\newtheoremstyle{thmstyle}
  {\medskipamount}
  {\smallskipamount}
  {\slshape}
  {0pt}
  {\bfseries}
  {.}
  { }
  {\thmname{#1}\thmnumber{ #2}{\normalfont\thmnote{ (#3)}}}
\newtheoremstyle{plainstyle}
  {\medskipamount}
  {\smallskipamount}
  {\rmfamily}
  {0pt}
  {\bfseries}
  {.}
  { }
  {\thmname{#1}\thmnumber{ #2}{\normalfont\thmnote{ (#3)}}}
\theoremstyle{thmstyle}
\newtheorem{theorem}{Theorem}[section]
\newtheorem{lemma}[theorem]{Lemma}
\newtheorem{proposition}[theorem]{Proposition}
\newtheorem{corollary}[theorem]{Corollary}
\newtheorem{claim}{Claim}[theorem]
\theoremstyle{plainstyle}
\newtheorem{definition}[theorem]{Definition}
\newtheorem{remark}[theorem]{Remark}
\newtheorem{example}[theorem]{Example}
\newenvironment{proofof}[1]{\begin{proof}[Proof of #1.]}{\end{proof}}
\setlist[enumerate]{label={\roman*.}, ref={(\roman*)}}
\numberwithin{equation}{section} 
\DeclareMathOperator{\qr}{qr}
\DeclareMathOperator{\qrT}{qrT}
\DeclareMathOperator{\twist}{twist}
\DeclareMathOperator{\semitwist}{semitwist}
\DeclareMathOperator{\bp}{bp}
\DeclareMathOperator{\disc}{disc}
\DeclareMathOperator{\cycle}{cycle}
\DeclareMathOperator{\odd}{odd}
\DeclareMathOperator{\weak}{weak}
\DeclareMathOperator{\dsct}{dsct}
\DeclareMathOperator{\ovlp}{ovlp}
\DeclareMathOperator{\amlg}{amlg}
\DeclareMathOperator{\dom}{dom}
\DeclareMathOperator{\im}{im}
\DeclareMathOperator{\id}{id}
\newcommand{\kqrO}[1][k]{#1\operatorname{-qrO}}
\DeclareMathOperator{\rk}{rk}
\def\Tind{T_{\operatorname{ind}}}
\def\tind{t_{\operatorname{ind}}}
\DeclareMathOperator{\Aut}{Aut}
\newcommand{\HomT}[1][T]{\operatorname{Hom}^+(\cA[#1],\RR)}
\def\property{\texttt}
\def\Independence{\property{Independence}}
\def\UCouple{\property{UCouple}}
\def\UInduce{\property{UInduce}}
\def\Disc{\property{Disc}}
\newcommand{\TLinOrder}{T_{\operatorname{LinOrder}}}
\newcommand{\TkO}[1][k]{T_{#1\operatorname{-Orientation}}}
\def\cat{\textsc}
\def\Set{\cat{Set}}
\def\Inj{\cat{Inj}}
\def\Int{\cat{Int}}
\def\down{\mathord{\downarrow}}
\def\up{\mathord{\uparrow}}
\def\pto{\rightharpoonup}
\let\epsilon\varepsilon
\newcommand{\df}{\stackrel{\text{def}}{=}}
\def\rn{\bm}
\def\rest{\mathord{\vert}}
\def\comp{\mathbin{\circ}}
\def\symdiff{\mathbin{\triangle}}
\def\disjcup{\mathbin{\stackrel{\cdot}{\cup}}}
\def\place{\mathord{-}}
\newcommand{\floor}[1]{\lfloor #1\rfloor}
\newcommand{\given}[1][]{\mathbin{#1\vert}}
\newcommand{\psilin}{\psi_{\operatorname{lin}}}
\def\EE{\mathbb{E}}
\def\NN{\mathbb{N}}
\def\PP{\mathbb{P}}
\def\QQ{\mathbb{Q}}
\def\RR{\mathbb{R}}
\def\pp{\ensuremath{\mathbbm{p}}}
\def\qq{\ensuremath{\mathbbm{q}}}
\def\One{\mathbbm{1}}
\def\cA{{\mathcal{A}}}
\def\cB{{\mathcal{B}}}
\def\cC{{\mathcal{C}}}
\def\cD{{\mathcal{D}}}
\def\cE{{\mathcal{E}}}
\def\cG{{\mathcal{G}}}
\def\cH{{\mathcal{H}}}
\def\cK{{\mathcal{K}}}
\def\cL{{\mathcal{L}}}
\def\cM{{\mathcal{M}}}
\def\cN{{\mathcal{N}}}
\def\cO{{\mathcal{O}}}
\def\cP{{\mathcal{P}}}
\def\cR{{\mathcal{R}}}
\def\cS{{\mathcal{S}}}
\def\cX{{\mathcal{X}}}
\def\Caratheodory{Carath\'{e}odory}
\title{
  On the equivalence of quasirandomness\\
  and exchangeable representations\\
  independent from lower-order variables%
}
\author{%
  Leonardo N.~Coregliano \and%
  Henry P.~Towsner%
}
\date{\today}
\begin{document}

\maketitle

\begin{abstract}
  It is often convenient to represent a process for randomly generating a graph as a graphon. (More precisely, these give \emph{vertex exchangeable} processes---those processes in which each vertex is treated the same way.) Other structures can be treated by generalizations like hypergraphons, permutatons, and, for a very general class, theons. These representations are not unique: different representations can lead to the same probability distribution on graphs. This naturally leads to questions (going back at least to Hoover's proof of the Aldous--Hoover Theorem on the existence of such representations) that ask when quasirandomness properties on the distribution guarantee the existence of particularly simple representations.

  We extend the usual theon representation by adding an additional datum of a random permutation to each tuple, which we call a $\ast$-representation. We show that if a process satisfies the \emph{unique coupling} property $\UCouple[\ell]$, which says roughly that all $\ell$-tuples of vertices ``look the same'', then the process is $\ast$-$\ell$-independent: there is a $\ast$-representation that does not make use of any random information about $\ell$-tuples (including tuples of length $<\ell$). Simple examples show that the use of $\ast$-representations is necessary.

  This resolves a question of Coregliano and Razborov, since it easily follows that $\UCouple[\ell]$ implies $\Independence[\ell']$ (the existence of an $\ell'$-independent ordinary representation) for $\ell'<\ell$. 
\end{abstract}

\section{Introduction}

Consider a rule for randomly generating a graph on a set of vertices $V$:
\begin{itemize}
\item for each vertex $v\in V$ we pick a random value $\rn{\omega}_v\in[0,1]$ uniformly and independently,
\item for each pair of vertices $\{v,w\}\in\binom{V}{2}$ we pick a random value
  $\rn{\omega}_{v,w}\in[0,1]$, also uniformly and independently,
\item we have a measurable set $\cN_E\subseteq[0,1]^3$, and we place an edge\footnote{Our
  notation here foreshadows the general situation, where we might be considering structures with
  multiple edge relations simultaneously. In that setting, we will need a different set
  $\cN_E$ for each edge relation $E$.} between the pair $\{v,w\}$ if
  $(\rn{\omega}_v,\rn{\omega}_w,\rn{\omega}_{v,w})\in\cN_E$.
\end{itemize}
Such a construction is sometimes called a \emph{$2$-hypergraphon}\footnote{A \emph{graphon} is a
  closely related object: specifically, given $\cN_E$, we may define a graphon
  $f_E\colon [0,1]^2\to[0,1]$ by $f_E(\omega_v,\omega_w)\df\lambda(\{\omega_{v,w}\mid
  (\omega_v,\omega_w,\omega_{v,w})\in\cN_E\})$, where $\lambda$ is the Lebesgue
  measure.}. These arise, for instance, as a way to represent the limit of a suitably convergent
sequence of graphs, and they have been studied under using several essentially equivalent
formalisms---exchangeable relations~\cite{Aus08,DJ08}, graph limits and graphons and their
generalizations~\cite{LS06,Lov12,ES12,CR20}, flag algebras~\cite{Raz07} and
ultraproducts~\cite{ES12,NO12,AC14,NO20}. The Aldous--Hoover Theorem~\cite{Hoo79,Ald81,Ald85} (see
also~\cite[Chapter~7]{Kal05}) says that \emph{any} method for randomly generating a graph that
treats the vertices symmetrically (that is, which is \emph{exchangeable}) can be represented as a
distribution over $2$-hypergraphons.

\begin{example}[The Quasirandom Graph]\label{ex:qr}
  A fundamental example is the case where $\cN^{\qr}_E\df\{(\omega_v,\omega_w,\omega_{v,w})\mid
  \omega_{v,w}<1/2\}$. This is essentially the process where we flip a coin for each pair $\{v,w\}$
  and place an edge if the coin comes up heads.
\end{example}

The representation given by a $2$-hypergraphon is not unique: there are other $2$-hypergraphons that
are essentially the same. For instance, any measure-preserving permutation of $[0,1]$ gives us
essentially the same rule---$\cN^{\qr'}_E\df\{(\omega_v,\omega_w,\omega_{v,w})\mid
\omega_{v,w}<1/4\text{ or }\omega_{v,w}\geq 3/4\}$ is not meaningfully different from
$\cN^{\qr}_E$.

The more fundamental object is the probability distribution that $\cN$ induces on a set of
vertices. That is, if we fix a finite vertex set $V$ and let $\cK_V$ be the set of graphs on
$V$ (with vertices labeled by the elements of $V$), $\cN$ induces a probability
distribution $\mu^\cN_V$ on $\cK_V$ where the probability of
$K\in\cK_V$ is precisely the probability that generating a graph using $\cN$ by
the method described above produces the graph $K$. (This idea also makes sense, with slightly more
care about measurability, when $V$ is infinite.) For example, when $V$ is finite,
$\mu^{\cN^{\qr}}_V$ is the uniform distribution on (labeled) graphs on $V$.

We can then say that $\cN$ and $\cN'$ are equivalent if $\mu^\cN_V$
and $\mu^{\cN'}_V$ are the same for all finite sets $V$.

The example $\cN^{\qr}$ has a distinctive property: the variables $\omega_v$ and $\omega_w$
are not actually used, that is, $\cN^{\qr}_E=[0,1]^2\times \cH^{\qr}_E$ for a set
$\cH^{\qr}_E$. We call this property \emph{$1$-independence}, because the $2$-hypergraphon is
independent of the unary variables $\omega_v,\omega_w$.

Complicating matters, a $1$-independent $2$-hypergraphon may be equivalent to one that is not
$1$-independent.
\begin{example}\label{ex:twist}
  Consider the rule where $\cN^{\twist}_E\df\{(\omega_v,\omega_w,\omega_{v,w})\mid
  (\omega_v+\omega_{w}+\omega_{v,w})\bmod 1<1/2\}$.
\end{example}
It is not difficult to check that $\mu^{\cN^{\twist}}_V$ is the same as $\mu^{\cN^{\qr}}_V$ for
every finite set $V$.

To get a property of the underlying object, instead of just the representation, we can say that
$\cN$ has $\Independence[1]$ if it is equivalent to some $1$-independent set. For graphs, there is
not much to say: any $2$-hypergraphon with $\Independence[1]$ is equivalent to
$\{(\omega_v,\omega_w,\omega_{v,w})\mid \omega_{v,w}<p\}$ for some $p\in[0,1]$. However we will see
that the situation is more complicated when we consider directed graphs and hypergraphs.

The main topic of this paper is understanding how the quasirandomness properties of the measures
$(\mu^\cN_V)_V$ relate to whether $\mu^\cN_V$ has a $1$-independent---or, more generally,
$\ell$-independent---representation. This question was already raised in Hoover's original
paper~\cite{Hoo79}, where he asks when $\cN$ has a representation omitting some
variables. Nevertheless, our result here is the first general result showing how to take a suitably
quasirandom $(\mu^\cN_V)_V$ and produce a representation omitting some of the variables.

To explain our motivation for doing this, we need to consider what happens when we try generalize
the notion of quasirandomness beyond graphs. For the purposes of the introduction, it will suffice
to consider $3$-hypergraphs. We will let $H$, rather than $E$, be the name of our hypergraph
relation. A $3$-hypergraphon is given by a measurable set $\cN_H\subseteq[0,1]^7$. The seven here is
the number of non-empty subsets of a set of three elements\footnote{In our general notation below,
  where we will consider relations on $d$-tuples, we will write $\cN_H\subseteq\cE_{k(H)}$. Here
  $k(H)$ is the arity of $H$, in this case $3$, and $\cE_{k(H)}$ will be defined to be
  $[0,1]^{r(k(H))}$, where $r(k(H))$ is the set of non-empty subsets of
  $[k(H)]\df\{1,\ldots,k(H)\}$.}. To generate a $3$-hypergraph on $V$, we select random variables
$(\rn{\omega}_v)_{v\in V}$, $(\rn{\omega}_{v,w})_{\{v,w\}\in\binom{V}{2}}$, and
$(\rn{\omega}_{v,w,x})_{\{v,w,x\}\in\binom{V}{3}}$ uniformly and independently in $[0,1]$---that is,
we choose random variables for each vertex, each unordered pair of vertices, and each unordered
triple of vertices---and we place a $3$-edge on the triple $\{v,w,x\}$ if
\begin{align*}
  (\rn{\omega}_v,\rn{\omega}_w,\rn{\omega}_x,
  \rn{\omega}_{v,w},\rn{\omega}_{v,x},\rn{\omega}_{w,x},
  \rn{\omega}_{v,w,x})
  & \in\cN_H.
\end{align*}
We abbreviate the long tuple by $(\rn{\omega}_s)_{s\in r(\{v,w,x\})}$ to emphasize that we are indexing
the variables by non-empty subsets of the index set $\{v,w,x\}$, where we write $r(V)$ for the set
of non-empty subsets of $V$.

There are a number of ways we might generalize the properties of $\cN^{\qr}$ to a
$3$-hypergraph~\cite{LM15b,Tow17,ACHPS18}. Let us focus just on those ways that $\cN_H$ might be
quasirandom relative to the unary data $\{\omega_v,\omega_w,\omega_x\}$.

Before stating these, it is helpful to generalize the $\mu^\cN_V$ notation to allow the possibility
that some of our random variables have already been chosen. Given a finite set $V$ and a set
$S\subseteq r(V)$, let us write $\mu^\cN_V[(\omega_s)_{s\in S}]$ for the distribution on $\cK_V$
given by taking the given values $(\omega_s)_{s\in S}$, randomly choosing all those $\rn{\omega}_s$
with $s\in r(V)\setminus S$ and placing $(v,w,x)\in H$ exactly when $((\omega_s)_{s\in
  S},(\rn{\omega}_s)_{s\in r(V)\setminus S})\in\cN_H$.

Then saying that $\cN_H$ is quasirandom could mean:
\begin{enumerate}[label={(\arabic*)}, ref={(\arabic*)}]
\item $\mu^\cN_{\{v,w,x\}}[(\omega_v,\omega_w,\omega_x)]$ is independent of the choice of
  $\omega_v,\omega_w,\omega_x$ (except on a set of measure $0$). This property is called $\Disc[1]$,
  standing for ``discrepancy'', and has been extensively
  studied~\cite{CGW89,KRS02,KNRS10,LM15a,LM15b,LM17,Tow17,ACHPS18}.
\item For every finite set $V$, $\mu^\cN_V[(\omega_v)_{v\in V}]$ is independent of the choice of
  $(\omega_v)_{v\in V}$ (except on a set of measure $0$). This property was introduced
  in~\cite{CR23} under the name $\UCouple[1]$. (This stands for ``unique coupling'', a formulation
  of this property introduced in that paper and proven equivalent.)
\item There is a representation equivalent to $\cN$ of the form $[0,1]^3\times\cM_H$. This
  property is called $\Independence[1]$, and is also introduced in~\cite{CR23}.
\end{enumerate}
The first two properties can be shown to be representation independent. It is not hard to see that
these are ordered from weakest to strongest. For $2$-hypergraphons, these are all
equivalent\footnote{This follows as an easy exercise from the equivalence $P_1(t)\iff P_1(4)\iff
  P_4$ of the seminal paper on graph quasirandomness~\cite{CGW89}.}. For $3$-hypergraphs, however, these
notions are all distinct.

To see that the $\UCouple[1]$ is stronger than $\Disc[1]$, consider the following example, slightly
modified from one in~\cite[Theorem~3.15]{CR23}.
\begin{example}
  Let $\cN^{\disc}_H$ be the set of
  $(\omega_v,\omega_w,\omega_x,\omega_{v,w},\omega_{v,x},\omega_{w,x},\omega_{v,w,x})$ such that
  either:
  \begin{itemize}
  \item $\min\{\omega_v,\omega_w,\omega_x\}<1/2$ and $\omega_{v,w,x}<1/2$, or
  \item $\min\{\omega_v,\omega_w,\omega_x\}\geq 1/2$ and an odd number of the values
    $\omega_{v,w},\omega_{v,x},\omega_{w,x}$ are $<1/2$.
  \end{itemize}
\end{example}
Observe that $\mu^{\cN^{\disc}}_{\{v,w,x\}}[(\omega_v,\omega_w,\omega_x)]$ always places measure
$1/2$ on the case where the $3$-edge appears. However on a set of four vertices
$V=\{v_1,v_2,v_3,v_4\}$, the distribution is no longer independent of the $\omega_v$. For instance,
if $\omega_v<1/2$ for all $v\in V$ then $\mu^{\cN^{\disc}}_V[(\omega_v)_{v\in V}]$ is the uniform
distribution on $\cK_V$, while if $\omega_v\geq 1/2$ for all $v\in V$ then
$\mu^{\cN^{\disc}}_V[(\omega_v)_{v\in V}]$ assigns probability $0$ to any $3$-hypergraph on $V$ with
an odd number of edges.

To see that $\Independence[1]$ is stronger than $\UCouple[1]$, it helps to first consider the random
tournament.
\begin{example}\label{ex:qrT}
  $\cN^{\qrT}_{\rightarrow}\subseteq[0,1]^3$ is the set of triples
  $(\omega_v,\omega_w,\omega_{v,w})$ such that exactly one of $\omega_v<\omega_w$ and
  $\omega_{v,w}<1/2$ is true.
\end{example}
We write $\rightarrow$ rather than $E$ for the name of our relation because it is asymmetric:
$\mu^{\cN^{\qrT}}_V$ concentrates on structures where, for any pair $\{v,w\}$ of distinct vertices,
exactly one of the edges $(v,w)$ or $(w,v)$ is present---that is, a tournament. In fact, it is the
uniform distribution on tournaments, and further, $\mu^{\cN^{\qrT}}_V((\omega_v)_{v\in V})$ is still
the uniform distribution, no matter what the values $(\omega_v)_{v\in V}$ are, so this has
$\UCouple[1]$.

On the other hand, any representation of the form $[0,1]^2\times\cH$ must concentrate on symmetric
structures, so $\cN^{\qrT}$ cannot have $\Independence[1]$. This can be extended to an example on
$3$-hypergraphs that similarly has $\UCouple[1]$ but not $\Independence[1]$, as is done
in~\cite[Theorem~3.8]{CR23}: for instance, take $\cN^{\cycle}_H$ to be the $3$-hypergraphon
consisting of triples on which $\cN^{\qrT}_{\rightarrow}$ would induce a cycle.

The first author and Razborov~\cite{CR23} introduced $\UCouple[1]$ and $\Independence[1]$, as well
as higher-arity generalizations, and settled most questions about which combinations are possible,
leaving only the question of whether $\UCouple[\ell]$ implies $\Independence[\ell']$ when
$\ell'<\ell$. Our original motivation was settling this implication: given $\cN_H$
satisfying $\UCouple[\ell]$, we show how to construct an equivalent representation satisfying
$\Independence[\ell']$---that is, a representation that does not use any of the variables $\omega_s$
with $\lvert s\rvert\leq\ell'$.

Our approach begins by looking at the proof that the quasirandom tournament does not have
$\Independence[1]$. That argument turned on a quirk of the representation: the binary data in a
$2$-hypergraphon is intrinsically symmetric, forcing us to use the unary data to encode the
asymmetry of the quasirandom tournament. However, like the quasirandom graph, the quasirandom
tournament seems like an essentially binary object, the representation just has no way to express
that.

Facing a similar issue, Crane and the second author proposed~\cite{CT18} a modified representation
that adds in an antisymmetric datum at each level. For binary relations, our modified representation
takes $\cN_E$ to be a subset of $[0,1]^3\times\cO_{\{v,w\}}$, where $\cO_{\{v,w\}}$ is just the set
of orders of $\{v,w\}$ with the uniform distribution. For example, in this modified representation,
we can take $\cN^{\qrT}_{\rightarrow}$ to be $\{(\omega_v,\omega_w,\omega_{v,w},<_{v,w})\mid v
<_{v,w} w\}$. We should view $(\omega_v,\omega_w)$ as the unary data and $(\omega_{v,w},<_{v,w})$ as
the binary data, so this gives a representation of the quasirandom tournament that is independent of
the unary data, but only using our modified representation. We will call this a
$\ast$-$1$-independent representation. (We could, in fact, add the requirement that $\cN_E$ satisfy
a suitable symmetry property in the $\omega_s$ variables, essentially requiring that \emph{all} the
asymmetry be expressed by the $<_{v,w}$ variables, but since we do not need this restriction, we do
not impose it here.)

The bulk of this paper is devoted to showing, as part of Theorem~\ref{thm:UCouple}, that
$\UCouple[\ell]$ is \emph{equivalent} to the existence of a $\ast$-$\ell$-independent
representation. This shows that the quasirandom tournament is the essential difference between
$\UCouple[1]$ and $\Independence[1]$, and similarly a ``quasirandom orientation'' describes the gap between
$\UCouple[\ell]$ and $\Independence[\ell]$. We also show, in Corollary~\ref{cor:astindep->indep},
that we can easily convert a $\ast$-$\ell$-independent representation into an $\ell'$-independent
representation for any $\ell'<\ell$ (because we can use any lower arity data, not just the unary
data, to simulate the asymmetry).

Our construction begins with the following observation. When we have a representation $\cN_H$ and
choose a value $\omega_v$ for some vertex $v$, we determine, in some probabilistic sense, how $v$
will behave in the broader structure---for instance $\omega_v$ determines the degree of $v$ (with
high probability, up to a small error). However different values of $\omega_v$ might determine the
same behavior for $v$; for instance, in $\cN^{\qr}$, $\omega_v$ is entirely ignored---there is
only one ``type'' of point. In the twisted representation, the same thing is true: even though the
value $\omega_v$ appears in the definition of $\cN^{\twist}_E$, it does not ultimately affect the
behavior of $v$: every vertex ``looks the same'', no matter what $\omega_v$ was. On the other hand,
there are exactly two ``types'' of pairs---the types with an edge and the types without.

We construct a new representation in which we randomly choose, for each tuple, the type that tuple
should have. To say this slightly more formally, the shape of our construction will be as follows:
\begin{itemize}
\item for each $\ell$, we will define a space $\Lambda_\ell$ of types of $\ell$-tuples,
\item for each finite set $A$, we will define a function
  $\pp^\cN_A\colon[0,1]^{r(\ell)}\to\Lambda_{\lvert A\rvert}$
  mapping the random data $(\omega_s)_{s\in r(A)}$ to the type in $\Lambda_{\lvert A\rvert}$ that
  $\cN$ determines on the data $(\omega_s)_{s\in r(A)}$.
\end{itemize}
Under these definitions, $\cN$ determines a probability measure on $\Lambda_1$, $\cN$ together with
a pair $(p,p')\in \Lambda_1^2$ determines a probability measure on $\Lambda_2$, and so on. We can
identify these with $[0,1]$ by choosing measure-preserving functions $u\colon[0,1]\to\Lambda_1$,
$u_{p,p'}\colon[0,1]\to\Lambda_2$, and so on. Then our new representation is given by taking
$p_v=u(\omega_v)$, $p_{v,w}=u_{u(\omega_v),u(\omega_w)}(\omega_{v,w})$, and similarly at higher
arity. We need the data from $<_{u,v}$ to break symmetry: if $p_v,p_w$ are the same then
$\omega_{v,w}$ can determine something about the unordered set $\{v,w\}$, but if it specifies
something asymmetric---say, that exactly one of the two edges $(v,w)$ and $(w,v)$ is present---then
our representation needs the asymmetric datum $<_{v,w}$ to choose among the ways to resolve the
asymmetry.

It would be trivial to just take $\Lambda_\ell=[0,1]^{r(\ell)}$ with $\pp^\cN_\ell$ as the
identity. However, since we want to extract a $\ast$-$\ell$-independent representation from this, we
want to show that $\UCouple[\ell]$ implies that there is only one type of $\ell$-tuple. This will
force us to choose a definition in which the type does not contain unused information---points which
behave the same way really should get the same type.

On the other hand, our notion of type has to contain enough information to reconstruct
$\mu^\cN_V$. Certainly this means that $\pp^\cN_{k(H)}(\cN_H)$ has to be disjoint from
$\pp^\cN_{k(H)}([0,1]^{r(k(H))}\setminus\cN_H)$---that is, a $k(H)$-tuple that is an edge needs to
have a different type from one that does not have an edge. Slightly more generally, points that lead
to different distributions need to have different types: that is, if $\mu^\cN_V[(\omega_s)_{s\in
    r(A)}]\neq \mu^\cN_V[(\omega'_s)_{s\in r(A)}]$ for some finite set $V$ containing $A$, then we
need to have $\pp^\cN_A((\omega_s)_{s\in r(A)})\neq \pp^\cN_A((\omega'_s)_{s\in r(A)})$.

This gives our first real candidate for the definition of a type, which we will call a \emph{weak
  type}: we could take $\pp^{\weak,\cN}_A((\omega_s)_{s\in r(A)})$ to be $(\mu^\cN_{A\cup
  B}[(\omega_s)_{s\in r(A)}])_{B\text{ a finite set}}$. That is, two tuples have the same weak type
if, whenever we have a finite set containing that tuple, the distributions of structures on that set
are the same\footnote{The reader familiarized with flag algebras should note that this is almost a
  flag algebra homomorphism extension. More specifically, if $\phi$ is the flag algebra homomorphism
  corresponding to $\cN$, then the flag algebra extension $\rn{\phi^{\sigma}}$ corresponds to the
  \emph{distribution of the weak type} $\pp^{\weak,\cN}_A((\rn{\omega}_s)_{s\in r(A)})$ when
  $(\rn{\omega}_s)_{s\in r(A)}$ is picked uniformly at random conditioned to
  $\mu^{\cN_H}_A[(\rn{\omega}_s)_{s\in r(A)}]$ being concentrated on $\sigma$.}.

A standard example shows that this is insufficient.
\begin{example}[Complete Bipartite Graph]
  Let $\cN^{\bp}_E$ be the set of $(\omega_v,\omega_w,\omega_{v,w})$ such that exactly one of
  $\omega_v$ and $\omega_w$ is $<1/2$.
\end{example}
The random structure generated by $\mu^{\cN^{\bp}}_V$ is a complete bipartite graph, with each vertex
having equal chance to belong to each of the parts. Since the parts are unlabeled, there is only one
possible value of $\mu^{\cN^{\bp}}_{\{v\}\cup B}[\omega_v]$. However there is no way to
replicate $\cN^{\bp}_E$ with only one type of point: our construction should have two types
of points, corresponding to the two parts of the bipartition.

Formally, what goes wrong is a failure of \emph{amalgamation}. If we know the types of $v$ and $w$,
they should determine a distribution on the possible types of the pair $\{v,w\}$. That means that if
we have two different pairs, $\omega_v,\omega_w$ and $\omega'_v,\omega'_w$ that lead to the same
pair of types---that is, $\pp^{\weak,\cN}_{\{v\}}(\omega_v)=\pp^{\weak,\cN}_{\{v\}}(\omega'_v)$ and
$\pp^{\weak,\cN}_{\{w\}}(\omega_w)=\pp^{\weak,\cN}_{\{w\}}(\omega'_w)$---then we need the distribution on types of pairs
extending $\omega_v,\omega_w$ to be the same as the distribution on types of pairs extending
$\omega'_v,\omega'_w$.

With weak types for $\cN^{\bp}_E$, this fails: we have
$\pp^{\weak,\cN^{\bp}}_{\{v\}}(1/4)=\pp^{\weak,\cN^{\bp}}_{\{v\}}(3/4)$, but the pair $(1/4,1/4)$ needs to
place measure $1$ on the $2$-type that is not an edge, while the pair $(1/4,3/4)$ needs to place
measure $1$ on the $2$-type that is an edge.

Our types need more information than weak types. To do this, we want to replace the distribution
$\mu^\cN_{A\cup B}[(\omega_s)_{s\in r(A)}]$ with some pointwise information. We define the
\emph{dissociated type} $\pp^{\dsct,\cN}_A((\omega_s)_{s\in r(A)})$ to be the family of
functions
\begin{align*}
  (\omega_s)_{s\in r(B)}\mapsto \mu^\cN_{A\cup B}[(\omega_s)_{s\in r(A)\cup r(B)}],
\end{align*}
where $B$ ranges over finite sets. That is, in order to have the same type, two sets of values
$\{\omega_s\}_{s\in r(A)}$, $\{\omega'_s\}_{s\in r(A)}$ not only need to lead to the same
distribution on $\cK_{A\cup B}$, they need to lead to the same distribution for each fixed
set of data on $B$. (We call these dissociated types because we look at data on $A$ and $B$
separately, but not at any data on sets overlapping $A$ and $B$.)\footnote{There is a very close
  analogy to the situation in model theory: types over a set are not well-behaved. Our solution is
  analogous to considering only types over \emph{models}: allowing data from additional sets $B$
  amounts to letting us name additional fixed elements which we use, for example, to distinguish the
  equivalence classes. In the model theoretic setting, a great deal of work has gone into replacing
  types over models with notions of strong types, which add only enough information to resolve these
  obstacles. Hrushovski has asked whether there is a way to transfer the model theoretic
  approach~\cite{hrushovskitalk}.}

This will turn out to be the correct definition for $1$-types, and we can prove that amalgamation
holds going from $1$-types to $2$-types: if
$\pp^{\dsct,\cN}_{\{v\}}(\omega_v)=\pp^{\dsct,\cN}_{\{v\}}(\omega'_v)$
and
$\pp^{\dsct,\cN}_{\{w\}}(\omega_w)=\pp^{\dsct,\cN}_{\{w\}}(\omega'_w)$
then $\pp^{\dsct,\cN_H}_{\{v,w\}}(\omega_v,\omega_w,\rn{\omega}_{v,w})$ and
$\pp^{\dsct,\cN_H}_{\{v,w\}}(\omega'_v,\omega'_w,\rn{\omega}_{v,w})$ have the same
distribution when $\rn{\omega}_{v,w}$ is picked uniformly at random in $[0,1]$.

Unfortunately, the next step of amalgamation fails, as the following example illustrates.
\begin{example}
  Let $\cN^{\odd}_H$ consist of those $(\omega_s)_{s\in r(3)}$ such that an odd number
  of the values $\omega_{1,2},\omega_{1,3},\omega_{2,3}$ are $<1/2$.
\end{example}
This has a unique dissociated $2$-type\footnote{This representation is $1$-independent, so we will
  ignore the unary data. We need to show that
  $\pp^{\dsct,\cN^{\odd}}_{\{v,w\}}(\omega_{v,w})=\pp^{\dsct,\cN^{\odd}}_{\{v,w\}}(\omega'_{v,w})$
  for any $\omega_{v,w},\omega'_{v,w}$. Obviously the interesting case is when, without loss of
  generality, $\omega_{v,w}<1/2$ while $\omega'_{v,w}\geq 1/2$. In this case, observe that, for any
  finite set $B$ and data $\{\omega_s\}_{s\in r(\{v,w\}\cup B)}$, replacing $\omega_{v,w}$ with
  $\omega'_{v,w}$ and also replacing each $\omega_s$ with $s\cap\{v,w\} = \{v\}$ with $1/2-\omega_s$
  will generate the same structure. This is a measure-preserving transformation on
  $\{\omega_s\}_{s\in r(\{v,w\}\cup B)\setminus(r(\{v,w\})\cup r(B))}$, so in particular shows that
  $\mu^\cN_{\{v,w\}\cup B}[(\omega_{v,w},(\omega_s)_{s\in r(B)})]=\mu^\cN_{\{v,w\}\cup
    B}[(\omega'_{v,w},(\omega_s)_{s\in r(B)})]$.}. However, the $3$-type is completely determined by
the information at the $2$-ary level: for instance, the values $(1/4,1/4,3/4)$ and $(1/4,3/4,3/4)$
for $(\omega_{1,2},\omega_{1,3},\omega_{2,3})$ lead to different distributions on $3$-types.

We conclude that dissociated types still do not contain enough information when the arity is greater
than $1$. Examining the proof that $1$-types can amalgamate to $2$-types suggested the following
strengthening, which we call \emph{overlapping} types. $\pp^{\ovlp,\cN}_A((\omega_s)_{s\in r(A)})$
to be the family of functions
\begin{align*}
  (\omega_s)_{s\in r(A\cup B), A\not\subseteq s\not\subseteq A}
  \mapsto
  \mu^\cN_{A\cup B}[(\omega_s)_{s\in r(A\cup B), A\not\subseteq A},\omega_A],
\end{align*}
where $B$ ranges over finite sets. That is, we now consider more pointwise information, namely all
the data which overlaps some \emph{but not all} of $A$. (Including any $\omega_s$ with $A\subseteq
s$ would certainly be too much information, because it would amount to keeping all the data of the
original representation regarding $A$, leading to failure of uniqueness of $\ell$-types for
$\UCouple[\ell]$.) This gives the same definition of a $1$-type, but adds in more information at
higher arity.

This definition works for $\cN^{\odd}$---there are now two overlapping $2$-types, as there should
be, depending on whether $\omega_{v,w}$ is $<1/2$ or $\geq 1/2$. We also get some progress towards
amalgamation: overlapping types amalgamate to dissociated types, in any arity. That is, if we have
two $k$-tuples $(\omega_s)_{s\in r(k)\setminus\{[k]\}}$ missing only the top variable and such that,
for each $(k-1)$-tuple, the overlapping types are the same, then the distribution on dissociated
$k$-types obtained by choosing $\rn{\omega}_{[k]}$ uniformly at random in $[0,1]$ is the same.

However the overlapping type has too much information. For example, $\cN^{\twist}$ has many
overlapping $2$-types; indeed, if $(\omega_v,\omega_w,\omega_{v,w})$ and
$(\omega'_v,\omega'_w,\omega'_{v,w})$ have the same overlapping $2$-type in $\cN^{\twist}$ then not
only does $\omega_v+\omega_w+\omega_{v,w}<1/2$ if and only if
$\omega'_v+\omega'_w+\omega'_{v,w}<1/2$, but $\omega_v=\omega'_v$ and $\omega_w=\omega'_w$ as
well\footnote{This is because the overlapping $2$-type of $(\omega_v,\omega_w)$ knows for each
  $(\omega_u,\omega_{v,u},\omega_{w,u})$ whether $(\omega_v,\omega_u,\omega_{v,u})$ and
  $(\omega_w,\omega_u,\omega_{w,u})$ are edges, so it can subtract out $\omega_u+\omega_{v,u}$ and
  $\omega_u+\omega_{w,u}$ modulo $1$ to determine $\omega_v$ and $\omega_w$, respectively.}: the
overlapping $2$-type has completely learned the pointwise $1$-ary data, which is information about
the individual points $v$ and $w$ that wasn't present in the overlapping $1$-types. To see why this
is a failure of amalgamation, note that since $\cN^{\twist}$ has a unique overlapping $1$-type
(recall that overlapping and dissociated $1$-types are the same) and since the overlapping $2$-type
knows full pointwise $1$-ary information, changing $1$-ary point data changes the overlapping
$2$-type distribution without changing the overlapping $1$-types.

One of our guiding principles is that the right definition ought to properly stratify information:
all unary information should show up in the $1$-types, all binary information in the $2$-types, and
so on. But this example shows that overlapping $2$-types can contain new unary information that
wasn't already present in the $1$-types.

\medskip

Our final notion, which we call an \emph{amalgamating type}, is in between a dissociated type and an
overlapping type. Essentially, we want it to be a dissociated type together with all the information
from the overlapping type which isn't really lower arity data. Informally, it is essentially a pair
consisting of the dissociated type together with a function taking any $(\omega'_s)_{s\in
  r(A)\setminus\{A\}}$ consistent with the dissociated type to an overlapping type consistent with
$(\omega'_s)_{s\in r(A)\setminus\{A\}}$.

That is, the amalgamating type does not know what the overlapping type is, because that would be too
much information. But it knows everything about the overlapping type that isn't too much
information, because once you tell it the extra information, it can recover the overlapping type.

In the case of $2$-types, this works roughly as follows. Given $\omega_v,\omega_w,\omega_{v,w}$, we
will choose a dissociated $2$-type, which includes a choice of dissociated $1$-types. We then also
want to remember as much as we can about the overlapping $2$-type while still forgetting the
specific values $\omega_v,\omega_w$. Therefore, we will remember a recipe that says ``for any
$\omega'_v,\omega'_w$ consistent with my $1$-types, I know which overlapping $2$-type I would have
had if my unary data had been $\omega'_v,\omega'_w$''.

It is not immediately obvious that such a function makes sense, but its existence almost falls out
of the fact that we have amalgamation. Consider any $\omega'_v,\omega'_w$ with the same dissociated
$1$-types as $\omega_v,\omega_w$. Amalgamation tells us that the distribution of dissociated
$2$-types is the same. That is, there is a measure-preserving function\footnote{This is not quite
  correct, but for simplicity, we will assume this in the introduction only.}
$f_{\omega_v,\omega_w,\omega'_v,\omega'_w}\colon[0,1]\to[0,1]$ so that
\begin{align*}
  \pp^{\dsct,\cN}_{\{v,w\}}(\omega_v,\omega_w,\omega_{v,w})
  =
  \pp^{\dsct,\cN}_{\{v,w\}}(
  \omega'_v,\omega'_w, f_{\omega_v,\omega_w,\omega'_v,\omega'_w}(\omega_{v,w})
  ).
\end{align*}
Therefore we would like to take the amalgamating $2$-type of $(\omega_v,\omega_w,\omega_{v,w})$ to
be the function
\begin{align*}
  (\omega'_v,\omega'_w)
  \mapsto
  \pp^{\ovlp,\cN}_{\{v,w\}}(
  \omega'_v,\omega'_w,f_{\omega_v,\omega_w,\omega'_v,\omega'_w}(\omega_{v,w})
  )
\end{align*}
defined on those $\omega'_v,\omega'_w$ with the correct dissociated $1$-types.

There are some additional measure-theoretic formalities to make this work correctly. For example, we
need the functions $f_{\omega_v,\omega_w,\omega'_v,\omega'_w}$ to satisfy a cocycle condition
$f_{\omega'_v,\omega'_w,\omega''_v,\omega''_w}\circ
f_{\omega_v,\omega_w,\omega'_v,\omega'_w}=f_{\omega_v,\omega_w,\omega''_v,\omega''_w}$. Therefore it
is more natural to identify some target space $\Lambda$, obtain almost everywhere bijective
functions $f_{\omega_v,\omega_w}\colon[0,1]\to\Lambda$, and then set
$f_{\omega_v,\omega_w,\omega'_v,\omega'_w}=f^{-1}_{\omega'_v,\omega'_w}\circ
f_{\omega_v,\omega_w}$. Some work is needed to identify the target space in a way that depends
measurably on $\omega_v,\omega_w$.

Nevertheless, after solving the measure-theoretic issues, this definition suffices: amalgamating
$(k-1)$-types do determine the distribution of amalgamating $k$-types
(Lemma~\ref{lem:amlg}\ref{lem:amlg:strong}) and $\UCouple[\ell]$ implies that there is only one
amalgamating $\ell$-type (Remark~\ref{rmk:amlguniqueness}). (Actually, we are not quite able to
prove the second: we need the additional assumption that the representation is
$(\ell-1)$-independent; however this weaker result is sufficient because we can iterate the
construction.)

\subsection{Organization}

In Section~\ref{sec:basic}, we setup basic notation and definitions and formally state our main
theorems formally.

Section~\ref{sec:dreal} is devoted to the concept of $d$-realizations, which allows us to transform
a representation that uses order variables into one that does not use order variables at the expense
of at most one level of independence (Corollary~\ref{cor:astindep->indep}).

Section~\ref{sec:uniqueness} is devoted to extending the characterization of when two
representations are equivalent to cover order variables (Theorem~\ref{thm:dTU}). As a consequence,
we will derive that if $\cN$ is $\ast$-$\ell$-independent, then it must satisfy $\UCouple[\ell]$
(Proposition~\ref{prop:astellindep->UCouple}).

In Section~\ref{sec:types}, we begin our journey to prove the converse direction, by formalizing the
notions of dissociated and overlapping types and proving fundamental properties about them,
culminating in the proof of uniqueness of overlapping $\ell$-types in $(\ell-1)$-independent
representations (Proposition~\ref{prop:typeuniqueness}); this uniqueness will be inherited by
amalgamating $\ell$-types later.

In Section~\ref{sec:top}, we address measurability within the space of dissociated/overlapping types
by constructing a Polish topology over these spaces, so that equipping the space with the Borel
$\sigma$-algebra turns it into a standard Borel space (Proposition~\ref{prop:toptypes}). This allows
us to make sense of a ``random type''.

Section~\ref{sec:dsctovlp} is devoted to proving the aforementioned ``defective amalgamation''
(Proposition~\ref{prop:weakamalgdsctovlp}), that is, that overlapping types amalgamate to
dissociated types; this will serve as a building block for the final construction of amalgamating
types in Section~\ref{sec:amlg}.

However, before that, Section~\ref{sec:meas} proves a technical measure-theoretic lemma
(Lemma~\ref{lem:keymeas}) that will provide all the measurability and measure-preserving properties
required for the formalization of amalgamating types.

In Section~\ref{sec:amlg}, we finally define amalgamating types (Definition~\ref{def:amlg}) and
prove that every $\UCouple[\ell]$ object has a $\ast$-$\ell$-independent representation
(Proposition~\ref{prop:UCouple->1ellindep}).

In Section~\ref{sec:sim}, we show that every $\UCouple[\ell]$ object is the reduct of an independent
coupling of an $\Independence[\ell]$ object with a quasirandom $(\ell+1)$-orientation
(Proposition~\ref{prop:1ellindep->kqrOcoupling}). This is the formalization of the statement that
``the gap between $\UCouple[\ell]$ and $\Independence[\ell]$ is a quasirandom orientation''.

We conclude the paper with some final remarks and open problems in Section~\ref{sec:conc}.

More details on the location of the proofs of our main theorems can be found in
Table~\ref{tab:prooflocations} near the end of Section~\ref{sec:basic}.

\section{Notation, basic definitions and main theorems}
\label{sec:basic}

In this section, we establish basic notation and provide all basic definitions required to formally
state our main results.

We denote the set of non-negative integers by $\NN$ and the set of positive integers by $\NN_+$. For
$k\in\NN$, we let $[k]\df\{1,\ldots,k\}$. The set of injective functions from $A$ to $B$ is denoted
$(B)_A$ and we use the shorthand $(B)_k\df (B)_{[k]}$ when $k\in\NN$. The symmetric group on a set
$B$ is denoted $S_B$ and we also use the shorthand $S_k\df S_{[k]}$ for $k\in\NN$ (we also identify
$S_k=([k])_k$). For sets $A\subseteq B$, we let $\iota_{A,B}\colon A\to B$ be the inclusion map.

For an injection $\alpha\colon A\to B$, we let $\overline{\alpha}\colon A\to\im(\alpha)$ be the
bijection obtained by restricting the codomain of $\alpha$ to its image $\im(\alpha)$. As usual, if
$U\subseteq A$, we denote the restriction of $\alpha$ to $U$ by $\alpha\rest_U\colon U\to B$, but we
will also use the notation $\alpha\down_U\colon U\to\alpha(U)$ for the bijection obtained from
$\alpha$ by restricting its domain to $U$ and its codomain to $\alpha(U)$ (i.e.,
$\alpha\down_U\df\overline{\alpha\rest_U}$). Furthermore, if $A$ and $B$ are both disjoint from a
set $V$, we let $\alpha\up_V\colon A\cup V\to B\cup V$ be the extension of $\alpha$ that acts
identically on $V$.

For $\ell\in\NN$ and a set $V$, we let $\binom{V}{\ell}\df\{A\subseteq V \mid \lvert A\rvert=\ell\}$
be the set of subsets of $V$ of size $\ell$, we let $\binom{V}{>\ell}\df\{A\subseteq V \mid \lvert
A\rvert>\ell\}$, we let $r(V,\ell)\df\bigcup_{1\leq k\leq\ell}\binom{V}{k}$ and we let
$r(V)\df\bigcup_{\ell\in\NN_+}\binom{V}{\ell}$ be the set of \emph{finite} non-empty subsets of $V$.

By Borel space, we mean a standard Borel space, that is, a measurable space $(X,\cB)$, where $X$ can
be equipped with a Polish space structure such that $\cB$ is the Borel $\sigma$-algebra of the
corresponding topology; however, we will typically omit $\cB$ from the notation. The set of
probability measures on a Borel space $X$ is denoted by $\cP(X)$. By standard probability space, we
mean a probability space whose underlying measurable space is a (standard) Borel space. (Finite or)
countable products of Borel spaces will always be equipped with the product (Borel)
$\sigma$-algebra. Random variables will always be typeset in bold font. We will denote the Lebesgue
measure by $\lambda$ (with a small abuse of notation, we will use $\lambda$ for the Lebesgue measure
in any number of dimensions and even for a countable product of one-dimensional Lebesgue measures).

For a relational language $\cL$ and a predicate symbol $P\in\cL$, we denote the \emph{arity} of $P$
by $k(P)\in\NN_+$. A structure $M$ in $\cL$ is \emph{canonical} if $P^M\subseteq (V(M))_{k(P)}$ for
every $P\in\cL$, that is, if all its predicates can only hold in injective tuples. A universal
theory is \emph{canonical} if all of its models are canonical. Since every universal theory is
isomorphic (via a quantifier-free definition) to a canonical universal theory
(see~\cite[Theorem~2.3]{CR20}, where quantifier-free definitions are called open interpretations),
\emph{all of our structures will be assumed to be canonical unless explicitly stated otherwise}. We
denote the set of (canonical) structures in $\cL$ over a set $V$ by $\cK_V[\cL]$ and for a canonical
theory $T$, the set of models of $T$ over a set $V$ is denoted $\cK_V[T]$; we will omit $\cL$ and
$T$ from the notation when these are clear from the context. The \emph{pure canonical theory} in
$\cL$ is the universal theory $T_\cL$ whose models are all canonical structures in $\cL$ (i.e.,
$\cK_V[T_\cL]=\cK_V[\cL]$). When $V$ is (finite or) countable, we equip $\cK_V$ with the
$\sigma$-algebra generated by cylinder sets, that is, sets of the form
\begin{align}\label{eq:cylinderset}
  C(U,K) & \df \{M\in\cK_V \mid M\rest_U = K\}
\end{align}
for some finite set $U\subseteq V$ and some $N\in\cK_U$; this makes $\cK_V$ a Borel space.

\subsection{Contra-variance and equivariance}
\label{subsec:contequiv}

Throughout this article, we will define several classes of objects $X_A$ indexed by countable sets
$A$ and most of the time, whenever $\alpha\colon A\to B$ is an injection, there will a
contra-variantly defined map $\alpha^*\colon X_B\to X_A$ in the sense that $\id_A^*$ is the identity
map on $X_A$ and $(\beta\comp\alpha)^* = \alpha^*\comp\beta^*$ whenever $\alpha\colon A\to B$ and
$\beta\colon B\to C$ are injective.

Formally, what is going on here is that there is a subcategory $\cX$ of $\Set$ whose objects are
precisely the $X_A$ and there is a contra-variant functor $F_{\cX}\colon\Inj\to\cX$ given by
$F_{\cX}(\alpha)\df\alpha^*$, where $\Inj$ is the subcategory of $\Set$ whose objects are the
countable sets and whose morphisms are the injective functions. However, since what will be
important is the functor $F_{\cX}$ rather than the precise morphisms of $\cX$, we will abbreviate
this abstract non-sense by simply saying that every $\alpha\colon A\to B$ \emph{contra-variantly}
defines the map $\alpha^*\colon X_B\to X_A$. Technically, we should add $\cX$ to the notation
$\alpha^*$, but since $\cX$ will be indicated by the domain and co-domain of the function, we opt
for the simpler notation.

A few important examples are the following:
\begin{enumerate}[label={\Roman*.}, ref={(\Roman*)}]
\item\label{it:alpha*ZB} If $Z$ is a set, then every injection $\alpha\colon A\to B$ between countable sets
  contra-variantly induces a map $\alpha^*\colon Z^B\to Z^A$ given by
  \begin{align*}
    \alpha^*(z)_t & \df z_{\alpha(t)} \qquad (z\in Z^B, t\in A).
  \end{align*}
  Recalling that $\overline{\alpha}\colon A\to\im(\alpha)$ is the restriction of the codomain of
  $\alpha$ to its image and that $\iota_{\im(\alpha),B}\colon\im(\alpha)\to B$ is the inclusion map,
  note that $\alpha^*=\overline{\alpha}^*\comp\iota_{\im(\alpha),B}^*$ due to
  $\alpha=\iota_{\im(\alpha),B}\comp\overline{\alpha}$ and contra-variance.
\item\label{it:alpha*Zc} If $Z$ is a set and for each countable set $A$, we have a collection
  $c(A)\subseteq 2^A$ of subsets of $A$ such that $\alpha(c(A))\subseteq c(B)$ whenever
  $\alpha\colon A\to B$ is an injection, then there is a contra-variantly defined $\alpha^*\colon
  Z^{c(B)}\to Z^{c(A)}$ given by
  \begin{align*}
    \alpha^*(z)_C & \df z_{\alpha(C)} \qquad (z\in Z^{c(B)}, C\in c(A)).
  \end{align*}

  Examples of $c(A)$ with the required property include $r(A)$, $r(A,\ell)$ for some fixed $\ell$,
  $\binom{A}{>\ell}$ for some fixed $\ell$, etc.
\item\label{it:alpha*mu} If $Z$ is a Borel space, then every injection $\alpha\colon A\to B$ between
  countable sets contra-variantly induces a map $\alpha^*\colon\cP(Z^B)\to\cP(Z^A)$, where for
  $\mu\in\cP(Z^B)$, $\alpha^*(\mu)$ is the pushforward measure under the map $\alpha^*\colon Z^B\to
  Z^A$ of item~\ref{it:alpha*ZB} above:
  \begin{align*}
    \alpha^*(\mu)(U) & \df \mu((\alpha^*)^{-1}(U)) = \mu(\{z\in Z^B \mid \alpha^*(z)\in U\}).
  \end{align*}
  (Typically, the pushforward construction is co-variant, but since this is the pushforward of the
  contra-variant $\alpha^*$ of item~\ref{it:alpha*ZB}, the composed functor is contra-variant.)
\item\label{it:alpha*M} For a fixed finite relational language $\cL$, every injection $\alpha\colon
  A\to B$ between countable sets contra-variantly induces a map $\alpha^*\colon\cK_B\to\cK_A$ given
  by
  \begin{align*}
    P^{\alpha^*(M)} & \df \{\beta\in A^{k(P)} \mid \alpha\comp\beta\in P^M\}
    \qquad (P\in\cL),
  \end{align*}
  where $k(P)\in\NN_+$ is the arity of $P\in\cL$, that is, $\alpha^*(M)$ is the unique structure in
  $\cK_A$ such that $\alpha$ is an embedding of $M$ in $\alpha^*(M)$.
\item Building on items~\ref{it:alpha*mu} and~\ref{it:alpha*M}, for a finite relational language
  $\cL$, an injection $\alpha\colon A\to B$ between countable sets contra-variantly induces a map
  $\alpha^*\colon\cP(\cK_B)\to\cP(\cK_A)$ defined by letting $\alpha^*(\mu)$ be the distribution of
  $\alpha^*(\rn{M})$ for the $\alpha^*\colon\cK_B\to\cK_A$ of item~\ref{it:alpha*M}, where $\rn{M}$
  is sampled in $\cK_B$ according to $\mu$. Note that in this language, an exchangeable distribution
  is simply a $\mu\in\cP(\cK_A)$ such that $\sigma^*(\mu)=\mu$ for every $\sigma\in S_A$.
\item In a similar fashion to item~\ref{it:alpha*Zc}, suppose we already have contra-variantly
  defined $\alpha^*\colon X_B\to X_A$ for every injection $\alpha\colon A\to B$ and suppose for each
  countable set $A$, we have a collection $c(A)\subseteq 2^A$ of subsets of $A$ such that
  $\alpha(c(A))\subseteq c(B)$ whenever $\alpha\colon A\to B$ is an injection.

  Then we have a further contra-variantly defined
  \begin{align*}
    \alpha^*\colon\prod_{C\in c(B)} X_C \to \prod_{C\in c(A)} X_C
  \end{align*}
  given by
  \begin{align*}
    \alpha^*(x)_C & \df \alpha\down_C^*(x_C)
    \qquad \left(x\in \prod_{D\in c(B)} X_D, C\in c(A)\right).
  \end{align*}
  (Recall that $\alpha\down_C\colon C\to\alpha(C)$ is the bijective restriction of $\alpha$ to $U$.)
\end{enumerate}

We will also perform several constructions $G_A\colon X_A\to X'_A$, where $X_A$ is an object of
$\cX$ and $X'_A$ is an object of $\cX'$ and most of the time, these constructions will be
\emph{equivariant} in the sense that the diagram
\begin{equation*}
  \begin{tikzcd}
    X_B
    \arrow[r, "G_B"]
    \arrow[d, "\alpha^*"']
    &
    X'_B
    \arrow[d, "\alpha^*"]
    \\
    X_A
    \arrow[r, "G_A"]
    &
    X'_A
  \end{tikzcd}
\end{equation*}
is commutative for every injection $\alpha\colon A\to B$. Formally, what is going on here is that
the collection of maps $(G_A)_A$ is a natural transformation between the contra-variant functors
$F_{\cX}\to F_{\cX'}$, but we will abbreviate this abstract non-sense by simply saying that the
construction $G_A$ is \emph{equivariant}.

One example of equivariance already appeared in the introduction: for a $k$-hypergraphon
$\cN\subseteq[0,1]^{r(k)}$ and a countable set $A$, we implicitly defined a function $M_A^\cN\colon
[0,1]^{r(A)}\to\cK_A$ that constructed a $k$-hypergraph $M_A^\cN(x)$ with vertex set $A$ from
$x\in[0,1]^{r(A)}$ and $\cN$ by declaring $e\in\binom{A}{k}$ to be an edge exactly when
$\alpha^*(x)\in\cN$ for $\alpha^*\colon [0,1]^{r(A)}\to[0,1]^{r(k)}$ given by item~\ref{it:alpha*Zc}
for any enumeration $\alpha\in A^k$ of $e$. It is straightforward to check that the construction
$M_A^\cN$ is equivariant, that is, the diagram
\begin{equation*}
  \begin{tikzcd}
    {[0,1]}^{r(B)}
    \arrow[r, "M_B^\cN"]
    \arrow[d, "\alpha^*"']
    &
    \cK_B
    \arrow[d, "\alpha^*"]
    \\
    {[0,1]}^{r(A)}
    \arrow[r, "M_A^\cN"]
    &
    \cK_A
  \end{tikzcd}
\end{equation*}
is commutative.

\subsection{$d$-peons, $d$-Euclidean structures and $d$-theons}

For a (finite or) countable set $V$, we let $\cO_V$ be the set of linear orders on $V$ and if $V$ is
finite, we equip $\cO_V$ with the uniform probability measure, which we denote by $\nu_V$. Note that
every injection $\alpha\colon U\to V$ contra-variantly defines $\alpha^*\colon \cO_V\to \cO_U$ by
\begin{align*}
  u_1 \mathbin{\alpha^*(\lhd)} u_2 & \iff  \alpha(u_1) \lhd \alpha(u_2)
  \qquad (u_1,u_2\in U).
\end{align*}

Given further an atomless probability space $\Omega=(X,\mu)$, a countable set $V$ and
$d,\ell\in\NN$, we let
\begin{align*}
  \cE_V^{(d)}(\Omega) & \df \prod_{A\in r(V)} (X\times\cO_A^d),
  &
  \cE_{V,\ell}^{(d)}(\Omega) & \df \prod_{A\in r(V,\ell)} (X\times\cO_A^d),
\end{align*}
be equipped with the product (Borel) $\sigma$-algebra and with the product measures
\begin{align*}
  \bigotimes_{A\in r(V)}\left(\mu\otimes\bigotimes_{i=1}^d \nu_A\right),
  & &
  \bigotimes_{A\in r(V,\ell)}\left(\mu\otimes\bigotimes_{i=1}^d \nu_A\right),
\end{align*}
respectively, which we denote both by $\mu^{(d)}$ by abuse of notation. We will omit $\Omega$ from
the notation when it is clear from context and we will omit $(d)$ from the notation when $d=0$ (the
same convention will be used in terms defined later that use $\Omega$ and $d$). We will also use the
shorthands $\cO_k$, $\cE_k^{(d)}$ and $\cE_{k,\ell}^{(d)}$ when $V=[k]$ for $k\in\NN$. Finally, we
will naturally identify $\cE_V^{(d)}$ and $\cE_{V,\ell}^{(d)}$ with the spaces
\begin{align*}
  \left(\prod_{A\in r(V)} X\right)\times\prod_{A\in r(V)}\cO_A^d,
  & &
  \left(\prod_{A\in r(V,\ell)} X\right)\times\prod_{A\in r(V,\ell)}\cO_A^d,
\end{align*}
respectively.

Note that every injection $\alpha\colon U\to V$ contra-variantly defines
$\alpha^*\colon\cE_V^{(d)}\to\cE_U^{(d)}$ by
\begin{align*}
  \alpha^*(x,{\lhd}) & \df (y,{\ll})
  \qquad \left(x\in\cE_V, {\lhd}\in\prod_{A\in r(V)}\cO_A^d\right),
\end{align*}
where
\begin{align*}
  y_A
  & \df
  \alpha^*(x)_A = x_{\alpha(A)}
  \qquad (A\in r(V)),
  \\
  {\ll^i_A}
  & \df
  \alpha\down_A^*({\lhd^i_{\im(\alpha)}})
  \qquad (i\in[d], A\in r(V)).
\end{align*}
An analogous formula yields a contra-variantly defined
$\alpha^*\colon\cE_{V,\ell}^{(d)}\to\cE_{U,\ell}^{(d)}$.

For $d_1,d_2\in\NN$ and two atomless standard probability spaces $\Omega_1$ and $\Omega_2$, we will
naturally identify the spaces $\cE_V^{(d_1+d_2)}(\Omega_1\times\Omega_2)$ and
$\cE_V^{(d_1)}(\Omega_1)\times\cE_V^{(d_2)}(\Omega_2)$ via the natural bijection that maps
$x\in\cE_V^{(d_1+d_2)}(\Omega_1\times\Omega_2)$ to
$(y,z)\in\cE_V^{(d_1)}(\Omega_1)\times\cE_V^{(d_2)}(\Omega_2)$, where for every $A\in r(V)$, $y_A$ is
the projection of $x_A$ to $\Omega_1$ along with the first $d_1$ orders and $z_A$ is the projection
of $x_A$ to $\Omega_2$ along with the last $d_2$ orders.

The remainder of the section is devoted to generalizing the notions of theons of~\cite{CR20} and the
associated quasirandomness notions of~\cite{CR23} to include the extra order variables
of~\cite{CT18}.

Let $\Omega=(X,\mu)$ be an atomless probability space, let $d,\ell\in\NN$, let $\cL$ be a relational
language and $P\in\cL$.

A \emph{(Borel) $d$-$P$-on} over $\Omega$ is a measurable subset $\cN$ of
$\cE_{k(P)}^{(d)}(\Omega)$. We say that $\cN$ is \emph{$\ell$-independent} if
$\cN=\cE_{k(P),\ell}^{(d)}\times\cH$ for some $\cH\subseteq\prod_{A\in\binom{[k(P)]}{>\ell}}
(X\times\cO_A^d)$. Dually, the \emph{rank} of $\cN$ (denoted $\rk(\cN)$) is the minimum $\ell\in\NN$
such that $\cN=\cH\times\prod_{A\in\binom{[k(P)]}{>\ell}}(X\times\cO_A^d)$ for some
$\cH\subseteq\cE_{k(P),\ell}^{(d)}$.

A \emph{(Borel) $d$-Euclidean structure} in $\cL$ over $\Omega$ is a function $\cN$ that maps each
$P\in\cL$ to a $d$-$P$-on $\cN_P\subseteq\cE_{k(P)}^{(d)}(\Omega)$. We say that $\cN$ is
\emph{$\ell$-independent} if all of its $d$-peons $\cN_P$ are $\ell$-independent. Dually, the
\emph{rank} of $\cN$ is $\rk(\cN)\df\max\{\rk(\cN_P)\mid P\in\cL\}\cup\{0\}$.

For a $d$-Euclidean structure in $\cL$ over $\Omega$ and a countable set $V$, we let
$M^\cN_V\colon\cE_V^{(d)}\to\cK_V$ be the (Borel) measurable function given by
\begin{align*}
  P^{M^\cN_V(x)} & \df \{\alpha\in (V)_{k(P)} \mid \alpha^*(x)\in\cN_P\}
  \qquad (x\in\cE_V^{(d)}, P\in\cL),
\end{align*}
that is, the predicate $P$ holds on an injective tuple $\alpha$ in $M^\cN_V(x)$ exactly when the
point $\alpha^*(x)\in\cE_{k(P)}^{(d)}$ is an element of $\cN_P$. It is straightforward to check that
$M^\cN_V$ is equivariant in the sense of Section~\ref{subsec:contequiv}.

We also let $\mu^\cN_V\df (M^\cN_V)_*(\mu^{(d)})$ be the pushforward probability measure of
$\mu^{(d)}$ by $M^\cN_V$, that is, if $\rn{x}\sim\mu^{(d)}$ is picked in $\cE_V^{(d)}$ according
to $\mu^{(d)}$, then $\mu^\cN_V$ is the distribution of $M^\cN_V(\rn{x})$. The fact that the
distribution of $\rn{x}$ is $S_V$-invariant and $M^\cN_V$ is equivariant implies that the
distribution $\mu^\cN_V$ is $S_V$-invariant (i.e., \emph{exchangeable}). Obviously $\mu^\cN_V$ is
also \emph{local}, that is, for $A,B\subseteq V$ disjoint, $M^\cN_V(\rn{x})\rest_A$ is independent
from $M^\cN_V(\rn{x})\rest_B$.

For a finite (canonical) $\cL$-structure $K$, we let
\begin{align*}
  \Tind(K,\cN) & \df (M^\cN_{V(K)})^{-1}(K) \df \{x\in\cE_{V(K)}^{(d)} \mid M^\cN_{V(K)}(x)=K\},
  \\
  \tind(K,\cN) & \df \mu^{(d)}(\Tind(K,\cN)) = \mu^\cN_{V(K)}(\{K\}),
  \\
  \phi_\cN(K)
  & \df
  \frac{\lvert K\rvert!}{\lvert\Aut(K)\rvert}\tind(K,\cN)
  =
  \mu^\cN_{V(K)}(\{K'\in\cK_{V(K)} \mid K'\cong K\}).
\end{align*}
Exchangeability implies that $\tind(K,\cN)$ and $\phi_\cN(K)$ depends only on the isomorphism-type
of $K$.

For a canonical theory $T$, a \emph{(Borel) $d$-$T$-on} is a $d$-Euclidean structure $\cN$ such that
$\tind(K,\cN)=0$ whenever $K$ is \emph{not} a model of $T$. Let $\cN_1$ and $\cN_2$ be a
$d_1$-Euclidean structure over $\Omega_1$ and a $d_2$-Euclidean structure over $\Omega_2$,
respectively, both in $\cL$. We say that $\cN_1$ is \emph{equivalent} to $\cN_2$ if
$\mu^{\cN_1}_V=\mu^{\cN_2}_V$ for every countable set $V$ (this is easily summarized as
$\phi_{\cN_1}=\phi_{\cN_2}$).

The set of functions
\begin{align*}
  \HomT
  & \df
  \{\phi_\cN \mid \cN\text{ is a $0$-$T$-on}\land d\in\NN\}
\end{align*}
was completely characterized in the theory of flag algebras~\cite{Raz07} (see also~\cite{CR20} for
the connection to theons). Since the precise characterization will not be important for us, the
unfamiliarized reader can think of this as a set of representatives of each equivalence class of
$0$-$T$-ons. Furthermore, since elements $\HomT$ encode limits of convergent sequences of finite
models of $T$, we will refer to them as \emph{limits} of $T$ or as limits in $\cL$ when we do not
want to specify $T$. As one might expect, every $d$-$T$-on $\cN$ is equivalent to some $0$-$T$-on
$\cN'$, but it will be important to produce $\cN'$ from $\cN$ preserving as much independence as
possible.

We say that two $d$-Euclidean structures $\cN^1$ and $\cN^2$ in the same language $\cL$ over the
same space $\Omega=(X,\mu)$ are \emph{almost everywhere (a.e.) equal} if
$\mu^{(d)}(\cN^1_P\symdiff\cN^2_P)=0$ for every $P\in\cL$; in particular, this implies that they
represent the same limit: $\phi_{\cN^1}=\phi_{\cN^2}$.

We say that a limit $\phi$ is $\ell$-independent if there exists a $0$-Euclidean structure $\cN$
such that $\phi_\cN=\phi$; this property is also denoted $\Independence[\ell]$. We say that a limit
$\phi$ is $\ast$-$\ell$-independent if there exist $d\in\NN$ and a $d$-Euclidean structure $\cN$
such that $\phi_\cN=\phi$. These are not equivalent: our main result Theorem~\ref{thm:UCouple} says
that $\ast$-$\ell$-independence is equivalent to the property $\UCouple[\ell]$ defined below, which
is strictly weaker than $\Independence[\ell]$ (see~\cite{CR23}).

Dually, the \emph{rank} of a limit $\phi$ is the minimum rank of a $0$-$T$-on $\cN$ such that
$\phi=\phi_\cN$. We could define the $\ast$-rank by allowing $d$-$T$-ons, but as we will see in
Corollary~\ref{cor:rk}, such a definition would be equivalent to the usual rank.

Recall that a \emph{quantifier-free definition} (or \emph{open interpretation}) from a canonical
theory $T_1$ in $\cL_1$ to a canonical theory $T_2$ in $\cL_2$ is a function $I$ that maps predicate
symbols $P\in\cL_1$ to quantifier-free formulas $I(P)(x_1,\ldots,x_{k(P)})$ in $\cL_2$ so that
axioms of $T_1$ are mapped to theorems of $T_2$ when we declare $I$ to commute with logical
connectives; such a quantifier-free definition is denoted as $I\colon T_1\leadsto T_2$. For a set
$V$, a quantifier-free definition naturally defines a map $I^*_V\colon\cK_V[T_2]\to\cK_V[T_1]$ given
by\footnote{The $\ast$ in $I^*_V$ is to denote a different kind of contra-variance. Namely, this is
  a contra-variant functor from the category $\Int$ of quantifier-free definitions to $\Set$,
  see~\cite{CR20}. The next two defined maps, which will be denoted by $I^*$ also have similar
  contra-variances as functors from $\Int$.}
\begin{align*}
  P^{I^*_V(M)} & \df I(P)(M) \df \{\alpha\in (V)_{k(P)} \mid M\vDash I(P)(\alpha)\}
  \qquad (P\in\cL),
\end{align*}
that is, the predicate $P\in\cL_1$ holds on a tuple $\alpha$ in $I^*_V(M)$ exactly when the formula
$I(P)$ holds on $\alpha$ in $M$. It is straightforward to check that $I^*_V$ is equivariant in the
sense of Section~\ref{subsec:contequiv}. Similarly, $I\colon T_1\leadsto T_2$ also naturally defines
a map $I^*\colon\HomT[T_2]\to\HomT[T_1]$ given by
\begin{align*}
  I^*(\phi)(M) & \df \sum\{\phi(N)\mid N\in\cM_{V(M)}[T_2]\land I^*_{V(M)}(N)\cong M\},
\end{align*}
for every finite $\cL_1$-structure $M$, where $\cM_{V(M)}[T_2]$ is the set of models of $T_2$ over
$V(M)$ \emph{up to isomorphism} (in~\cite{Raz07,CR20} $I^*(\phi)$ is denoted as $\phi^I$). From a
probabilistic view, we have $I^*(\phi_\cN) = \phi_\cH$ exactly when for every countable set $V$, if
$\rn{M}$ is sampled at random according to $\mu^\cN_V$, then $I^*_V(\rn{M})\sim\mu^\cH_V$.

In turn, there is also a naturally induced map over $d$-Euclidean structures $\cN$ over
$\Omega=(X,\mu)$ defined as follows. First, for a quantifier-free formula $F(x_1,\ldots,x_n)$, we
define the \emph{truth set} $T(F,\cN)\subseteq\cE_n^{(d)}(\Omega)$ as follows:
\begin{itemize}
\item if $F$ is $P(x_{i_1},\ldots,x_{i_k})$ and $i_1,\ldots,i_k$ are not pairwise distinct or $F$ is
  $x_i = x_j$ for $i\neq j$, then $T(F,\cN)\df\varnothing$.
\item $T(x_i=x_i)\df\cE_n^{(d)}(\Omega)$.
\item if $F$ is $P(x_{i_1},\ldots,x_{i_k})$ and $i_1,\ldots,i_k$ are pairwise distinct, then
  $T(F,\cN)\df (i^*)^{-1}(\cN_P)$, where we view $i$ as an injection $[k]\to[n]$.
\item $T(\place,\cN)$ commutes with propositional connectives (e.g., we have $T(F_1\lor F_2,\cN)\df
  T(F_1,\cN)\cup T(F_2,\cN)$ and $T(\neg F,\cN)\df\cE_n^{(d)}(\Omega)\setminus T(F,\cN)$).
\end{itemize}
Then for a $d$-Euclidean structure $\cN$ over $\Omega=(X,\mu)$, we define the $d$-Euclidean structure
$I^*(\cN)$ over $\Omega=(X,\mu)$ by
\begin{align}\label{eq:I*cN}
  I^*(\cN)_P & \df T(I(P),\cN) \qquad (P\in\cL_1).
\end{align}
(In~\cite[Remark~6]{CR20}, this was defined only for the $d=0$ case and used the notation $I(\cN)$
instead.) These maps are connected to each other in the sense that $\phi_{I^*(\cN)} = I^*(\phi_\cN)$
(this is easily checked by noting that if $\rn{M}$ is sampled at random according to $\mu^\cN_V$,
then $I^*_V(\rn{M})\sim\mu^{I^*(\cN)}_V$).

A particularly important case of quantifier-free definitions are
\emph{reducts}\footnote{In~\cite{CR20,CR23}, these were called structure-erasing interpretations.},
i.e., quantifier-free definitions of the form $I\colon T_{\cL'}\leadsto T_\cL$ in which
$\cL'\subseteq\cL$ and $I$ acts identically on predicate symbols of $\cL'$. In this case,
$I^*_V\colon\cK_V[\cL]\to\cK_V[\cL']$ is simply the reduct map $M\rest_{\cL'}\df I^*_V(M)$ and we
also abbreviate $\phi\rest_{\cL'}\df I^*(\phi)$. It is straightforward to check that if $\cN$ is a
$d$-Euclidean structure in $\cL$, then $\phi_\cN\rest_{\cL'} = \phi_{\cN\rest_{\cL'}}$, where
$\cN\rest_{\cL'}$ is the $d$-Euclidean structure in $\cL'$ obtained from $\cN$ by simply restricting
it to $\cL'$ (which is a.e.\ equal to $I^*(\cN)$).

Given two theories $T_1$ and $T_2$ in $\cL_1$ and $\cL_2$, respectively, the \emph{disjoint union
  theory} is the theory $T_1\cup T_2$ in the disjoint union language $\cL_1\disjcup\cL_2$ that
contains both axioms of $T_1$ and $T_2$ (i.e., its models are models of $T_1$ and $T_2$ on the same
vertex set). With a small abuse of notation, for $i\in[2]$, we also call the quantifier-free
definitions of the form $I\colon T_i\leadsto T_1\cup T_2$ that act identically on predicate symbols
of $\cL_i$ \emph{reducts} and use the notation $\phi\rest_{T_i}$ for $\phi\rest_{\cL_i}$ when
$\phi\in\HomT[T_1\cup T_2]$.

A \emph{coupling} of two limits $\phi_1$ and $\phi_2$ in $\cL_1$ and $\cL_2$, respectively, is a
limit $\psi$ in the disjoint union language $\cL_1\disjcup\cL_2$ such that
$\psi\rest_{\cL_1}=\phi_1$ and $\psi\rest_{\cL_2}=\phi_2$. For example, if $\cN^1$ and $\cN^2$ are
$d$-Euclidean structures in $\cL_1$ and $\cL_2$, respectively, over the same $\Omega$ with
$\phi_{\cN^1}=\phi_1$ and $\phi_{\cN^2}=\phi_2$, then the (disjoint) union function
$\cN^1\disjcup\cN^2$ is a $d$-Euclidean structure in $\cL_1\disjcup\cL_2$ over $\Omega$ such that
$\phi_{\cN^1\disjcup\cN^2}$ is a coupling of $\phi_1$ and $\phi_2$.

The \emph{independent coupling} of two limits $\phi_1$ and $\phi_2$ in $\cL_1$ and $\cL_2$,
respectively is the limit $\phi_1\otimes\phi_2$ in $\cL_1\disjcup\cL_2$ given by
\begin{align*}
  (\phi_1\otimes\phi_2)(M)
  & \df
  \frac{
    \lvert\Aut(M\rest_{\cL_1})\rvert\cdot\lvert\Aut(M\rest_{\cL_2})\rvert
  }{
    \lvert\Aut(M)\rvert\cdot\lvert M\rvert!
  }\cdot
  \phi_1(M\rest_{\cL_1})\cdot\phi_2(M\rest_{\cL_2}).
\end{align*}
A more geometric description of the independent coupling is as follows: let $\cN^1$ be a
$d_1$-Euclidean structure in $\cL_1$ over $\Omega_1$ and $\cN^2$ be a
$d_2$-Euclidean structure in $\cL_2$ over $\Omega_2$. The \emph{independent coupling} of $\cN^1$
and $\cN^2$ is the $(d_1+d_2)$-Euclidean structure in $\cL_1\disjcup\cL_2$ over
$\Omega_1\times\Omega_2$ given by
\begin{align*}
  (\cN^1\otimes\cN^2)_P
  & \df
  \{(x^1,x^2)\in\cE_{k(P)}^{(d_1)}(\Omega_1)\times\cE_{k(P)}^{(d_2)}(\Omega_2) \mid
  x^i\in\cN^i_P\}
  \qquad (P\in\cL_i, i\in[2]).
\end{align*}
It is straightforward to check that for every finite $\cL_1\disjcup\cL_2$-structure $M$, we have
\begin{align*}
  \tind(M,\cN^1\otimes\cN^2) & = \tind(M\rest_{\cL_1},\cN^1)\cdot\tind(M\rest_{\cL_2},\cN^2),
\end{align*}
from which one gets $\phi_{\cN^1\otimes\cN^2}=\phi_{\cN^1}\otimes\phi_{\cN^2}$.

A more probabilistic view of couplings and independent couplings is as follows: $\phi_\cH$ is a
coupling of $\phi_{\cN^1}$ and $\phi_{\cN^2}$ if for every countable set $V$, if $\rn{N}$ is
sampled according to $\mu^\cH_V$, then $\rn{N}\rest_{\cL_1}\sim\mu^{\cN^1}_V$ and
$\rn{N}\rest_{\cL_2}\sim\mu^{\cN^2}_V$; if further $\rn{N}\rest_{\cL_1}\sim\mu^{\cN^1}_V$ and
$\rn{N}\rest_{\cL_2}\sim\mu^{\cN^2}_V$ are independent, then $\phi_\cH$ is the independent coupling
of $\phi_{\cN^1}$ and $\phi_{\cN^2}$.

Two limits $\phi_1$ and $\phi_2$ are \emph{uniquely coupleable} if there exists exactly one
coupling of $\phi_1$ and $\phi_2$ (which must be the independent coupling
$\phi_1\otimes\phi_2$).

We say that a limit $\phi$ is \emph{uniquely $\ell$-coupleable} if for every limit $\psi$ with
$\rk(\psi)\leq\ell$, $\phi$ and $\psi$ are uniquely coupleable; this property is also denoted
$\UCouple[\ell]$. It was shown in~\cite[Theorem~3.10]{CR23} that $\phi\in\UCouple[\ell]$ is
equivalent to the property that for some (equivalently, every) $0$-Euclidean structure $\cN$ over
$\Omega=(X,\mu)$ with $\phi_\cN=\phi$, if $V$ is a countable set and $\rn{x}$ is picked at random in
$\cE_V$ according to $\mu$, then $M^\cN_V(\rn{x})$ is independent (as a random variable) from
$(\rn{x}_A\mid A\in r(V,\ell))$. As we will see in Proposition~\ref{prop:astellindep->UCouple}, the
same characterization holds if we let $\cN$ be a $d$-Euclidean structure with $\phi_\cN=\phi$ for
$d\in\NN$. From this characterization, it is obvious that
$\Independence[\ell]\implies\UCouple[\ell]$ (see~\cite[Theorem~3.2]{CR23})

Furthermore, it was shown in~\cite[Theorem~3.3]{CR23} that both $\Independence[\ell]$ and
$\UCouple[\ell]$ are natural properties in the sense that if $\phi\in\HomT$ satisfies the property
and $I\colon T'\leadsto T$ is a quantifier-free definition, then $I^*(\phi)$ also satisfies the
property. Moreover, these properties are also preserved under independent
couplings~\cite[Theorem~3.4]{CR23} in the sense that if $\phi_1$ and $\phi_2$ both have the property
then $\phi_1\otimes\phi_2$ also has the property.

We end this section with a high-arity generalization of the quasirandom tournament of
Example~\ref{ex:qrT}.

\begin{definition}\label{def:kqrO}
  For $k\in\NN_+$, the \emph{theory of $k$-orientations} is the theory $\TkO$ in a
  language $\cL$ containing a single predicate symbol $P$, whose arity is $k(P)\df k$, and with
  axioms
  \begin{gather*}
    \forall x_1,\ldots,x_k, (P(x_1,\ldots,x_k)\to\neg P(x_{\sigma(1)},\ldots,x_{\sigma(k)}))
    \qquad (\sigma\in S_k\setminus\{\id_k\}),
    \\
    \forall x_1,\ldots,x_k,
    \left(\bigwedge_{i\neq j} x_i\neq x_j\to\bigvee_{\sigma\in S_k} P(x_{\sigma(1),\ldots,\sigma(k)})\right),
  \end{gather*}
  that is, for each $k$-set $A$ of vertices, the predicate $P$ holds in exactly one
  of the (injective) $k$-tuples $\alpha$ with $\im(\alpha)=A$.

  The \emph{quasirandom $k$-orientation} is the limit $\phi^{\kqrO}\in\HomT[\TkO]$ is the limit
  encoded by the ($(k-1)$-independent) $1$-$\TkO$-on $\cN^{\kqrO}$ over any space $\Omega=(X,\mu)$
  given by
  \begin{align}\label{eq:kqrO}
    \cN^{\kqrO}_P
    & \df
    \{(x,{\lhd})\in\cE_k^{(1)}(\Omega) \mid {\lhd}_{[k]} = {\leq}\},
  \end{align}
  where ${\leq}\in\cO_k$ is the usual order on $[k]$. Note that $\mu^\cN_V$ consists of the
  distribution in which each $k$-set is given a uniformly at random orientation independently.

  Alternatively, we will see in Remark~\ref{rmk:altkqrO} that
  $\phi^{\kqrO}=\phi_{\cH^{\kqrO}}$ for the $0$-$\TkO$-on $\cH^{\kqrO}$ over
  $\Omega=([0,1),\lambda)$ given by
  \begin{align}
      \cH^{\kqrO}_P
      & \df
      \{x\in\cE_k^{(0)}(\Omega) \mid \sigma_x = \gamma_{\floor{x_{[k]}\cdot k!}+1}\},
      \label{eq:altkqrO}
  \end{align}
  where $\sigma_x\in S_k$ is defined when $x$ has distinct coordinates as the unique permutation
  such that
  \begin{align*}
    x_{[k]\setminus\{\sigma^{-1}_x(1)\}} < x_{[k]\setminus\{\sigma^{-1}_x(2)\}}
    < \cdots < x_{[k]\setminus\{\sigma^{-1}_x(k)\}},
  \end{align*}
  and defined arbitrarily (but measurably) when $x$ has repeated coordinates and
  $\gamma_1,\ldots,\gamma_{k!}$ is some fixed enumeration of $S_k$.

  This latter representation shows that $\phi^{\kqrO}$ satisfies $\Independence[k-2]$.
\end{definition}

It is straightforward to check that the quasirandom $k$-orientation $\phi^{\kqrO}$ can be obtained
as a quantifier-free definition of the $(\Theta,p)$-quasirandom homomorphisms in $S_k$-action
theories defined in~\cite[Definition~2.9]{CR23}, which in particular implies that it satisfies
$\UCouple[k-1]$ (by~\cite[Proposition~9.1]{CR23}). However, we will see that
$\phi^{\kqrO}\in\UCouple[k-1]$ follows from the more conceptual fact that it is represented by the
$(k-1)$-independent $1$-$\TkO$-on $\cN^{\kqrO}$ (see Corollary~\ref{cor:kqrO}).

\subsection{Main theorems}
\label{subsec:mainthms}

Our main theorem provides some more characterizations of $\UCouple[\ell]$ complementing the ones
of~\cite[Theorem~3.10]{CR23}.

\begin{theorem}\label{thm:UCouple}
  The following are equivalent for a limit $\phi\in\HomT$ and $\ell\in\NN$.
  \begin{enumerate}
  \item\label{thm:UCouple:UCouple} $\phi\in\UCouple[\ell]$.
  \item\label{thm:UCouple:astellindep} $\phi$ is $\ast$-$\ell$-independent.
  \item\label{thm:UCouple:1ellindep} There exists an $\ell$-independent $1$-Euclidean structure
    $\cN$ such that $\phi=\phi_\cN$.
  \item\label{thm:UCouple:kqrOcoupling} There exist a canonical theory $T'$, an $\ell$-independent
    limit $\psi\in\HomT[T']$ and a quantifier-free definition $I\colon T\leadsto
    T'\cup\TkO[(\ell+1)]$ such that $\phi = I^*(\psi\otimes\phi^{\kqrO[(\ell+1)]})$.
  \end{enumerate}
\end{theorem}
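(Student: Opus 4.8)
The plan is to prove Theorem~\ref{thm:UCouple} by establishing the cycle of implications
\ref{thm:UCouple:UCouple}$\implies$\ref{thm:UCouple:1ellindep}$\implies$\ref{thm:UCouple:astellindep}$\implies$\ref{thm:UCouple:UCouple}
and then the extra equivalence with~\ref{thm:UCouple:kqrOcoupling}. Since~\ref{thm:UCouple:1ellindep} is the special case $d=1$ of~\ref{thm:UCouple:astellindep}, the implication \ref{thm:UCouple:1ellindep}$\implies$\ref{thm:UCouple:astellindep} is immediate. For \ref{thm:UCouple:astellindep}$\implies$\ref{thm:UCouple:UCouple} I would invoke Proposition~\ref{prop:astellindep->UCouple}, whose statement (forward-referenced in the excerpt) is exactly that a $\ast$-$\ell$-independent limit satisfies $\UCouple[\ell]$; this is the ``easy'' direction coming from the extended equivalence-of-representations theory in Section~\ref{sec:uniqueness}. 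So the entire content is the implication \ref{thm:UCouple:UCouple}$\implies$\ref{thm:UCouple:1ellindep}, which is precisely Proposition~\ref{prop:UCouple->1ellindep}.

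For that main implication I would proceed as sketched at length in the introduction: starting from any $0$-Euclidean structure $\cN$ with $\phi_\cN = \phi$, build the hierarchy of \emph{amalgamating types} level by level. First one fixes the target type spaces $\Lambda_1,\Lambda_2,\dots$, makes them standard Borel via the Polish topology of Proposition~\ref{prop:toptypes} (applied to the dissociated/overlapping type spaces, from which amalgamating types are carved out in Definition~\ref{def:amlg}), and defines the measurable maps $\pp^{\amlg,\cN}_A$. The key structural input is Lemma~\ref{lem:amlg}\ref{lem:amlg:strong}: amalgamating $(k-1)$-types determine the distribution of amalgamating $k$-types. Using this, one identifies each conditional distribution on $\Lambda_k$ (given the $\Lambda_{k-1}$-data of the $(k-1)$-subfaces) with $[0,1]$ via measure-preserving maps, and reads off a new $1$-Euclidean structure whose variable $\omega_A$ at level $|A|$ is used only to select the amalgamating $|A|$-type, with the single order variable breaking the residual symmetry exactly as in the $\cN^{\qrT}$ discussion. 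One then checks $\phi$ of this new structure equals $\phi$ — because amalgamating types retain enough information to reconstruct $\mu^\cN_V$ (they refine weak types, hence in particular distinguish edges from non-edges) — and checks $\ell$-independence using Remark~\ref{rmk:amlguniqueness}: $\UCouple[\ell]$ forces a unique amalgamating $\ell$-type, so the level-$\le\ell$ variables carry no information and can be omitted. As flagged in the excerpt, the uniqueness statement is only available under the extra hypothesis of $(\ell-1)$-independence, so the honest argument is an induction on $\ell$: one first applies the construction to get a $(\ell-1)$-independent (even $\ell$-independent minus one level) representation, then re-runs it to push independence up to level $\ell$; I would organize this as a downward induction or an iteration, invoking the $d=1$ case repeatedly (and absorbing extra order coordinates, which is harmless by the $d$-realization machinery of Section~\ref{sec:dreal} if needed, though keeping $d=1$ throughout should be arrangeable).

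Finally, for the equivalence with~\ref{thm:UCouple:kqrOcoupling} I would argue both directions. For \ref{thm:UCouple:kqrOcoupling}$\implies$\ref{thm:UCouple:UCouple}: $\psi$ is $\ell$-independent hence in $\UCouple[\ell]$, $\phi^{\kqrO[(\ell+1)]}$ is in $\UCouple[\ell]$ (it is represented by the $(\ell)$-independent... i.e. $\ell$-independent $1$-$T_{(\ell+1)\operatorname{-Orientation}}$-on $\cN^{\kqrO[(\ell+1)]}$, by Corollary~\ref{cor:kqrO}); $\UCouple[\ell]$ is preserved under independent couplings and under quantifier-free definitions (both cited from~\cite{CR23} in the excerpt), so $I^*(\psi\otimes\phi^{\kqrO[(\ell+1)]})\in\UCouple[\ell]$. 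For \ref{thm:UCouple:UCouple}$\implies$\ref{thm:UCouple:kqrOcoupling}: this is Proposition~\ref{prop:1ellindep->kqrOcoupling} combined with the already-proved \ref{thm:UCouple:UCouple}$\implies$\ref{thm:UCouple:1ellindep}; one takes the $\ell$-independent $1$-Euclidean structure $\cN$ produced above, lets $T'$ record the ``underlying'' structure after forgetting the order variable, lets $\psi$ be the corresponding $\ell$-independent limit, and shows that $\phi$ is the reduct under a suitable $I$ of the independent coupling of $\psi$ with the quasirandom $(\ell+1)$-orientation — the point being that the single order coordinate at level $\ell+1$, distributed uniformly and independently, is exactly what $\phi^{\kqrO[(\ell+1)]}$ encodes, while $\psi$ carries everything that does not depend on that coordinate.

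The step I expect to be the genuine obstacle is the construction and analysis of amalgamating types feeding into Proposition~\ref{prop:UCouple->1ellindep} — specifically establishing Lemma~\ref{lem:amlg}\ref{lem:amlg:strong} (amalgamation of $(k-1)$-types to $k$-types) together with the attendant measure-theoretic bookkeeping: choosing the target spaces $\Lambda_k$ measurably in the lower-order data, arranging the measure-preserving identifications with $[0,1]$ to respect the cocycle condition, and carrying through the Polish-topology construction of Section~\ref{sec:top} so that ``random type'' makes sense. Everything else in the theorem is either a forward-referenced proposition or a short deduction from cited preservation properties.
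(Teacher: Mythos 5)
Your proposal is correct and follows essentially the same route as the paper: the same decomposition into the trivial implication \ref{thm:UCouple:1ellindep}$\implies$\ref{thm:UCouple:astellindep}, Proposition~\ref{prop:astellindep->UCouple} for \ref{thm:UCouple:astellindep}$\implies$\ref{thm:UCouple:UCouple}, the preservation properties plus Corollary~\ref{cor:kqrO} for \ref{thm:UCouple:kqrOcoupling}$\implies$\ref{thm:UCouple:UCouple}, the amalgamating-type construction (with the iteration forced by the $(\ell-1)$-independence hypothesis in the uniqueness step) for \ref{thm:UCouple:UCouple}$\implies$\ref{thm:UCouple:1ellindep}, and the order-variable-to-quasirandom-orientation translation of Proposition~\ref{prop:1ellindep->kqrOcoupling} for \ref{thm:UCouple:1ellindep}$\implies$\ref{thm:UCouple:kqrOcoupling}. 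You also correctly identify the genuine difficulty as the strong amalgamation Lemma~\ref{lem:amlg}\ref{lem:amlg:strong} and its measure-theoretic underpinnings.
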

The implication~\ref{thm:UCouple:1ellindep}$\implies$\ref{thm:UCouple:astellindep} follows trivially
from definitions.

As it was shown in~\cite[Theorems~3.6 and~3.8]{CR23} (see also Example~\ref{ex:qrT}),
$\Independence[\ell]$ (i.e., existence of an $\ell$-independent $0$-Euclidean structure $\cN$ with
$\phi=\phi_\cN$) is strictly stronger than $\UCouple[\ell]$ (see Example~\ref{ex:qrT}). The theorem
above says that not only is $\UCouple[\ell]$ (item~\ref{thm:UCouple:UCouple}) equivalent to
$\ast$-$\ell$-independence (item~\ref{thm:UCouple:astellindep}), but even adding extra order
variables does not increase the strength of quasirandomness: existence of an $\ell$-independent
$1$-Euclidean structure representation (item~\ref{thm:UCouple:1ellindep}) is still equivalent to
$\UCouple[\ell]$. Finally, the fact that item~\ref{thm:UCouple:kqrOcoupling} is also equivalent to
$\UCouple[\ell]$ can be seen as a classification of all $\UCouple[\ell]$ limits as those that
essentially consist of an $\Independence[\ell]$ part independently coupled with a quasirandom
$(\ell+1)$-orientation; in other words, the only difference between $\UCouple[\ell]$ and
$\Independence[\ell]$ is the potential presence of a hidden quasirandom $(\ell+1)$-orientation.

As a consequence of Theorem~\ref{thm:UCouple} above, we settle the final missing implications
between the quasirandomness properties of~\cite{CR23}:
\begin{theorem}\label{thm:Indep}
  For every $\ell,\ell'\in\NN$ with $\ell' < \ell$, we have
  $\UCouple[\ell]\implies\Independence[\ell']$.
\end{theorem}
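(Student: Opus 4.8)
The plan is to deduce Theorem~\ref{thm:Indep} directly from Theorem~\ref{thm:UCouple}, so essentially no new work is needed beyond the equivalence proven there together with the ``$d$-realization'' machinery promised in Section~\ref{sec:dreal} (Corollary~\ref{cor:astindep->indep}). First I would invoke the implication \ref{thm:UCouple:UCouple}$\implies$\ref{thm:UCouple:1ellindep} of Theorem~\ref{thm:UCouple}: given $\phi\in\UCouple[\ell]$, there is an $\ell$-independent $1$-Euclidean structure $\cN$ with $\phi_\cN=\phi$. The goal is then to convert this $1$-Euclidean structure into a $0$-Euclidean structure that is $\ell'$-independent for the given $\ell'<\ell$; this is where Corollary~\ref{cor:astindep->indep} enters, which states that a $\ast$-$\ell$-independent representation can be turned into an $\ell'$-independent (ordinary) representation for any $\ell'<\ell$.

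Concretely, the idea behind Corollary~\ref{cor:astindep->indep}, as sketched in the introduction, is that the single order variable attached to each tuple can be simulated using genuinely random lower-arity data, at the cost of one level of independence: instead of carrying an abstract order $\lhd_A$ on each $A\in r(k(P))$, we use the random reals $\omega_s$ attached to proper nonempty subsets $s\subsetneq A$ to manufacture, in a measure-preserving and equivariant way, a uniformly random order on $A$ (for instance, by comparing the values $\omega_{A\setminus\{a\}}$ for $a\in A$, breaking ties measurably). Because this manufactured order uses data indexed by sets of size $<\lvert A\rvert\le k(P)$, but in particular uses data of arity up to $k(P)-1$, a representation that was independent of all data of arity $\le\ell$ becomes, after this substitution, independent only of data of arity $\le\ell-1$; more generally it remains $\ell'$-independent for any $\ell'<\ell$, since for those lower levels we never needed to touch the order variables at all. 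Formally I would set this up using the $\qr$/$\qrT$-style constructions and the equivariance framework of Section~\ref{subsec:contequiv}, check the measure-preserving property, and verify $\phi$ is unchanged by the standard argument that a measure-preserving equivariant re-coordinatization of the random data does not alter $\mu^\cN_V$.

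The main obstacle is entirely in the machinery being cited rather than in the deduction itself: the real content is Theorem~\ref{thm:UCouple} (the hard direction being $\UCouple[\ell]\implies$ existence of an $\ell$-independent $1$-Euclidean structure, which the bulk of the paper establishes via amalgamating types) and Corollary~\ref{cor:astindep->indep} (the $d$-realization argument, which has its own measure-theoretic formalities). Granting those, the proof of Theorem~\ref{thm:Indep} is a two-line chain: $\phi\in\UCouple[\ell]$ gives, by Theorem~\ref{thm:UCouple}, an $\ell$-independent $1$-Euclidean structure $\cN$ with $\phi_\cN=\phi$; hence $\phi$ is $\ast$-$\ell$-independent; hence by Corollary~\ref{cor:astindep->indep} it is $\ell'$-independent for every $\ell'<\ell$, i.e.\ $\phi\in\Independence[\ell']$. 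One subtlety worth flagging is the edge case $\ell=0$: then no $\ell'<\ell$ exists, so the statement is vacuous, and for $\ell\ge 1$ the construction genuinely drops exactly one level, which is why the hypothesis $\ell'<\ell$ (rather than $\ell'\le\ell$) is needed and sharp, consistent with the known strictness $\Independence[\ell]\subsetneq\UCouple[\ell]$ coming from the quasirandom tournament and its higher-arity analogues.
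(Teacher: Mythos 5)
Your proposal is correct and follows essentially the same route as the paper: the paper proves Theorem~\ref{thm:Indep} in Corollary~\ref{cor:Indep} by combining Proposition~\ref{prop:UCouple->1ellindep} (the $\UCouple[\ell]\implies$ $\ell$-independent $1$-Euclidean structure implication of Theorem~\ref{thm:UCouple}) with Corollary~\ref{cor:astindep->indep}, whose proof via $d$-realizations (Lemmas~\ref{lem:dreal} and~\ref{lem:widehatEuclidean}) is exactly the "simulate the order variable by comparing the $\omega_{A\setminus\{a\}}$, losing one level of independence" idea you sketch.
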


Since by~\cite[Theorems~3.3 and~3.4]{CR23} independent couplings and quantifier-free definitions
preserve both $\Independence[\ell]$ and $\UCouple[\ell]$, one can easily see how
Theorem~\ref{thm:UCouple}\ref{thm:UCouple:UCouple}$\implies$\ref{thm:UCouple:kqrOcoupling} implies
Theorem~\ref{thm:Indep} as the representation $\phi^{\kqrO[(\ell+1)]}=\phi_{\cH^{\kqrO[(\ell+1)]}}$ shows
$\phi^{\kqrO[(\ell+1)]}\in\Independence[\ell-1]$. However, as it turns out, it will be easier to prove that
every $\ast$-$\ell$-independent limit satisfies $\Independence[\ell-1]$
(Corollary~\ref{cor:astindep->indep}), (so Theorem~\ref{thm:Indep} will follow from the implication
Theorem~\ref{thm:UCouple}\ref{thm:UCouple:UCouple}$\implies$\ref{thm:UCouple:astellindep} instead,
see Corollary~\ref{cor:Indep}).

Table~\ref{tab:prooflocations} contains the location of the proofs of the implications of
Theorem~\ref{thm:UCouple} and the proof of Theorem~\ref{thm:Indep}.

\begin{table}[htb]
  \centering
  \begin{tabular}{ll@{\hspace{5em}}l}
    \multicolumn{2}{l}{Result} & Proof location
    \\
    \hline
    \hline
    \multirow{5}{*}{Theorem~\ref{thm:UCouple}}
    &
    \ref{thm:UCouple:1ellindep}$\implies$\ref{thm:UCouple:astellindep}
    &
    Trivial from definitions
    \\
    &
    \ref{thm:UCouple:astellindep}$\implies$\ref{thm:UCouple:UCouple}
    &
    Proposition~\ref{prop:astellindep->UCouple}
    \\
    &
    \ref{thm:UCouple:kqrOcoupling}$\implies$\ref{thm:UCouple:UCouple}
    &
    Corollary~\ref{cor:kqrOcoupling->UCouple}
    \\
    &
    \ref{thm:UCouple:UCouple}$\implies$\ref{thm:UCouple:1ellindep}
    &
    Proposition~\ref{prop:UCouple->1ellindep}
    \\
    &
    \ref{thm:UCouple:1ellindep}$\implies$\ref{thm:UCouple:kqrOcoupling}
    &
    Proposition~\ref{prop:1ellindep->kqrOcoupling}
    \\
    \hline
    Theorem~\ref{thm:Indep}
    & &
    Corollaries~\ref{cor:astindep->indep} and~\ref{cor:Indep}
  \end{tabular}
  
  \caption{Proof locations for main theorems.}
  \label{tab:prooflocations}  
\end{table}

\section{$d$-Realizations}
\label{sec:dreal}

This section is dedicated to proving in Corollary~\ref{cor:astindep->indep} that every
$\ast$-$\ell$-independent limit is $(\ell-1)$-independent, which in particular means that
Theorem~\ref{thm:Indep} is a direct corollary of Theorem~\ref{thm:UCouple} (this will be put
together in Corollary~\ref{cor:Indep}). We start by recalling a few concepts and basic results from
exchangeability theory.

\begin{definition}
  Let $\Omega_1=(X_1,\mu_1)$ and $\Omega_2=(X_2,\mu_2)$ be atomless probability spaces, let
  $d_1,d_2,n\in\NN$ and let $f\colon\cE_n^{(d_1)}(\Omega_1)\to\Omega_2\times\cO_n^{d_2}$ be a function.

  We say that $f$ is \emph{symmetric} if it is a.e.\ $S_n$-invariant in the first component and
  a.e.\ $S_n$-equivariant in the order components in the sense that
  \begin{align*}
    f(\sigma^*(x)) & \df (f_0(x), \sigma^*(f_1(x)), \sigma^*(f_2(x)), \ldots, \sigma^*(f_{d_2}(x)))
  \end{align*}
  for $\mu_1^{(d_1)}$-almost every $x\in\cE_n^{(d_1)}(\Omega_1)$ and every $\sigma\in S_n$, where
  $f_0\colon\cE_n^{(d_1)}(\Omega_1)\to\Omega_2$ and $f_i\colon\cE_n^{(d_1)}(\Omega_1)\to\cO_n$
  ($i\in[d_2]$) denote the components of $f$.

  We say that $f$ is \emph{measure-preserving on highest order argument} (h.o.a.) if it is
  measurable and for almost every $x\in\cE_{n,n-1}^{(d_1)}$, the function
  \begin{align*}
    f(x,\place)\colon\Omega_1\times\cO_n^{d_1}\to\Omega_2\times\cO_n^{d_2}
  \end{align*}
  is measure-preserving (with respect to $\mu_1^{(d_1)}$ and $\mu_2^{(d_2)}$).

  Given a family $f = (f_i)_{i\in\NN_+}$ of symmetric functions
  $f_i\colon\cE_i^{(d_1)}(\Omega_1)\to\Omega_2\times\cO_i^{d_2}$ and a countable set $V$, we define
  the function $\widehat{f}_V\colon\cE_V^{(d_1)}(\Omega_1)\to\cE_V^{(d_2)}(\Omega_2)$ by
  \begin{align*}
    \widehat{f}_V(x)_A
    & \df
    (\overline{\alpha}_A^{-1})^*(f_{\lvert A\rvert}(\alpha_A^*(x)))
    \qquad (A\in r(V)),
  \end{align*}
  where $\alpha_A\colon [\lvert A\rvert]\to V$ is any injection with $\im(\alpha_A)=A$ (recall that
  $\overline{\alpha}_A\colon[\lvert A\rvert]\to A$ is the restriction of the codomain of
  $\alpha_A$). Since the $f_i$ are symmetric, this a.e.\ does not depend on the particular choice of
  $\alpha_A$.

  Given further a $d_2$-Euclidean structure $\cN$ in $\cL$ over $\Omega_2$, we define the
  $d_1$-Euclidean structure $f^*(\cN)$ in $\cL$ over $\Omega_1$ by
  \begin{align*}
    f^*(\cN)_P
    & \df
    \widehat{f}_{k(P)}^{-1}(\cN_P)
    \qquad (P\in\cL).
  \end{align*}
\end{definition}

The following lemma is an extension of a standard fact from exchangeability theory to account for
order variables.

\begin{lemma}\label{lem:widehat}
  Let $\Omega_1=(X_1,\mu_1)$ and $\Omega_2=(X_2,\mu_2)$ be atomless probability spaces, let
  $d_1,d_2,n\in\NN$ and let $f = (f_i)_{i\in\NN_+}$ of symmetric functions
  $f_i\colon\cE_i^{(d_1)}(\Omega_1)\to\Omega_2\times\cO_i^{d_2}$. Then the following hold.
  \begin{enumerate}
  \item\label{lem:widehat:measpres} If each $f_i$ is measure-preserving on h.o.a.\ and $V$ is a
    countable set, then $\widehat{f}_V$ is measure-preserving.
  \item\label{lem:widehat:equivariance} $\widehat{f}_V$ is a.e.\ equivariant, that is, if
    $\beta\colon U\to V$ is an injection between countable sets, then $\beta^*\comp\widehat{f}_V =
    \widehat{f}_U\comp\beta^*$ holds $\mu_1^{(d_1)}$-almost everywhere.
  \end{enumerate}
\end{lemma}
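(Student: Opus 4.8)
The plan is to prove both parts by reducing to standard facts from exchangeability theory applied coordinatewise, with the order variables carried along by the symmetry hypothesis on the $f_i$.

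\medskip

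For part~\ref{lem:widehat:measpres}, the goal is to show that if $\rn{x}\sim\mu_1^{(d_1)}$ on $\cE_V^{(d_1)}(\Omega_1)$, then $\widehat{f}_V(\rn{x})\sim\mu_2^{(d_2)}$ on $\cE_V^{(d_2)}(\Omega_2)$. Since $\mu_2^{(d_2)}$ is a product measure indexed by $r(V)$, it suffices to show that the family $(\widehat{f}_V(\rn{x})_A)_{A\in r(V)}$ consists of mutually independent random variables, each $\widehat{f}_V(\rn{x})_A$ being distributed as $\mu_2\otimes\bigotimes_{i=1}^{d_2}\nu_A$. I would build this up by revealing the coordinates $\rn{x}_A$ in order of increasing $\lvert A\rvert$ (say, along a well-order of $r(V)$ refining the size order). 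The key point is that $\widehat{f}_V(\rn{x})_A = (\overline{\alpha}_A^{-1})^*(f_{\lvert A\rvert}(\alpha_A^*(\rn{x})))$ depends only on $(\rn{x}_B \mid B\subseteq A)$, and by the measure-preserving on h.o.a.\ hypothesis, conditioned on all $\rn{x}_B$ with $B\subsetneq A$, the map $\rn{x}_A\mapsto f_{\lvert A\rvert}(\alpha_A^*(\rn{x}))$ is measure-preserving from $\mu_1^{(d_1)}$ (restricted to the top coordinate) to $\mu_2^{(d_2)}$ (on $\Omega_2\times\cO_{\lvert A\rvert}^{d_2}$); pulling back along the bijection $\overline{\alpha}_A$ via $(\overline{\alpha}_A^{-1})^*$ is an isomorphism of the order-measure spaces, so $\widehat{f}_V(\rn{x})_A$ is, conditionally on $(\rn{x}_B \mid B\subsetneq A)$, distributed as $\mu_2\otimes\bigotimes_{i=1}^{d_2}\nu_A$ \emph{and independent of the freshly used randomness}. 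One then needs to check that $\widehat{f}_V(\rn{x})_A$ is actually independent of \emph{all} the previously revealed $\widehat{f}_V(\rn{x})_B$ with $B\ne A$ — those with $B\subsetneq A$ are handled by the conditional argument, and those with $B$ not contained in $A$ and $A$ not contained in $B$ are independent because they are measurable with respect to disjoint blocks of the independent family $(\rn{x}_C)_C$ once one proceeds carefully along the well-order. A clean way to phrase this is by induction on a finite exhaustion of $r(V)$ and then taking a limit, using that the product $\sigma$-algebra on $\cE_V^{(d_2)}(\Omega_2)$ is generated by finitely-many-coordinate cylinders.

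\medskip

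For part~\ref{lem:widehat:equivariance}, I want to show $\beta^*\comp\widehat{f}_V = \widehat{f}_U\comp\beta^*$ a.e.\ for an injection $\beta\colon U\to V$. This is essentially a bookkeeping check on the contra-variant maps. Fix $A\in r(U)$; I would compute both sides coordinatewise at $A$. On the left, $(\beta^*\widehat{f}_V(x))_A = \beta\!\down_A^*(\widehat{f}_V(x)_{\beta(A)}) = \beta\!\down_A^*\big((\overline{\alpha}_{\beta(A)}^{-1})^*(f_{\lvert A\rvert}(\alpha_{\beta(A)}^*(x)))\big)$ for a chosen injection $\alpha_{\beta(A)}\colon[\lvert A\rvert]\to V$ with image $\beta(A)$. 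On the right, $(\widehat{f}_U(\beta^*x))_A = (\overline{\alpha}_A^{-1})^*(f_{\lvert A\rvert}(\alpha_A^*(\beta^*x))) = (\overline{\alpha}_A^{-1})^*(f_{\lvert A\rvert}((\beta\comp\alpha_A)^*(x)))$ using contra-variance of $\place^*$. Now $\beta\comp\alpha_A\colon[\lvert A\rvert]\to V$ is an injection with image $\beta(A)$, so by the a.e.\ well-definedness of $\widehat{f}$ (independence of the choice of enumerating injection, which itself follows from symmetry of $f_{\lvert A\rvert}$) we may take $\alpha_{\beta(A)} = \beta\comp\alpha_A$; then $\overline{\alpha}_{\beta(A)} = \beta\!\down_A\comp\overline{\alpha}_A$ as bijections $[\lvert A\rvert]\to\beta(A)$, and contra-variance gives $(\overline{\alpha}_{\beta(A)}^{-1})^* = (\overline{\alpha}_A^{-1})^*\comp\beta\!\down_A^*$, which matches the two expressions exactly. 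The "a.e." is needed only because of the choice of enumerating injections and because symmetry of the $f_i$ holds only almost everywhere; since $U$, $V$ are countable there are countably many coordinates $A$ and countably many injections to worry about, so a countable union of null sets remains null.

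\medskip

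The main obstacle is part~\ref{lem:widehat:measpres}: carefully organizing the "reveal coordinates in size order" argument so that the conditional measure-preservation at each new coordinate genuinely yields global independence across the whole family $(\widehat{f}_V(\rn{x})_A)_{A\in r(V)}$, rather than just a marginal statement. This is exactly the content of the "standard fact from exchangeability theory" the lemma is extending, and the only real novelty is threading the $\cO_A^{d_2}$ factors through via $(\overline{\alpha}_A^{-1})^*$ while using that these pullback maps are measure-space isomorphisms between $\cO_{\lvert A\rvert}^{d_2}$ (with uniform measure) and $\cO_A^{d_2}$ (with $\bigotimes\nu_A$); once that observation is made, the order variables add no difficulty.
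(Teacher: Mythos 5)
Your proposal is correct and follows essentially the same route as the paper: for item~\ref{lem:widehat:measpres}, condition on the lower-indexed coordinates, use measure-preservation on the highest-order argument to get the correct conditional law of each newly revealed coordinate, and conclude by induction using the mutual independence of the coordinates of $\rn{x}$; for item~\ref{lem:widehat:equivariance}, the same coordinatewise bookkeeping with the enumerating injections, using symmetry of the $f_i$ to choose $\alpha_{\beta(A)}=\beta\comp\alpha_A$ compatibly with $\beta$. One small wording fix: for incomparable $A,B$ the index families $r(A)$ and $r(B)$ are \emph{not} disjoint, so the independence of $\widehat{f}_V(\rn{x})_A$ and $\widehat{f}_V(\rn{x})_B$ should be stated as conditional independence given the shared lower coordinates (after which the residual randomness $\rn{x}_A$, $\rn{x}_B$ does live in disjoint, independent blocks), which is exactly what your induction along the size-refining well-order delivers.
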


\begin{proof}
  Item~\ref{lem:widehat:measpres} is equivalent to proving that if $\rn{x}$ is picked at random in
  $\cE_V^{(d_1)}(\Omega_1)$ according to $\mu_1^{(d_1)}$, then $\rn{y}\df\widehat{f}_V(\rn{x})$ is
  distributed according to the measure $\mu_2^{(d_2)}$ of $\cE_V^{(d_2)}(\Omega_2)$. The definition
  of $\widehat{f}_V$ implies that each coordinate $\rn{y}_A$ is measurable with respect to the
  $\sigma$-algebra generated by $(\rn{x}_B \mid B\in r(A))$ and since all $f_i$ are
  measure-preserving on h.o.a., the conditional distribution of the coordinate $\rn{y}_A$ given
  $(\rn{x}_B \mid B\subsetneq A)$ is $\mu_2^{(d)}$, hence a simple induction and the fact that
  the coordinates of $\rn{x}$ are mutually independent yields item~\ref{lem:widehat:measpres}.

  \medskip

  For item~\ref{lem:widehat:equivariance}, note that for $\mu_1^{(d_1)}$-almost every
  $x\in\cE_V^{(d_1)}(\Omega_1)$ and every $A\in r(U)$, we have
  \begin{align*}
    & \!\!\!\!\!\!
    (\beta^*\comp\widehat{f}_V)(x)_A
    \\
    & =
    \Biggl(f_{\lvert\beta(A)\rvert,0}(\alpha_{\beta(A)}^*(x)),
    \\
    & \hphantom{{}={}}
    \beta\down_A^*\biggl((\overline{\alpha}_{\beta(A)}^{-1})^*
    \Bigl(f_{\lvert\beta(A)\rvert,1}\bigl(\alpha_{\beta(A)}^*(x)\bigr)\Bigr)\biggr),
    \ldots,
    \beta\down_A^*\biggl((\overline{\alpha}_{\beta(A)}^{-1})^*
    \Bigl(f_{\lvert\beta(A)\rvert,d_2}\bigl(\alpha_{\beta(A)}^*(x)\bigr)\Bigr)\biggr)
    \Biggr),
  \end{align*}
  where $f_{i,0}\colon\cE_i^{(d_1)}(\Omega_1)\to\Omega_2$ and
  $f_{i,j}\colon\cE_i^{(d_1)}(\Omega_1)\to\cO_i$ ($j\in[d_2]$) denote the components of $f_i$
  ($i\in\NN_+$).

  Note that the composition $\beta\down_A^{-1}\comp\overline{\alpha}_{\beta(A)}$ is a bijection
  $[\lvert A\rvert]\to A$. Let $\gamma_A\colon[\lvert A\rvert]\to U$ be obtained from
  $\beta\down_A^{-1}\comp\overline{\alpha}_{\beta(A)}$ by changing its codomain to $U$ (i.e., we
  have $\gamma_A\df\iota_{A,V}\comp\beta\down_A^{-1}\comp\overline{\alpha}_{\beta(A)}$). By
  construction, $\gamma_A$ is an injection with $\im(\gamma_A)=A$ and $\alpha_{\beta(A)} =
  \beta\comp\gamma_A$, so we get
  \begin{align*}
    f_{\lvert\beta(A)\rvert,0}(\alpha_{\beta(A)}^*(x))
    & =
    f_{\lvert A\rvert,0}(\gamma_A^*(\beta^*(x))).
  \end{align*}

  On the other hand, note that
  $\overline{\gamma}_A=\beta\down_A^{-1}\comp\overline{\alpha}_{\beta(A)}$, hence
  $\overline{\gamma}_A^{-1} = \overline{\alpha}_{\beta(A)}^{-1}\comp\beta\down_A$, so for every
  $j\in[d_2]$, we get
  \begin{align*}
    \beta\down_A^*\bigl((\overline{\alpha}_{\beta(A)}^{-1})^*(f_{\lvert\beta(A)\rvert,j}(\alpha_{\beta(A)}^*(x)))\bigr)
    & =
    (\overline{\gamma}_A^{-1})^*(f_{\lvert A\rvert,j}(\gamma_A^*(\beta^*(x))).
  \end{align*}

  Since $\im(\gamma_A)=A$, we conclude that
  \begin{align*}
    (\beta^*\comp\widehat{f}_V)(x)_A
    & =
    (\widehat{f}_U\comp\beta^*)(x)_A
  \end{align*}
  for $\mu_1^{(d_1)}$-almost every $x\in\cE_V^{(d_1)}(\Omega_1)$ and every $A\in r(U)$.
\end{proof}

In order to prove Theorem~\ref{thm:Indep}, we need to find a way of simulating the order variables
of a $d$-Euclidean structure by the ordinary variables of a $0$-Euclidean structures in a way that
preserves as much independence as possible. This will be done by $d$-realizations defined below,
which are heavily inspired by the different representations $\cN^{\kqrO}$ and $\cH^{\kqrO}$ of the
quasirandom $k$-orientation (see Definition~\ref{def:kqrO}).

\begin{definition}\label{def:dreal}
  Let $\Omega_1=(X_1,\mu_1)$ and $\Omega_2=(X_2,\mu_2)$ be atomless probability spaces and let
  $d\in\NN$.

  A \emph{$d$-realization} from $\Omega_1$ to $\Omega_2$ is a family $f = (f_i)_{i\in\NN_+}$ of
  symmetric measure-preserving on h.o.a.\ functions
  $f_i\colon\cE_i(\Omega_1)\to\Omega_2\times\cO_i^d$ satisfying the following properties:
  \begin{enumerate}
  \item\label{def:dreal:indep} For every $i\in\NN_+$, if the coordinate functions of $f_i$ are
    $f_{i,0}\colon\cE_i(\Omega_1)\to\Omega_2$ and $f_{i,j}\colon\cE_i(\Omega_1)\to\cO_i$
    ($j\in[d]$), then, except for a zero $\mu_1$-measure set, $f_{i,0}$ depends only on the top
    coordinate (indexed by $[i]$) and for every $j\in[d]$, except for a zero $\mu_1$-measure set,
    $f_{i,j}$ depends only on the coordinates indexed by sets in $\binom{[i]}{>i-2}$.
  \item\label{def:dreal:almostinverse} There exists a family $g = (g_i)_{i\in\NN_+}$ of symmetric
    measure-preserving on h.o.a.\ functions
    $g_i\colon\cE_i^{(d)}(\Omega_2\times\Omega_2)\to\Omega_1$ such that
    \begin{align}\label{eq:dreal:almostinverse}
      \widehat{f}_i(\widehat{g}_i(x,y,{\lhd})) & = (x,{\lhd})
    \end{align}
    for every $i\in\NN_+$ and $(\mu_2\otimes\mu_2)^{(d)}$-almost every
    $(x,y,{\lhd})\in\cE_i^{(d)}(\Omega_2\times\Omega_2)$.
  \end{enumerate}

  Any family of functions $g$ as in item~\ref{def:dreal:almostinverse} above is called an
  \emph{almost inverse family} of $f$.
\end{definition}

\begin{remark}
  Since both the $f_i$ and $g_i$ are symmetric, by
  Lemma~\ref{lem:widehat}\ref{lem:widehat:equivariance}, equation~\eqref{eq:dreal:almostinverse}
  implies
  \begin{align*}
    \widehat{f}_V(\widehat{g}_V(x,y,{\lhd})) & = (x,{\lhd})
  \end{align*}
  for every countable $V$ and $(\mu_2\otimes\mu_2)^{(d)}$-almost every
  $(x,y,{\lhd})\in\cE_V^{(d)}(\Omega_2)$.
\end{remark}

Our first order of business is to show that $d$-realizations exist.

\begin{lemma}\label{lem:dreal}
  For all $d\in\NN$ and all atomless standard probability spaces $\Omega_1=(X_1,\mu_1)$ and
  $\Omega_2=(X_2,\mu_2)$, there exists a $d$-realization from $\Omega_1$ to $\Omega_2$.
\end{lemma}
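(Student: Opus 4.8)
The plan is to construct the $d$-realization $f=(f_i)_{i\in\NN_+}$ explicitly, imitating the two representations $\cN^{\kqrO}$ and $\cH^{\kqrO}$ of the quasirandom $k$-orientation. By the isomorphism of standard atomless probability spaces, it suffices to handle $\Omega_1=\Omega_2=([0,1),\lambda)$; indeed, fixing measure-isomorphisms $\Omega_i\cong([0,1),\lambda)$ and conjugating a $d$-realization between copies of $([0,1),\lambda)$ through these isomorphisms produces one between $\Omega_1$ and $\Omega_2$ (symmetry, the measure-preserving-on-h.o.a.\ property, and property~\ref{def:dreal:indep} are all preserved, and an almost inverse conjugates to an almost inverse). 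So assume $\Omega_1=\Omega_2=([0,1),\lambda)$ throughout.

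For each $i$ I would define $f_i\colon\cE_i([0,1))\to[0,1)\times\cO_i^d$ as follows. The $\Omega_2$-component $f_{i,0}$ should depend only on the top coordinate $x_{[i]}$: take $f_{i,0}(x)\df x_{[i]}$. For each $j\in[d]$, the order component $f_{i,j}$ should depend only on the coordinates indexed by $\binom{[i]}{\geq i-1}=\binom{[i]}{i-1}\cup\{[i]\}$; I would build it from these exactly as $\sigma_x$ is built in~\eqref{eq:altkqrO}, namely read off a permutation from the relative order of the $i$ many coordinates $(x_{[i]\setminus\{m\}})_{m\in[i]}$ (breaking ties measurably, on a null set) and transport the standard order $\leq$ on $[i]$ by that permutation to get an element of $\cO_i$. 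To get $d$ \emph{independent} such orders out of a single uniform variable $x_{[i]}$ together with the $(i-1)$-coordinates, I would instead first split $x_{[i]}$ via a fixed measure-isomorphism $[0,1)\cong[0,1)^{d+1}$ into $(x^{(0)},x^{(1)},\dots,x^{(d)})$, set $f_{i,0}(x)\df x^{(0)}$, and for $j\in[d]$ use $x^{(j)}$ together with the family $(x_{[i]\setminus\{m\}})_{m\in[i]}$ to pick a uniformly random element of $\cO_i$ in a way that is $S_i$-equivariant. Concretely: the relative order of the $(x_{[i]\setminus\{m\}})_m$ already singles out a bijection $[i]\to[i]$; compose the "standard" uniform-order construction (a measure-preserving map $[0,1)\to\cO_i$ equivariant for the $S_i$-action coming from relabelling, e.g.\ sort $i$ auxiliary i.i.d.\ uniforms extracted from $x^{(j)}$) with that bijection. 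One checks directly that this $f_i$ is measurable, measure-preserving on h.o.a.\ (for fixed lower-order data, $x_{[i]}\mapsto(f_{i,0},f_{i,1},\dots,f_{i,d})$ is a composition of measure isomorphisms and measure-preserving maps, so pushes $\lambda$ to $\lambda\otimes\bigotimes\nu_i$), symmetric (the dependence on the $(i-1)$-data enters only through the relative order, which transforms correctly under $\sigma\in S_i$, and the "standard" order construction is equivariant), and satisfies the independence constraints of~\ref{def:dreal:indep} by construction. This gives the family $f$.

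For property~\ref{def:dreal:almostinverse} I would exhibit the almost inverse family $g=(g_i)_{i\in\NN_+}$, $g_i\colon\cE_i^{(d)}([0,1)\times[0,1))\to[0,1)$, directly: the job of $g_i$ is, given a target datum $(x,{\lhd})\in[0,1)\times\cO_i^d$ and an auxiliary uniform $y$, to produce a single point of $[0,1)$ from which $f_i$ will reconstruct exactly $(x,{\lhd})$. Since $f_i$'s output only looks at the top coordinate and the $(i-1)$-coordinates of its input, and we want $\widehat{f}_i(\widehat{g}_i(\cdot))=\mathrm{id}$ on \emph{all} of $\cE_i^{(d)}$, the point is that $\widehat{g}_i$ fills in the $(i-1)$-level coordinates (recursively, using the $(i-1)$-level target orders ${\lhd}$ and fresh uniforms from the $y$-coordinates) so that their relative order realizes the permutation encoded by ${\lhd}_{[i]}^{1},\dots,{\lhd}_{[i]}^{d}$, and fills in the top coordinate so that $f_{i,0}$ recovers $x_{[i]}$ and the "standard" order constructions recover ${\lhd}_{[i]}^{j}$; at a single $i$, $g_i$ just needs to produce the scalar $\widehat{g}_i(x,y,{\lhd})_{[i]}$, which is obtained by inverting the fixed measure-isomorphism $[0,1)\cong[0,1)^{d+1}$ on $(x_{[i]},\text{orders-into-uniforms})$, where the "orders-into-uniforms" part uses $y_{[i]}$ to pick, uniformly, a preimage under the standard order map of ${\lhd}_{[i]}^{j}$ consistent with the $(i-1)$-data already laid down. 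That $g_i$ is symmetric and measure-preserving on h.o.a.\ follows from the same kind of bookkeeping as for $f_i$; the defining identity~\eqref{eq:dreal:almostinverse} then holds pointwise off a null set by construction, because at every set $A\in r(i)$ the reconstruction $f$ performs is the left inverse of the lookup $g$ performs. The main obstacle I anticipate is purely organizational rather than conceptual: (i) arranging the recursive "fill-in" so that the relative order of the $(i-1)$-coordinates can realize an \emph{arbitrary} permutation of $[i]$ — this needs that the standard uniform-order map $[0,1)\to\cO_i$ is surjective with measurable (in fact, can be taken measure-preserving) one-sided inverses, and that the $(i-1)$-coordinates have enough freedom, which they do since each $x_{[i]\setminus\{m\}}$ carries a fresh independent uniform, so one just sorts them into the prescribed order; and (ii) verifying the $S_i$-equivariance of both $f_i$ and $g_i$ simultaneously with the ``depends only on $\binom{[i]}{\ge i-1}$'' constraint, which is the point where the construction must be done carefully (using the relative order of the $(i-1)$-data as the only channel through which permutations act) rather than, say, using $x_{[i]}$ alone. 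Everything else — measurability, the pushforward computations for the h.o.a.\ property, and the passage from $\cE_i$ to general countable $V$ via Lemma~\ref{lem:widehat} — is routine.
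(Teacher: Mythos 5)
Your construction of the forward family $f$ is essentially the paper's: the order components are obtained by composing the relative-order permutation of the $(i-1)$-level coordinates with a uniformly random permutation read off the top coordinate, and your way of handling general $d$ (splitting the top uniform into $d+1$ independent pieces via a fixed isomorphism $[0,1)\cong[0,1)^{d+1}$, instead of the paper's choice $\Omega_1=([0,1)^2,\lambda)$ followed by taking $d$-fold products of a $1$-realization) is a harmless variation; the reduction to one fixed pair of spaces is also the same.

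The genuine gap is in the almost inverse family. You propose that $\widehat{g}_i$ ``fills in the $(i-1)$-level coordinates \ldots so that their relative order realizes the permutation encoded by ${\lhd}^1_{[i]},\ldots,{\lhd}^d_{[i]}$'', and you flag as your main obstacle (i) arranging this fill-in (``one just sorts them into the prescribed order''). That step cannot work: $g$ must be a single family $(g_i)_i$ of symmetric functions, and by the $\widehat{(\cdot)}$-construction the coordinate of $\widehat{g}_i(x,y,{\lhd})$ at an $(i-1)$-set $B$ is $g_{i-1}$ applied to the restriction of the data to $B$, so it cannot depend on ${\lhd}_{[i]}$ at all, and its value is already forced by requiring~\eqref{eq:dreal:almostinverse} at level $i-1$; there is no freedom to sort (and a single relative order of $i$ reals could not realize $d$ prescribed orders anyway). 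The correct mechanism is the one you only gesture at in your closing sentences: compensate entirely at the top coordinate, choosing its ``permutation index'' so that (random permutation)$\comp$(relative order of the already-determined lower values) equals the target order. But to define this choice one must compute, inside $g_i$, the relative order of the values that $g_{i-1}$ will output on the $(i-1)$-subsets; this is exactly the recursion the paper's proof is built around — the functions $h_{i-1},k_i$ and the permutation $\widetilde{\sigma}_{y,{\lhd}}$ of~\eqref{eq:dreal:ki}--\eqref{eq:dreal:widetildesigmaylhd}, the verification~\eqref{eq:dreal:sigmawsigma} that this anticipated permutation coincides with the one $f$ actually extracts from the filled-in coordinates, and the proofs that $k_i,h_i$ are $S_i$-invariant and that $h_i$ is measure-preserving on h.o.a. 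This is the crux of the lemma, not routine bookkeeping, and your proposal both misidentifies the obstacle and leaves this part unaddressed.
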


\begin{proof}
  First we claim that it is enough to show the result for only one fixed pair of spaces
  $(\Omega_1,\Omega_2)$. Indeed, if $f = (f_i)_{i\in\NN_+}$ is a $d$-realization from $\Lambda_1$ to
  $\Lambda_2$, $g = (g_i)_{i\in\NN_+}$ is an almost inverse family of $f$ and $\Omega_1$ and
  $\Omega_2$ are standard probability spaces, then we can let $F_1\colon\Omega_1\to\Lambda_1$ and
  $F_2\colon\Omega_2\to\Lambda_2$ be measure-isomorphisms modulo $0$ and for every $i\in\NN_+$,
  define $f'_i\colon\cE_i(\Omega_1)\to\Omega_2\times\cO_i^d$ and
  $g'_i\colon\cE_i^{(d)}(\Omega_2\times\Omega_2)\to\Omega_1$ by
  \begin{align*}
    f'_i & \df (F_2^{-1}\otimes\id_{\cO_i^d})\comp f_i\comp \bigotimes_{A\in r(i)} F_1,\\
    g'_i & \df F_1^{-1}\comp g_i\comp\bigotimes_{A\in r(i)}(F_2\otimes F_2\otimes\id_{\cO_A^d}).
  \end{align*}
  (Note that the above only gives an almost everywhere definition as $F_1^{-1}$ and $F_2^{-1}$ are
  only defined in a full measure set, but we can simply extend the definition arbitrarily but
  measurably.)

  It is straightforward to check that $f'=(f'_i)_{i\in\NN_+}$ is a $d$-realization from $\Omega_1$
  to $\Omega_2$ and $g'=(g'_i)_{i\in\NN_+}$ is an almost inverse family of $f'$.

  \medskip

  We now note that the case $d=0$ is trivial: by taking $\Omega_1=\Omega_2$, we can simply let
  $f_i(x)\df x_{[i]}$ for every $i\in\NN_+$ and every $x\in\cE_i(\Omega_1)$. An almost inverse
  family of $f$ is then obtained by taking $g_i(x,y)\df x_{[i]}$ for every $i\in\NN_+$ and every
  $(x,y)\in\cE_i(\Omega_2\times\Omega_2)$.

  \medskip

  We now claim that the case $d\in\NN_+$ follows from the case $d=1$. Indeed, if
  $f=(f_i)_{i\in\NN_+}$ is a $1$-realization from $\Omega_1$ to $\Omega_2$ and $g=(g_i)_{i\in\NN_+}$
  is an almost inverse family of $f$, then letting
  \begin{align*}
    f'_i & \df \bigotimes_{i=1}^d f_i, &
    g'_i & \df \bigotimes_{i=1}^d g'_i
  \end{align*}
  for every $i\in\NN_+$, it is straightforward to check that $f'=(f'_i)_{i\in\NN_+}$ is a
  $d$-realization from $\Omega_1^d$ to $\Omega_2^d$ and $g'=(g'_i)_{i\in\NN_+}$ is an almost inverse
  family of $f'$.

  \medskip

  Let us then prove the case $d=1$ when $\Omega_1=([0,1)^2,\lambda)$ and $\Omega_2=([0,1),\lambda)$.

  First, for every $i\in\NN_+$, let
  \begin{align*}
    D_i & \df \{(x,y)\in\cE_i(\Omega_1) \mid \exists A,B\in r(i), (A\neq B\land y_A = y_B)\}.
  \end{align*}
  (Recall that we naturally identify $\cE_i(\Omega_1)$ with $\cE_i(\Omega_2)\times\cE_i(\Omega_2)$.)
  Note that $D_i$ is a Borel set with $\lambda$-measure zero, so it is sufficient to define $f_i$
  only outside of $D_i$.

  For each $i\in\NN_+$, let $\theta_i$ be the uniform probability measure on $S_i$ and let
  $\tau_i\colon[0,1)\to S_i$ be defined as follows. We fix an enumeration
  $\gamma^i_1,\ldots,\gamma^i_{i!}$ of $S_i$ and let
  \begin{align}\label{eq:dreal:tau}
    \tau_i(x) & \df \gamma^i_{\floor{x\cdot i!}+1}.
  \end{align}
  Note that $\tau_i$ is a measure-preserving with respect to $\lambda$ and $\theta_i$.

  Define the function $f_i\colon\cE_i(\Omega_1)\to\Omega_2\times\cO_i$ by
  \begin{align*}
    f_i(x,y) & \df (x_{[i]}, (\tau_i(y_{[i]})\comp\sigma_y)^*({\leq}))
  \end{align*}
  for every $(x,y)\in\cE_i(\Omega_1)\setminus D_i$, where $\leq$ is the usual order on $[i]$ and
  $\sigma_y\in S_i$ is the unique permutation such that
  \begin{align}\label{eq:dreal:sigmay}
    y_{[i]\setminus\{\sigma_y^{-1}(1)\}} < y_{[i]\setminus\{\sigma_y^{-1}(2)\}} < \cdots <
    y_{[i]\setminus\{\sigma_y^{-1}(i)\}}.
  \end{align}
  We also define $f_i$ arbitrarily but measurably in $D_i$.

  Clearly, item~\ref{def:dreal:indep} of Definition~\ref{def:dreal} holds for $f=(f_i)_{i\in\NN_+}$.

  \medskip

  Let us now show that $f$ is symmetric. Let $i\in\NN_+$, let $(x,y)\in\cE_i(\Omega)\setminus D_i$,
  let $\pi\in S_i$ and note that
  \begin{align*}
    (f_i\comp\pi^*)(x,y)
    & =
    \bigl(x_{[i]}, (\tau_i(y_{[i]})\comp\sigma_{\pi^*(y)})^*({\leq})\bigr)
    \\
    \bigl((\id_{\Omega_2}\otimes\pi^*)\comp f_i\bigr)(x,y)
    & =
    \Bigl(x_{[i]}, \pi^*\bigl((\tau_i(y_{[i]})\comp\sigma_y)^*({\leq})\bigr)\Bigr)
    =
    \bigl(x_{[i]}, (\tau_i(y_{[i]})\comp\sigma_y\comp\pi)^*({\leq})\bigr),
  \end{align*}
  so it suffices to show that $\sigma_y\comp\pi = \sigma_{\pi^*(y)}$.

  But indeed, we know that $\sigma_{\pi^*(y)}$ is the unique element of $S_i$ such that
  \begin{align*}
    \pi^*(y)_{[i]\setminus\{\sigma_{\pi^*(y)}^{-1}(1)\}}
    <
    \pi^*(y)_{[i]\setminus\{\sigma_{\pi^*(y)}^{-1}(2)\}}
    <
    \cdots
    <
    \pi^*(y)_{[i]\setminus\{\sigma_{\pi^*(y)}^{-1}(i)\}}
  \end{align*}
  and since for every $j\in[i]$, we have
  \begin{align*}
    \pi^*(y)_{[i]\setminus\{\sigma_{\pi^*(y)}^{-1}(j)\}}
    & =
    y_{[i]\setminus\{(\pi\comp\sigma_{\pi^*(y)}^{-1})(j)\}}
    =
    y_{[i]\setminus\{(\sigma_{\pi^*(y)}\comp\pi^{-1})^{-1}(j)\}},
  \end{align*}
  it follows that $\sigma_y = \sigma_{\pi^*(y)}\comp\pi^{-1}$, hence $\sigma_y\comp\pi =
  \sigma_{\pi^*(y)}$ as desired. Therefore $f_i$ is symmetric.

  \medskip

  Let us now show that $f_i$ is measure-preserving on h.o.a. Fix $(x,y)\in\cE_{i,i-1}(\Omega_1)$ and
  note that showing that the function $f_i(x,y,\place)\colon\Omega_1\to\Omega_2\times\cO_i$ is
  measure-preserving is equivalent to showing that if $\rn{w}$ and $\rn{z}$ are picked
  i.i.d.\ uniformly at random in $[0,1)$, then $(\rn{u},{\rn{\lhd}})\df f_i((x,\rn{w}),(y,\rn{z}))$
  is uniformly distributed in $[0,1)\times\cO_i$.

  The definition of $f_i$ gives $\rn{u} = \rn{w}$ and $\rn{\lhd}$ is $\rn{z}$-measurable with
  \begin{align*}
    \rn{\lhd} & = (\tau_i(\rn{z})\comp\sigma_{y,\rn{z}})^*({\leq}),
  \end{align*}
  but note that $\sigma_{y,\rn{z}}$ does not depend on $\rn{z}$ at all and $\tau_i(\rn{z})$ is
  uniformly distributed in $S_i$, hence $\rn{\lhd}$ is uniformly distributed in $\cO_i$ and is
  independent from $\rn{u}$. Therefore $(\rn{u},\rn{\lhd})$ is uniformly distributed in
  $[0,1)\times\cO_i$.

  \medskip
  
  It remains to show that there exists an almost inverse family of $f$. For each $i\in\NN_+$, let
  \begin{align*}
    D'_i
    & \df
    \{(y,{\lhd})\in\cE_i^{(1)}(\Omega_2) \mid
    \exists A,B\in r(i), (A\neq B\land y_A = y_B)\}.
  \end{align*}
  Clearly $D'_i$ is an $S_i$-invariant Borel set with zero $\lambda$-measure.

  We now define $S_i$-invariant functions
  \begin{align*}
    h_i\colon & \cE_i^{(1)}(\Omega_2)\setminus D'_i\to [0,1),\\
    k_i\colon & \cE_i^{(1)}(\Omega_2)\setminus D'_i\to [i!]
  \end{align*}
  inductively in $i\in\NN_+$ as follows. (We will later use $h_i$ in~\eqref{eq:dreal:gi} to define
  the almost inverse family $g$ of $f$.)

  To start the induction, we define $h_0$ as the constant $0$ function. Given $h_{i-1}$, for
  $(y,{\lhd})\in\cE_i^{(1)}(\Omega_2)\setminus D'_i$, we let $k_i(y,{\lhd})\in[i!]$ be the unique
  element such that
  \begin{align}\label{eq:dreal:ki}
    (\gamma^i_{k_i(y,{\lhd})}\comp\widetilde{\sigma}_{y,{\lhd}})^*({\leq}) & = {\lhd_{[i]}},
  \end{align}
  where $\leq$ is the usual order on $[i]$, $\widetilde{\sigma}_{y,{\lhd}}\in S_i$ is the unique
  permutation such that
  \begin{align}\label{eq:dreal:widetildesigmaylhd}
    [i]\ni a
    & \longmapsto
    h_{i-1}(\alpha_{[i]\setminus\{\widetilde{\sigma}_{y,{\lhd}}^{-1}(a)\},i}^*(y,{\lhd}))
    \in
    [0,1)
  \end{align}
  is increasing and for each $A\subseteq[i]$, $\alpha_{A,i}\colon[\lvert A\rvert]\to[i]$ is any
  injection with $\im(\alpha_{A,i})=A$. Note that since $h_{i-1}$ is inductively assumed to be
  $S_{i-1}$-invariant, the above does not depend on the choice of $\alpha_{A,i}$.

  We then define
  \begin{align*}
    h_i(y,{\lhd})
    & \df
    \frac{y_{[i]} + k_i(y,{\lhd}) - 1}{i!}.
  \end{align*}

  \smallskip

  Clearly, if $k_i$ is $S_i$-invariant, so is $h_i$.

  Let us then show that $k_i$ is $S_i$-invariant. Let $(y,{\lhd})\in\cE_i^{(1)}(\Omega_2)\setminus
  D'_i$, let $\pi\in S_i$. Since
  \begin{align*}
    (\gamma^i_{k_i(\pi^*(y,{\lhd}))}\comp\widetilde{\sigma}_{\pi^*(y,{\lhd})})^*({\leq}) & = \pi^*({\lhd_{[i]}}),
  \end{align*}
  we get
  \begin{align*}
    (\gamma^i_{k_i(\pi^*(y,{\lhd}))}\comp\widetilde{\sigma}_{\pi^*(y,{\lhd})}\comp\pi^{-1})^*({\leq})
    & =
    {\lhd_{[i]}},
  \end{align*}
  so it suffices to show that
  \begin{align}\label{eq:dreal:wsigma}
    \widetilde{\sigma}_{\pi^*(y,{\lhd})}\comp\pi^{-1} = \widetilde{\sigma}_{y,{\lhd}}.
  \end{align}

  To do so, note first that since for every $a\in[i]$, we have
  \begin{equation}\label{eq:dreal:hi-1}
    \begin{aligned}
      h_{i-1}(\alpha_{[i]\setminus\{\widetilde{\sigma}_{\pi^*(y,{\lhd})}^{-1}(a)\},i}^*(\pi^*(y,{\lhd})))
      & =
      h_{i-1}(\pi\comp\alpha_{[i]\setminus\{\widetilde{\sigma}_{\pi^*(y,{\lhd})}^{-1}(a)\},i})^*(y,{\lhd}))
      \\
      & =
      h_{i-1}(\alpha_{[i]\setminus\{(\widetilde{\sigma}_{\pi^*(y,{\lhd})}\comp\pi^{-1})^{-1}(a)\},i})^*(y,{\lhd}),
    \end{aligned}
  \end{equation}
  where the second equality follows since $h_{i-1}$ is inductively assumed to be
  $S_{i-1}$-invariant and
  \begin{align*}
    \im(\pi\comp\alpha_{[i]\setminus\{\widetilde{\sigma}_{\pi^*(y,{\lhd})}^{-1}(a)})
    & =
    \pi([i]\setminus\{\widetilde{\sigma}_{\pi^*(y,{\lhd})}^{-1}(a)\})
    \\
    & =
    [i]\setminus\{(\pi\comp\widetilde{\sigma}_{\pi^*(y,{\lhd})}^{-1})(a)\}
    =
    \im(\alpha_{[i]\setminus\{(\widetilde{\sigma}_{\pi^*(y,{\lhd})}\comp\pi^{-1})^{-1}(a)}).
  \end{align*}

  Since $\widetilde{\sigma}_{y,{\lhd}}$ and $\widetilde{\sigma}_{\pi^*(y,{\lhd})}$ are the unique elements of
  $S_i$ such that
  \begin{align*}
    [i]\ni a
    & \longmapsto
    h_{i-1}(\alpha_{[i]\setminus\{\widetilde{\sigma}_{y,{\lhd}}^{-1}(a)\}}^*(y,{\lhd}))
    \in
    [0,1)
    \\
    [i]\ni a
    & \longmapsto
    h_{i-1}(\alpha_{[i]\setminus\{\widetilde{\sigma}_{\pi^*(y,{\lhd})}^{-1}(a)\}}^*(\pi^*(y,{\lhd})))
    \in
    [0,1)
  \end{align*}
  are increasing, \eqref{eq:dreal:wsigma} follows from~\eqref{eq:dreal:hi-1}. Therefore $k_i$ is
  $S_i$-invariant.

  \smallskip

  We now claim that $h_i$ is measure-preserving on h.o.a.\ (or more precisely, any measurable
  extension of $h_i$ to $\cE_i^{(1)}(\Omega_2)$ is measure-preserving on h.o.a.). This is equivalent
  to saying that for $\lambda^{(1)}$-almost every $(y,{\lhd})\in\cE_{i,i-1}^{(1)}(\Omega_2)$ such
  that $y_A\neq y_B$ whenever $A\neq B$, if $\rn{z}$ and $\rn{\ll}$ are sampled independently and
  uniformly in $[0,1)$ and $\cO_i$, respectively, then $\rn{w}\df h_i((y,\rn{z}),({\lhd},\rn{\ll}))$
  is uniformly distributed in $[0,1)$.

  In turn, from the definition of $h_i$, the above is equivalent to showing that $\rn{j}\df
  k_i((y,\rn{z}),({\lhd},\rn{\ll}))$ is uniformly distributed in $[i!]$. But this follows since
  \begin{align*}
    (\gamma^i_{\rn{j}}\comp\widetilde{\sigma}_{(y,\rn{z}),({\lhd},\rn{\ll})})^*({\leq}) & = {\rn{\ll}},
  \end{align*}
  since $\widetilde{\sigma}_{(y,\rn{z}),({\lhd},\rn{\ll})}$ does not depend on $\rn{z}$ or
  $\rn{\ll}$ and since $\rn{\ll}$ is uniformly distributed on $\cO_i$.

  \smallskip

  We now finally define $g_i\colon\cE_i^{(1)}(\Omega_2\times\Omega_2)\to\Omega_1$ ($i\in\NN_+$) as
  \begin{align}\label{eq:dreal:gi}
    g_i(x,y,{\lhd}) & \df (x_{[i]}, h_i(y,{\lhd})),
  \end{align}
  (using an arbitrary measurable extension of $h_i$).

  It is straightforward to check that the fact that $h_i$ is $S_i$-invariant implies that $g_i$ is
  symmetric and the fact that $h_i$ is measure-preserving on h.o.a.\ implies that $g_i$ is measure
  preserving on h.o.a.

  We now claim that $g=(g_i)_{i\in\NN_+}$ is an almost inverse family of $f$. For this, we have to
  show that for every $i\in\NN_+$ and $\lambda^{(1)}$-almost every
  $(x,y,{\lhd})\in\cE_i^{(1)}(\Omega_2\times\Omega_2)$, we have
  \begin{align}\label{eq:dreal:gialmostinverse}
    \widehat{f}_i(\widehat{g}_i(x,y,{\lhd})) & = (x,{\lhd}).
  \end{align}

  Since $D'_i$ has zero measure, we may suppose that $(y,{\lhd})\notin D'_i$. Note that for $A\in
  r(i)$, we have
  \begin{align*}
    \widehat{f}_i(\widehat{g}_i(x,y,{\lhd}))_A
    & =
    (\overline{\alpha}_{A,i}^{-1})^*\biggl(f_{\lvert A\rvert}\Bigl(
    \alpha_{A,i}^*\bigl(\widehat{g}_i(x,y,{\lhd})
    \bigr)\Bigr)\biggr)
    =
    (\overline{\alpha}_{A,i}^{-1})^*\biggl(f_{\lvert A\rvert}\Bigl(
    \bigl(\widehat{g}_{\lvert A\rvert}(\alpha_{A,i}^*(x,y,{\lhd}))
    \bigr)\Bigr)\biggr),
  \end{align*}
  almost everywhere, where the second equality follows from
  Lemma~\ref{lem:widehat}\ref{lem:widehat:equivariance}.

  For every $B\in r(\lvert A\rvert)$, let
  \begin{align}
    z_B
    & \df
    \alpha_{B,\lvert A\rvert}^*\bigl(\alpha_{A,i}^*(x)\bigr)_{[\lvert B\rvert]}
    =
    \alpha_{A,i}^*(x)_B
    =
    x_{\alpha_{A,i}(B)},
    \notag
    \\
    w_B
    & \df
    h_{\lvert B\rvert}\Bigl(\alpha_{B,\lvert A\rvert}^*\bigl(\alpha_{A,i}^*(y,{\lhd})\bigr)\Bigr)
    =
    \frac{
      y_{\alpha_{A,i}(B)}
      + k_{\lvert B\rvert}\bigl((\alpha_{A,i}\comp\alpha_{B,\lvert A\rvert})^*(y,{\lhd})\bigr)
      - 1
    }{
      \lvert B\rvert!
    },
    \label{eq:dreal:w}
  \end{align}
  so that $\widehat{g}_{\lvert A\rvert}(\alpha_{A,i}^*(x,y,{\lhd}))=(z,w)$.

  We claim that
  \begin{align}\label{eq:dreal:sigmawsigma}
    \sigma_w & = \widetilde{\sigma}_{\alpha_{A,i}^*(y,{\lhd})}.
  \end{align}
  To see this, note first that for every $a\in[\lvert A\rvert]$, we have
  \begin{align}\label{eq:dreal:wh}
    w_{[\lvert A\rvert]\setminus\{\sigma_w^{-1}(a)\}}
    & =
    h_{\lvert A\rvert - 1}\Bigl(
    \alpha_{[\lvert A\rvert]\setminus\{\sigma_w^{-1}(a)\},\lvert A\rvert}^*
    \bigl(\alpha_{A,i}^*(y,{\lhd})\bigr)
    \Bigr).
  \end{align}

  The definition of $\sigma_w$ in~\eqref{eq:dreal:sigmay} is as the unique permutation in $S_{\lvert
    A\rvert}$ such that mapping $a\in[\lvert A\rvert]$ to the left-hand side of~\eqref{eq:dreal:wh}
  yields an increasing function. On the other hand, the definition of
  $\widetilde{\sigma}_{\alpha_{A,i}^*(y,{\lhd})}$ in~\eqref{eq:dreal:widetildesigmaylhd} is as the
  unique permutation in $S_{\lvert A\rvert}$ such that mapping $a\in[\lvert A\rvert]$ to
  \begin{align*}
    h_{\lvert A\rvert-1}\Bigl(
    \alpha_{[\lvert A\rvert]\setminus\{\widetilde{\sigma}_{\alpha_{A,i}^*(y,{\lhd})}(a)\}}^*\bigl(
    \alpha_{A,i}^*(y,{\lhd})\bigr)\Bigr)
  \end{align*}
  yields an increasing function; comparing this to the right-hand side of~\eqref{eq:dreal:wh}, we
  conclude that $\sigma_w=\widetilde{\sigma}_{\alpha_{A,i}^*(y,{\lhd})}$ (i.e.,
  \eqref{eq:dreal:sigmawsigma} holds).

  Note now that the definitions of $\tau_{\lvert A\rvert}$ and $w$ (in~\eqref{eq:dreal:tau}
  and~\eqref{eq:dreal:w}, respectively) imply that
  \begin{align}\label{eq:dreal:taugamma}
    \tau_{\lvert A\rvert}(w_{[\lvert A\rvert]})
    & =
    \gamma^{\lvert A\rvert}_{\floor{w_{[\lvert A\rvert]}\cdot\lvert A\rvert!}+1}
    =
    \gamma^{\lvert A\rvert}_{k_{\lvert A\rvert}(\alpha_{A,i}^*(y,{\lhd}))}
  \end{align}
  almost everywhere simply because $y_A\in[0,1)$.

  Thus, almost everywhere we have
  \begin{align*}
    \widehat{f}_i(\widehat{g}_i(x,y,{\lhd}))_A
    & =
    (\overline{\alpha}_{A,i}^{-1})^*\bigl(f_{\lvert A\rvert}(z,w)\bigr)
    \\
    & =
    \Bigl(z_{[\lvert A\rvert]},
    (\overline{\alpha}_{A,i}^{-1})^*\bigl(
    (\tau_{\lvert A\rvert}(w_{[\lvert A\rvert]})\comp\sigma_w)^*({\leq})
    \bigr)\Bigr)
    \\
    & =
    \Bigl(x_A,
    (\overline{\alpha}_{A,i}^{-1})^*\bigl(
    (\gamma^{\lvert A\rvert}_{k_{\lvert A\rvert}(\alpha_{A,i}^*(y,{\lhd}))}
    \comp\widetilde{\sigma}_{\alpha_{A,i}^*(x,y,{\lhd})})^*
    ({\leq})
    \bigr)\Bigr)
    \\
    & =
    \Bigl(x_A,
    (\overline{\alpha}_{A,i}^{-1})^*\bigl(
    \alpha_{A,i}\down_{[\lvert A\rvert]}^*({\lhd_A})
    \bigr)\Bigr)
    \\
    & =
    (x_A,{\lhd_A}),
  \end{align*}
  where the third equality follows from~\eqref{eq:dreal:sigmawsigma} and~\eqref{eq:dreal:taugamma},
  the fourth equality follows from~\eqref{eq:dreal:ki} and the fifth equality follows since
  $\alpha_{A,i}\down_{[\lvert A\rvert]} = \overline{\alpha}_{A,i}$. Thus~\eqref{eq:dreal:gialmostinverse}
  holds almost everywhere, so $g$ is an almost inverse family of $f$.
\end{proof}

The next lemma shows how to use families of symmetric measure-preserving on h.o.a.\ functions, and in
particular $d$-realizations to change representations of limits.

\begin{lemma}\label{lem:widehatEuclidean}
  Let $\Omega_1=(X_1,\mu_1)$ and $\Omega_2=(X_2,\mu_2)$ be atomless standard probability spaces, let
  $f = (f_i)_{i\in\NN_+}$ be a family of symmetric measure-preserving on h.o.a.\ functions
  $f_i\colon\cE_i^{(d_1)}(\Omega_1)\to\Omega_2\times\cO_i^{d_2}$ and let $\cN$ be a $d_2$-Euclidean
  structure in $\cL$ over $\Omega_2$. Then the following hold.
  \begin{enumerate}
  \item\label{lem:widehatEuclidean:rk} $\rk(f^*(\cN))\leq\rk(\cN)$.
  \item\label{lem:widehatEuclidean:phi} $\phi_{f^*(\cN)} = \phi_\cN$.
  \item\label{lem:widehatEuclidean:dreal} If $d_1=0$, $f$ is a $d_2$-realization and $\cN$ is
    $\ell$-independent, then $f^*(\cN)$ is a.e.\ equal to an $(\ell-1)$-independent $0$-Euclidean
    structure.
  \item\label{lem:widehatEuclidean:dreal2} If $d_1=0$, $f$ is a $d_2$-realization and $\cN$ is
    $\ell$-independent and does not depend on any coordinate in $\cO_A$ with $\lvert
    A\rvert=\ell+1$, then $f^*(\cN)$ is a.e.\ equal to an $\ell$-independent $0$-Euclidean
    structure.
  \end{enumerate}
\end{lemma}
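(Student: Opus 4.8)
The plan is to dispatch the four parts in order, using Lemma~\ref{lem:widehat} throughout; parts~\ref{lem:widehatEuclidean:rk} and~\ref{lem:widehatEuclidean:phi} are essentially formal, and parts~\ref{lem:widehatEuclidean:dreal} and~\ref{lem:widehatEuclidean:dreal2} reduce to a coordinate-dependency analysis of the composition $\One_{\cN_P}\comp\widehat{f}_{k(P)}$. For part~\ref{lem:widehatEuclidean:rk}, fix $P\in\cL$ and let $r\df\rk(\cN_P)$, so $\cN_P=\cH\times\prod_{A\in\binom{[k(P)]}{>r}}(X_2\times\cO_A^{d_2})$ for some $\cH\subseteq\cE_{k(P),r}^{(d_2)}$. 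From the definition $\widehat{f}_{k(P)}(x)_A=(\overline{\alpha}_A^{-1})^*(f_{\lvert A\rvert}(\alpha_A^*(x)))$ one reads off that $\widehat{f}_{k(P)}(x)_A$ depends only on $\{x_B\mid B\subseteq A\}$, hence only on the coordinates indexed by sets of size at most $\lvert A\rvert$. Consequently whether $\widehat{f}_{k(P)}(x)\in\cN_P$ holds is determined by $(x_B\mid \lvert B\rvert\le r)$ alone, so $f^*(\cN)_P=\cH'\times\prod_{A\in\binom{[k(P)]}{>r}}(X_1\times\cO_A^{d_1})$ for some $\cH'\subseteq\cE_{k(P),r}^{(d_1)}$, giving $\rk(f^*(\cN)_P)\le r$; taking the maximum over $P\in\cL$ yields the claim.

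For part~\ref{lem:widehatEuclidean:phi}, the first step is to establish that $M^{f^*(\cN)}_V=M^\cN_V\comp\widehat{f}_V$ holds $\mu_1^{(d_1)}$-almost everywhere for every countable $V$. Unwinding definitions, a tuple $\alpha\in(V)_{k(P)}$ satisfies $P$ in $M^{f^*(\cN)}_V(x)$ exactly when $\widehat{f}_{k(P)}(\alpha^*(x))\in\cN_P$, and by the a.e.\ equivariance of $\widehat{f}$ in Lemma~\ref{lem:widehat}\ref{lem:widehat:equivariance} we have $\widehat{f}_{k(P)}(\alpha^*(x))=\alpha^*(\widehat{f}_V(x))$ outside a $\mu_1^{(d_1)}$-null set; discarding the countable union of these null sets over all $\alpha$ and the finitely many $P\in\cL$ gives the identity, whose right-hand side is by definition $M^\cN_V(\widehat{f}_V(x))$. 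Since each $f_i$ is measure-preserving on h.o.a., $\widehat{f}_V$ is measure-preserving by Lemma~\ref{lem:widehat}\ref{lem:widehat:measpres}, so $\mu^{f^*(\cN)}_V=(M^\cN_V)_*\bigl((\widehat{f}_V)_*(\mu_1^{(d_1)})\bigr)=(M^\cN_V)_*(\mu_2^{(d_2)})=\mu^\cN_V$ for every countable $V$; that is, $f^*(\cN)$ is equivalent to $\cN$ and $\phi_{f^*(\cN)}=\phi_\cN$.

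For parts~\ref{lem:widehatEuclidean:dreal} and~\ref{lem:widehatEuclidean:dreal2}, note first that with $d_1=0$ the structure $f^*(\cN)$ is genuinely a $0$-Euclidean structure. The key input is Definition~\ref{def:dreal}\ref{def:dreal:indep}: for a $d_2$-realization, off a $\mu_1$-null set the $\Omega_2$-component $f_{i,0}$ of $f_i$ depends only on the top coordinate indexed by $[i]$, and each order component $f_{i,j}$ ($j\in[d_2]$) depends only on the coordinates indexed by sets in $\binom{[i]}{>i-2}$, i.e.\ of size $i$ or $i-1$. Pulling these statements back along the measure-preserving relabelings $\alpha_A^*$ and $(\overline{\alpha}_A^{-1})^*$ and unioning the relevant null sets over the finitely many nonempty $A\subseteq[k(P)]$, we get that for $\mu_1$-a.e.\ $x$ the $\Omega_2$-part of $\widehat{f}_{k(P)}(x)_A$ depends only on $x_A$ and its $\cO_A^{d_2}$-part depends only on $\{x_C\mid C\subseteq A,\ \lvert C\rvert\ge\lvert A\rvert-1\}$. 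If $\cN$ is $\ell$-independent, then $\cN_P$ depends only on the coordinates $y_A$ with $\lvert A\rvert\ge\ell+1$; composing, $\One_{\cN_P}\comp\widehat{f}_{k(P)}$ depends $\mu_1$-a.e.\ only on $\{x_C\mid\lvert C\rvert\ge\ell\}$, the loss of one level being exactly the downward reach of the order components. By the standard fact that a measurable subset of a countable product of standard Borel spaces whose indicator is a.e.\ a function of a sub-family of the coordinates is a.e.\ equal to a set that is a full product in the remaining coordinates, $f^*(\cN)_P$ is a.e.\ equal to an $(\ell-1)$-independent set for each $P$, which is~\ref{lem:widehatEuclidean:dreal}. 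For~\ref{lem:widehatEuclidean:dreal2}, the extra hypothesis says $\cN_P$ also ignores the $\cO_A^{d_2}$-coordinates with $\lvert A\rvert=\ell+1$; hence in the composition we only need the $\Omega_2$-part of $\widehat{f}_{k(P)}(x)_A$ for those $A$ (a function of $x_A$, $\lvert A\rvert=\ell+1$), while the order parts enter only for $\lvert A\rvert\ge\ell+2$ and then reach down only to size $\ge\ell+1$, so $\One_{\cN_P}\comp\widehat{f}_{k(P)}$ depends $\mu_1$-a.e.\ only on $\{x_C\mid\lvert C\rvert\ge\ell+1\}$ and $f^*(\cN)_P$ is a.e.\ $\ell$-independent.

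I expect the main obstacle to lie in the measure-theoretic bookkeeping of parts~\ref{lem:widehatEuclidean:dreal}--\ref{lem:widehatEuclidean:dreal2}: correctly transporting the ``depends only on'' statements for the $f_i$ (which are a.e.\ statements, here under the product of $\mu_1$) through the relabelings and across the finitely many $A$ and $P$, and then cleanly justifying the passage from ``indicator a.e.\ a function of a sub-family of coordinates'' to ``a.e.\ equal to an honest $(\ell-1)$- (resp.\ $\ell$-)independent $0$-Euclidean structure'' (e.g.\ via conditioning on the sub-$\sigma$-algebra generated by the relevant coordinates). The remaining ingredients---the exact rank computation in~\ref{lem:widehatEuclidean:rk} and the identity $M^{f^*(\cN)}_V=M^\cN_V\comp\widehat{f}_V$ underlying~\ref{lem:widehatEuclidean:phi}---are routine once Lemma~\ref{lem:widehat} is available.
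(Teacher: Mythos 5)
Your proposal is correct and follows essentially the same route as the paper: the pointwise identity $M^{f^*(\cN)}_V=M^\cN_V\comp\widehat{f}_V$ a.e.\ (via Lemma~\ref{lem:widehat}\ref{lem:widehat:equivariance}) plus measure preservation of $\widehat{f}_V$ for item~\ref{lem:widehatEuclidean:phi}, and the coordinate-dependency analysis from Definition~\ref{def:dreal}\ref{def:dreal:indep} for items~\ref{lem:widehatEuclidean:dreal} and~\ref{lem:widehatEuclidean:dreal2}. You merely spell out details the paper treats as immediate (the rank bound in item~\ref{lem:widehatEuclidean:rk} and the passage from ``indicator a.e.\ depends only on a sub-family of coordinates'' to ``a.e.\ equal to an independent structure''), which is consistent with the paper's argument.
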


\begin{proof}
  Item~\ref{lem:widehatEuclidean:rk} follows directly from definitions.

  \medskip

  For item~\ref{lem:widehatEuclidean:phi}, note that for every finite set $V$, for almost every
  $x\in\cE_V^{(d_1)}(\Omega_1)$, every $P\in\cL$ and every $\alpha\in (V)_{k(P)}$, we have
  \begin{align*}
    (M^{f^*(\cN)}_V(x)\vDash P(\alpha))
    & \iff
    \alpha^*(x)\in f^*(\cN)_P
    \iff
    \alpha^*(x)\in\widehat{f}_{k(P)}^{-1}(\cN_P)
    \iff
    (\widehat{f}_{k(P)}\comp\alpha^*)(x)\in\cN_P
    \\
    & \iff
    \alpha^*(\widehat{f}_V(x))\in\cN_P
    \iff
    (M^\cN_V(\widehat{f}_V(x))\vDash P(\alpha)),
  \end{align*}
  where the third equivalence follows from Lemma~\ref{lem:widehat}\ref{lem:widehat:equivariance}.

  Since $\widehat{f}_V$ is measure-preserving by Lemma~\ref{lem:widehat}\ref{lem:widehat:measpres},
  we get that for $K\in\cK_V$, we have
  \begin{align*}
    \tind\bigl(K,f^*(\cN)\bigr)
    & =
    \mu^{(d_1)}\Bigl(\Tind\bigl(K,f^*(\cN)\bigr)\Bigr)
    =
    \mu^{(d_1)}\bigl((M_V^{f^*(\cN)})^{-1}(\{K\})\bigr)
    =
    \mu^{(d_1)}\Bigl(\widehat{f}_V^{-1}\bigr((M_V^\cN)^{-1}(\{K\})\bigr)\Bigr)
    \\
    & =
    \mu^{(d_2)}\bigl((M_V^\cN)^{-1}(\{K\})\bigr)
    =
    \mu^{(d_2)}\bigl(\Tind(K,\cN)\bigr)
    =
    \tind(K,\cN).
  \end{align*}
  Therefore $\phi_{f^*(\cN)}=\phi_\cN$.

  \medskip

  For item~\ref{lem:widehatEuclidean:dreal}, since each $d_2$-$P$-on $\cN_P$ is $\ell$-independent
  and item~\ref{def:dreal:indep} of Definition~\ref{def:dreal} says that, except for a zero
  $\mu_1$-measure set, $f_i$ depends only on coordinates indexed by sets in $\binom{[i]}{>i-2}$, it
  follows that, except for a zero $\mu_1$-measure set, each $P$-on $f^*(\cN_P)$ depends only on
  coordinates indexed by sets in $\binom{[k(P)]}{>\ell-1}$.

  \medskip

  Item~\ref{lem:widehatEuclidean:dreal2} follows with the same logic as
  item~\ref{lem:widehatEuclidean:dreal}, except that since $\cN$ also does not depend on coordinates
  in $\cO_A$ with $\lvert A\rvert=\ell+1$, it follows that, except for a zero $\mu_1$-measure
  set, each $P$-on $f^*(\cN_P)$ depends only on coordinates indexed by sets in
  $\binom{[k(P)]}{>\ell}$ as the order coordinates of size $\ell+1$ are now irrelevant.
\end{proof}

\begin{remark}\label{rmk:altkqrO}
  The alternative representation $\cH^{\kqrO}$ in~\eqref{eq:altkqrO} of the quasirandom
  $k$-orientation is obtained from the standard representation $\cN^{\kqrO}$ as $\cH^{\kqrO} =
  f^*(\cN^{\kqrO})$ (up to zero-measure) for the $d$-realization constructed in
  Lemma~\ref{lem:dreal} (when $\Omega=([0,1),\lambda)$).
\end{remark}

As a direct corollary, we conclude that $\ast$-$\ell$-independence implies $\Independence[\ell']$
for every $\ell'<\ell$:

\begin{corollary}\label{cor:astindep->indep}
  If $\ell',\ell\in\NN$ are such that $\ell' < \ell$, then every $\ast$-$\ell$-independent limit
  $\phi$ satisfies $\Independence[\ell']$.
\end{corollary}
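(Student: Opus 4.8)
The plan is to deduce this immediately from Lemmas~\ref{lem:dreal} and~\ref{lem:widehatEuclidean}. We may assume $\ell\geq 1$, since otherwise there is no $\ell'<\ell$ and there is nothing to prove. By hypothesis there are $d\in\NN$, an atomless standard probability space $\Omega_2$, and an $\ell$-independent $d$-Euclidean structure $\cN$ over $\Omega_2$ with $\phi_\cN=\phi$.

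First I would fix any atomless standard probability space $\Omega_1$ (for concreteness, $([0,1),\lambda)$) and apply Lemma~\ref{lem:dreal} to obtain a $d$-realization $f=(f_i)_{i\in\NN_+}$ from $\Omega_1$ to $\Omega_2$. Then I would pull $\cN$ back along $f$: the object $f^*(\cN)$ is a $0$-Euclidean structure over $\Omega_1$, and Lemma~\ref{lem:widehatEuclidean}\ref{lem:widehatEuclidean:phi} gives $\phi_{f^*(\cN)}=\phi_\cN=\phi$. Since $d_1=0$, $f$ is a $d$-realization, and $\cN$ is $\ell$-independent, Lemma~\ref{lem:widehatEuclidean}\ref{lem:widehatEuclidean:dreal} applies and shows that $f^*(\cN)$ is a.e.\ equal to an $(\ell-1)$-independent $0$-Euclidean structure $\cN'$. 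Because a.e.\ equal Euclidean structures represent the same limit, $\phi_{\cN'}=\phi_{f^*(\cN)}=\phi$.

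Finally I would observe that an $(\ell-1)$-independent Euclidean structure is automatically $\ell'$-independent for every $\ell'\leq\ell-1$ (one simply regroups the factors of $\cE_{k(P),\ell-1}^{(0)}$ into $\cE_{k(P),\ell'}^{(0)}$ times the intermediate coordinates), and in particular for every $\ell'<\ell$. Hence $\cN'$ is an $\ell'$-independent $0$-Euclidean structure with $\phi_{\cN'}=\phi$, i.e.\ $\phi\in\Independence[\ell']$, for every $\ell'<\ell$.

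This is really just bookkeeping: all the substance lives in Lemma~\ref{lem:dreal}, where the $d$-realization and its almost inverse family are constructed, and in Lemma~\ref{lem:widehatEuclidean}, where it is checked that $f^*$ preserves the represented limit while spending exactly one level of independence to absorb the order variables. The only points needing any attention in the corollary itself are the vacuous case $\ell=0$ and the trivial downward monotonicity of $\ell$-independence in $\ell$, so I do not anticipate a genuine obstacle here.
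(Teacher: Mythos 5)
Your proposal is correct and follows essentially the same route as the paper: apply Lemma~\ref{lem:dreal} to get a $d$-realization, then use Lemma~\ref{lem:widehatEuclidean}, items~\ref{lem:widehatEuclidean:phi} and~\ref{lem:widehatEuclidean:dreal}, to produce an a.e.\ $(\ell-1)$-independent $0$-Euclidean structure representing $\phi$, and finish by the trivial downward monotonicity of $\ell$-independence (the paper phrases this as reducing to $\ell'=\ell-1$). The only cosmetic difference is that the paper takes the $d$-realization from $\Omega$ to itself rather than from $([0,1),\lambda)$, which changes nothing.
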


\begin{proof}
  It suffices to show the case $\ell'=\ell-1$. If $\phi$ is $\ast$-$\ell$-independent, then there
  exist $d\in\NN$ and an $\ell$-independent $d$-Euclidean structure $\cN$ over some space
  $\Omega=(X,\mu)$ such that $\phi=\phi_\cN$. By Lemma~\ref{lem:dreal}, there exists a
  $d$-realization $f$ from $\Omega$ to $\Omega$ and by Lemma~\ref{lem:widehatEuclidean},
  items~\ref{lem:widehatEuclidean:phi} and~\ref{lem:widehatEuclidean:dreal}, we have
  $\phi=\phi_{f^*(\cN)}$ and $f^*(\cN)$ is a $0$-Euclidean structure that is a.e.\ equal to some
  $(\ell-1)$-independent $0$-Euclidean structure $\cH$. In particular, this implies that for the
  $(\ell-1)$-independent $0$-Euclidean structure $\cH$, we have $\phi=\phi_\cH$, so $\phi$ satisfies
  $\Independence[\ell-1]$.
\end{proof}

Another direct corollary is that the rank of a limit can be alternatively defined as the minimum
rank of any $d$-Euclidean structure representing it (instead taking the minimum over only
$0$-Euclidean structures representing it):

\begin{corollary}\label{cor:rk}
  For every limit $\phi$, we have
  \begin{align*}
    \rk(\phi) & = \min_\cN \rk(\cN),
  \end{align*}
  where the minimum is over all $d$-Euclidean structures $\cN$ over some space $\Omega$ and for some
  $d\in\NN$ such that $\phi=\phi_\cN$.
\end{corollary}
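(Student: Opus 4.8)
The plan is to derive both inequalities from the $d$-realization machinery already developed. The inequality $\min_\cN\rk(\cN)\le\rk(\phi)$ requires no work: by definition $\rk(\phi)$ is the minimum of $\rk(\cN)$ over $0$-$T$-ons $\cN$ with $\phi_\cN=\phi$, and every such $\cN$ is in particular a $d$-Euclidean structure (with $d=0$) representing $\phi$, hence is among the structures over which the right-hand minimum is taken.

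For the reverse inequality I would fix an arbitrary $d$-Euclidean structure $\cN$ over some atomless standard probability space $\Omega$ with $\phi_\cN=\phi$ and produce a $0$-$T$-on of no larger rank representing $\phi$. By Lemma~\ref{lem:dreal} there is a $d$-realization $f$ from $\Omega$ to $\Omega$, and I would pass to $f^*(\cN)$, which is a $0$-Euclidean structure over $\Omega$. Lemma~\ref{lem:widehatEuclidean}\ref{lem:widehatEuclidean:phi} gives $\phi_{f^*(\cN)}=\phi_\cN=\phi$ and Lemma~\ref{lem:widehatEuclidean}\ref{lem:widehatEuclidean:rk} gives $\rk(f^*(\cN))\le\rk(\cN)$. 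Since $\tind(K,\place)$ depends only on the represented limit and $\phi\in\HomT$ vanishes on structures that are not models of $T$, we have $\tind(K,f^*(\cN))=\tind(K,\cN)=0$ whenever $K$ is not a model of $T$, so $f^*(\cN)$ is in fact a $0$-$T$-on. Hence $\rk(\phi)\le\rk(f^*(\cN))\le\rk(\cN)$, and taking the minimum over all such $\cN$ yields $\rk(\phi)\le\min_\cN\rk(\cN)$. Combining the two inequalities gives the claimed equality.

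There is essentially no obstacle here beyond invoking the results already in hand: the content lives entirely in Lemmas~\ref{lem:dreal} and~\ref{lem:widehatEuclidean}. The only point worth stating explicitly is why $f^*(\cN)$ qualifies as a $0$-$T$-on rather than merely as a $0$-Euclidean structure, which is immediate from the identity $\tind(K,f^*(\cN))=\tind(K,\cN)$ for every finite $\cL$-structure $K$.
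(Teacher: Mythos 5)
Your proof is correct and follows essentially the same route as the paper: the easy inequality from the definition of $\rk(\phi)$, and the reverse inequality via Lemma~\ref{lem:dreal} together with items~\ref{lem:widehatEuclidean:phi} and~\ref{lem:widehatEuclidean:rk} of Lemma~\ref{lem:widehatEuclidean} applied to $f^*(\cN)$. Your explicit remark that $f^*(\cN)$ is a $0$-$T$-on (via $\tind(K,f^*(\cN))=\tind(K,\cN)$) is a point the paper leaves implicit, but it is the same argument.
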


\begin{proof}
  Let $m$ be the minimum on the right-hand side. From the definition of rank, it is clear that
  $\rk(\phi)\geq m$. On the other hand, if $\cN$ is a $d$-Euclidean structure over some space
  $\Omega$ with $\phi=\phi_\cN$ and $\rk(\cN)=m$, then by Lemma~\ref{lem:dreal}, there exists a
  $d$-realization $f$ from $\Omega$ to $\Omega$ and by Lemma~\ref{lem:widehatEuclidean},
  items~\ref{lem:widehatEuclidean:phi} and~\ref{lem:widehatEuclidean:rk}, we have
  $\phi=\phi_{f^*(\cN)}$ and $\rk(\phi)\leq\rk(f^*(\cN))\leq\rk(\cN)=m$ (as $f^*(\cN)$ is a
  $0$-Euclidean structure).
\end{proof}

\section{Uniqueness for $d$-theons}
\label{sec:uniqueness}

This section is devoted to proving the two easiest non-trivial implications of
Theorem~\ref{thm:UCouple}, namely, the
implications~\ref{thm:UCouple:astellindep}$\implies$\ref{thm:UCouple:UCouple}, which says that any
$\ast$-$\ell$-independent limit satisfies $\UCouple[\ell]$
(Proposition~\ref{prop:astellindep->UCouple}) and the
implication~\ref{thm:UCouple:kqrOcoupling}$\implies$\ref{thm:UCouple:UCouple}, which says that any
limit obtained as a quantifier-free definition of an independent coupling of an
$(\ell+1)$-quasirandom orientation with an $\ell$-independent limit satisfies $\UCouple[\ell]$
(Corollary~\ref{cor:kqrOcoupling->UCouple}).

The proof idea is the same as the proof that $\Independence[\ell]\implies\UCouple[\ell]$
from~\cite[Theorem~3.2]{CR23}: we first show that if $\rk(\psi)\leq\ell$, then every $d$-Euclidean
structure $\cN$ representing $\psi$ must be a.e.\ equal to a $d$-Euclidean structure of rank at most
$\ell$ (Lemma~\ref{lem:rk}). This means that for a coupling $\xi$ of a $\ast$-$\ell$-independent
limit $\phi$ with $\psi$, we can represent $\xi$ by some $d$-Euclidean structure $\cH$ such that the
$\phi$ part is $\ell$-independent and the $\psi$ part has ``almost everywhere'' rank at most $\ell$;
thus $\tind(\place,\cH)$ behaves multiplicatively so $\xi$ must be the independent coupling
$\phi\otimes\psi$.

The main ingredient for such a proof is $d$-theon uniqueness that characterizes when two $d$-theons
represent the same limit. We start by stating the $0$-theon uniqueness proved
in~\cite{CR20}\footnote{Let us note that the statement in~\cite{CR20} is slightly different from the
  below in two aspects: First, the measurability is with respect to the completion of the spaces
  $\cE_k(\Omega)$ on the domains of the functions (i.e., the analogue of Lebesgue measurability as
  opposed to Borel measurability); however, one can get Borel measurability by simply changing the
  functions on a zero measure set. Second, the notions of symmetric and measure-preserving are not
  a.e.\ but everywhere notions in~\cite{CR20}, but this is clearly not an issue (see
  Lemma~\ref{lem:widehatEuclidean}\ref{lem:widehatEuclidean:phi}).}.

\begin{theorem}[Theon Uniqueness~\protect{\cite[Theorems~3.9 and~3.11, Proposition~7.7]{CR20}}]\label{thm:TU}
  The following are equivalent for $T$-ons $\cN^1$ and $\cN^2$ over spaces $\Omega_1$ and
  $\Omega_2$, respectively.
  \begin{enumerate}
  \item We have $\phi_{\cN^1} = \phi_{\cN^2}$ (i.e., $\cN^1$ and $\cN^2$ represent the same limit).
  \item For some (equivalently, every) atomless standard probability space $\Omega$, there exist
    families $f=(f_i)_{i\in\NN_+}$ and $g=(g_i)_{i\in\NN_+}$ of symmetric measure-preserving on
    h.o.a.\ functions $f_i\colon\cE_i(\Omega)\to\Omega_1$ and $g_i\colon\cE_i(\Omega)\to\Omega_2$
    such that $f^*(\cN^1)$ is a.e.\ equal to $g^*(\cN^2)$.
  \item There exists a family $h=(h_i)_{i\in\NN_+}$ of symmetric measure-preserving on
    h.o.a.\ functions ($h_i\colon\cE_i(\Omega_1\times\Omega_1)\to\Omega_2$) such that $h^*(\cN^2)$
    is a.e.\ equal to the the $T$-on $\cH$ over $\Omega_1\times\Omega_1$ given by
    \begin{align*}
      \cH_P & \df \{(x,y)\in\cE_{k(P)}(\Omega\times\Omega) \mid x\in\cN^1_P\}
      \qquad (P\in\cL),
    \end{align*}
    where $\cL$ is the language of $T$.
  \end{enumerate}
\end{theorem}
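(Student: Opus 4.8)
The plan is to prove the cycle (2)$\implies$(1)$\implies$(3)$\implies$(2), together with the clause ``for some $\iff$ for every $\Omega$'' inside item~(2); the only genuinely hard step is (1)$\implies$(3). The implication (2)$\implies$(1) is immediate from Lemma~\ref{lem:widehatEuclidean}\ref{lem:widehatEuclidean:phi}: it gives $\phi_{f^*(\cN^1)}=\phi_{\cN^1}$ and $\phi_{g^*(\cN^2)}=\phi_{\cN^2}$, and since a.e.\ equal Euclidean structures represent the same limit, the hypothesis $f^*(\cN^1)=g^*(\cN^2)$ a.e.\ forces $\phi_{\cN^1}=\phi_{\cN^2}$.

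For (3)$\implies$(2) I would work over $\Omega\df\Omega_1\times\Omega_1$ and take $f_i(x,y)\df x_{[i]}$ for $(x,y)\in\cE_i(\Omega_1\times\Omega_1)$; this is trivially symmetric and measure-preserving on h.o.a., and unwinding the definition of $\widehat{f}_V$ shows that $\widehat{f}_V$ is (a.e.) the projection onto the first coordinate, so that $f^*(\cN^1)$ is exactly the $T$-on $\cH$ of item~(3). Taking $g\df h$ then yields $f^*(\cN^1)=\cH$ a.e.\ equal to $h^*(\cN^2)=g^*(\cN^2)$, which is item~(2) for this particular $\Omega$. To upgrade ``some $\Omega$'' to ``every atomless standard probability space $\Omega$'', I would use the transfer trick from the proof of Lemma~\ref{lem:dreal}: precompose the $f_i$ and $g_i$ coordinatewise with measure-isomorphisms mod $0$ (every atomless standard probability space is isomorphic mod $0$ to $([0,1],\lambda)$), noting that symmetry and measure-preservation on h.o.a.\ are preserved and that a.e.\ equalities are preserved under pulling back along measure-preserving maps.

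For (1)$\implies$(3) I would first upgrade the hypothesis to the countable vertex set $V=\NN$: each finite marginal $\mu^{\cN^i}_U$ is determined by $\phi_{\cN^i}$ because $\tind(K,\cN^i)=\mu^{\cN^i}_{V(K)}(\{K\})$, the cylinder sets form a generating $\pi$-system of the Borel space $\cK_\NN$, and hence $\phi_{\cN^1}=\phi_{\cN^2}$ forces $\mu^{\cN^1}_\NN=\mu^{\cN^2}_\NN$ by the $\pi$-$\lambda$ theorem. Then I would construct the family $h=(h_i)_{i\in\NN_+}$, with $h_i\colon\cE_i(\Omega_1\times\Omega_1)\to\Omega_2$, by induction on $i$, arranging that the induced map $\widehat{h}_\NN\colon\cE_\NN(\Omega_1\times\Omega_1)\to\cE_\NN(\Omega_2)$ is measure-preserving and equivariant (Lemma~\ref{lem:widehat}) and satisfies $M^{\cN^2}_\NN(\widehat{h}_\NN(x,y))=M^{\cN^1}_\NN(x)$ for $(\mu_1\otimes\mu_1)$-almost every $(x,y)$; by the definition of $f^*$ this last identity says precisely that $h^*(\cN^2)$ is a.e.\ equal to $\cH$. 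At level $i$, having already fixed the outputs on all proper non-empty subsets of $[i]$, the equality $\mu^{\cN^1}_\NN=\mu^{\cN^2}_\NN$ supplies, via disintegration on standard Borel spaces, a probability kernel giving the conditional law of the ``top'' $\cN^2$-coordinate given the lower-arity $\cN^2$-coordinates and given the structure that $x$ forces on $[i]$; one realizes this kernel measurably using the fresh coordinate $y_{[i]}$ (a parametrized measurable-selection, i.e.\ randomization, argument), and, because the kernel is built from the $S_i$-invariant data $\mu^{\cN^1}_\NN,\mu^{\cN^2}_\NN$, the selection can be made $S_i$-equivariantly so that $h_i$ is symmetric. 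Checking, tuple by tuple, that feeding $(x,y)$ into $\widehat{h}_\NN$ produces a $\cN^2$-structure agreeing with $M^{\cN^1}_\NN(x)$ closes the induction.

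The step I expect to be the main obstacle is exactly this inductive construction in (1)$\implies$(3): one must produce, level by level, a Borel family $h_i$ that is simultaneously symmetric, measure-preserving on the highest-order argument, and structure-matching, with each $h_i$ depending only on the coordinates of $\cE_i$. The tension is that the conditional law to be realized at level $i$ varies measurably and $S_i$-equivariantly with the lower-arity data, so one needs an equivariant measurable selection of a randomization of a parametrized family of measures on a standard Borel space, and then a verification that these choices across all arities assemble into a single coherent, measure-preserving $\widehat{h}_\NN$; this is where the bulk of the work in~\cite{CR20} lies.
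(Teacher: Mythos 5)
A preliminary remark: the paper does not prove Theorem~\ref{thm:TU} at all --- it is imported verbatim from \cite{CR20} (the surrounding footnote only reconciles measurability conventions) --- so your attempt can only be measured against the cited proof, not against an argument appearing here. Your easy steps are correct: (2)$\implies$(1) is exactly Lemma~\ref{lem:widehatEuclidean}\ref{lem:widehatEuclidean:phi}; (3)$\implies$(2) via the first-coordinate projection $f_i(x,y)\df x_{[i]}$ works (it is the same trivial $d=0$ device used in Lemma~\ref{lem:dreal}), and the upgrade from ``some $\Omega$'' to ``every $\Omega$'' by coordinatewise measure-isomorphisms modulo $0$ is sound.

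The genuine gap is in (1)$\implies$(3), which is the entire content of the theorem. At level $i$ you propose to realize ``the conditional law of the top $\cN^2$-coordinate given the lower-arity $\cN^2$-coordinates and given the structure that $x$ forces on $[i]$.'' That conditioning is far too weak: matching it only reproduces the distribution of the induced structure on $[i]$ itself, whereas the target identity $M^{\cN^2}_\NN(\widehat{h}_\NN(x,y))=M^{\cN^1}_\NN(x)$ a.e.\ requires the chosen value $z_{[i]}=h_i(\cdot)$ to have the same \emph{extension behavior} as the data it replaces, i.e.\ to be compatible with the structures generated later on every superset of $[i]$ involving vertices that $h_i$ cannot see (locality forces $h_i$ to depend only on coordinates indexed by $r(i)$). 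The complete bipartite example from the introduction already defeats the sketch at $i=1$: the structure forced on a single vertex is trivial, so your kernel is just the marginal $\mu_2$, and a realization of it may send a ``left'' unary value of $\cN^1$ to a ``right'' unary value of $\cN^2$; no later choice of $z_{\{1,2\}}$ can then repair the structure on pairs. So the kernel one must realize is conditioned on (at least) the full type of $x\rest_{r(i)}$ --- the conditional law of all finite extensions --- and the actual work is to show that this type is a.e.\ a measurable function of the coordinates available to $h_i$, that the $\cN^1$- and $\cN^2$-side kernels can be intertwined pointwise (not merely in distribution), and that an $S_i$-invariant measurable realization exists; on this last point note that the paper itself warns, in the proof of Proposition~\ref{prop:UCouple->1ellindep}, that $S_a$-invariant kernels need not be generated by equivariant functions, so equivariance of the selection needs an argument (compare the fundamental-domain device in Lemma~\ref{lem:keymeas}). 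These are precisely the difficulties the machinery of \cite{CR20} --- and, in the refined setting of this paper, Sections~\ref{sec:types}--\ref{sec:amlg} --- exists to overcome; ``disintegration plus equivariant selection, conditioning on the induced structure'' does not close them.
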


The avid reader might have noticed that we did not use almost inverse families of $d$-realizations
in any of the applications in Section~\ref{sec:dreal}; this is where they will become useful: we
will reduce $d$-theon uniqueness to theon uniqueness via $d$-realizations and their almost inverse
families.

\begin{theorem}[$d$-Theon Uniqueness]\label{thm:dTU}
  The following are equivalent for a $d_1$-$T$-on $\cN^1$ over $\Omega_1$ and a $d_2$-$T$-on $\cN^2$
  over $\Omega_2$.
  \begin{enumerate}
  \item\label{thm:dTU:phi} We have $\phi_{\cN^1} = \phi_{\cN^2}$ (i.e., $\cN^1$ and $\cN^2$
    represent the same limit).
  \item\label{thm:dTU:everyfg} For every $d\in\NN$ and every atomless standard probability space
    $\Omega$, there exist families $f=(f_i)_{i\in\NN_+}$ and $g=(g_i)_{i\in\NN_+}$ of symmetric
    measure-preserving on h.o.a.\ functions
    $f_i\colon\cE_i^{(d)}(\Omega)\to\Omega_1\times\cO_i^{d_1}$ and
    $g_i\colon\cE_i^{(d)}(\Omega)\to\Omega_2\times\cO_i^{d_2}$ such that $f^*(\cN^1)$ is a.e.\ equal
    to $g^*(\cN^2)$.
  \item\label{thm:dTU:existsfg} Item~\ref{thm:dTU:existsfg} holds for some $d$ and $\Omega$.
  \item\label{thm:dTU:h} There exists a family $h=(h_i)_{i\in\NN_+}$ of symmetric measure-preserving
    on h.o.a.\ functions
    ($h_i\colon\cE_i^{(d_1)}(\Omega_1\times\Omega_1)\to\Omega_2\times\cO_i^{d_2}$) such that
    $h^*(\cN^2)$ is a.e.\ equal to the $d_1$-$T$-on $\cH$ over $\Omega_1\times\Omega_1$ given by
    \begin{align*}
      \cH_P
      & \df
      \{(x,y,{\lhd})\in\cE_{k(P)}^{(d_1)}(\Omega\times\Omega) \mid (x,{\lhd})\in\cN^1_P\}
      \qquad (P\in\cL),
    \end{align*}
    where $\cL$ is the language of $T$.
  \end{enumerate}
\end{theorem}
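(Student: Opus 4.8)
The plan is to reduce everything to the $d=0$ case, Theon Uniqueness (Theorem~\ref{thm:TU}), using the $d$-realizations of Lemma~\ref{lem:dreal} and — crucially — their almost inverse families. I will use freely two observations. First, if $g=(g_i)$ and $f=(f_i)$ are composable families of symmetric measure-preserving on h.o.a.\ functions, then $(f\circ g)_i\df f_i\comp\widehat g_i$ is again such a family with $\widehat{f\circ g}_V=\widehat f_V\comp\widehat g_V$ a.e.\ (by Lemma~\ref{lem:widehat}\ref{lem:widehat:equivariance}), hence $(f\circ g)^*(\cN)=g^*\bigl(f^*(\cN)\bigr)$; combined with the measure-preservation of $\widehat g_V$ (Lemma~\ref{lem:widehat}\ref{lem:widehat:measpres}), precomposition by such a $g$ also preserves a.e.\ equality of Euclidean structures. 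Second, $\cH$ is itself a pullback of $\cN^1$: the family $e_i(x,y,{\lhd})\df(x_{[i]},{\lhd}_{[i]})$ (``keep the first $\Omega_1$-coordinate and the orders, forget the second'') is symmetric measure-preserving on h.o.a.\ with $\widehat e_V(x,y,{\lhd})=(x,{\lhd})$, so $e^*(\cN^1)=\cH$. Thus \ref{thm:dTU:everyfg}$\implies$\ref{thm:dTU:existsfg} is trivial, \ref{thm:dTU:h}$\implies$\ref{thm:dTU:existsfg} follows by taking $e$ and $h$ as the two families over $\Omega=\Omega_1\times\Omega_1$ with $d=d_1$, and \ref{thm:dTU:existsfg}$\implies$\ref{thm:dTU:phi} follows exactly as in Lemma~\ref{lem:widehatEuclidean}\ref{lem:widehatEuclidean:phi}, since $\phi_{f^*(\cN^1)}=\phi_{\cN^1}$, $\phi_{g^*(\cN^2)}=\phi_{\cN^2}$, and a.e.\ equal Euclidean structures share the same $\phi$.

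\textbf{The implication \ref{thm:dTU:phi}$\implies$\ref{thm:dTU:everyfg}.} Use Lemma~\ref{lem:dreal} to choose a $d_1$-realization $f^1$ from some atomless standard $\Theta_1$ to $\Omega_1$ and a $d_2$-realization $f^2$ from $\Theta_2$ to $\Omega_2$, so that the $0$-$T$-ons $\cM^1\df(f^1)^*(\cN^1)$ over $\Theta_1$ and $\cM^2\df(f^2)^*(\cN^2)$ over $\Theta_2$ satisfy $\phi_{\cM^1}=\phi_{\cN^1}=\phi_{\cN^2}=\phi_{\cM^2}$ by Lemma~\ref{lem:widehatEuclidean}\ref{lem:widehatEuclidean:phi}. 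Apply Theorem~\ref{thm:TU}, item~(2), over a chosen atomless standard $\Omega$ to get families $p=(p_i)$, $q=(q_i)$ of symmetric measure-preserving on h.o.a.\ functions $p_i\colon\cE_i(\Omega)\to\Theta_1$, $q_i\colon\cE_i(\Omega)\to\Theta_2$ with $p^*(\cM^1)$ a.e.\ equal to $q^*(\cM^2)$. Then $f^1\comp p$ and $f^2\comp q$ have the required form and $(f^1\comp p)^*(\cN^1)=p^*(\cM^1)$ is a.e.\ equal to $q^*(\cM^2)=(f^2\comp q)^*(\cN^2)$, giving \ref{thm:dTU:everyfg} for $d=0$. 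For general $d$ and arbitrary atomless standard $\Omega'$, precompose by the symmetric measure-preserving on h.o.a.\ family $b_i(x,{\lhd})\df\rho(x_{[i]})$ with $\rho$ a measure-isomorphism of $\Omega'$ with $\Omega$ (this discards the domain's order variables), which again preserves a.e.\ equality. Together with the trivial \ref{thm:dTU:everyfg}$\implies$\ref{thm:dTU:existsfg} and \ref{thm:dTU:existsfg}$\implies$\ref{thm:dTU:phi} above, this already makes \ref{thm:dTU:phi}, \ref{thm:dTU:everyfg}, \ref{thm:dTU:existsfg} equivalent.

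\textbf{The main step \ref{thm:dTU:phi}$\implies$\ref{thm:dTU:h}.} Keep $f^1,f^2,\cM^1,\cM^2$ and fix an almost inverse family $g^1=(g^1_i)$ of $f^1$, so $\widehat{f^1}_V\bigl(\widehat{g^1}_V(u,v,{\lhd})\bigr)=(u,{\lhd})$ a.e. Apply Theorem~\ref{thm:TU}, item~(3), to obtain $k=(k_i)$ with $k_i\colon\cE_i(\Theta_1\times\Theta_1)\to\Theta_2$ symmetric measure-preserving on h.o.a.\ and $k^*(\cM^2)$ a.e.\ equal to the doubling $\cM^1_\square$ of $\cM^1$ over $\Theta_1\times\Theta_1$ (i.e.\ the $\cH$-type construction applied to $\cM^1$). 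Composing, $(f^2\comp k)^*(\cN^2)=k^*(\cM^2)$ is a.e.\ equal to $\cM^1_\square$; it remains to transport $\cM^1_\square$ back to a doubling of $\cN^1$ using $g^1$. The obvious attempt — feeding $g^1$ into $f^2\comp k$ — fails for a measure-theoretic reason: since $g^1_i$ is an almost inverse it already exhausts the randomness of the highest-order argument, leaving no room to fill the second $\Theta_1$-coordinate of the doubled space in a measure-preserving way. I fix this by enlarging the domain by an auxiliary copy of $\Theta_1$: set $\widetilde g_i\colon\cE_i^{(d_1)}(\Omega_1\times\Omega_1\times\Theta_1)\to\Theta_1\times\Theta_1$, $\widetilde g_i(u,v,t,{\lhd})\df\bigl(g^1_i(u,v,{\lhd}),t_{[i]}\bigr)$, which is symmetric measure-preserving on h.o.a.\ (first coordinate consumes the randomness used by $g^1$, second is supplied by the fresh variable $t_{[i]}$), with $\widehat{\widetilde g}_V(u,v,t,{\lhd})=\bigl(\widehat{g^1}_V(u,v,{\lhd}),t\bigr)$. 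Then for $h'\df(f^2\comp k)\comp\widetilde g$ one has $\widehat{\widetilde g}_{k(P)}(u,v,t,{\lhd})\in(\cM^1_\square)_P\iff\widehat{g^1}_{k(P)}(u,v,{\lhd})\in\cM^1_P\iff\widehat{f^1}_{k(P)}\bigl(\widehat{g^1}_{k(P)}(u,v,{\lhd})\bigr)\in\cN^1_P\iff(u,{\lhd})\in\cN^1_P$ (the last step by the almost-inverse identity), so $(h')^*(\cN^2)$ is a.e.\ equal to the doubling of $\cN^1$ over $\Omega_1\times(\Omega_1\times\Theta_1)$.

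\textbf{Contracting the auxiliary copy.} Fix a measure-isomorphism $\psi$ of $\Omega_1$ with $\Omega_1\times\Theta_1$ (one exists, all atomless standard spaces being measure-isomorphic). The symmetric measure-preserving on h.o.a.\ family $\kappa_i\colon\cE_i^{(d_1)}(\Omega_1\times\Omega_1)\to\bigl(\Omega_1\times(\Omega_1\times\Theta_1)\bigr)\times\cO_i^{d_1}$, $\kappa_i(u,v,{\lhd})\df\bigl(u_{[i]},\psi(v_{[i]}),{\lhd}_{[i]}\bigr)$, has $\widehat\kappa_V(u,v,{\lhd})=(u,\psi\comp v,{\lhd})$ and therefore transports the doubling of $\cN^1$ over $\Omega_1\times(\Omega_1\times\Theta_1)$ back to $\cH$. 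Hence $h\df h'\comp\kappa$ has the form required in~\ref{thm:dTU:h} and $h^*(\cN^2)=\kappa^*\bigl((h')^*(\cN^2)\bigr)$ is a.e.\ equal to $\cH$, completing the cycle. The one genuine obstacle in the whole argument is the ``no room'' phenomenon in un-realizing via the almost inverse; once it is handled by the enlarge-then-contract device, the rest is bookkeeping with the composition identities $\widehat{f\comp g}_V=\widehat f_V\comp\widehat g_V$ and the fact that precomposition by a measure-preserving-on-h.o.a.\ family respects a.e.\ equality.
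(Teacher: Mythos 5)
Your proposal is correct and follows essentially the same route as the paper: reduce to Theorem~\ref{thm:TU} by passing to the $0$-$T$-ons obtained from $d$-realizations (Lemmas~\ref{lem:dreal} and~\ref{lem:widehatEuclidean}), and for item~\ref{thm:dTU:h} undo the realization on the $\cN^1$ side with an almost inverse family, using a measure-isomorphism to reconcile the dummy coordinate the almost inverse needs with the doubled coordinate of $\cH$. Your ``enlarge by an auxiliary copy, then contract $\Omega_1\times\Theta_1\cong\Omega_1$'' device is just the mirror image of the paper's splitting of the second $\Omega_1$-coordinate via a measure-isomorphism $\Omega_1\cong\Omega_1\times\Omega_1$ (feeding one half to $G_1$ and the other to the doubled coordinate), so the two arguments coincide up to bookkeeping.
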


\begin{proof}
  The implication~\ref{thm:dTU:everyfg}$\implies$\ref{thm:dTU:existsfg} is trivial.

  \medskip

  The implications~\ref{thm:dTU:existsfg}$\implies$\ref{thm:dTU:phi}
  and~\ref{thm:dTU:h}$\implies$\ref{thm:dTU:phi} follow from
  Lemma~\ref{lem:widehatEuclidean}\ref{lem:widehatEuclidean:phi}.

  \medskip

  Let us now show the implication~\ref{thm:dTU:phi}$\implies$\ref{thm:dTU:everyfg}.

  By Lemma~\ref{lem:dreal}, for $j\in[2]$, there exists a $d_j$-realization $F_j$ from $\Omega_j$ to
  $\Omega_j$. Since by Lemma~\ref{lem:widehatEuclidean}\ref{lem:widehatEuclidean:phi} we have
  \begin{align*}
    \phi_{F_1^*(\cN^1)} & = \phi_{\cN^1} = \phi_{\cN^2} = \phi_{F_2^*(\cN^2)}
  \end{align*}
  for the $0$-$T$-ons $F_1^*(\cN^1)$ and $F_2^*(\cN^2)$, by Theorem~\ref{thm:TU}, there exist
  families $f'=(f'_i)_{i\in\NN_+}$ and $g'=(g'_i)_{i\in\NN_+}$ of symmetric measure-preserving on
  h.o.a.\ functions $f'_i\colon\cE_i(\Omega)\to\Omega_1$ and $g'_i\colon\cE_i(\Omega)\to\Omega_2$
  such that $(f')^*(F_1^*(\cN^1))$ is a.e.\ equal to $(g')^*(F_2^*(\cN^2))$.

  Define $f_i\colon\cE_i^{(d)}(\Omega)\to\Omega_1\times\cO_i^{d_1}$ and
  $g_i\colon\cE_i^{(d)}(\Omega)\to\Omega_2\times\cO^{d_2}$ by
  \begin{align*}
    f_i(x,{\leq}) & \df ((F_1)_i\comp\widehat{f'}_i)(x), &
    g_i(x,{\leq}) & \df ((F_2)_i\comp\widehat{g'}_i)(x).
  \end{align*}
  (Note that both $f_i$ and $g_i$ completely ignore the order variables.) It is straightforward to
  check that $f_i$ and $g_i$ are symmetric and measure-preserving on h.o.a.\ and that $f^*(\cN^1)$
  is a.e.\ equal to $g^*(\cN^2)$.

  \medskip

  It remains to show the implication~\ref{thm:dTU:phi}$\implies$\ref{thm:dTU:h}.

  Let $F_1$ be a $d_1$-realization from $\Omega_1$ to $\Omega_1$ and let $G_1$ be an almost inverse
  family of $F$. Let also $F_2$ be a $d_2$-realization from $\Omega_2$ to $\Omega_2$. Since by
  Lemma~\ref{lem:widehatEuclidean}\ref{lem:widehatEuclidean:phi} we have
  \begin{align*}
    \phi_{F_1^*(\cN^1)} & = \phi_{\cN^1} = \phi_{\cN^2} = \phi_{F_2^*(\cN^2)}
  \end{align*}
  for the $0$-$T$-ons $F_1^*(\cN^1)$ and $F_2^*(\cN^2)$, by Theorem~\ref{thm:dTU}, there exists a
  family $h'=(h'_i)_{i\in\NN_+}$ of symmetric measure-preserving on h.o.a.\ functions
  $h'_i\colon\cE_i(\Omega_1\times\Omega_1)\to\Omega_2$ such that $(h')^*(F_2^*(\cN^2))$ is
  a.e.\ equal to the $T$-on $\cH'$ over $\Omega_1\times\Omega_1$ given by
  \begin{align*}
    \cH'_P & \df \{(x,y)\in\cE_{k(P)}(\Omega_1\times\Omega_1) \mid x\in F_1^*(\cN^1)_P\}
    \qquad (P\in\cL).
  \end{align*}

  Let now $H\colon\Omega_1\to\Omega_1\times\Omega_1$ be a measure-isomorphism modulo $0$ and let
  $H^1,H^2\colon\Omega_1\to\Omega_1$ be its coordinate functions (which are measure-preserving) and
  define functions $\widetilde{H}^1_i,\widetilde{H}^2_i\colon\cE_i(\Omega_1)\to\cE_i(\Omega_1)$ by
  \begin{align*}
    \widetilde{H}^j_i(y)_A & \df H^j(y_A) \qquad (j\in[2], y\in\cE_i(\Omega_1), A\in r(i)).
  \end{align*}
  Clearly $\widetilde{H}^j_i$ is measure-preserving.

  Finally, define $h_i\colon\cE_i^{(d)}(\Omega_1\times\Omega_1)\to\Omega_2\times\cO_i^{d_2}$ by
  \begin{align*}
    h_i(x,y,{\leq})
    & \df
    (F_2)_i\Bigl(
    \widehat{h'}_i\bigl(\widehat{(G_1)}_i(x,\widetilde{H}^1_i(y),{\leq}), \widetilde{H}^2_i(y)
    \bigr)\Bigr).
  \end{align*}
  It is straightforward to check that $h_i$ is symmetric and measure-preserving on h.o.a.

  Let us now show that $h^*(\cN^2)$ is a.e.\ equal to the $d_1$-$T$-on $\cH$ over
  $\Omega_1\times\Omega_1$ given by
  \begin{align*}
    \cH_P
    & \df
    \{(x,y,{\lhd})\in\cE_{k(P)}^{(d_1)}(\Omega\times\Omega) \mid (x,{\lhd})\in\cN^1_P\}
    \qquad (P\in\cL).
  \end{align*}

  Fix $P\in\cL$ and note that for $(\mu_1\otimes\mu_1)^{(d_1)}$-almost every
  $(x,y,{\leq})\in\cE_{k(P)}^{(d_1)}(\Omega_1\times\Omega_1)$, we have
  \begin{align*}
    & \!\!\!\!\!\!
    (x,y,{\leq})\in\cH_P
    \\
    & \iff
    (x,{\leq})\in\cN^1_P
    \\
    & \iff
    \widehat{(F_1)}_{k(P)}\Bigl(\widehat{(G_1)}_{k(P)}\bigl(x,\widetilde{H}_{k(P)}^1(y),{\leq}
    \bigr)\Bigr)
    \in\cN^1_P
    \\
    & \iff
    \widehat{(G_1)}_{k(P)}\bigl(x,\widetilde{H}_{k(P)}^1(y),{\leq}\bigr)\in F_1^*(\cN^1)_P
    \\
    & \iff
    \Bigl(\widehat{(G_1)}_{k(P)}\bigl(x,\widetilde{H}_{k(P)}^1(y),{\leq}\bigr),\widetilde{H}^2_{k(P)}(y)
    \Bigr)
    \in
    \cH'_P
    \\
    & \iff
    \Bigl(\widehat{(G_1)}_{k(P)}\bigl(x,\widetilde{H}_{k(P)}^1(y),{\leq}\bigr),
    \widetilde{H}^2_{k(P)}(y)
    \Bigr)
    \in
    (h')^*(F_2^*(\cN^2))_P
    \\
    & \iff
    (\widehat{F_2})_{k(P)}\Biggl(
    \biggl(\widehat{h'}_{k(P)}
    \Bigl(\widehat{(G_1)}_{k(P)}\bigl(x,\widetilde{H}_{k(P)}^1(y),{\leq}\bigr),
    \widetilde{H}^2_{k(P)}(y)
    \Bigr)\biggr)\Biggr)
    \in\cN^2_P
    \\
    & \iff
    \widehat{h}_{k(P)}(x,y,{\leq})\in\cN^2_P
    \\
    & \iff
    (x,y,{\leq})\in h^*(\cN^2)_P,
  \end{align*}
  as desired, where the second equivalence follows since $G$ is an almost inverse family to $F$ and
  $\widetilde{H}^1_{k(P)}$ is measure-preserving, the fourth equivalence follows by the definition
  of $\cH'$, the fifth equivalence follows since $\cH'$ is a.e.\ equal to $(h')^*(F_2^*(\cN^2))$ and
  the seventh equivalence follows by the definition of $h$.
\end{proof}

The next lemma shows that except for zero measure changes, the notion of rank does not depend on the
representation.

\begin{lemma}\label{lem:rk}
  If $\cN$ is a $d$-Euclidean structure over $\Omega$, then it is a.e.\ equal to a
  $d$-Euclidean structure $\cN'$ over $\Omega$ with $\rk(\cN')=\rk(\phi_\cN)$. Moreover, if $\cN$ is
  $\ell$-independent from some $\ell$, then $\cN'$ can be taken to also be $\ell$-independent.
\end{lemma}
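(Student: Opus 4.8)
The plan is to use $d$-theon uniqueness (Theorem~\ref{thm:dTU}) to import a minimal-rank representation of the limit $\phi_\cN$ and then cut it down to a representation over the given $\Omega$ that is a.e.\ equal to $\cN$. Set $m\df\rk(\phi_\cN)$. By the definition of the rank of a limit there is a $0$-Euclidean structure $\cM$ over some atomless standard probability space $\Omega'$, in the language of $\cN$, with $\phi_\cM=\phi_\cN$ and $\rk(\cM)=m$. Since $\phi_\cN=\phi_\cM$, the implication~\ref{thm:dTU:phi}$\implies$\ref{thm:dTU:h} of Theorem~\ref{thm:dTU} (applied with $T=T_\cL$ the pure canonical theory, $\cN^1\df\cN$ and $\cN^2\df\cM$) yields a family $h=(h_i)_{i\in\NN_+}$ of symmetric measure-preserving on h.o.a.\ functions $h_i\colon\cE_i^{(d)}(\Omega\times\Omega)\to\Omega'$ such that $h^*(\cM)$ is a.e.\ equal to the $d$-Euclidean structure $\cH$ over $\Omega\times\Omega$ with $\cH_P=\{(x,y,{\lhd})\in\cE_{k(P)}^{(d)}(\Omega\times\Omega)\mid(x,{\lhd})\in\cN_P\}$. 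By Lemma~\ref{lem:widehatEuclidean}\ref{lem:widehatEuclidean:rk} we have $\rk(h^*(\cM))\leq\rk(\cM)=m$, so for each $P\in\cL$ the set $h^*(\cM)_P$ does not depend on any coordinate indexed by a set of size greater than $m$.

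The next step is to extract $\cN'$ by slicing away the second copy of $\Omega$ (the $y$-variables) together with the low-arity variables. Let $\ell\in\NN$ be such that $\cN$ is $\ell$-independent, taking $\ell=0$ if no such hypothesis is made (every $d$-Euclidean structure is trivially $0$-independent). Fix $P\in\cL$, put $k\df k(P)$, factor $\cE_k^{(d)}(\Omega)=\cE_{k,\ell}^{(d)}\times\cG$ with $\cG\df\prod_{A\in\binom{[k]}{>\ell}}(X\times\cO_A^d)$, and write points of $\cE_k^{(d)}(\Omega\times\Omega)=\cE_{k,\ell}^{(d)}\times\cG\times\prod_{A\in r(k)}X$ as $(s,u,y)$, the last factor being the second copy of $\Omega$. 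By $\ell$-independence, $\cN_P=\cE_{k,\ell}^{(d)}\times\cK_P$ for a Borel $\cK_P\subseteq\cG$, hence $\cH_P=\cE_{k,\ell}^{(d)}\times\cK_P\times\prod_{A\in r(k)}X$; in particular the slice $\{u\mid(s,u,y)\in\cH_P\}$ equals $\cK_P$ for \emph{every} $(s,y)$. Because $h^*(\cM)_P$ is a.e.\ equal to $\cH_P$, Fubini's theorem furnishes a point $(s^*,y^*)$ for which $\cK'_P\df\{u\mid(s^*,u,y^*)\in h^*(\cM)_P\}$ is a.e.\ equal to $\cK_P$; set $\cN'_P\df\cE_{k,\ell}^{(d)}\times\cK'_P\subseteq\cE_k^{(d)}(\Omega)$.

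Then $\cN'\df(\cN'_P)_{P\in\cL}$ does the job. It is $\ell$-independent by the shape of each $\cN'_P$ (and for $\ell=0$ this is the first assertion). It is a.e.\ equal to $\cN$, since $\cN'_P\symdiff\cN_P=\cE_{k,\ell}^{(d)}\times(\cK'_P\symdiff\cK_P)$ has $\mu^{(d)}$-measure equal to the measure of $\cK'_P\symdiff\cK_P$ in $\cG$, which is $0$; in particular $\phi_{\cN'}=\phi_\cN$. Finally, since $h^*(\cM)_P$ does not depend on coordinates of size greater than $m$, neither does $\cK'_P$, hence neither does $\cN'_P$, so $\rk(\cN')\leq m$; and as $\cN'$ is a $d$-Euclidean structure with $\phi_{\cN'}=\phi_\cN$, Corollary~\ref{cor:rk} gives $\rk(\cN')\geq\rk(\phi_{\cN'})=\rk(\phi_\cN)=m$. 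Hence $\rk(\cN')=m=\rk(\phi_\cN)$.

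I expect the main obstacle to be conceptual rather than computational. The $d$-theon uniqueness theorem most readily produces representations over auxiliary spaces joined by measure-preserving maps, whereas the lemma insists on a representation over the given $\Omega$ that is \emph{a.e.\ equal} to $\cN$; the device that bridges this is to use the ``$\cH$ over $\Omega\times\Omega$'' form of uniqueness (item~\ref{thm:dTU:h}), whose target $\cH$ is literally $\cN$ padded by an inert copy of $\Omega$, and then to slice that auxiliary copy away. After that, the only care needed is in the bookkeeping of the slicing: a single Fubini selection must at once give a slice a.e.\ equal to $\cN_P$, keep the structure $\ell$-independent, and preserve the rank bound $\leq m$. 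The latter two are automatic---$\ell$-independence because the full factor $\cE_{k,\ell}^{(d)}$ is retained, the rank bound because slicing never creates dependence on coordinates that $h^*(\cM)_P$ did not already have---so the substance is arranging the factorization of $\cE_k^{(d)}(\Omega\times\Omega)$ so that the $u$-slice of $\cH_P$ is $\cK_P$ regardless of the discarded $(s,y)$, which is exactly where the hypothesis that $\cN$ is $\ell$-independent is used.
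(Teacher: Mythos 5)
Your proof is correct, and it reaches the lemma by a somewhat different construction than the paper's. The paper builds $\cN'$ intrinsically: for each $P$ it defines $W(x)$ as the fiber measure of $\cN_P$ over the coordinates indexed by sets of size at most $r=\rk(\phi_\cN)$, sets $\cN'_P\df W^{-1}(1)\times\prod_{\lvert A\rvert>r}(X\times\cO_A^d)$ (which is manifestly of rank at most $r$ and inherits $\ell$-independence), and reduces everything to showing $W\in\{0,1\}$ a.e.; that is done by pulling both $\cN$ and a minimal-rank $0$-representation back to a common space via item~\ref{thm:dTU:existsfg} of Theorem~\ref{thm:dTU}, decomposing $\widehat{f}_k,\widehat{g}_k$ into low/high parts, and comparing fiber measures using the h.o.a.\ property and Fubini. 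You instead push the minimal-rank representation $\cM$ forward with item~\ref{thm:dTU:h}, so that $h^*(\cM)$ is a.e.\ equal to $\cN$ padded by a dummy copy of $\Omega$ and has rank at most $m$ \emph{exactly} (Lemma~\ref{lem:widehatEuclidean}\ref{lem:widehatEuclidean:rk} holds pointwise, since $\widehat{h}_k(x)_A$ depends only on $(x_B)_{B\subseteq A}$ and $\cM_P$ has the exact product form), and then take a Fubini-generic section in the $(s,y)$-variables; the rank bound and $\ell$-independence are inherited by the section for free, a.e.\ equality to $\cN_P$ comes from the generic choice, and the matching lower bound $\rk(\cN')\geq\rk(\phi_\cN)$ is supplied by Corollary~\ref{cor:rk} (which is proved before this lemma, so there is no circularity; the paper needs the same observation implicitly). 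The trade-off is that the paper's $W^{-1}(1)$ is canonical and choice-free, whereas your section argument depends on a generic point (chosen per predicate, which is fine since a.e.\ equality is checked predicate by predicate) but avoids introducing the fiber-measure function altogether; both yield the same conclusion with comparable effort.
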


\begin{proof}
  Write $\Omega=(X,\mu)$ and let $r\df\rk(\phi_\cN)$. Note that it suffices to prove the case when
  $\cL$ has a single predicate symbol $P$.

  Let $k\df k(P)$ and define $W\colon\cE_{k,r}^{(d)}(\Omega)\to[0,1]$ by
  \begin{align*}
    W(x)
    & \df
    \mu^{(d)}\left(\left\{y\in\prod_{A\in\binom{[k]}{>r}} (X\times\cO_A^d)
    \;\middle\vert\;
    (x,y)\in\cN_P\right\}\right)
  \end{align*}
  and let
  \begin{align*}
    \cH_P & \df W^{-1}(1)\times\prod_{A\in\binom{[k]}{>r}} (X\times\cO_A^d).
  \end{align*}
  Clearly $\rk(\cH)\leq r$. Also note that if $\cN$ is $\ell$-independent, then so is $\cH$.

  Thus, it suffices to show that $\cN$ is a.e.\ equal to $\cH$. In turn, this is equivalent to
  showing that $W$ is $\{0,1\}$-valued $\mu^{(d)}$-a.e.

  Since $\rk(\phi_\cN)\leq r$, we know that there exists a $0$-Euclidean structure $\cG$ over some
  space $\Omega'=(X',\mu')$ such that $\phi_\cG=\phi_\cN$ and $\rk(\cG) = r$. By
  Theorem~\ref{thm:dTU}, there exist families $f=(f_i)_{i\in\NN_+}$ and $g=(g_i)_{i\in\NN_+}$ of
  symmetric measure-preserving on h.o.a.\ functions
  $f_i\colon\cE_i(\Omega)\to\Omega\times\cO_i^{(d)}$ and $g_i\colon\cE_i(\Omega)\to\Omega'$ such
  that $f^*(\cN)$ is a.e.\ equal to $g^*(\cG)$.
  
  From the structure of the functions $\widehat{f}_k$ and $\widehat{g}_k$, we can decompose them as
  \begin{align*}
    \widehat{f}_k(x,y) & = (F_1(x), F_2(x,y)),\\
    \widehat{g}_k(x,y) & = (G_1(x), G_2(x,y)),
  \end{align*}
  where $x\in\cE_{k,r}(\Omega)$, $y\in X^{\binom{[k]}{>r}}$ and
  \begin{align*}
    F_1\colon & \cE_{k,r}(\Omega)\to\cE_{k,r}^{(d)}(\Omega), &
    F_2\colon & \cE_k(\Omega)\to\prod_{A\in\binom{[k]}{>r}}(X\times\cO_A^d),
    \\
    G_1\colon & \cE_{k,r}(\Omega)\to\cE_{k,r}(\Omega'), &
    G_2\colon & \cE_k(\Omega)\to X^{\binom{[k]}{>r}}.
  \end{align*}

  The fact that the $f_i$ and $g_i$ are measure-preserving on h.o.a.\ implies that $F_1$ and $G_1$
  are measure-preserving and that for almost every $x\in\cE_{k,r}(\Omega)$, the restrictions
  $F_2(x,\place)$ and $G_2(x,\place)$ are also measure-preserving.

  Since $f^*(\cN)$ is a.e.\ equal to $g^*(\cG)$, by Fubini's Theorem, we get
  \begin{align*}
    W(F_1(x))
    & =
    \mu\left(\left\{y\in X^{\binom{[k]}{>r}}
    \;\middle\vert\;
    (G_1(x), G_2(x,y))\in\cG_P\right\}\right)
  \end{align*}
  for $\mu$-almost every $x\in\cE_{k,r}(\Omega)$. Since $\rk(\cG)=r$, the measure above can only be
  $0$ or $1$ (as $G_2(x,y)$ only contains coordinates indexed by sets in $\binom{[k]}{>r}$). Using
  again that $F_1$ is measure-preserving, we conclude that $W(z)\in\{0,1\}$ for $\mu^{(d)}$-almost
  every $z\in\cE_{k,r}^{(d)}(\Omega)$, as desired.
\end{proof}

We can finally prove the
implication~\ref{thm:UCouple:astellindep}$\implies$\ref{thm:UCouple:UCouple} of
Theorem~\ref{thm:UCouple} in the proposition below.

\begin{proposition}[Theorem~\ref{thm:UCouple}\ref{thm:UCouple:astellindep}$\implies$\ref{thm:UCouple:UCouple}]\label{prop:astellindep->UCouple}
  If $\phi\in\HomT$ is $\ast$-$\ell$-independent, then $\phi$ satisfies $\UCouple[\ell]$.
\end{proposition}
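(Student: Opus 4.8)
The plan is to follow the strategy sketched at the start of Section~\ref{sec:uniqueness}: fix an arbitrary limit $\psi$ with $\rk(\psi)\le\ell$, say in a language $\cL'$, and an arbitrary coupling $\xi$ of $\phi$ and $\psi$ in $\cL\disjcup\cL'$, and prove $\xi=\phi\otimes\psi$; since $\psi$ and $\xi$ are arbitrary this gives unique $\ell$-coupleability, i.e.\ $\phi\in\UCouple[\ell]$. Using $\ast$-$\ell$-independence, first fix an $\ell$-independent $d$-Euclidean structure $\cN^{\phi}$ over an atomless standard space $\Omega^{\phi}$ with $\phi_{\cN^{\phi}}=\phi$, and fix a $0$-Euclidean structure $\cH_0$ over some $\Omega_0$ with $\phi_{\cH_0}=\xi$ (such a $\cH_0$ exists since every limit has a $0$-Euclidean representation). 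Its reduct $\cH_0\rest_\cL$ represents $\phi$, exactly as $\cN^{\phi}$ does (both are $T$-ons, where $T$ is the theory of $\phi$).

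The key step is to import the independent representation $\cN^{\phi}$ into a representation of the whole coupling without disturbing the $\cL'$-part. For this I apply $d$-theon uniqueness (Theorem~\ref{thm:dTU}, item~\ref{thm:dTU:h}) with $\cN^1=\cN^{\phi}$ and $\cN^2=\cH_0\rest_\cL$: there is a family $h=(h_i)_{i\in\NN_+}$ of symmetric measure-preserving on h.o.a.\ functions $h_i\colon\cE_i^{(d)}(\Omega^{\phi}\times\Omega^{\phi})\to\Omega_0$ such that $h^*(\cH_0\rest_\cL)$ is a.e.\ equal to the $d$-$T$-on $\cG$ over $\Omega^{\phi}\times\Omega^{\phi}$ with $\cG_P=\{(x,y,{\lhd})\mid(x,{\lhd})\in\cN^{\phi}_P\}$. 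Because $\cN^{\phi}$ is $\ell$-independent, the defining condition of $\cG_P$ ignores every coordinate indexed by a set of size $\le\ell$ (and all of the $y$-coordinates), so $\cG$ is again $\ell$-independent. Now $h^*(\cH_0)$ is a $d$-Euclidean structure in $\cL\disjcup\cL'$ over $\Omega^{\phi}\times\Omega^{\phi}$ with $\phi_{h^*(\cH_0)}=\phi_{\cH_0}=\xi$ by Lemma~\ref{lem:widehatEuclidean}\ref{lem:widehatEuclidean:phi}, and since $h^*$ commutes with reducts, $h^*(\cH_0)\rest_\cL=h^*(\cH_0\rest_\cL)$ is a.e.\ equal to $\cG$. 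Let $\cH_1$ be the $d$-Euclidean structure that coincides with $\cG$ on $\cL$-predicates and with $h^*(\cH_0)$ on $\cL'$-predicates; then $\cH_1$ is a.e.\ equal to $h^*(\cH_0)$, hence $\phi_{\cH_1}=\xi$, and now $\cH_1\rest_\cL=\cG$ is genuinely $\ell$-independent.

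It remains to do the analogous cleanup on the other side. Since $\cH_1\rest_{\cL'}$ represents $\psi$ and $\rk(\psi)\le\ell$, Lemma~\ref{lem:rk} gives a $d$-Euclidean structure $\cG'$ over $\Omega^{\phi}\times\Omega^{\phi}$ that is a.e.\ equal to $\cH_1\rest_{\cL'}$ with $\rk(\cG')=\rk(\psi)\le\ell$. Replacing the $\cL'$-part of $\cH_1$ by $\cG'$ yields a $d$-Euclidean structure $\cH_2$ in $\cL\disjcup\cL'$, a.e.\ equal to $\cH_1$ and hence with $\phi_{\cH_2}=\xi$, such that $\cH_2\rest_\cL=\cG$ depends only on coordinates indexed by sets of size $>\ell$ while $\cH_2\rest_{\cL'}=\cG'$ depends only on coordinates indexed by sets of size $\le\ell$. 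Consequently, for every countable set $V$, if $\rn{x}\sim\mu^{(d)}$ in $\cE_V^{(d)}(\Omega^{\phi}\times\Omega^{\phi})$, then $M^{\cH_2}_V(\rn{x})\rest_\cL$ is measurable with respect to the $\sigma$-algebra generated by $(\rn{x}_A\mid A\in\binom{V}{>\ell})$ and $M^{\cH_2}_V(\rn{x})\rest_{\cL'}$ is measurable with respect to the one generated by $(\rn{x}_A\mid A\in r(V,\ell))$; as $\mu^{(d)}$ is a product measure over all $A\in r(V)$, the two reducts are independent. Their marginal distributions are $\mu^{\cH_2\rest_\cL}_V$ and $\mu^{\cH_2\rest_{\cL'}}_V$, which represent $\phi$ and $\psi$ respectively, so $\mu^{\cH_2}_V$ is the independent coupling of the distributions of $\phi$ and $\psi$ on $V$. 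Since this holds for all $V$, $\xi=\phi_{\cH_2}=\phi\otimes\psi$, as required.

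The real content sits in the second paragraph: $d$-theon uniqueness in the form of Theorem~\ref{thm:dTU}\ref{thm:dTU:h} is precisely what lets us transplant the $\ell$-independent representation of $\phi$ into an otherwise arbitrary representation of the coupling. Everything afterwards is bookkeeping, and the only points that need care are verifying that $\cG$ inherits $\ell$-independence from $\cN^{\phi}$, that the a.e.\ modifications producing $\cH_1$ and $\cH_2$ never change the represented limit, and that the product structure of $\mu^{(d)}$ splits cleanly along the arity threshold $\ell$, making the high-arity $\cL$-data and the low-arity $\cL'$-data independent.
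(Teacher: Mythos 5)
Your proposal is correct and follows essentially the same route as the paper's proof: invoke Theorem~\ref{thm:dTU}\ref{thm:dTU:h} to transplant the $\ell$-independent representation of $\phi$ into the coupling's representation, use Lemma~\ref{lem:rk} to make the $\cL'$-part have rank at most $\ell$ after an a.e.\ change, and conclude independence of the two reducts from the product structure of $\mu^{(d)}$ split at arity $\ell$. The only cosmetic difference is that you phrase the construction of $\cH_1$ as replacing the $\cL$-part by $\cG$ rather than extending $\cG$ by $\widehat{h}_{k(P)}^{-1}$-preimages on $\cL'$, which is the same object.
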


\begin{proof}
  Let $\cL$ be the language of $T$. Since $\phi$ is $\ast$-$\ell$-independent, there exist $d\in\NN$
  and an $\ell$-independent $d$-$T$-on over some space $\Omega$ such that $\phi=\phi_\cN$.

  Let now $\psi$ be any limit in some language $\cL'$ with $\rk(\psi)\leq\ell$, let $\xi$ be a
  coupling of $\phi$ and $\psi$ and let $\cG$ be any $0$-Euclidean structure in $\cL'$ over some
  space $\Omega'$ such that $\xi=\phi_\cG$.

  Since $\xi\rest_\cL=\phi$, we have $\phi_{\cG\rest_\cL}=\phi_\cN$, so by Theorem~\ref{thm:dTU},
  there exists a family $h=(h_i)_{i\in\NN_+}$ of symmetric measure-preserving on h.o.a.\ functions
  $h_i\colon\cE_i^{(d)}(\Omega\times\Omega)\to\Omega'$ such that $h^*(\cG\rest_\cL)$ is a.e.\ equal
  to the $d$-$T$-on $\cH$ over $\Omega\times\Omega$ given by
  \begin{align*}
    \cH_P
    & \df
    \{(x,y,{\lhd})\in\cE_{k(P)}^{(d)}(\Omega\times\Omega) \mid (x,{\lhd})\in\cN_P\}
    \qquad (P\in\cL).
  \end{align*}

  We now extend $\cH$ to a $d$-Euclidean structure $\cH'$ in $\cL\disjcup\cL'$ over
  $\Omega\times\Omega$ by defining for every $P\in\cL'$ the $P$-on
  \begin{align*}
    \cH'_P & \df \widehat{h}_{k(P)}^{-1}(\cG_P).
  \end{align*}

  Note that since $h^*(\cG\rest_\cL)$ is a.e.\ equal to $\cH$, it follows that $h^*(\cG)$ is
  a.e.\ equal to $\cH'$, hence by Lemma~\ref{lem:widehatEuclidean}\ref{lem:widehatEuclidean:phi}, we
  get $\phi_{\cH'}=\phi_\cG=\xi$.

  Note now that, since $\cN$ is $\ell$-independent, it follows that $\cH'\rest_\cL=\cH$ is
  $\ell$-independent and since
  $\rk(\phi_{\cH'\rest_{\cL'}})=\rk(\xi\rest_{\cL'})=\rk(\psi)\leq\ell$, by Lemma~\ref{lem:rk}, by
  possibly changing only a zero measure set, we may suppose that $\rk(\cH'\rest_{\cL'})\leq\ell$.

  Let now $V$ be a finite set and note that if $z\in\cE_V^{(d)}(\Omega\times\Omega)$, then since
  $\cH'\rest_\cL$ is $\ell$-independent, it follows that $M^\cN_V(z)\rest_\cL$ only depends on the
  coordinates of $z$ that are indexed by sets in $\binom{V}{>\ell}$. Similarly, since
  $\rk(\cH'\rest_{\cL'})\leq\ell$, $M^\cN_V(z)\rest_{\cL'}$ only depends on the coordinates of $z$
  that are indexed by sets in $r(V,\ell)$.

  We conclude that if $\rn{z}$ is picked at random in $\cE_V^{(d)}(\Omega\times\Omega)$ according to
  $(\mu\otimes\mu)^{(d)}$, then $M^{\cH'}_V(\rn{z})\rest_\cL$ is independent from
  $M^{\cH'}_V(\rn{z})\rest_{\cL'}$, so for every $K\in\cK_V[\cL\disjcup\cL']$, we have
  \begin{align*}
    \tind(K,\cH') & = \tind(K\rest_\cL,\cH'\rest_\cL)\cdot\tind(K\rest_{\cL'},\cH'\rest_{\cL'}),
  \end{align*}
  that is, $\phi_{\cH'}=\xi$ is the independent coupling $\phi\otimes\psi$. Therefore $\phi$
  satisfies $\UCouple[\ell]$.
\end{proof}

As a corollary, we get that the quasirandom $k$-orientation (see Definition~\ref{def:kqrO})
satisfies $\UCouple[k-1]$:
\begin{corollary}\label{cor:kqrO}
  The quasirandom $k$-orientation $\phi^{\kqrO}$ satisfies $\UCouple[k-1]$.
\end{corollary}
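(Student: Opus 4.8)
The plan is to deduce the corollary directly from the explicit representation of $\phi^{\kqrO}$ recorded in Definition~\ref{def:kqrO} together with the implication already established in Proposition~\ref{prop:astellindep->UCouple}.

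First I would observe that the $1$-$\TkO$-on $\cN^{\kqrO}$ of~\eqref{eq:kqrO}, given by
\[
  \cN^{\kqrO}_P = \{(x,{\lhd})\in\cE_k^{(1)}(\Omega) \mid {\lhd}_{[k]} = {\leq}\},
\]
is $(k-1)$-independent: its membership condition constrains only the single order coordinate ${\lhd}_{[k]}$ attached to the top set $[k]\in\binom{[k]}{k}$, so $\cN^{\kqrO}_P$ has the form $\cE_{k,k-1}^{(1)}\times\cH$ for a suitable $\cH\subseteq X\times\cO_{[k]}$; in particular it does not depend on any coordinate indexed by a set in $r([k],k-1)$. (This is exactly the parenthetical ``$(k-1)$-independent'' appearing in Definition~\ref{def:kqrO}, and it is essentially the only point that needs checking.) Hence $\cN^{\kqrO}$ witnesses that $\phi^{\kqrO}$ satisfies item~\ref{thm:UCouple:1ellindep} of Theorem~\ref{thm:UCouple} with $\ell = k-1$.

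Next, since a $1$-Euclidean structure is in particular a $d$-Euclidean structure for $d = 1\in\NN$, the equality $\phi^{\kqrO} = \phi_{\cN^{\kqrO}}$ shows that $\phi^{\kqrO}$ is $\ast$-$(k-1)$-independent (this is the trivial implication~\ref{thm:UCouple:1ellindep}$\implies$\ref{thm:UCouple:astellindep} of Theorem~\ref{thm:UCouple}). Applying Proposition~\ref{prop:astellindep->UCouple} with $\ell = k-1$ then yields $\phi^{\kqrO}\in\UCouple[k-1]$, as desired.

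I do not expect any real obstacle here: the only non-definitional ingredient is the $(k-1)$-independence of $\cN^{\kqrO}$, which is immediate from the fact that the defining condition of $\cN^{\kqrO}_P$ refers only to the highest-order order variable ${\lhd}_{[k]}$. Note that arguing instead through the $0$-representation $\cH^{\kqrO}$ of~\eqref{eq:altkqrO} would only recover $\Independence[k-2]$, so it is essential to work with the $1$-Euclidean representation $\cN^{\kqrO}$ and Proposition~\ref{prop:astellindep->UCouple}.
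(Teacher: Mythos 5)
Your proof is correct and is essentially identical to the paper's own argument: the paper likewise notes that $\cN^{\kqrO}$ is a $(k-1)$-independent $1$-$\TkO$-on representing $\phi^{\kqrO}$, hence $\phi^{\kqrO}$ is $\ast$-$(k-1)$-independent, and concludes via Proposition~\ref{prop:astellindep->UCouple}. Your extra verification that the defining condition of $\cN^{\kqrO}_P$ involves only the top order coordinate ${\lhd}_{[k]}$ just spells out the $(k-1)$-independence that the paper asserts in Definition~\ref{def:kqrO}.
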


\begin{proof}
  Since $\phi^{\kqrO}=\phi_{\cN^{\kqrO}}$ for the $(k-1)$-independent $1$-$\TkO$-on $\cN^{\kqrO}$,
  it is $\ast$-$(k-1)$-independent, so it satisfies $\UCouple[k-1]$ by
  Proposition~\ref{prop:astellindep->UCouple}.
\end{proof}

As a further consequence, we can also obtain the
implication~\ref{thm:UCouple:kqrOcoupling}$\implies$\ref{thm:UCouple:UCouple} of
Theorem~\ref{thm:UCouple} from the facts that $\Independence[\ell]\implies\UCouple[\ell]$ and that
$\UCouple[\ell]$ is preserved by independent couplings and quantifier-free definitions.

\begin{corollary}[Theorem~\ref{thm:UCouple}\ref{thm:UCouple:kqrOcoupling}$\implies$\ref{thm:UCouple:UCouple}]\label{cor:kqrOcoupling->UCouple}
  If $I\colon T\leadsto T'\cup\TkO[(\ell+1)]$ is a quantifier-free definition and $\psi\in\HomT[T']$
  satisfies $\Independence[\ell]$, then $I^*(\psi\otimes\phi^{\kqrO[(\ell+1)]})$ satisfies
  $\UCouple[\ell]$.
\end{corollary}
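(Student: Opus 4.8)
The plan is to obtain this corollary as a short deduction from Corollary~\ref{cor:kqrO} together with the stability properties of $\UCouple[\ell]$ recorded in~\cite{CR23}; there is no genuine difficulty beyond bookkeeping of the arity.

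First I would note that, by hypothesis, $\psi$ satisfies $\Independence[\ell]$, and hence $\psi$ satisfies $\UCouple[\ell]$ by~\cite[Theorem~3.2]{CR23} (equivalently, by the characterization of $\UCouple[\ell]$ recalled in Section~\ref{sec:basic}). Next, Corollary~\ref{cor:kqrO}, applied with $k=\ell+1$, shows that $\phi^{\kqrO[(\ell+1)]}$ satisfies $\UCouple[(\ell+1)-1]=\UCouple[\ell]$ (recall that this in turn rests on Proposition~\ref{prop:astellindep->UCouple}, via the $(k-1)$-independent $1$-$\TkO$-on $\cN^{\kqrO}$).

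Then I would invoke~\cite[Theorem~3.4]{CR23}, which states that $\UCouple[\ell]$ is preserved under independent couplings: applying it to $\psi$ and $\phi^{\kqrO[(\ell+1)]}$ gives that $\psi\otimes\phi^{\kqrO[(\ell+1)]}$ satisfies $\UCouple[\ell]$. Finally, by~\cite[Theorem~3.3]{CR23}, $\UCouple[\ell]$ is a natural property, i.e.\ preserved by pushforward under quantifier-free definitions; applying $I^*$ for the given $I\colon T\leadsto T'\cup\TkO[(\ell+1)]$ yields that $I^*(\psi\otimes\phi^{\kqrO[(\ell+1)]})$ satisfies $\UCouple[\ell]$, as desired.

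I do not anticipate any real obstacle: the substantive content is entirely in Corollary~\ref{cor:kqrO}, and the remaining steps merely chain together the closure statements of~\cite{CR23}. The only point deserving care is the arity count---the theory $\TkO[(\ell+1)]$ has arity $\ell+1$, so that Corollary~\ref{cor:kqrO} delivers unique coupleability precisely at level $\ell$, which is exactly the level at which the independent-coupling and quantifier-free-definition closure properties must be applied.
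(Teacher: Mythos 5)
Your proposal is correct and follows essentially the same route as the paper's proof: deduce $\UCouple[\ell]$ for $\psi$ from~\cite[Theorem~3.2]{CR23}, for $\phi^{\kqrO[(\ell+1)]}$ from Corollary~\ref{cor:kqrO}, and then apply the closure of $\UCouple[\ell]$ under independent couplings~\cite[Theorem~3.4]{CR23} and quantifier-free definitions~\cite[Theorem~3.3]{CR23}. Nothing is missing.
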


\begin{proof}
  By~\cite[Theorem~3.2]{CR23} and Corollary~\ref{cor:kqrO}, both $\psi$ and $\phi^{\kqrO[(\ell+1)]}$
  satisfy $\UCouple[\ell]$, hence by~\cite[Theorem~3.4]{CR23} so does their independent coupling
  $\psi\otimes\phi^{\kqrO[(\ell+1)]}$ and by~\cite[Theorem~3.3]{CR23} so does
  $I^*(\psi\otimes\phi^{\kqrO[(\ell+1)]})$.
\end{proof}

\section{Types}
\label{sec:types}

We now move to arguably the hardest implication of Theorem~\ref{thm:UCouple},
\ref{thm:UCouple:UCouple}$\implies$\ref{thm:UCouple:1ellindep}, whose full argument will only be
concluded in Section~\ref{sec:amlg}.

Recall from the introduction that the main idea is to show that a limit satisfying $\UCouple[\ell]$
has a different representation in which we sample ``amalgamating types'' of points instead of the
points directly. For this, we will need these ``amalgamating types'' to have the following
properties:
\begin{enumerate}
\item\label{it:unique} (Uniqueness for $\UCouple[\ell]$) If $\cN$ is a $d$-Euclidean structure that is
  $(\ell-1)$-independent and satisfies $\UCouple[\ell]$, then $\cN$ has a unique amalgamating
  $\ell$-type.
\item\label{it:strongamalg} (Strong amalgamation:) If $A$ is a finite set then the distribution of
  the random amalgamating $A$-type is completely determined when we are given the amalgamating $B$-types for
  all $B\in r(A,\lvert A\rvert-1)$.
\end{enumerate}
As we have previously mentioned, amalgamating types will be obtained as something intermediate
between what we call ``dissociated types'' and ``overlapping types''. The objective of this section
is to define these latter concepts and prove basic properties about them, culminating in the proof
that they satisfy the uniqueness of item~\ref{it:unique} above
(Proposition~\ref{prop:typeuniqueness}). We will see in Section~\ref{sec:dsctovlp} that they also
satisfy a weak version of item~\ref{it:strongamalg} (Proposition~\ref{prop:weakamalgdsctovlp}).

\begin{definition}
  Given a finite set $A$ and a countable set $V$ disjoint from $A$, we define the sets
  \begin{align*}
    r^{\dsct}(A,V) & \df r(V),
    \\
    \overline{r}^{\dsct}(A,V)
    & \df
    r(A\cup V)\setminus(r(A)\cup r^{\dsct}(A,V))
    =
    \{B\in r(A\cup V) \mid B\cap A\neq\varnothing\land B\cap V\neq\varnothing\},
    \\
    r^{\ovlp}(A,V)
    & \df
      \{B\in r(A\cup V) \mid A = \varnothing\lor (B\cap V\neq\varnothing \land A\not\subseteq B)\},
    \\
    \overline{r}^{\ovlp}(A,V)
    & \df
    r(A\cup V)\setminus(r(A)\cup r^{\ovlp}(A,V))
    =
    \{B\in r(A\cup V) \mid \varnothing\neq A\subsetneq B\}.
  \end{align*}
  (Note that when $A=\varnothing$, we have $r^{\dsct}(\varnothing,V)=r^{\ovlp}(\varnothing,V)=r(V)$
  and $\overline{r}^{\dsct}(\varnothing,V)=\overline{r}^{\ovlp}(\varnothing,V)=\varnothing$.)
\end{definition}

The dissociated type of some $d$-Euclidean structure $\mathcal{N}$ will be determined by the
information contained in the coordinates whose indices are in $r(A)\cup r^{\dsct}(A,V)$. The
overlapping type will contain slightly more information, since it will instead use the coordinates
whose indices are in $r(A)\cup r^{\ovlp}(A,V)$, a slightly larger set. (The sets
$\overline{r}^{\dsct}$ and $\overline{r}^{\ovlp}$ are simply the complementary information.)

\begin{definition}
  Given further an atomless standard probability space $\Omega=(X,\mu)$ and $d\in\NN$, we let
  \begin{align*}
    \cE_{A,V}^{(d),\dsct}(\Omega)
    & \df
    \prod_{B\in r^{\dsct}(A,V)} (X\times\cO_B^d),
    &
    \overline{\cE}_{A,V}^{(d),\dsct}(\Omega)
    & \df
    \prod_{B\in \overline{r}^{\dsct}(A,V)} (X\times\cO_B^d),
    \\
    \cE_{A,V}^{(d),\ovlp}(\Omega)
    & \df
    \prod_{B\in r^{\ovlp}(A,V)} (X\times\cO_B^d),
    &
    \overline{\cE}_{A,V}^{(d),\ovlp}(\Omega)
    & \df
    \prod_{B\in \overline{r}^{\ovlp}(A,V)} (X\times\cO_B^d),
  \end{align*}
  and we equip these spaces with the product measures
  \begin{align*}
    \bigotimes_{B\in r^{\dsct}(A,V)}\left(\mu\otimes\bigotimes_{i=1}^d \nu_B\right),
    & &
    \bigotimes_{B\in \overline{r}^{\dsct}(A,V)}\left(\mu\otimes\bigotimes_{i=1}^d \nu_B\right),
    \\
    \bigotimes_{B\in r^{\ovlp}(A,V)}\left(\mu\otimes\bigotimes_{i=1}^d \nu_B\right),
    & &
    \bigotimes_{B\in \overline{r}^{\ovlp}(A,V)}\left(\mu\otimes\bigotimes_{i=1}^d \nu_B\right),
  \end{align*}
  which by abuse of notation, we denote all by $\mu^{(d)}$ (which is also the notation for the
  measure on $\cE_A^{(d)}$ and $\cE_{A,\ell}^{(d)}$).

  Note that an injection $\alpha\colon A\cup V\to U$ contra-variantly induces maps
  \begin{align*}
    \alpha^*\colon &
    \cE_{\alpha(A),U\setminus\alpha(A)}^{(d),\dsct}
    \to
    \cE_{A,V}^{(d),\dsct},
    &
    \alpha^*\colon &
    \overline{\cE}_{\alpha(A),U\setminus\alpha(A)}^{(d),\dsct}
    \to
    \overline{\cE}_{A,V}^{(d),\dsct},
    \\
    \alpha^*\colon &
    \cE_{\alpha(A),U\setminus\alpha(A)}^{(d),\ovlp}
    \to
    \cE_{A,V}^{(d),\ovlp},
    &
    \alpha^*\colon &
    \overline{\cE}_{\alpha(A),U\setminus\alpha(A)}^{(d),\ovlp}
    \to
    \overline{\cE}_{A,V}^{(d),\ovlp},
  \end{align*}
  all given by the formula $\alpha^*(x)_B\df x_{\alpha(B)}$. With even more abuse of notation, we
  also use $\alpha^*$ for the product maps of the first two or the last two functions above (which
  are also contra-variant definitions).

  Given further a $d$-Euclidean structure $\cN$ in $\cL$ over $\Omega$, we define the functions
  \begin{align*}
    f^{\dsct,\cN}_{A,V}\colon &
    \cE_A\times\cE_{A,V}^{(d),\dsct}\to\cP(\cK_{A\cup V}),
    \\
    f^{\ovlp,\cN}_{A,V}\colon & \cE_A\times\cE_{A,V}^{(d),\ovlp}\to\cP(\cK_{A\cup V})
  \end{align*}
  as follows: for $(x,z)\in\cE_A^{(d)}\times\cE_{A,V}^{(d),\dsct}$, we let $\rn{y}$ be picked at
  random in $\overline{\cE}_{A,V}^{(d),\dsct}$ according to $\mu^{(d)}$ and let
  $f^{\dsct,\cN}_{A,V}(x,z)$ be the distribution of the random $\cL$-structure $M^\cN_{A\cup
    V}(x,\rn{y},z)$ over $A\cup V$. We define $f^{\ovlp,\cN}_{A,V}$ analogously by replacing $\dsct$
  with $\ovlp$ throughout.

  For a point $x\in\cE_A^{(d)}$, the \emph{dissociated $(A,V)$-type} of $x$ in $\cN$, denoted by
  $\pp^{\dsct,\cN}_{A,V}(x)$, is the function $\cE_{A,V}^{(d),\dsct}\to\cP(\cK_{A\cup V})$ obtained
  from $f^{\dsct,\cN}_{A,V}$ by fixing its $\cE_A^{(d)}$ argument to be $x$, up to
  $\mu^{(d)}$-almost everywhere equivalence (in the argument in $\cE_{A,V}^{(d),\dsct}$).

  We also let $\cS_{A,V}^{(d),\dsct}(\Omega,\cL)$ be the space of all functions
  $p\colon\cE_{A,V}^{(d),\dsct}\to\cP(\cK_{A\cup V})$ up to $\mu^{(d)}$-almost everywhere
  equivalence such that:
  \begin{itemize}
  \item For every cylinder set $C(U,K)\df\{M\in\cK_{A\cup V} \mid M\rest_U=K\}$ of $\cK_{A\cup V}$
    (see~\eqref{eq:cylinderset}), the map $x\mapsto p(x)(B)$ is measurable.
  \item There exists $M^{\dsct}_{A,V}(p)\in\cK_A$ such that for every $x\in\cE_{A,V}^{(d),\dsct}$, we have
    \begin{align*}
      p(x)(\{K\in\cK_{A\cup V} \mid K\rest_A = M^{\dsct}_{A,V}(p)\}) = 1.
    \end{align*}
  \end{itemize}

  Clearly such $M^{\dsct}_{A,V}(p)$ is unique and is called the \emph{underlying model} of $p$ (and we
  view $M^{\dsct}_{A,V}$ as a function $\cS_{A,V}^{(d),\dsct}\to\cK_A$). Note also that we can view
  $\pp^{\dsct,\cN}_{A,V}$ as the unique function $\cE_A^{(d)}\to\cS_{A,V}^{(d),\dsct}$ such that
  \begin{align*}
    \pp^{\dsct,\cN}_{A,V}(x)(z) & = f^{\dsct,\cN}_{A,V}(x,z)
    \qquad (x\in\cE_A^{(d)}, z\in\cE_{A,V}^{(d),\dsct}).
  \end{align*}

  We define \emph{overlapping $(A,V)$-types} and
  $\pp^{\ovlp,\cN}_{A,V}\colon\cE_A^{(d)}\to\cS_{A,V}^{(d),\ovlp}$ analogously by replacing $\dsct$
  with $\ovlp$ throughout.
\end{definition}

Our first goal is to show in the lemma below that the definitions of $f^{\dsct,\cN}_{A,V}$ and
$f^{\ovlp,\cN}_{A,V}$ are equivariant. An important consequence is that the dissociated (overlapping,
respectively) $(A,V)$-type of $x$ is completely determined by the dissociated (overlapping,
respectively) $(A,U)$-type of $x$ for any fixed countably infinite set $U$ (disjoint from
$A$). Thus, we will use the term dissociated (overlapping, respectively) $A$-type when we do not
want to specify $V$.

\begin{lemma}[Equivariance of $f^{\dsct,\cN}$ and $f^{\ovlp,\cN}$]\label{lem:equivf}
  Let $\cN$ be a $d$-Euclidean structure in $\cL$ over $\Omega$, let $A$ be a finite set, let $V$ be
  a countable set disjoint from $A$. Then the definition of $f^{\dsct,\cN}_{A,V}$ is equivariant in
  the sense that for every injection $\alpha\colon A\cup V\to U$, the following diagram commutes:
  \begin{equation}\label{eq:diagpp}
    \begin{tikzcd}[column sep={2cm}]
      \cE_{\alpha(A)}^{(d)}\times\cE_{\alpha(A),U\setminus\alpha(A)}^{(d),\dsct}
      \arrow[r, "f^{\dsct,\cN}_{\alpha(A),U\setminus\alpha(A)}"]
      \arrow[d, "\alpha^*"']
      &
      \cP(\cK_U)
      \arrow[d, "\alpha^*"]
      \\
      \cE_A^{(d)}\times\cE_{A,V}^{(d),\dsct}
      \arrow[r, "f^{\dsct,\cN}_{A,V}"]
      &
      \cP(\cK_{A\cup V})
    \end{tikzcd}
  \end{equation}

  In particular, if $V$ is infinite, then for every set $A'$ with $\lvert A'\rvert=\lvert A\rvert$
  and every $V'$ countable and disjoint from $A'$, the
  function $\pp^{\dsct,\cN}_{A',V'}$ is completely determined by $\pp^{\dsct,\cN}_{A,V}$ via
  \begin{align}\label{eq:ppalpha*x}
    \pp^{\dsct,\cN}_{A',V'}(\alpha\down_{A'}^*(x))\comp\alpha^* & = \alpha^*\comp\pp^{\dsct,\cN}_{A,V}(x)
    \qquad (x\in\cE_{A,V}^{(d),\dsct}),
  \end{align}
  where $\alpha\colon A'\cup V'\to A\cup V$ is any injection with $\alpha(A')=A$.

  The analogous statements hold for $f^{\ovlp,\cN}$ and overlapping types.
\end{lemma}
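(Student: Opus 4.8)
The plan is to deduce everything from the equivariance of $M^\cN$ recorded in Section~\ref{subsec:contequiv}, the only additional ingredient being an elementary check of how an injection $\alpha\colon A\cup V\to U$ acts on the three regions of indices. First I would record that $\alpha$ sends $r(A)$ into $r(\alpha(A))$, sends $r^{\dsct}(A,V)$ into $r^{\dsct}(\alpha(A),U\setminus\alpha(A))$, and sends $\overline{r}^{\dsct}(A,V)$ into $\overline{r}^{\dsct}(\alpha(A),U\setminus\alpha(A))$; all three are immediate from $\alpha(A)\cap\alpha(V)=\varnothing$, $\alpha(V)\subseteq U\setminus\alpha(A)$, and injectivity of $\alpha$ (the same inclusions hold with $\ovlp$ in place of $\dsct$, where injectivity is what preserves the region-defining conditions $A\not\subseteq B$ and $A\subsetneq B$). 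Consequently the map $\alpha^*\colon\cE_U^{(d)}\to\cE_{A\cup V}^{(d)}$ of Section~\ref{subsec:contequiv} respects the triple factorizations $\cE_U^{(d)}=\cE_{\alpha(A)}^{(d)}\times\cE_{\alpha(A),U\setminus\alpha(A)}^{(d),\dsct}\times\overline{\cE}_{\alpha(A),U\setminus\alpha(A)}^{(d),\dsct}$ and $\cE_{A\cup V}^{(d)}=\cE_A^{(d)}\times\cE_{A,V}^{(d),\dsct}\times\overline{\cE}_{A,V}^{(d),\dsct}$: on the first two factors it is the map $\alpha^*$ of the Definition above, and on the third factor it is the map $\alpha^*\colon\overline{\cE}_{\alpha(A),U\setminus\alpha(A)}^{(d),\dsct}\to\overline{\cE}_{A,V}^{(d),\dsct}$.

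Second, I would check that this last map is measure-preserving. Since $B\mapsto\alpha(B)$ is injective on $\overline{r}^{\dsct}(A,V)$, it is a projection of a product space onto a sub-family of its coordinates, followed by relabeling the $\alpha(B)$-th factor as the $B$-th factor, which on the order components is the bijection $\alpha\down_B^*\colon\cO_{\alpha(B)}\to\cO_B$; a marginal of a product measure onto a sub-family of coordinates is the product of the corresponding factors, and $\alpha\down_B^*$ carries $\nu_{\alpha(B)}$ to $\nu_B$, so the pushforward of $\mu^{(d)}$ is $\mu^{(d)}$. The overlapping version is identical.

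The commutativity of~\eqref{eq:diagpp} is then a short computation: fix $(x,z)$ in $\cE_{\alpha(A)}^{(d)}\times\cE_{\alpha(A),U\setminus\alpha(A)}^{(d),\dsct}$, let $\rn{y}$ be picked at random according to $\mu^{(d)}$ in $\overline{\cE}_{\alpha(A),U\setminus\alpha(A)}^{(d),\dsct}$, so that $f^{\dsct,\cN}_{\alpha(A),U\setminus\alpha(A)}(x,z)$ is the law of $M^\cN_U(x,\rn{y},z)$. Pushing forward by $\alpha^*$ and using equivariance of $M^\cN$, the measure $\alpha^*(f^{\dsct,\cN}_{\alpha(A),U\setminus\alpha(A)}(x,z))$ is the law of $M^\cN_{A\cup V}(\alpha^*(x,\rn{y},z))$; by the first paragraph $\alpha^*(x,\rn{y},z)=(\alpha^*(x),\alpha^*(\rn{y}),\alpha^*(z))$ in the factorization of $\cE_{A\cup V}^{(d)}$, and by the second paragraph $\alpha^*(\rn{y})$ is distributed according to $\mu^{(d)}$ on $\overline{\cE}_{A,V}^{(d),\dsct}$. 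Hence this law is exactly $f^{\dsct,\cN}_{A,V}(\alpha^*(x),\alpha^*(z))=f^{\dsct,\cN}_{A,V}(\alpha^*(x,z))$, which is the desired commutativity; running the same argument with $\ovlp$ in place of $\dsct$ gives equivariance of $f^{\ovlp,\cN}$.

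Finally, for the last clause of the statement: since $V$ is infinite there is an injection $\alpha\colon A'\cup V'\to A\cup V$ with $\alpha(A')=A$, and the diagram just proved, applied with $U:=A\cup V$, yields $f^{\dsct,\cN}_{A',V'}(\alpha\down_{A'}^*(x),\alpha^*(z))=\alpha^*(f^{\dsct,\cN}_{A,V}(x,z))$ for all $x,z$; unwinding $\pp^{\dsct,\cN}_{A,V}(x)(z)=f^{\dsct,\cN}_{A,V}(x,z)$ turns this into~\eqref{eq:ppalpha*x}, and since $\alpha\down_{A'}^*$ is a bijection $\cE_A^{(d)}\to\cE_{A'}^{(d)}$ while $\alpha^*\colon\cE_{A,V}^{(d),\dsct}\to\cE_{A',V'}^{(d),\dsct}$ is surjective and measure-preserving, this recovers $\pp^{\dsct,\cN}_{A',V'}$ from $\pp^{\dsct,\cN}_{A,V}$ up to a.e.\ equivalence, and likewise for overlapping types. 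I expect the only genuinely delicate point to be the first paragraph: verifying that the three index regions over $A\cup V$ land in the matching regions over $U$, and pinning down the action of $\alpha^*$ on the overlap factor $\overline{\cE}$ (which, when $U$ is strictly larger than $\alpha(A\cup V)$, it only partially reads) --- but this becomes a one-line check once the factorizations are written out.
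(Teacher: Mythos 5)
Your proof is correct and follows essentially the same route as the paper's: equivariance of $M^\cN$ together with the fact that $\alpha^*$ restricted to the complementary $\overline{\cE}$ factor is measure-preserving gives the commutative diagram, and the ``in particular'' clause is obtained exactly as in the paper by specializing the diagram to $U=A\cup V$ and using surjectivity of $\alpha\down_{A'}^*$ and of $\alpha^*\colon\cE_{A,V}^{(d),\dsct}\to\cE_{A',V'}^{(d),\dsct}$. Your explicit bookkeeping of how $\alpha$ maps the three index regions and why the restricted $\alpha^*$ is measure-preserving merely spells out what the paper leaves implicit, so there is nothing to fix.
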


\begin{proof}
  The proofs for the dissociated and overlapping cases are analogous to each other, so we prove only
  the dissociated case.

  Let $\alpha\colon A\cup V\to U$ be an injection, let $x\in\cE_{\alpha(A)}^{(d)}$ and
  $z\in\cE_{\alpha(A),U\setminus\alpha(A)}^{(d),\dsct}$. Our objective is to show that
  \begin{align*}
    \alpha^*(f^{\dsct,\cN}_{\alpha(A),U\setminus\alpha(A)}(x,z))
    & =
    f^{\dsct,\cN}_{A,V}(\alpha^*(x,z)).
  \end{align*}
  For this, we let $\rn{y}$ and $\rn{w}$ be picked at random in
  $\overline{\cE}_{\alpha(A),U\setminus\alpha(A)}^{(d),\dsct}$ and
  $\overline{\cE}_{A,V}^{(d),\dsct}$, respectively according to $\mu^{(d)}$ and we need to show that
  \begin{align}\label{eq:alpha*M}
    \alpha^*(M^\cN_U(x,\rn{y},z)) & \sim M^\cN_{A\cup V}(\alpha^*(x),\rn{w},\alpha^*(z)).
  \end{align}

  But note that since $M^\cN$ is equivariant, we have
  \begin{align*}
    \alpha^*(M^\cN_U(x,\rn{y},z))
    & =
    M^\cN_U(\alpha^*(x),\alpha^*(\rn{y}),\alpha^*(z))
  \end{align*}
  and since the middle $\alpha^*$ is measure-preserving (with respect to both versions of
  $\mu^{(d)}$), we have $\alpha^*(\rn{y})\sim\rn{w}$, so~\eqref{eq:alpha*M} follows.

  \medskip

  For the final assertion, first note that since $V$ is (countably) infinite and $V'$ is countable,
  an injection $\alpha\colon A'\cup V'\to A\cup V$ is guaranteed to exist.

  We claim that $\alpha\down_{A'}^*$ and $\alpha^*$ that appear on the left-hand side
  of~\eqref{eq:ppalpha*x} are surjective (which implies that~\eqref{eq:ppalpha*x} indeed completely
  determines $\pp^{\dsct,\cN}_{A',V'}$).

  For $\alpha\down_{A'}^*\colon\cE_A^{(d)}\to\cE_{A'}^{(d)}$, since $\lvert A'\rvert=\lvert
  A\rvert$, $\alpha\down_{A'}$ is bijective so if $x'\in\cE_{A'}^{(d)}$, then $x\in\cE_A^{(d)}$
  defined by $x_B\df x_{\alpha^{-1}(B)}$ ($B\in r(A)$) satisfies $\alpha\down_{A'}^*(x)=x'$.

  For $\alpha^*\colon\cE_{A',V'}^{(d),\dsct}\to\cE_{A,V}^{(d),\dsct}$, if
  $z'\in\cE_{A',V'}^{(d),\dsct}$, then $z\in\cE_{A,V}^{(d),\dsct}$ defined by $z_B\df
  z_{\alpha^{-1}(B)}$ when $B\subseteq\im(\alpha)$ and defined arbitrarily for $B\in
  r^{\dsct}(A,V)\setminus\im(\alpha)$ satisfies $\alpha^*(z)=z'$.

  Thus the claim is proved.

  Let us now show that~\eqref{eq:ppalpha*x} holds: for $z\in\cE_{A',V'}^{(d),\dsct}$, we have
  \begin{align*}
    \pp^{\dsct,\cN}_{A',V'}(\alpha\down_{A'}^*(x))(\alpha^*(z))
    & =
    f^{\dsct,\cN}_{A',V'}(\alpha^*(x,z))
    =
    \alpha^*(f^{\dsct,\cN}_{A,V}(x,z))
    =
    \alpha^*(\pp^{\dsct,\cN}_{A,V}(x)(z)),
  \end{align*}
  where the second equality follows from the fact that~\eqref{eq:diagpp} commutes and $\alpha(A)=A'$
  since $\lvert A\rvert=\lvert A'\rvert < \infty$ and $\alpha$ is injective.
\end{proof}

Our next objective is to show that injective functions also induce contra-variant maps between the
spaces of types (both dissociated and overlapping).

\begin{definition}\label{def:contravartypes}
  Let $\Omega=(X,\mu)$ be an atomless standard probability space, let $d\in\NN$, let $\cL$ be a
  finite relational language, let $\alpha\colon A\to B$ be an injection between finite sets and let
  $V$ be a countable set disjoint from both $A$ and $B$.

  We define the map $\alpha^*\colon\cS_{B,V}^{(d),\dsct}\to\cS_{A,V}^{(d),\dsct}$ as follows: recall
  that $\overline{\alpha}$ denotes the bijection $A\to\alpha(A)$ obtained from $\alpha$ by
  restricting its codomain. In turn, this contra-variantly induces the map
  $(\overline{\alpha}^{-1})^*\colon\cE_{A,V}^{(d),\dsct}\to\cE_{\alpha(A),V}^{(d),\dsct}$ via
  $(\overline{\alpha}^{-1})^*(x)_C = x_{\overline{\alpha}^{-1}(C)}$.

  For $p\in\cS_{B,V}^{(d),\dsct}$, that is, $p\colon\cE_{B,V}^{(d),\dsct}\to\cP(\cK_{B\cup V})$ we
  need to define $\alpha^*(p)\in\cS_{A,V}^{(d),\dsct}$, that is,
  $\alpha^*(p)\colon\cE_{A,V}^{(d),\dsct}\to\cP(\cK_{A\cup V})$. For this, given
  $x\in\cE_{A,V}^{(d),\dsct}$, we sample $\rn{y}$ at random in
  \begin{align*}
    \prod_{C\in r^{\dsct}(B,V)\setminus r^{\dsct}(\alpha(A),V)} (X\times\cO_C^d)
  \end{align*}
  according to $\bigotimes_{C\in r^{\dsct}(B,V)\setminus
    r^{\dsct}(\alpha(A),V)}(\mu\otimes\bigotimes_{i=1}^d \nu_C)$, we sample $\rn{M}$ at random in
  $\cK_{B\cup V}$ according to
  \begin{align*}
    p((\overline{\alpha}^{-1})^*(x),\rn{y})
  \end{align*}
  (conditionally on $\rn{y}$) and we let $\alpha^*(p)(x)\in\cP(\cK_{A\cup V})$ be the distribution of
  \begin{align*}
    \alpha\up_V^*(\rn{M})
  \end{align*}
  (recall that $\alpha\up_V\colon A\cup V\to B\cup V$ is the extension of $\alpha$ that acts identically on
  $V$, which gives a contra-variantly induced map $\alpha\up_V^*\colon\cK_{B\cup V}\to\cK_{A\cup V}$).

  Note that
  \begin{align*}
    \PP[\alpha\up_V^*(\rn{M})\rest_A = \alpha^*(M^{\dsct}_{B,V}(p))]
    & =
    \PP[\alpha^*(\rn{M}\rest_B) = \alpha^*(M^{\dsct}_{B,V}(p))]
    \geq
    \PP[\rn{M}\rest_B = M^{\dsct}_{B,V}(p)]
    =
    1,
  \end{align*}
  so indeed $\alpha^*(p)$ is an element of $\cS_{A,V}^{(d),\dsct}$ and we have
  \begin{align}\label{eq:equivMdsct}
    M^{\dsct}_{A,V}(\alpha^*(p)) & = \alpha^*(M^{\dsct}_{B,V}(p)),
  \end{align}
  that is, $M^{\dsct}$ is equivariant.

  We define $\alpha^*\colon\cS_{B,V}^{(d),\ovlp}\to\cS_{A,V}^{(d),\ovlp}$ analogously by replacing
  $\dsct$ with $\ovlp$ throughout.
\end{definition}

\begin{lemma}\label{lem:contravartypes}
  The definitions of the maps $\alpha^*$ of Definition~\ref{def:contravartypes} are contra-variant,
  that is, if $\alpha\colon A\to B$ and $\beta\colon B\to C$ are injections and $V$ is a countable
  set disjoint from $A$, $B$ and $C$, then $(\beta\comp\alpha)^* = \alpha^*\comp\beta^*$.
\end{lemma}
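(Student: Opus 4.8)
The plan is to verify the functoriality identity $(\beta\comp\alpha)^* = \alpha^*\comp\beta^*$ by unwinding the definition of the induced maps on type spaces in terms of sampling, and tracking how the sampling procedure for the composite matches the iterated sampling procedure. Throughout, I will treat only the dissociated case; the overlapping case is identical after replacing $\dsct$ with $\ovlp$ everywhere, exactly as in the statement of Definition~\ref{def:contravartypes}.

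First I would fix $p\in\cS_{C,V}^{(d),\dsct}$ and an input $x\in\cE_{A,V}^{(d),\dsct}$, and expand both $\alpha^*(\beta^*(p))(x)$ and $(\beta\comp\alpha)^*(p)(x)$ as distributions of structures over $A\cup V$. For the left-hand side, applying $\beta^*$ to $p$ produces, for an input $x'\in\cE_{B,V}^{(d),\dsct}$, the distribution of $\beta\up_V^*(\rn{M})$ where $\rn{M}$ is sampled from $p$ after feeding it $(\overline{\beta}^{-1})^*(x')$ padded with fresh randomness $\rn{y}_\beta$ on $r^{\dsct}(C,V)\setminus r^{\dsct}(\beta(B),V)$; then applying $\alpha^*$ feeds $(\overline{\alpha}^{-1})^*(x)$ padded with fresh randomness $\rn{y}_\alpha$ on $r^{\dsct}(B,V)\setminus r^{\dsct}(\alpha(A),V)$ into this, and post-composes with $\alpha\up_V^*$. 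For the right-hand side, $(\beta\comp\alpha)^*(p)(x)$ is the distribution of $(\beta\comp\alpha)\up_V^*(\rn{M})$, with $\rn{M}$ sampled from $p$ fed $(\overline{\beta\comp\alpha}^{-1})^*(x)$ padded with fresh randomness on $r^{\dsct}(C,V)\setminus r^{\dsct}((\beta\comp\alpha)(A),V)$.

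The key steps are then three routine checks. (i) Composition of the inclusion pushforwards: since $\overline{\beta\comp\alpha}=\overline{\beta}\comp(\beta\down_{\alpha(A)})^{-1}\comp\overline{\alpha}$ or, more cleanly, since $(\overline{\alpha}^{-1})^*$ and $(\overline{\beta}^{-1})^*$ are contra-variant on $\cE^{(d),\dsct}$ (this is an instance of the general contra-variance noted in Section~\ref{subsec:contequiv}), one has $(\overline{\beta}^{-1})^*\comp(\overline{\alpha}^{-1})^*=(\overline{\beta\comp\alpha}^{-1})^*$ on the coordinates that matter, after identifying $(\overline{\alpha}^{-1})^*(x)$ with its image inside $\cE_{\alpha(A),V}^{(d),\dsct}\subseteq$ the relevant factor. (ii) Matching the fresh randomness: the index set $r^{\dsct}(C,V)\setminus r^{\dsct}((\beta\comp\alpha)(A),V)$ is the disjoint union of $r^{\dsct}(C,V)\setminus r^{\dsct}(\beta(B),V)$ and the image under $\beta$ of $r^{\dsct}(B,V)\setminus r^{\dsct}(\alpha(A),V)$ — here I would use that $r^{\dsct}(A,V)=r(V)$ depends only on $V$, so actually $r^{\dsct}(C,V)=r^{\dsct}(B,V)=r^{\dsct}(A,V)=r(V)$ and these ``difference'' sets are in fact empty, which makes this step vacuous in the dissociated case (in the overlapping case one genuinely needs that $r^{\ovlp}(C,V)\setminus r^{\ovlp}(\beta(B),V)$ and $\beta(r^{\ovlp}(B,V)\setminus r^{\ovlp}(\alpha(A),V))$ partition $r^{\ovlp}(C,V)\setminus r^{\ovlp}((\beta\comp\alpha)(A),V)$). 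In either case, by independence and Fubini, sampling the two blocks of fresh coordinates in sequence is the same as sampling their union at once. (iii) Composition of reducts: $(\alpha\up_V)^*\comp(\beta\up_V)^*=((\beta\up_V)\comp(\alpha\up_V))^*=((\beta\comp\alpha)\up_V)^*$ by the contra-variance of $M\mapsto\alpha^*(M)$ on $\cK$ recorded in item~\ref{it:alpha*M} of Section~\ref{subsec:contequiv}, together with $(\beta\up_V)\comp(\alpha\up_V)=(\beta\comp\alpha)\up_V$. Chaining (i)--(iii) shows the two distributions coincide, hence $\alpha^*(\beta^*(p))=(\beta\comp\alpha)^*(p)$ as elements of $\cS_{A,V}^{(d),\dsct}$ (equality is up to $\mu^{(d)}$-a.e.\ equivalence in the $\cE_{A,V}^{(d),\dsct}$-argument, which is harmless).

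The only mild subtlety — and the part I would be most careful about — is bookkeeping the ``padding'' coordinates and the identifications of $\cE^{(d),\dsct}_{\alpha(A),V}$ with a sub-product of $\cE^{(d),\dsct}_{B,V}$, since $\alpha(A)$ and $A$ differ only by a relabeling via $\overline{\alpha}$, and one must make sure the abuse of notation in ``$p((\overline{\alpha}^{-1})^*(x),\rn{y})$'' is being reassembled consistently on both sides. This is purely combinatorial on the index posets $r^{\dsct}$ (trivial) or $r^{\ovlp}$ (a short set-theoretic computation using $A\subseteq B\subseteq C$), so once the index-set identity in step (ii) is nailed down, the rest is a mechanical composition of contra-variant operations already available from Section~\ref{subsec:contequiv}. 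I would therefore write the proof as: reduce to the dissociated case by symmetry, expand both sides via the sampling definition, invoke the three contra-variance facts (i)--(iii), and conclude.
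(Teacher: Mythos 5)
Your proposal is correct and takes essentially the same approach as the paper: both sides are expanded via the sampling definition, and the identity follows by combining the fresh-randomness blocks (your step (ii); in the paper this is the distributional identity $\rn{w}\sim((\overline{\beta}^{-1})^*(\rn{y}),\rn{z})$) with the composition identities for $(\overline{\alpha}^{-1})^*$ and for $\alpha\up_V^*$. The only difference is presentational: the paper writes out the overlapping case (where the index-set bookkeeping is the one nontrivial step, namely the partition of $r^{\ovlp}(C,V)\setminus r^{\ovlp}((\beta\comp\alpha)(A),V)$ that you correctly identify) and declares the dissociated case analogous, so in a final write-up you should present that case rather than the dissociated one, where the padding is vacuous.
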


\begin{proof}
  We prove only the overlapping case as the dissociated case has an analogous proof.

  Let $p\in\cS_{C,V}^{(d),\ovlp}$ and $x\in\cE_{A,V}^{(d),\ovlp}$ and let $\rn{y}$ and $\rn{z}$ be
  picked at random in
  \begin{align*}
    \prod_{D\in r^{\ovlp}(B,V)\setminus r^{\ovlp}(\alpha(A),V)} (X\times\cO_D^d),
    & &
    \prod_{D\in r^{\ovlp}(C,V)\setminus r^{\ovlp}(\beta(B),V)} (X\times\cO_D^d),
  \end{align*}
  respectively, according to
  \begin{align*}
    \bigotimes_{D\in r^{\ovlp}(B,V)\setminus r^{\ovlp}(\alpha(A),V)}
    \left(\mu\otimes\bigotimes_{i=1}^d \nu_D\right),
    & &
    \bigotimes_{D\in r^{\ovlp}(C,V)\setminus r^{\ovlp}(\beta(B),V)}
    \left(\mu\otimes\bigotimes_{i=1}^d \nu_D\right)
  \end{align*}
  and independently from each other.

  By sampling $\rn{M}$ in $\cK_{C\cup V}$ according to
  \begin{align*}
    p\Bigl((\overline{\beta}^{-1})^*\bigl((\overline{\alpha}^{-1})^*(x), \rn{y}\bigr),
      \rn{z}\Bigr)
  \end{align*}
  (conditionally on $(\rn{y},\rn{z})$), we have that the conditional distributions given $\rn{y}$ of
  $\beta\up_V^*(\rn{M})$ and $\beta^*(p)((\overline{\alpha}^{-1})^*(x),\rn{y})$ are the same. In turn, this
  means that $\alpha\up_V^*(\beta\up_V^*(\rn{M}))\sim \alpha^*(\beta^*(p))(x)$.

  Now let $\rn{w}$ be picked at random in
  \begin{align*}
    \prod_{D\in r^{\ovlp}(C,V)\setminus r^{\ovlp}((\beta\comp\alpha)(A),V)} (X\times\cO_D^d)
  \end{align*}
  according to
  \begin{align*}
    \bigotimes_{D\in r^{\ovlp}(C,V)\setminus r^{\ovlp}((\beta\comp\alpha)(A),V)}
    \left(\mu\otimes\bigotimes_{i=1}^d \nu_D\right)
  \end{align*}
  and note that
  \begin{align*}
    \rn{w} & \sim \bigl((\overline{\beta}^{-1})^*(\rn{y}),\rn{z}\bigr),
  \end{align*}
  so if we sample $\rn{N}$ in $\cK_{C\cup V}$ according to
  \begin{align*}
    p\Bigl((\overline{\beta}^{-1})^*\bigl((\overline{\alpha}^{-1})^*(x)\bigr),\rn{w}\Bigr)
    & =
    p\bigl((\overline{\beta\comp\alpha}^{-1})^*(x),\rn{w}\bigr)
  \end{align*}
  (conditionally on $\rn{w}$), then we have $\rn{N}\sim\rn{M}$.

  Thus we get
  \begin{align*}
    (\beta\comp\alpha)^*(p)(x)
    & \sim
    (\beta\comp\alpha)\up_V^*(\rn{N})
    \sim
    (\beta\comp\alpha)\up_V^*(\rn{M})
    =
    \alpha\up_V^*(\beta\up_V^*(\rn{M}))
    \sim
    \alpha^*(\beta^*(p))(x),
  \end{align*}
  so we conclude that $(\beta\comp\alpha)^* = \alpha^*\comp\beta^*$.
\end{proof}

We also define natural maps that move between overlapping types and dissociated types.

\begin{definition}\label{def:typeproj}
  Let $\Omega=(X,\mu)$ be an atomless standard probability space, let $d\in\NN$, let $\cL$ be a
  finite relational language, let $A$ be a finite set and let $V$ be a countable set disjoint from
  $A$.

  We define the map $\pi^{\ovlp}_{A,V}\colon\cS_{A,V}^{(d),\ovlp}\to\cS_{A,V}^{(d),\dsct}$ as follows: for
  $p\in\cS_{A,V}^{(d),\ovlp}$, that is, $p\colon\cE_{A,V}^{(d),\ovlp}\to\cP(\cK_{A\cup V})$ and
  $x\in\cE_{A,V}^{(d),\dsct}$, we sample $\rn{y}$ at random in $\prod_{B\in r^{\ovlp}(A,V)\setminus
    r^{\dsct}(A,V)} (X\times\cO_B^d)$ according to $\bigotimes_{B\in r^{\ovlp}(A,V)\setminus
    r^{\dsct}(A,V)}(\mu\otimes\bigotimes_{i=1}^d\nu_B)$, we sample $\rn{M}$ at random in $\cK_{A\cup
    V}$ according to 
  \begin{align*}
    p(x,\rn{y})
  \end{align*}
  (conditionally on $\rn{y}$) and we let $\pi^{\ovlp}_{A,V}(p)(x)$ be the distribution of
  $\rn{M}$. Lemma~\ref{lem:equivpp} below shows that $\pi^{\ovlp}_{A,V}(p)$ is indeed an element of
  $\cS_{A,V}^{(d),\dsct}$ by showing that $\PP[\rn{M}\rest_A = M^{\ovlp}_{A,V}(p)] = 1$.
\end{definition}

The next lemma shows that both $\pp^{\dsct,\cN}$ and $\pp^{\ovlp,\cN}$ have the expected
equivariances and appropriately commute with the maps $M^{\dsct}$, $M^{\ovlp}$ and $\pi^{\ovlp}$.

\begin{lemma}\label{lem:equivpp}
  Let $\cN$ be a $d$-Euclidean structure in $\cL$ over $\Omega$, let $A$ and $B$ be finite sets, let $V$ be
  a countable set disjoint from $A$ and $B$ and let $\alpha\colon A\to B$ be an injection. Then the
  following diagram commutes:
  \begin{equation*}
    \begin{tikzcd}[column sep={3cm}]
      \cE_B^{(d)}
      \arrow[r, "\pp^{\ovlp,\cN}_{B,V}"]
      \arrow[rr, bend left=20, "\pp^{\dsct,\cN}_{B,V}"]
      \arrow[rrr, bend left=30, "M^\cN_B"']
      \arrow[d, "\alpha^*"]
      &
      \cS_{B,V}^{(d),\ovlp}
      \arrow[r, "\pi^{\ovlp}_{B,V}"]
      \arrow[rr, bend left=20, "M^{\ovlp}_{B,V}"]
      \arrow[d, "\alpha^*"]
      &
      \cS_{B,V}^{(d),\dsct}
      \arrow[r, "M^{\dsct}_{B,V}"]
      \arrow[d, "\alpha^*"]
      &
      \cK_B
      \arrow[d, "\alpha^*"]
      \\
      \cE_A^{(d)}
      \arrow[r, "\pp^{\ovlp,\cN}_{A,V}"]
      \arrow[rr, bend right=20, "\pp^{\dsct,\cN}_{A,V}"']
      \arrow[rrr, bend right=30, "M^\cN_A"]
      &
      \cS_{A,V}^{(d),\ovlp}
      \arrow[r, "\pi^{\ovlp}_{A,V}"]
      \arrow[rr, bend right=20, "M^{\ovlp}_{A,V}"']
      &
      \cS_{A,V}^{(d),\dsct}
      \arrow[r, "M^{\dsct}_{A,V}"]
      &
      \cK_A
    \end{tikzcd}
  \end{equation*}
\end{lemma}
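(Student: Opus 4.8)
The plan is to check the diagram cell by cell, separating the three ``horizontal'' identities inside each of the two rows from the six ``vertical'' equivariance squares; nearly all of the work is an unwinding of the definitions of $\pp^{\dsct,\cN}$, $\pp^{\ovlp,\cN}$, $\pi^{\ovlp}$, $M^{\dsct}$, $M^{\ovlp}$, combined with equivariance of $M^\cN$ and the fact that restricting a product of i.i.d.\ coordinates along an injective reindexing is measure-preserving.

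For the horizontal identities, stated for a finite set $C\in\{A,B\}$, I would argue as follows. First, $M^{\dsct}_{C,V}\comp\pp^{\dsct,\cN}_{C,V}=M^\cN_C$: the measure $\pp^{\dsct,\cN}_{C,V}(x)(z)$ is the law of $M^\cN_{C\cup V}(x,\rn y,z)$ with $\rn y$ uniform over $\overline{\cE}_{C,V}^{(d),\dsct}$, and the restriction $M^\cN_{C\cup V}(x,\rn y,z)\rest_C$ depends only on the coordinates indexed by subsets of $C$, namely on $x$, so it equals $M^\cN_C(x)$ deterministically; this shows the underlying model exists and equals $M^\cN_C(x)$. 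Second, $\pi^{\ovlp}_{C,V}\comp\pp^{\ovlp,\cN}_{C,V}=\pp^{\dsct,\cN}_{C,V}$: composing the two samplings, the resulting measure is the law of $M^\cN_{C\cup V}$ evaluated at $x$ on $r(C)$, at $z$ on $r^{\dsct}(C,V)$, and at i.i.d.\ $\mu^{(d)}$ data on every index in $\overline{r}^{\ovlp}(C,V)\cup(r^{\ovlp}(C,V)\setminus r^{\dsct}(C,V))$; since every subset of $C\cup V$ meeting both $C$ and $V$ either contains $C$ (hence lies in $\overline{r}^{\ovlp}(C,V)$) or does not (hence lies in $r^{\ovlp}(C,V)\setminus r^{\dsct}(C,V)$), that union is exactly $\overline{r}^{\dsct}(C,V)$, so the measure is $\pp^{\dsct,\cN}_{C,V}(x)(z)$. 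Third, $M^{\dsct}_{C,V}\comp\pi^{\ovlp}_{C,V}=M^{\ovlp}_{C,V}$: since every $p(x,\rn y)$ with $p\in\cS_{C,V}^{(d),\ovlp}$ is concentrated on structures restricting to $M^{\ovlp}_{C,V}(p)$ on $C$, so is $\pi^{\ovlp}_{C,V}(p)(x)$; this also discharges the well-definedness claim deferred from Definition~\ref{def:typeproj}.

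For the vertical squares, the three involving $M^\cN$, $M^{\dsct}$, $M^{\ovlp}$ are respectively equivariance of $M^\cN$ (recorded when $M^\cN$ was introduced), equation~\eqref{eq:equivMdsct}, and its overlapping analogue from Definition~\ref{def:contravartypes}. For the squares for $\pp^{\dsct,\cN}$, $\pp^{\ovlp,\cN}$, $\pi^{\ovlp}$, I would unwind both sides directly. For instance, for $\alpha^*\comp\pp^{\ovlp,\cN}_{B,V}=\pp^{\ovlp,\cN}_{A,V}\comp\alpha^*$: by Definition~\ref{def:contravartypes} and the definition of $f^{\ovlp,\cN}$, $\alpha^*(\pp^{\ovlp,\cN}_{B,V}(x))(z)$ is the law of $\alpha\up_V^*\bigl(M^\cN_{B\cup V}(x,\rn w,((\overline{\alpha}^{-1})^*(z),\rn y))\bigr)$ for independent i.i.d.\ noises $\rn w$ on $\overline{\cE}_{B,V}^{(d),\ovlp}$ and $\rn y$ on the index set $r^{\ovlp}(B,V)\setminus r^{\ovlp}(\alpha(A),V)$; applying equivariance of $M^\cN$ to $\alpha\up_V\colon A\cup V\to B\cup V$ and tracking, for each $B'\subseteq A\cup V$, which block of the $(B\cup V)$-data its image $\alpha\up_V(B')$ lies in, this becomes the law of $M^\cN_{A\cup V}$ fed $\alpha^*(x)$ on $r(A)$, the given $z$ on $r^{\ovlp}(A,V)$, and i.i.d.\ $\mu^{(d)}$ data on $\overline{r}^{\ovlp}(A,V)$ — which is exactly $\pp^{\ovlp,\cN}_{A,V}(\alpha^*(x))(z)$. (Alternatively one may factor $\alpha$ through $\overline{\alpha}$ and $\iota_{\alpha(A),B}$, using Lemma~\ref{lem:contravartypes} and the ``in particular'' clause of Lemma~\ref{lem:equivf} for the bijective part.) The cases of $\pp^{\dsct,\cN}$ and $\pi^{\ovlp}$ are entirely parallel, with $r^{\ovlp},\overline{r}^{\ovlp}$ replaced by (or mixed with) $r^{\dsct},\overline{r}^{\dsct}$. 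Assembling the horizontal identities with the vertical squares gives commutativity of the whole diagram, including the three bent arrows.

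The only real obstacle is the bookkeeping in the vertical squares: one must check that, under $\alpha\up_V^*$, the coordinates of $\cE_{B\cup V}^{(d)}$ whose indices lie in $A\cup V$ redistribute correctly among $r(A)$, $r^{\ovlp}(A,V)$ (resp.\ $r^{\dsct}(A,V)$) and $\overline{r}^{\ovlp}(A,V)$ (resp.\ $\overline{r}^{\dsct}(A,V)$), and that the i.i.d.\ families arising on the two sides of each square from different sources have matching joint laws. This reduces to the elementary observations that a subset of $A\cup V$ contained in $B$ is a subset of $A$, that no subset of $A\cup V$ strictly contains $B$ unless $\alpha$ is a bijection, and that $r^{\ovlp}(\alpha(A),V)\subseteq r^{\ovlp}(B,V)$, all immediate from the definitions of $r^{\dsct}$ and $r^{\ovlp}$; once these are in hand the diagram chase is purely formal.
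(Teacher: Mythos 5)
Your proposal is correct and follows essentially the same route as the paper: horizontal identities within a row by unwinding the sampling definitions (including the decomposition $\overline{r}^{\dsct}(C,V)=\overline{r}^{\ovlp}(C,V)\cup(r^{\ovlp}(C,V)\setminus r^{\dsct}(C,V))$ for $\pi^{\ovlp}\comp\pp^{\ovlp,\cN}=\pp^{\dsct,\cN}$), and vertical squares by equivariance of $M^\cN$ together with matching the joint laws of the i.i.d.\ noise families (the paper simply cites Lemma~\ref{lem:equivf} with $\alpha\up_V$ for the $\pp^{\dsct,\cN}$ and $\pp^{\ovlp,\cN}$ squares, which is your factorization remark). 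Only note that for the $\pi^{\ovlp}$ square the argument cannot pass through $M^\cN$, since $p\in\cS_{B,V}^{(d),\ovlp}$ is an arbitrary kernel; there, as in the paper, one compares the two samplings from $p$ directly and uses exactly the matching-of-noise-laws bookkeeping you describe in your last paragraph.
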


\begin{proof}
  Let us first prove the commutative properties involving only one of the rows, which we can
  assume without loss of generality to be the second row.

  The equalities
  \begin{align*}
    M^{\dsct}_{A,V}\comp\pp^{\dsct,\cN}_{A,V} & = M^\cN_A, &
    M^{\ovlp}_{A,V}\comp\pp^{\ovlp,\cN}_{A,V} & = M^\cN_A
  \end{align*}
  follow straightforwardly from definitions.

  \medskip

  We now prove that
  \begin{align}\label{eq:pipp}
    \pi^{\ovlp}_{A,V}\comp\pp^{\ovlp,\cN}_{A,V} & = \pp^{\dsct,\cN}_{A,V}.
  \end{align}
  Let $z\in\cE_A^{(d)}$, let $x\in\cE_{A,V}^{(d),\dsct}$, sample $\rn{y}$ at random in $\prod_{C\in
    r^{\ovlp}(A,V)\setminus r^{\dsct}(A,V)} (X\times\cO_C^d)$ according to $\bigotimes_{C\in
    r^{\ovlp}(A,V)\setminus r^{\dsct}(A,V)} (\mu\otimes\bigotimes_{i=1}^d\nu_C)$ and sample $\rn{M}$
  at random in $\cK_{A\cup V}$ according to
  \begin{align*}
    \pp^{\ovlp,\cN}_{A,V}(z)(x,\rn{y})
  \end{align*}
  (conditionally on $\rn{y}$) so that $\pi^{\ovlp}_{A,V}(\pp^{\ovlp,\cN}_{A,V}(z))(x)$ is the distribution
  of $\rn{M}$.

  By the definition of $\pp^{\ovlp,\cN}_{A,V}$, if we sample $\rn{w}$ in
  $\overline{\cE}_{A,V}^{(d),\ovlp}$ according to $\mu^{(d)}$, then $\rn{M}$ has the same
  distribution as $M^\cN_{A\cup V}(z,x,\rn{y},\rn{w})$. Since the distribution of $(\rn{y},\rn{w})$
  as an element of $\overline{\cE}_{A,V}^{(d),\dsct}$ is exactly $\mu^{(d)}$, by the definition of
  $\pp^{\dsct,\cN}_{A,V}$, it follows that $\rn{M}$ has the same distribution as
  $\pp^{\dsct,\cN}_{A,V}(z)(x)$, so we conclude that
  $\pi^{\ovlp}_{A,V}(\pp^{\ovlp,\cN}_{A,V}(z))(x)=\pp^{\dsct,\cN}_{A,V}(z)(x)$, from which~\eqref{eq:pipp}
  follows.

  \medskip

  To conclude the second row, it remains to show that
  \begin{align}\label{eq:Mpi}
    M^{\dsct}_{A,V}\comp\pi^{\ovlp}_{A,V} & = M^{\ovlp}_{A,V}.
  \end{align}
  Let $p\in\cS_{A,V}^{(d),\ovlp}$, let $x\in\cE_{A,V}^{(d),\dsct}$, sample $\rn{y}$ at random in
  $\prod_{C\in r^{\ovlp}(A,V)\setminus r^{\dsct}(A,V)} (X\times\cO_C^d)$ according to
  $\bigotimes_{C\in r^{\ovlp}(A,V)\setminus r^{\dsct}(A,V)} (\mu\otimes\bigotimes_{i=1}^d\nu_C)$ and
  sample $\rn{M}$ at random in $\cK_{A\cup V}$ according to
  \begin{align*}
    p(x,\rn{y})
  \end{align*}
  (conditionally on $\rn{y}$) so that $\pi^{\ovlp}_{A,V}(p)(x)$ is the distribution of $\rn{M}$.

  Then
  \begin{align*}
    \PP[\rn{M}\rest_A = M^{\ovlp}_{A,V}(p)] & = 1,
  \end{align*}
  so we have $M^{\dsct}_{A,V}(\pi^{\ovlp}_{A,V}(p)) = M^{\ovlp}_{A,V}(p)$, from which~\eqref{eq:Mpi} follows.

  \medskip

  It remains to show that all squares involving exactly two elements of each row commute, that is,
  we need to show equivariance of $\pp^{\ovlp,\cN}$, $\pp^{\dsct,\cN}$, $M^\cN$, $\pi^{\ovlp}_{A,V}$,
  $M^{\ovlp}$ and $M^{\dsct}$.

  The fact that $\pp^{\ovlp,\cN}$ and $\pp^{\dsct,\cN}$ are equivariant follows by applying
  Lemma~\ref{lem:equivf} with $\alpha\up_V$.

  Equivariance of $M^\cN$ is straightforward.

  Equivariance of $M^{\dsct}$ was observed in~\eqref{eq:equivMdsct} and equivariance of
  $M^{\ovlp}$ is analogous.

  It remains to show equivariance of $\pi^{\ovlp}$.

  Let $p\in\cS_{B,V}^{(d),\ovlp}$, let $x\in\cE_{A,V}^{(d),\dsct}$ and let us describe the
  distributions of $(\pi^{\ovlp}_{A,V}\comp\alpha^*)(p)(x)$ and
  $(\alpha^*\comp\pi^{\ovlp}_{B,V})(p)(x)$. Throughout this, all measures will be product measures of
  copies of $\mu$ with appropriate copies of $\nu_C$, which can be deduced from the underlying space
  of the random variables, so we will omit the measures for simplicity.

  First, we sample $\rn{y}$ and $\rn{z}$ independently in
  \begin{align*}
    \prod_{C\in r^{\ovlp}(A,V)\setminus r^{\dsct}(A,V)} (X\times\cO_C^d),
    & &
    \prod_{C\in r^{\ovlp}(B,V)\setminus r^{\ovlp}(\alpha(A),V)} (X\times\cO_C^d),
  \end{align*}
  respectively and we sample $\rn{M}$ according to
  \begin{align*}
    p((\overline{\alpha}^{-1})^*(x,\rn{y}),\rn{z})
  \end{align*}
  (conditionally on $\rn{y}$ and $\rn{z}$) so that the conditional distribution of
  $\alpha\up_V^*(\rn{M})$ given $\rn{y}$ is $\alpha^*(p)(x,\rn{y})$, which in turn means that
  the unconditional distribution of $\alpha\up_V^*(\rn{M})$ is $(\pi^{\ovlp}_{A,V}\comp\alpha^*)(p)(x)$.

  For the other distribution, we sample $\rn{u}$ and $\rn{v}$ independently in
  \begin{align*}
    \prod_{C\in r^{\dsct}(B,V)\setminus r^{\dsct}(\alpha(A),V)} (X\times\cO_C^d),
    & &
    \prod_{C\in r^{\ovlp}(B,V)\setminus r^{\dsct}(B,V)} (X\times\cO_C^d),
  \end{align*}
  respectively and we sample $\rn{N}$ according to
  \begin{align*}
    p((\overline{\alpha}^{-1})^*(x),\rn{u},\rn{v})
  \end{align*}
  (conditionally on $\rn{u}$ and $\rn{v}$) so that the conditional distribution of $\rn{N}$ given
  $\rn{u}$ is $\pi^{\ovlp}_{B,V}(p)((\overline{\alpha}^{-1})^*(x),\rn{u})$, which in turn means that the
  unconditional distribution of $\alpha\up_V^*(\rn{N})$ is $(\alpha^*\comp\pi^{\ovlp}_{B,V})(p)(x)$.

  Now we note that since all distributions are products of copies of $\mu$ with appropriate copies
  of $\nu_C$ and we have appropriate independences between random elements, it follows that
  \begin{align*}
    ((\overline{\alpha}^{-1})^*(\rn{y}),\rn{z}) & \sim (\rn{u},\rn{v})
  \end{align*}
  as random elements of
  \begin{align}\label{eq:equivpispace}
    \prod_{C\in r^{\ovlp}(B,V)\setminus r^{\dsct}(\alpha(A),V)} (X\times\cO_C^d),
  \end{align}
  so both distributions of $\rn{M}$ and $\rn{N}$ are equal to the distribution of $\rn{K}$ defined
  as follows: we sample $\rn{w}$ in the space of~\eqref{eq:equivpispace} above and sample $\rn{K}$
  according to
  \begin{align*}
    p((\overline{\alpha}^{-1})^*(x),\rn{w})
  \end{align*}
  (conditionally on $\rn{w}$). Clearly $\rn{M}\sim\rn{K}\sim\rn{N}$, from which we get
  $\alpha\up_V^*(\rn{M})\sim\alpha\up_V^*(\rn{K})\sim\alpha\up_V^*(\rn{N})$, hence
  $(\pi^{\ovlp}_{A,V}\comp\alpha^*)(p)(x) = (\alpha^*\comp\pi^{\ovlp}_{B,V})(p)(x)$, concluding the
  proof.
\end{proof}

We conclude this section by showing uniqueness of types of size $\ell$ for $(\ell-1)$-independent
$d$-Euclidean structures satisfying $\UCouple[\ell]$.

\begin{proposition}\label{prop:typeuniqueness}
  If $\cN$ is a $d$-Euclidean structure in $\cL$ that is $(\ell-1)$-independent and satisfies
  $\UCouple[\ell]$, $A$ is a set of size at most $\ell$ and $V$ is a countable set disjoint from
  $A$, then for almost every $(x,x')\in\cE_A^{(d)}\times\cE_A^{(d)}$, we have
  $\pp^{\ovlp,\cN}_{A,V}(x) = \pp^{\ovlp,\cN}_{A,V}(x')$.
\end{proposition}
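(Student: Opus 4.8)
\emph{Setup and case split.} The plan is to split on whether $|A|<\ell$ or $|A|=\ell$ and, in the hard case, to rephrase uniqueness of the overlapping type as a conditional‑independence statement about the ambient structure. If $|A|<\ell$, then since $\cN$ is $(\ell-1)$‑independent, locality of $M^\cN$ together with $(\ell-1)$‑independence shows that for any enumeration $\alpha$ of a tuple in $A\cup V$ the relation $P(\alpha)$ in $M^\cN_{A\cup V}(x,y,z)$ depends only on the coordinates indexed by subsets of $\im(\alpha)$ of size $\ge\ell$; every coordinate of $x$ is indexed by a subset of $A$, hence of size $\le|A|<\ell$, so $M^\cN_{A\cup V}(x,y,z)$ does not depend on $x$ at all and $\pp^{\ovlp,\cN}_{A,V}(x)$ is literally constant. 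This leaves the case $|A|=\ell$.

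\emph{Reduction to conditional independence.} When $|A|=\ell$ the only coordinate of $x\in\cE_A^{(d)}$ on which $M^\cN_{A\cup V}$ can depend is the one indexed by $A$ itself, call it $x_A$, of size $\ell$. Hence $\pp^{\ovlp,\cN}_{A,V}(\rn x)$ (for $\rn x\sim\mu^{(d)}$) is a deterministic function of $\rn x_A$, i.e.\ is $\sigma(\rn x_A)$‑measurable, and it suffices to prove it is independent of $\rn x_A$: a $\sigma(\rn x_A)$‑measurable random variable independent of $\rn x_A$ is a.s.\ constant, which is exactly the desired conclusion. Taking $V$ countably infinite, writing $W=A\cup V$ and $\rn w\sim\mu^{(d)}$ on $\cE_W^{(d)}$, one has $\pp^{\ovlp,\cN}_{A,V}(x)(z)=\mathrm{Law}\bigl(M^\cN_W(\rn w)\mid \rn w_A=x_A,\ (\rn w_B)_{B\in r^{\ovlp}(A,V)}=z\bigr)$, so the whole statement becomes
\[
 M^\cN_W(\rn w)\ \perp\ \rn w_A \ \ \Big|\ \ (\rn w_B)_{B\in r^{\ovlp}(A,V)} ,
\]
the assertion for finite $V_0$ following from this by the equivariance of Lemma~\ref{lem:equivf} and approximation by cylinder sets.

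\emph{Dependency analysis.} From the $\UCouple[\ell]$ characterization, $M^\cN_W(\rn w)\perp(\rn w_B)_{B\in r(W,\ell)}$, and since the coordinates of $\rn w$ are mutually independent, $M^\cN_W(\rn w)$ is jointly independent of every coordinate indexed by a set of size $\le\ell$. Two consequences: (i) for an enumeration $\alpha$ of an $\ell$‑set $e$, $P(\alpha)$ depends (by $(\ell-1)$‑independence and locality) only on $\rn w_e$, of size $\ell$, hence is a.s.\ constant; (ii) for an enumeration $\alpha$ with $A\not\subseteq\im(\alpha)$, one checks that all coordinates of size $\ge\ell$ inside $\im(\alpha)$ lie in $r^{\ovlp}(A,V)$, so $P(\alpha)$ is a function of the conditioning data. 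Therefore, up to the conditioning data and an a.s.-constant part, $M^\cN_W(\rn w)$ is determined by the ``overlapping part'' $M_{\mathrm c}$ consisting of the relations $P(\alpha)$ with $A\subsetneq\im(\alpha)$, and the displayed conditional independence reduces to $M_{\mathrm c}\perp\rn w_A\mid(\rn w_B)_{B\in r^{\ovlp}(A,V)}$.

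\emph{The crux.} This last reduction is the main obstacle. The relevant structural facts are that every relation of $M_{\mathrm c}$ that depends on $\rn w_A$ also depends on coordinates indexed by sets strictly containing $A$ — in particular on the top coordinate $\rn w_{\im(\alpha)}$ — all of which lie in $\overline r^{\,\ovlp}(A,V)$, so none of them is conditioned on, while $\UCouple[\ell]$ forces joint independence of the structure from all size‑$\le\ell$ coordinates over \emph{every} countable ground set, not merely $W=A\cup V$. My plan to exploit this is to pass to larger ground sets: split $V$ into disjoint infinite pieces $V^{(1)},V^{(2)},\dots$ (or adjoin a fresh $\ell$‑set $A^*$) and compare the restrictions of $M_{\mathrm c}$ associated to the different pieces, which conditionally on $\rn w_A$ share only $\rn w_A$ and are exchangeable with disjoint private randomness; $\UCouple[\ell]$ applied on the enlarged ground set prevents the ``type of $\rn w_A$'' from being visible in the joint behaviour of these pieces, and since each relation of $M_{\mathrm c}$ carrying $\rn w_A$‑dependence is scrambled by a randomised coordinate strictly above $A$, the dependence cannot survive conditioning only on coordinates in $r^{\ovlp}(A,V)$ (none of which properly contains $A$). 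I expect the delicate point to be the measure‑theoretic bookkeeping needed to turn these observations into a valid chain of conditional‑independence deductions — ensuring that the $\sigma$‑algebra with respect to which the type is shown measurable is genuinely a sub‑$\sigma$‑algebra of $\sigma(M^\cN_W(\rn w))$, which $\UCouple[\ell]$ makes independent of $\rn w_A$. It is precisely here that the overlapping type (rather than the weak or dissociated type) is the right notion: conditioning on exactly the sets in $r^{\ovlp}(A,V)$ retains enough information to pin down the local structure around $A$ while withholding every coordinate that could serve as the randomising pad for $\rn w_A$.
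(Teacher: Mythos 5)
Your reductions are sound and essentially parallel the paper's own preprocessing: the case $\lvert A\rvert<\ell$ is indeed trivial from $(\ell-1)$-independence, and for $\lvert A\rvert=\ell$ the statement is correctly reformulated as the conditional independence of $M^\cN_{A\cup V}(\rn{w})$ from $\rn{w}_A$ given the coordinates indexed by $r^{\ovlp}(A,V)$ (the paper phrases the same target as the product formula~\eqref{eq:typeuniqueness:objective}, after fixing a finite $U$ and applying Fubini and Lebesgue differentiation). Your dependency analysis of which relations are functions of which coordinates is also correct. But the proof stops exactly where the real work begins, and you say so yourself: the step ``$M_{\mathrm c}$ is conditionally independent of $\rn{w}_A$ given the overlapping coordinates'' is the proposition, not a consequence of the facts you have assembled. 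Note in particular that the quoted characterization of $\UCouple[\ell]$ only gives \emph{unconditional} independence of the structure from the coordinates of size at most $\ell$; the conditioning $\sigma$-algebra here contains coordinates of size $>\ell$ (overlapping sets meeting $V$ but not containing $A$), and conditioning on such extra data can create dependence, so the needed statement does not follow from that characterization by conditional-independence bookkeeping. Likewise, the heuristic that each $\rn{w}_A$-dependent relation is ``scrambled by a randomised coordinate strictly above $A$'' only controls marginals of single relations; the possible failure mode is a conditional \emph{joint} law of many such relations varying with $\rn{w}_A$, which is precisely what must be excluded (the $\cN^{\twist}$ discussion in the introduction is a warning that such conditional dependence can appear even when all marginals are uniform). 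Your proposed fix (splitting $V$ into infinitely many pieces and invoking exchangeability) is not developed into an argument, and it is unclear how it would engage the actual strength of $\UCouple[\ell]$, which quantifies over couplings with \emph{arbitrary} rank-$\le\ell$ limits rather than over information definable from the structure itself.

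For comparison, the paper closes this gap by using unique coupling directly and quantitatively: for an arbitrary positive-measure set $B_\ell\subseteq X\times\cO_\ell^d$ one adjoins a new predicate $Q$ of arity $\ell$ interpreted by $\cE_{\ell,\ell-1}^{(d)}\times B_\ell$, so that $\phi_{\cN'}$ is a coupling of $\phi_\cN$ with an $(\ell-1)$-independent $d$-peon of arity $\ell$; $\UCouple[\ell]$ forces $\phi_{\cN'}=\phi_{\cN\otimes\cH}$, and then the $d$-theon uniqueness theorem (Theorem~\ref{thm:dTU}) supplies symmetric, measure-preserving on h.o.a.\ maps $h$ under which the event $\{\rn{w}_A\in B_\ell\}$ is carried by a fresh coordinate $y_{[\ell]}$ that is disjoint from every coordinate on which $\Tind(K,\cN)$ and the $r^{\ovlp}(\ell,V)$-projection depend; this yields the factorization~\eqref{eq:typeuniqueness:objective} for all $B_\ell,B_V$, which Lebesgue differentiation converts into the pointwise a.e.\ equality of types. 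Some device of this kind—converting the unique-coupling hypothesis into a measure-preserving change of variables that decouples $\rn{w}_A$ from the rest—is what your sketch is missing, so as it stands the proposal has a genuine gap at its central step.
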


Before we start the proof, let us point out that the analogous result for $\pp^{\dsct,\cN}$ can be
derived from the above and the fact that
$\pp^{\dsct,\cN}_{A,V}=\pi^{\ovlp}_{A,V}\comp\pp^{\ovlp,\cN}_{A,V}$ from Lemma~\ref{lem:equivpp}.

\begin{proof}
  By equivariance of $\pp^{\ovlp,\cN}$ (Lemma~\ref{lem:equivpp}), it suffices to show only the case
  when $A=[\ell]$. Furthermore, by using measure-isomorphisms modulo $0$, it suffices to show the
  case when $\Omega$ is the interval $[0,1]$ and $\mu$ is the Lebesgue measure $\lambda$.

  In turn, it suffices to show that for every finite $U\subseteq[\ell]\cup V$, every $K\in\cK_U$,
  and for the function $g\colon\cE_\ell^{(d)}(\Omega)\times\cE_{\ell,V}^{(d),\ovlp}(\Omega)\to[0,1]$
  given by
  \begin{align*}
    g(x,y) & \df \pp^{\ovlp,\cN}_{\ell,V}(x)(y)(\{M\in\cK_{[\ell]\cup V} \mid M\rest_U = K\})
    \qquad (x\in\cE_\ell^{(d)}(\Omega), y\in\cE_{\ell,V}^{(d),\ovlp}(\Omega)),
  \end{align*}
  the function $x\mapsto g(x,y)$ is a.e.\ constant for a.e.~$y\in\cE_{\ell,V}^{(d),\ovlp}(\Omega)$.

  Without loss of generality, we may suppose that $[\ell]\subseteq U$. Furthermore, since the
  definition of $\pp^{\ovlp,\cN}_{\ell,V}$ implies that $g$ only depends on the coordinates of $y$ that
  are indexed by sets in $r(U)$, we may assume without loss of generality that $V =
  U\setminus[\ell]$.

  Recalling that overlapping types are factored up to almost everywhere equivalence, by Fubini's
  Theorem and Lebesgue's Differentiation Theorem, it suffices to show that
  \begin{align*}
    \frac{1}{\lambda^{(d)}(B'_\ell\times B_\ell)}\cdot
    \int_{B'_\ell\times B_\ell}
    \int_{B'_V\times B_V}
    g(x,y)
    \ d\lambda^{(d)}(y)
    \ d\lambda^{(d)}(x)
    & =
    \int_{\cE_\ell^{(d)}(\Omega)}
    \int_{B'_V\times B_V}
    g(x,y)
    \ d\lambda^{(d)}(y)
    \ d\lambda^{(d)}(x)
  \end{align*}
  for all positive measure sets
  \begin{align*}
    B'_\ell & \subseteq\cE_{\ell,\ell-1}^{(d)}(\Omega),\\
    B_\ell & \subseteq [0,1]\times\cO_\ell^d,\\
    B'_V & \subseteq \cE_{\ell,V,\ell-1}^{(d),\ovlp}(\Omega)
    \df\prod_{\substack{C\in r^{\ovlp}(\ell,V)\\\lvert C\rvert\leq\ell-1}} (X\times\cO_C^d),\\
    B_V & \subseteq \prod_{\substack{C\in r^{\ovlp}(\ell,V)\\\lvert C\rvert\geq\ell}} (X\times\cO_C^d).
  \end{align*}

  Since $\cN$ is $(\ell-1)$-independent, it follows that $g$ does not depend on the coordinates
  indexed by sets of size at most $\ell-1$, so it suffices to show only the case when
  $B'_\ell=\cE_{\ell,\ell-1}^{(d)}(\Omega)$ and $B'_V=\cE_{\ell,V,\ell-1}^{(d)}(\Omega)$:
  \begin{align*}
    & \!\!\!\!\!\!
    \frac{1}{\lambda^{(d)}(B_\ell)}\cdot
    \int_{\cE_{\ell,\ell-1}^{(d)}(\Omega)\times B_\ell}
    \int_{\cE_{\ell,V,\ell-1}^{(d),\ovlp}(\Omega)\times B_V}
    g(x,y)
    \ d\lambda^{(d)}(y)
    \ d\lambda^{(d)}(x)
    \\
    & =
    \int_{\cE_\ell^{(d)}(\Omega)}
    \int_{\cE_{\ell,V,\ell-1}^{(d),\ovlp}(\Omega)\times B_V}
    g(x,y)
    \ d\lambda^{(d)}(y)
    \ d\lambda^{(d)}(x),
  \end{align*}
  which in turn is equivalent to
  \begin{equation}\label{eq:typeuniqueness:objective}
    \begin{aligned}
      & \!\!\!\!\!\!
      \lambda^{(d)}(\{w\in\Tind(K,\cN)\mid \pi_\ell(w)\in B_\ell\land \pi_V(w)\in B_V\})
      \\
      & =
      \lambda^{(d)}(\{w\in\Tind(K,\cN)\mid\pi_V(w)\in B_V\})
      \cdot
      \lambda^{(d)}(B_\ell),
    \end{aligned}
  \end{equation}
  where $\pi_\ell\colon\cE_U^{(d)}(\Omega)\to [0,1]\times\cO_\ell^d$ is the projection onto the coordinate
  indexed by $[\ell]$ and $\pi_V$ is the projection onto the coordinates indexed by sets $C$ in
  $r^{\ovlp}(\ell,V)$ with $\lvert C\rvert\geq\ell$.

  Let $\cL'$ be obtained from $\cL$ by adding a predicate symbol $Q$ of arity $\ell$ and let $\cN'$
  be the $d$-Euclidean structure in $\cL'$ extending $\cN$ by letting
  \begin{align*}
    \cN'_Q & \df \cE_{\ell,\ell-1}^{(d)}(\Omega)\times B_\ell.
  \end{align*}
  Clearly $\phi_{\cN'}$ is a coupling of $\phi_\cN$ with the $(\ell-1)$-independent $d$-peon
  $\cH\df\cN'_Q$ of arity $\ell$, so $\UCouple[\ell]$ gives $\phi_{\cN'} = \phi_{\cN\otimes\cH}$.

  By Theorem~\ref{thm:dTU}, there exists a family $h=(h_i)_{i\in\NN_+}$ of symmetric
  measure-preserving on h.o.a.\ functions $h_i\colon\cE_i^{(2d)}(\Omega^4)\to[0,1]\times\cO_i^d$
  such that $h^*(\cN')$ is a.e.\ equal to the $2d$-$T$-on $\cG$ over $\Omega^4$ given by
  \begin{align*}
    \cG_P
    & \df
    \{(x,y,z)\in\cE_{k(P)}^{(d)}(\Omega)\times\cE_{k(P)}^{(d)}(\Omega)\times\cE_{k(P)}(\Omega^2)
    \mid (x,y)\in(\cN\otimes\cH)_P\}
    \qquad
    (P\in\cL).
  \end{align*}

  From the product structure of $\cN\otimes\cH$ and the definition of $\cN'$, we conclude that
  \begin{align}\label{eq:typeuniqueness:dTU}
    x\in\cN_P & \iff \widehat{h}_{k(P)}(x,y,z)\in\cN_P
  \end{align}
  for every $P\in\cL$ and almost every
  $(x,y,z)\in\cE_{k(P)}^{(d)}(\Omega)\times\cE_{k(P)}^{(d)}(\Omega)\times\cE_{k(P)}(\Omega^2)$ and
  \begin{align}\label{eq:typeuniqueness:dTU2}
    y\in\cH & \iff \widehat{h}_\ell(x,y,z)\in\cH
  \end{align}
  for almost every $(x,y,z)\in\cE_\ell^{(d)}(\Omega)\times\cE_\ell^{(d)}(\Omega)\times\cE_\ell(\Omega^2)$.

  To simplify notation, in all expressions from now on, we will take $x$ and $y$ in
  $\cE_U^{(d)}(\Omega)$ and take $z$ in $\cE_U(\Omega^2)$.
  
  By Lemma~\ref{lem:widehat}\ref{lem:widehat:measpres}, $\widehat{h}_U$ is measure-preserving, so we
  get
  \begin{equation}\label{eq:typeuniqueness:dTUapplication}
    \begin{aligned}
      & \!\!\!\!\!\!
      \lambda^{(d)}\Bigl(\bigl\{
      w\in\Tind(K,\cN)\mid \pi_\ell(w)\in B_\ell\land \pi_V(w)\in B_V
      \bigr\}\Bigr)
      \\
      & =
      \lambda^{(2d)}\biggl(\Bigl\{
      (x,y,z)
      \;\Big\vert\;
      \widehat{h}_U(x,y,z)\in\Tind(K,\cN)
      \land\pi_\ell\bigl(\widehat{h}_U(x,y,z)\bigr)\in B_\ell
      \land\pi_V\bigl(\widehat{h}_U(x,y,z)\bigr)\in B_V
      \Bigr\}\biggr)
      \\
      & =
      \lambda^{(2d)}\biggl(\Bigl\{
      (x,y,z)
      \;\Big\vert\;
      \widehat{h}_U(x,y,z)\in\Tind(K,\cN)
      \land\iota_{[\ell],U}^*\bigl(\widehat{h}_U(x,y,z)\bigr)\in\cH
      \land\pi_V\bigl(\widehat{h}_U(x,y,z)\bigr)\in B_V
      \Bigr\}\biggr)
      \\
      & =
      \lambda^{(2d)}\biggl(\Bigl\{
      (x,y,z)
      \;\Big\vert\;
      x\in\Tind(K,\cN)
      \land\iota_{[\ell],U}^*(y)\in\cH
      \land\pi_V\bigl(\widehat{h}_U(x,y,z)\bigr)\in B_V
      \Bigr\}\biggr),
    \end{aligned}
  \end{equation}
  where the last equality follows from~\eqref{eq:typeuniqueness:dTU}
  and~\eqref{eq:typeuniqueness:dTU2} (recall that $\iota_{[\ell],U}\colon[\ell]\to U$ is the
  inclusion map).

  Note now that since $\cH$ is $(\ell-1)$-independent, the condition $\iota_{[\ell],U}^*(y)\in\cH$ depends
  only on the coordinate of $y$ indexed by $[\ell]$. On the other hand, the definition of $\pi_V$
  implies that the condition $\pi_V(\widehat{h}_U(x,y,z))\in B_V$ depends only on coordinates of
  $\widehat{h}_U(x,y,z)$ indexed by sets in $r^{\ovlp}(\ell,V)$, which by the definition of
  $\widehat{h}_U$ means that the condition depends only on coordinates of $(x,y,z)$ indexed by sets
  in $r^{\ovlp}(\ell,V)\cup r(\ell,\ell-1)$, which does not contain the set $[\ell]$. Finally, since
  the condition $x\in\Tind(K,\cN)$ clearly does not depend on $y$, we conclude that
  \begin{align*}
    & \!\!\!\!\!\!
    \lambda^{(2d)}(\{(x,y,z)\mid
    x\in\Tind(K,\cN)\land\iota_{[\ell],U}^*(y)\in B_\ell
    \land \pi_V(\widehat{h}_U(x,y,z))\in B_V\}),
    \\
    & =
    \lambda^{(2d)}(\{(x,y,z)\mid
    x\in\Tind(K,\cN)\land\pi_V(\widehat{h}_U(x,y,z))\in B_V\})
    \cdot
    \lambda^{(2d)}(\{(x,y,z)\mid\iota_{[\ell],U}^*(y)\in B_\ell\}).
  \end{align*}

  Plugging this in~\eqref{eq:typeuniqueness:dTUapplication} and using the fact that $\widehat{h}_U$
  is measure-preserving again, we get~\eqref{eq:typeuniqueness:objective}, concluding the proof.
\end{proof}

\section{Topologies of Markov kernels and of types}
\label{sec:top}

Recall that our plan is to provide a way of sampling models from a $d$-Euclidean structure by
instead sampling types. This means that we need to make sense of the notion of a ``random type'',
which in turn requires putting a $\sigma$-algebra on the spaces $\cS^{(d),\ovlp}$ and
$\cS^{(d),\dsct}$. Furthermore, to finally produce an equivalent $d$-Euclidean structure that
corresponds to sampling types, we will need the resulting measurable space to be a standard Borel
space. With this in mind, the objective of this section is to construct Polish topologies on the
spaces of dissociated and overlapping types that have enough measurable sets to measure all that we
will need for the notion of ``random types''.

Recall that a Markov kernel from a measurable space $X$ to a measurable space $Y$ is a map
$\rho\colon X\to\cP(Y)$ such that for every measurable $B\subseteq Y$, the map $X\ni
x\mapsto\rho(x)(B)\in[0,1]$ is measurable. Note that both dissociated and overlapping types are
Markov kernels, so it will suffice to produce a Polish topology on the space of Markov kernels.

Given a Markov kernel $\rho\colon X\to\cP(Y)$ and a probability measure $\mu$ on $X$,
\Caratheodory's Extension Theorem implies that there exists a unique probability measure $\rho[\mu]$
on $X\times Y$ such that
\begin{align}\label{eq:rhomu}
  \rho[\mu](A\times B) & = \int_A \rho(x)(B)\ d\mu(x)
\end{align}
for all measurable sets $A\subseteq X$ and $B\subseteq Y$. In particular, if
$(\rn{x},\rn{y})\sim\rho[\mu]$, then
\begin{align*}
  \PP[\rn{y}\in B\given \rn{x}\in A] & = \frac{1}{\mu(A)}\cdot\int_A\rho(x)(B)\ d\mu(x)
\end{align*}
and more generally, if $h\colon Y\to\RR$ is bounded and measurable, then
\begin{align*}
  \EE[h(\rn{y})\given\rn{x}] & = \int_Y h(\rn{y})\ d\rho(\rn{x})(y)
\end{align*}
with probability $1$ (over $\rn{x}$).

Informally, $\rho(x)$ can be interpreted as the conditional probability of $\rn{y}$ given the
potentially zero measure event $\rn{x}=x$. This informal intuition is made formal by the
Disintegration Theorem below (for a proof, see e.g.~\cite[Theorem~10.6.6 and Corollary~10.6.7]{Bog07}).
\begin{theorem}[Disintegration Theorem]\label{thm:DT}
  Let $X$ and $Y$ be Borel spaces, let $\nu\in\cP(Y)$, let $\pi\colon Y\to X$ be a measurable
  function and let $\mu\df\pi_*(\nu)\in\cP(X)$ be the pushforward measure (i.e.,
  $\pi_*(\nu)(A)\df\nu(\pi^{-1}(A))$). Then there exists a Markov kernel $\rho\colon X\to\cP(Y)$
  such that the set
  \begin{align*}
    R & \df \{x\in X\mid \rho(x)(\pi^{-1}(x)) = 1\}
  \end{align*}
  has $\mu$-measure $1$ and for every measurable function $h\colon Y\to[0,\infty]$, we have
  \begin{align*}
    \int_Y h(y)\ d\nu(y)
    & =
    \int_R \int_{\pi^{-1}(x)} h(y) \ d\rho(x)(y)\ d\mu(x).
  \end{align*}

  In particular, the marginal of $\rho[\mu]$ on $Y$ is $\nu$, that is, we have
  \begin{align*}
    \nu(B) & = \rho[\mu](X\times B) = \int_X \rho(x)(B)\ d\mu(x)
  \end{align*}
  for every measurable $B\subseteq Y$.
\end{theorem}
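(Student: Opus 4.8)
The plan is to derive this from the existence of regular conditional distributions on standard Borel spaces; I sketch the standard argument. Since every Borel space is Borel-isomorphic either to a countable discrete space or to $[0,1]$, I would first handle the countable cases separately (there one can simply let $\rho(x)$ be $\nu$ conditioned on $\pi^{-1}(x)$ whenever $\mu(\{x\})>0$, and pick $\rho(x)$ arbitrarily otherwise), and then assume $Y=[0,1]$ with its Borel $\sigma$-algebra $\cB_Y$, the general $Y$ reducing to this via a Borel isomorphism.

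The core step is to construct a regular conditional distribution of the identity map on $Y$ given the sub-$\sigma$-algebra $\cG\df\pi^{-1}(\cB_X)$, which is countably generated because $\cB_X$ is. For each rational $t\in[0,1]$ fix a version of the conditional expectation $\EE_\nu[\One_{[0,t]}\mid\cG]$; off a single $\nu$-null set these are simultaneously nondecreasing in $t$ and right-limited, so they determine a conditional cumulative distribution function $t\mapsto F_y(t)$, and setting $\kappa(y)$ equal to the Lebesgue--Stieltjes measure of $F_y$ gives a Markov kernel $\kappa\colon Y\to\cP(Y)$, which can be chosen $\cG$-measurable, with $\kappa(\place)(B)=\EE_\nu[\One_B\mid\cG]$ $\nu$-almost everywhere for every $B\in\cB_Y$. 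Next I would verify that $\kappa(y)$ is concentrated on the fibre $\pi^{-1}(\pi(y))$ for $\nu$-a.e.\ $y$: writing $\cG$ as generated by a countable family $(G_n)_n$ closed under complements, one has $\pi^{-1}(\pi(y))=\bigcap\{G_n\mid y\in G_n\}$ (here one uses that $\cB_X$, being standard Borel, is countably generated and separates points), and since $\One_{G_n}$ is already $\cG$-measurable, $\kappa(y)(G_n)=\One_{G_n}(y)$ holds for $\nu$-a.e.\ $y$; taking a common null exceptional set over all $n$ yields $\kappa(y)(\pi^{-1}(\pi(y)))=1$ off a $\nu$-null set.

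Finally I would transport $\kappa$ from being indexed by $Y$ to being indexed by $X$. As $\kappa$ is a $\cG$-measurable map into the standard Borel space $\cP(Y)$, the Doob--Dynkin factorization lemma provides a Borel map $\rho\colon X\to\cP(Y)$ with $\rho\comp\pi=\kappa$ $\nu$-a.e.; since $p\mapsto p(B)$ is Borel on $\cP(Y)$, $\rho$ is a genuine Markov kernel after redefinition on a $\mu$-null set. Pushing the concentration statement forward along $\pi$ shows that $R=\{x\mid\rho(x)(\pi^{-1}(x))=1\}$ has $\mu$-measure $1$. For the identity, given $B\in\cB_Y$ and $A\in\cB_X$, the defining property of conditional expectation together with the change of variables for $\mu=\pi_*\nu$ gives
\begin{align*}
  \nu(\pi^{-1}(A)\cap B)
  &=
  \int_Y\One_{\pi^{-1}(A)}\,\kappa(\place)(B)\,d\nu
  =
  \int_Y\One_A(\pi(y))\,\rho(\pi(y))(B)\,d\nu(y)
  =
  \int_A\rho(x)(B)\,d\mu(x);
\end{align*}
with $A=X$ this is $\nu(B)=\int_X\rho(x)(B)\,d\mu(x)=\rho[\mu](X\times B)$, and since $\rho(x)$ is carried by $\pi^{-1}(x)$ for $\mu$-a.e.\ $x$, extending from indicators to nonnegative measurable $h$ by linearity and monotone convergence yields $\int_Y h\,d\nu=\int_R\int_{\pi^{-1}(x)}h\,d\rho(x)\,d\mu(x)$. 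The main obstacle is the measurable-selection content, which sits in exactly two places: producing the conditional-distribution kernel $\kappa$ (requiring $Y$ standard Borel) and factoring it through $\pi$ (requiring $X$ standard Borel and $\cP(Y)$ standard Borel); the rest is routine bookkeeping with conditional expectations.
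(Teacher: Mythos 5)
The paper never proves this statement: Theorem~\ref{thm:DT} is quoted as a known result, with the proof delegated to Bogachev~\cite[Theorem~10.6.6 and Corollary~10.6.7]{Bog07}, so there is no in-paper argument to compare against. Your sketch is the classical existence proof for regular conditional distributions on standard Borel spaces, and it is essentially correct: reduce to $Y\cong[0,1]$ (the countable cases being handled by conditioning on atoms, which works because $\mu(\{x\})=\nu(\pi^{-1}(x))$), build the kernel $\kappa$ from countably many versions of $\EE_\nu[\One_{[0,t]}\mid\pi^{-1}(\cB_X)]$ via conditional CDFs, obtain fibre concentration from a countable point-separating generating family of $\cB_X$ closed under complements, factor through $\pi$ by Doob--Dynkin using that $\cP(Y)$ is standard Borel, and then the two displayed identities follow by the defining property of conditional expectation, the change of variables for $\mu=\pi_*\nu$, and monotone convergence. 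Since the paper's convention is that ``Borel space'' means standard Borel, your appeals to countable generation and point separation of $\cB_X$ and to the Borel structure on $\cP(Y)$ are all legitimate. The one step I would ask you to spell out is the measurability of $R$ (implicit in the statement and in your ``pushing forward'' step): you need that $\{(x,y)\in X\times Y\mid\pi(y)=x\}$ is Borel, which holds because the diagonal of the standard Borel space $X$ is Borel, and that $x\mapsto\rho(x)(C_x)$ is Borel for every Borel $C\subseteq X\times Y$ (a monotone class argument starting from product rectangles); with that, $\pi^{-1}(R)$ contains the $\nu$-full set where $\kappa$ is fibre-concentrated and $\rho\comp\pi=\kappa$, giving $\mu(R)=1$. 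Compared with the paper's citation, your route buys a self-contained argument at the cost of these routine measurable-selection details, but the mathematical content is the same standard disintegration theorem.
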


When $Y$ is a product space and $\pi$ is a projection, the Disintegration Theorem takes the
following form:
\begin{theorem}[Disintegration Theorem, projection version]\label{thm:DTproj}
  Let $X$ and $Y$ be Borel spaces, let $\nu\in\cP(X\times Y)$, let $\pi\colon X\times Y\to X$ be the
  projection function and $\mu\df\pi_*(\nu)$ be the marginal of $\nu$ on $X$. Then there exists a
  Markov kernel $\rho\colon X\to\cP(Y)$ such that for every measurable function $h\colon X\times
  Y\to[0,\infty]$, we have
  \begin{align*}
    \int_{X\times Y} h(x,y)\ d\nu(x,y)
    & =
    \int_X \int_Y h(x,y) \ d\rho(x)(y)\ d\mu(x).
  \end{align*}

  In particular, $\rho[\mu]=\nu$.
\end{theorem}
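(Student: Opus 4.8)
The plan is to deduce Theorem~\ref{thm:DTproj} directly from the general Disintegration Theorem (Theorem~\ref{thm:DT}) by taking its ``$Y$'' to be the product space $X\times Y$, keeping ``$X$'' as $X$, keeping $\nu\in\cP(X\times Y)$, and letting $\pi\colon X\times Y\to X$ be the coordinate projection. Products of Borel spaces are Borel, so the hypotheses of Theorem~\ref{thm:DT} are met, and $\mu\df\pi_*(\nu)$ is exactly the marginal of $\nu$ on $X$. Thus Theorem~\ref{thm:DT} yields a Markov kernel $\rho'\colon X\to\cP(X\times Y)$ such that the set $R\df\{x\in X\mid \rho'(x)(\{x\}\times Y)=1\}$ (note that $\pi^{-1}(x)=\{x\}\times Y$) has $\mu$-measure $1$ and $\int_{X\times Y}h\,d\nu=\int_R\int_{X\times Y}h\,d\rho'(x)\,d\mu(x)$ for every measurable $h\colon X\times Y\to[0,\infty]$.

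Next I would push $\rho'$ forward along the second coordinate projection $q\colon X\times Y\to Y$, setting $\rho(x)\df q_*(\rho'(x))\in\cP(Y)$ for every $x\in X$. This is a Markov kernel since $\rho(x)(B)=\rho'(x)(X\times B)$ depends measurably on $x$. The key observation is that for $x\in R$ the measure $\rho'(x)$ is concentrated on $\{x\}\times Y$, so for any measurable $h\colon X\times Y\to[0,\infty]$ we have $\int_{X\times Y}h(x',y')\,d\rho'(x)(x',y')=\int_{X\times Y}h(x,y')\,d\rho'(x)(x',y')=\int_Y h(x,y)\,d\rho(x)(y)$: the first equality holds because $x'=x$ for $\rho'(x)$-almost every $(x',y')$, and the second is the change-of-variables formula for $q_*$.

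Substituting this identity into the formula from Theorem~\ref{thm:DT} and enlarging the outer integral from $R$ to all of $X$ (harmless because $\mu(R^c)=0$) gives $\int_{X\times Y}h(x,y)\,d\nu(x,y)=\int_X\int_Y h(x,y)\,d\rho(x)(y)\,d\mu(x)$ for every measurable $h\colon X\times Y\to[0,\infty]$, which is the asserted formula. The ``in particular'' statement $\rho[\mu]=\nu$ then follows by applying this with $h=\One_{A\times B}$ for a measurable rectangle $A\times B\subseteq X\times Y$: the formula gives $\nu(A\times B)=\int_A\rho(x)(B)\,d\mu(x)$, which is precisely $\rho[\mu](A\times B)$ by the defining property~\eqref{eq:rhomu} of $\rho[\mu]$. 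Since the measurable rectangles form a $\pi$-system generating the product $\sigma$-algebra and both $\nu$ and $\rho[\mu]$ are probability measures agreeing on it, they coincide. The only mildly delicate points are the bookkeeping around the $\mu$-null set $R^c$ and the identification of a fiber measure on $\{x\}\times Y$ with its image in $\cP(Y)$; everything else is a routine unwinding of Theorem~\ref{thm:DT}, so there is no real obstacle here.
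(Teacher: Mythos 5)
Your proposal is correct and follows essentially the same route as the paper: apply Theorem~\ref{thm:DT} to the projection, use that $\rho'(x)$ concentrates on the fiber $\{x\}\times Y$ for $x\in R$, convert the fiber measure into a kernel $X\to\cP(Y)$, and then check $\rho[\mu]=\nu$ on rectangles. The only (harmless) difference is that you define $\rho(x)$ as the pushforward $q_*(\rho'(x))$ everywhere, whereas the paper sets $\rho(x)(B)\df\rho'(x)(\{x\}\times B)$ on $R$ and arbitrarily off $R$; the two agree for $\mu$-almost every $x$, and your choice makes measurability of $x\mapsto\rho(x)(B)$ immediate.
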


\begin{proof}
  Let $\rho'\colon X\to\cP(X\times Y)$ be provided by Theorem~\ref{thm:DT}. Define $\rho\colon
  X\to\cP(Y)$ by
  \begin{align*}
    \rho(x)(B) & \df \rho'(x)(\{x\}\times B)
  \end{align*}
  whenever $x\in R$ and define $\rho(x)$ to be an arbitrary fixed probability measure on $Y$ when
  $x\notin R$. The definition of $R$ ensures that $\rho$ is indeed a Markov kernel.

  Note now that if $h\colon X\times Y\to[0,\infty]$ is a measurable function, then
  \begin{align*}
    \int_{X\times Y} h(x,y)\ d\nu(x,y)
    & =
    \int_R \int_{\pi^{-1}(x)} h(w,y) \ d\rho'(x)(w,y)\ d\mu(x)
    \\
    & =
    \int_R \int_{\pi^{-1}(x)} h(x,y) \ d\rho(x)(y)\ d\mu(x)
    \\
    & =
    \int_X \int_Y h(x,y) \ d\rho(x)(y)\ d\mu(x),
  \end{align*}
  where the second equality follows since when $x\in R$, then every $(w,y)\in\pi^{-1}(x)$ satisfies
  $x=w$ and $\rho(x)(B) = \rho'(x)(\{x\}\times B)$ for every $B\subseteq Y$ measurable and the third
  equality follows since when $x\in R$, the set $\pi^{-1}(x)$ has $\rho(x)$-measure $1$ and the set
  $R$ itself has $\mu$-measure $1$.

  Finally, note that for all measurable $A\subseteq X$ and $B\subseteq Y$, we have
  \begin{align*}
    \rho[\mu](A\times B)
    & =
    \int_A \rho(x)(B)\ d\mu(x)
    =
    \int_X \One[x\in A]\int_Y \One[y\in B] \ d\rho(x)(y)\ d\mu(x)
    \\
    & =
    \int_X \int_Y \One[(x,y)\in A\times B] \ d\rho(x)(y)\ d\mu(x)
    =
    \int_{X\times Y} \One[(x,y)\in A\times B]\ d\nu(x,y)
    \\
    & =
    \nu(A\times B),
  \end{align*}
  so $\rho[\mu]=\nu$.
\end{proof}

\begin{definition}\label{def:top}
  Let $\Omega=(X,\mu)$ be a standard probability space and let $Y$ be a Borel space. We denote by
  $M(\Omega,Y)$ the space of all Markov kernels from $\Omega$ to $Y$ up to almost everywhere
  equivalence.

  For measurable sets $A\subseteq X$ and $B\subseteq Y$ and a real $r\in\RR$, we let
  \begin{align*}
    U(A,B,r) & \df \{\rho\in M(\Omega,Y) \mid I_{A,B}(\rho) < r\},\\
    L(A,B,r) & \df \{\rho\in M(\Omega,Y) \mid I_{A,B}(\rho) > r\},
  \end{align*}
  where
  \begin{align*}
    I_{A,B}(\rho) & \df \int_A \rho(x)(B)\ d\mu(x).
  \end{align*}

  Let $\cB$ be a collection of measurable sets of $Y$. We let $\tau(\Omega,Y,\cB)$ be the topology
  on $M(\Omega,Y)$ generated by all sets of the form $U(A,B,r)$ or $L(A,B,r)$ for some measurable
  $A\subseteq X$, some real $r\in\RR$ and some $B\in\cB$.
\end{definition}

The next proposition proves basic properties about the topology on the space of Markov kernels.

\begin{proposition}\label{prop:top}
  Let $\Omega=(X,\mu)$ be a standard probability space, let $Y$ be a Polish space and let $\cB_X$
  and $\cB_Y$ be countable Boolean algebras of measurable sets of $X$ and $Y$, respectively that
  generate their $\sigma$-algebras. Let further $\cC$ be the collection of all sets of the form
  $U(A,B,r)$ or $L(A,B,r)$ for some $A\in\cB_X$, some $B\in\cB_Y$ and some rational $r\in\QQ$. Then
  the following hold.
  \begin{enumerate}
  \item\label{prop:top:gen} $\cC$ generates the topology $\tau(\Omega,Y,\cB_Y)$.
  \item\label{prop:top:sep} $\tau(\Omega,Y,\cB_Y)$ is separable.
  \item\label{prop:top:met} $\tau(\Omega,Y,\cB_Y)$ is metrizable.
  \item\label{prop:top:Pol} If $Y$ is compact, all sets in $\cB_Y$ are clopen and $X$ is either
    finite or uncountable, then $(M(\Omega,Y),\tau(\Omega,Y,\cB_Y))$ is a Polish space.
  \end{enumerate}
\end{proposition}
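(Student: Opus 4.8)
The plan is to establish the four items in the order in which they build on each other, treating everything as statements about the space $M(\Omega,Y)$ of Markov kernels modulo a.e.\ equivalence. Item~\ref{prop:top:gen} is a basic density/generation argument; item~\ref{prop:top:sep} follows because a countable family of subbasic sets generates; item~\ref{prop:top:met} is a consequence of embedding $M(\Omega,Y)$ into a countable product of real lines via the functionals $I_{A,B}$; and item~\ref{prop:top:Pol} requires upgrading the metric topology to a Polish one, which is the main work. Throughout I would use the observation that, by the Riesz-type identification, a Markov kernel $\rho$ (up to a.e.\ equivalence) is determined by the values $I_{A,B}(\rho)=\int_A\rho(x)(B)\,d\mu(x)$ for $A$ ranging over a generating algebra $\cB_X$ and $B$ over a generating algebra $\cB_Y$: indeed, fixing $B$, the set function $A\mapsto I_{A,B}(\rho)$ extends to the measure $\int_{\place}\rho(x)(B)\,d\mu$, which is absolutely continuous w.r.t.\ $\mu$ with Radon--Nikodym derivative $x\mapsto\rho(x)(B)$; knowing this for all $B\in\cB_Y$ determines $x\mapsto\rho(x)(\cdot)$ for a.e.\ $x$ by \Caratheodory, hence $\rho$ modulo a.e.\ equivalence.

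\textbf{Items (i)--(iii).} For~\ref{prop:top:gen}, I would show each subbasic set $U(A,B,r)$ with $A$ measurable, $B\in\cB_Y$, $r\in\RR$ is a union of sets $U(A',B',r')$ with $A'\in\cB_X$, $B'\in\cB_Y$, $r'\in\QQ$. The key estimate is that $A\mapsto I_{A,B}(\rho)$ and $r\mapsto\{\rho: I_{A,B}(\rho)<r\}$ are "continuous enough": approximating $A$ in $\mu$-measure by sets in $\cB_X$ changes $I_{A,B}(\rho)$ by at most $\mu(A\symdiff A')$ uniformly in $\rho$ (since $\rho(x)(B)\in[0,1]$), and one can also push $\cB_Y$ through the Boolean algebra closure since $\cB_Y$ already generates. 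Rationalizing $r$ is immediate. Then~\ref{prop:top:sep} is automatic: $\cC$ is countable, so $\tau(\Omega,Y,\cB_Y)$ is second countable, hence separable. For~\ref{prop:top:met}, I would exhibit the injection $\rho\mapsto (I_{A,B}(\rho))_{A\in\cB_X,B\in\cB_Y}\in[0,1]^{\cB_X\times\cB_Y}$; injectivity is exactly the identification discussed above, and the topology $\tau(\Omega,Y,\cB_Y)$ is by construction the initial topology making all $\rho\mapsto I_{A,B}(\rho)$ continuous, i.e.\ the subspace topology from this countable power of $[0,1]$, which is metrizable.

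\textbf{Item (iv), the main obstacle.} The hard part is showing that when $Y$ is compact, the elements of $\cB_Y$ are clopen, and $X$ is finite or uncountable, the image of $M(\Omega,Y)$ in $[0,1]^{\cB_X\times\cB_Y}$ is a $G_\delta$ (equivalently, Polish). My plan is to characterize the image by closed/$G_\delta$ conditions. A point $(c_{A,B})$ lies in the image iff: (a) for each fixed $B\in\cB_Y$ the map $A\mapsto c_{A,B}$ is finitely additive on $\cB_X$ with $c_{X,B}\in[0,1]$ and is "$\mu$-continuous", i.e.\ $c_{A,B}=0$ whenever $\mu(A)=0$, plus a Radon--Nikodym-type boundedness forcing the derivative into $[0,1]$ — concretely $0\le c_{A,B}\le\mu(A)$ for all $A$; (b) for each fixed $A$, the map $B\mapsto c_{A,B}$ is finitely additive on $\cB_Y$ with $c_{A,\varnothing}=0$, $c_{A,Y}=\mu(A)$; and (c) a regularity/tightness condition that lets one pass from finitely additive to countably additive on $\cB_Y$ — this is where compactness of $Y$ and clopenness of the $B$'s enter, since for clopen sets in a compact space finite additivity automatically upgrades (any countable disjoint clopen cover of a clopen set is actually finite by compactness), so no extra $G_\delta$ condition is needed on the $Y$ side; countable additivity in the $A$ direction then follows from $c_{A,B}\le\mu(A)$ and dominated convergence-type reasoning. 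Each of (a), (b) is a countable intersection of closed conditions (finitely many coordinates at a time, equalities and inequalities), hence closed; so the image is closed in $[0,1]^{\cB_X\times\cB_Y}$, in particular Polish. The role of "$X$ finite or uncountable" is to guarantee $\Omega$ is (measure-isomorphic mod $0$ to) a standard atomless-or-finite space so that the reconstruction of $\rho$ from the $c_{A,B}$ via \Caratheodory and the Radon--Nikodym theorem actually produces a genuine Markov kernel into $\cP(Y)$; I would invoke that the reconstructed $x\mapsto(\text{derivative of }c_{\place,B}\text{ at }x)$ defines, for a.e.\ $x$, a finitely additive $[0,1]$-valued function on $\cB_Y$ that extends to a measure on $Y$ precisely by the compactness argument, and measurability of $x\mapsto\rho(x)(B)$ is built in. The one technical point to be careful about is handling the a.e.\ null set where the simultaneous Radon--Nikodym derivatives might fail additivity in $B$ — this is handled by choosing derivatives coherently over the countable algebra $\cB_Y$ on a common conull set, a standard maneuver. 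I expect this verification that the image is closed (equivalently that finite additivity plus the compact-clopen hypothesis yields a bona fide kernel) to be the only genuinely delicate step; the rest is bookkeeping with subbasic sets.
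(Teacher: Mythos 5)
Your proposal is correct, but for the key item~\ref{prop:top:Pol} it takes a genuinely different route from the paper. Items~\ref{prop:top:gen} and~\ref{prop:top:sep} coincide with the paper's argument (approximate $A$ in $\mu$-measure by elements of $\cB_X$, rationalize $r$; then second countability). For item~\ref{prop:top:met} the paper writes down the explicit metric $\Delta(\rho_1,\rho_2)=\sum_n 2^{-n}\lvert I_{A_n,B_n}(\rho_1)-I_{A_n,B_n}(\rho_2)\rvert$ and checks it separates points and generates $\tau(\Omega,Y,\cB_Y)$; you instead note that, by item~\ref{prop:top:gen}, $\tau(\Omega,Y,\cB_Y)$ is the initial topology of the countable family $(I_{A,B})_{A\in\cB_X,B\in\cB_Y}$, so the injective map into $[0,1]^{\cB_X\times\cB_Y}$ is an embedding---the same separation argument (\Caratheodory{} uniqueness in each variable), different packaging. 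For item~\ref{prop:top:Pol} the paper proves completeness of $\Delta$: it re-chooses a compact Polish topology on $X$ making a replacement $\cB_X$ clopen (this is exactly where ``finite or uncountable'' enters), turns the limiting values of a Cauchy sequence into a finitely additive set function on the algebra generated by rectangles in $X\times Y$, gets $\sigma$-additivity from compactness of clopen sets, extends by \Caratheodory, and recovers a kernel via the Disintegration Theorem. You instead characterize the image inside the cube by closed ``bimeasure'' conditions and rebuild the kernel directly: countable additivity in the $A$-variable from the domination $0\le c_{A,B}\le\mu(A)$, Radon--Nikodym derivatives $f_B$ with $0\le f_B\le 1$, almost everywhere finite additivity of $B\mapsto f_B(x)$ on a common conull set, the compact-clopen upgrade on the $Y$ side to get countable additivity, and a monotone class argument for measurability of $x\mapsto\rho(x)(B)$ for general Borel $B$ (with $\rho$ defined arbitrarily on the exceptional null set). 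This avoids both the re-topologization of $X$ and the Disintegration Theorem; in fact your argument never uses the hypothesis that $X$ is finite or uncountable (your stated explanation of its role is inaccurate, since in the paper it only serves to realize $\cB_X$ as a clopen algebra for a compact topology on $X$), and it gives the marginally stronger conclusion that $M(\Omega,Y)$ is compact metrizable, being homeomorphic to a closed subset of a compact cube. What your sketch leaves implicit---verifying $I_{A,B}(\rho)=c_{A,B}$ for the reconstructed $\rho$ and the routine closedness checks---is genuinely bookkeeping, so I see no gap.
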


\begin{proof}
  To prove item~\ref{prop:top:gen}, it suffices to show that for all measurable sets $A\subseteq X$
  and $B\subseteq Y$ and all real numbers $r\in\RR$, the following hold:
  \begin{itemize}
  \item If $\rho\in U(A,B,r)$, then there exists $C\in\cC$ with $\rho\in C\subseteq U(A,B,r)$.
  \item If $\rho\in L(A,B,r)$, then there exists $C\in\cC$ with $\rho\in C\subseteq L(A,B,r)$.
  \end{itemize}

  We prove only the first item as the second has an analogous proof.

  Since $\rho\in U(A,B,r)$, we can let $r'\in\QQ$ be such that $I_{A,B}(\rho) < r' < r$ and let
  $\epsilon\df\min\{r'-I_{A,B}(\rho), r-r'\} > 0$.

  Since $\cB_X$ is a Boolean algebra that generates the $\sigma$-algebra of $X$, there exists
  $A'\in\cB_X$ such that $\mu(A\symdiff A')\leq\epsilon/2$. We claim that $C\df U(A',B,r')$
  satisfies the required property $\rho\in C\subseteq U(A,B,r)$. First note that since
  $\mu(A\symdiff A')\leq\epsilon/2$ and $0\leq\rho(x)(B)\leq 1$ for every $x\in X$, we get
  \begin{align*}
    I_{A',B}(\rho) & \leq I_{A,B}(\rho) + \frac{\epsilon}{2} < r',
  \end{align*}
  hence $\rho\in C$.

  On the other hand, if $\rho'\in C$, then
  \begin{align*}
    I_{A,B}(\rho') & \leq I_{A',B}(\rho') + \frac{\epsilon}{2} < r' + \frac{\epsilon}{2} < r,
  \end{align*}
  so $\rho'\in U(A,B,r)$, hence $C\subseteq U(A,B,r)$ as desired.

  Therefore, $\cC$ generates the topology $\tau(\Omega,Y,\cB_Y)$.

  \medskip

  For item~\ref{prop:top:sep}, since $\cC$ is clearly countable, it follows that
  $\tau(\Omega,Y,\cB_Y)$ is second-countable, which in particular implies it is separable.

  \medskip

  We now prove item~\ref{prop:top:met} by constructing an explicit metric that generates the
  topology $\tau(\Omega,Y,\cB_Y)$. For this, let $(A_n,B_n)_{n\in\NN}$ be an enumeration of
  $\cB_X\times\cB_Y$ and for $\rho_1,\rho_2\in M(\Omega,Y)$, we let
  \begin{align*}
    \Delta(\rho_1,\rho_2)
    & \df
    \sum_{n\in\NN} \frac{\lvert I_{A_n,B_n}(\rho_1) - I_{A_n,B_n}(\rho_2)\rvert}{2^n}.
  \end{align*}
  (Note that the sum above is convergent as $0\leq I_{A_n,B_n}(\rho_i)\leq\mu(A_n)\leq 1$.)
  
  Obviously $\Delta$ is a non-negative symmetric function and it is straightforward to check that
  $\Delta$ satisfies triangle inequality.

  Let us now show that if $\rho_1,\rho_2\in M(\Omega,Y)$ are such that $\Delta(\rho_1,\rho_2)=0$,
  then we must have $\rho_1=\rho_2$ a.e. Since $\cB_X$ is a Boolean algebra that generates the
  $\sigma$-algebra of $X$, the set
  \begin{align*}
    \{x\in X \mid \forall B\in\cB_Y, \rho_1(x)(B) = \rho_2(x)(B)\}
  \end{align*}
  must have $\mu$-measure $1$ by the uniqueness of \Caratheodory's Extension Theorem. In turn, by
  the same theorem but applied to $\cB_Y$, the set above is equal to
  \begin{align*}
    \{x\in X \mid \rho_1(x) = \rho_2(x)\},
  \end{align*}
  hence $\rho_1 = \rho_2$ almost everywhere.

  Therefore $\Delta$ is a metric on $M(\Omega,Y)$.

  \smallskip

  Let us now show that $\Delta$ generates the topology $\tau(\Omega,Y,\cB_Y)$. To show this, it
  suffices to show that the following hold:
  \begin{enumerate}[label={\alph*.}, ref={(\alph*)}]
  \item\label{it:top:met:Deltafiner} For every $C\in\tau(\Omega,Y,\cB_Y)$ and every $\rho\in C$,
    there exists $\epsilon > 0$ such that $B_\epsilon(\rho)\subseteq C$, where $B_\epsilon(C)$ is
    the ball of radius $\epsilon$ centered at $\rho$ (with respect to $\Delta$).
  \item\label{it:top:met:Deltacoarser} For every $\rho\in M(\Omega,Y)$ and every $\epsilon > 0$,
    there exists $C\in\tau(\Omega,Y,\cB_Y)$ such that $\rho\in C\subseteq B_\epsilon(\rho)$.
  \end{enumerate}

  We start with item~\ref{it:top:met:Deltafiner}. Since $\cC$ generates the topology
  $\tau(\Omega,Y,\cB_Y)$, it suffices to show only the case when $C\in\cC$, that is, it suffices to
  show only the case when $C$ is of the form $U(A,B,r)$ or $L(A,B,r)$ for some $A\in\cB_X$, some
  $B\in\cB_Y$ and some rational $r\in\QQ$. We show only the case $C = U(A,B,r)$ as the other case
  has an analogous proof.

  Since $(A,B)\in\cB_X\times\cB_Y$, we know that $(A,B)=(A_n,B_n)$ for some $n\in\NN$ and since
  $\rho\in U(A,B,r)$, we have $I_{A,B}(\rho) < r$. Let then $\epsilon\df 2^{-n}(r - I_{A,B}(\rho)) >
  0$ and note that if $\rho'\in B_\epsilon(\rho)$, then
  \begin{align*}
    I_{A,B}(\rho') & \leq I_{A,B}(\rho) + 2^n\cdot\Delta(\rho,\rho') < r,
  \end{align*}
  so $\rho'\in U(A,B,r)$, hence $B_\epsilon(\rho)\subseteq U(A,B,r)$ as desired.

  Let us now show item~\ref{it:top:met:Deltacoarser}. Let $n_0\in\NN$ be large enough so that
  $\sum_{n > n_0} 2^{-n} < \epsilon/2$ and consider the set
  \begin{align*}
    C & \df
    \bigcap_{n=0}^{n_0} U\left(A_n,B_n,I_{A_n,B_n}(\rho) + \frac{\epsilon}{4}\right)
    \cap
    \bigcap_{n=0}^{n_0} L\left(A_n,B_n,I_{A_n,B_n}(\rho) - \frac{\epsilon}{4}\right).
  \end{align*}
  Clearly $C\in\tau(\Omega,Y,\cB_Y)$ and $\rho\in C$.

  On the other hand, if $\rho'\in C$, then
  \begin{align*}
    \Delta(\rho,\rho')
    & =
    \sum_{n\in\NN}\frac{\lvert I_{A_n,B_n}(\rho) - I_{A_n,B_n}(\rho')\rvert}{2^n}
    \leq
    \frac{\epsilon}{4}\cdot\sum_{n=0}^{n_0} 2^{-n} + \sum_{n > n_0} 2^{-n}
    <
    \epsilon,
  \end{align*}
  hence $\rho'\in B_\epsilon(\rho)$, thus $C\subseteq B_\epsilon(\rho)$ as desired.

  Therefore $\Delta$ generates the topology $\tau(\Omega,Y,\cB_Y)$.

  \medskip

  It remains to show item~\ref{prop:top:Pol}. Since $X$ is either finite or uncountable and is a
  Borel space, we may put a Polish topology on $X$ that both generates the $\sigma$-algebra of
  $\Omega$ and is \emph{compact}. Furthermore, we may assume that $\cB_X$ is a countable Boolean
  algebra of \emph{clopen} sets of $X$ that generates the $\sigma$-algebra of $\Omega$ (in the
  finite case, any choice works as all subsets are clopen; in the uncountable case, this can be done
  as $\Omega$ is Borel-isomorphic to the Cantor space). Note that even though this changes the
  metric $\Delta$, it does not change the topology $\tau(\Omega,Y,\cB_Y)$ as it does not depend on
  $\cB_X$ nor on the topology on $X$ (and the new metric $\Delta$ still generates
  $\tau(\Omega,Y,\cB_Y)$ by our proof of item~\ref{prop:top:met}). With this choice, it now suffices
  to show that $\Delta$ is complete.

  Let $(\rho_n)_{n\in\NN}$ be a Cauchy sequence with respect to $\Delta$. It is straightforward to
  check that this implies that for every $A\in\cB_X$ and every $B\in\cB_Y$, the limit
  $\lim_{n\to\infty} I_{A,B}(\rho_n)$ exists. For each $n\in\NN$, let $\theta_n\df\rho_n[\mu]$ so
  that for every $A\in\cB_X$ and every $B\in\cB_Y$, we have
  \begin{align*}
    \theta_n(A\times B) & = \int_A \rho_n(x)(B)\ d\mu(x).
  \end{align*}

  Let $\cD$ be the (countable) Boolean algebra generated by sets of the form $A\times B$ for
  $A\in\cB_X$ and $B\in\cB_Y$. Clearly, $\cD$ generates the product $\sigma$-algebra on $X\times
  Y$. Furthermore, the product topology on $X\times Y$ is compact and all elements of $\cD$ are
  clopen.

  Since every set $C$ in $\cD$ can be written as a finite disjoint union $\bigcup_{i=1}^m A_i\times
  B_i$ with $A\in\cB_X$ and $B\in\cB_Y$, we can define
  \begin{align*}
    \theta(C) & \df \sum_{i=1}^m \lim_{n\to\infty} I_{A_i,B_i}(\rho_n)
  \end{align*}
  (it is straightforward to check that this does not depend on the particular choice of sets
  $(A_i,B_i)_{i=1}^m$).

  We claim that $\theta$ is a pre-measure on $\cD$. It is clear that $\theta$ is finitely additive and
  assigns measure $0$ to the empty set. To see that $\theta$ is also $\sigma$-additive, note that since
  elements of $\cC$ are both compact and open, any countably infinite disjoint union of elements of
  $\cC$ that is in $\cC$ must necessarily contain only finitely many non-empty sets, so
  $\sigma$-additivity follows from finite additivity.

  By \Caratheodory's Extension Theorem, we can uniquely extend $\theta$ to a measure on $X\times Y$
  (which we denote by $\theta$ as well). Note that for every $A\in\cB_X$, we have
  \begin{align*}
    \theta(A\times Y)
    & =
    \lim_{n\to\infty} \theta_n(A\times Y)
    =
    \lim_{n\to\infty}\int_A\rho_n(x)(Y)\ d\mu(x)
    =
    \mu(A),
  \end{align*}
  which in particular implies that the marginal of $\theta$ on $X$ is $\mu$ and that $\theta$ is a
  probability measure on $X\times Y$.

  By the Disintegration Theorem, Theorem~\ref{thm:DTproj}, there exists a Markov kernel $\rho\in
  M(\Omega,Y)$ such that
  \begin{align*}
    \theta(A\times B) & = \int_A\rho(x)(B)\ d\mu(x)
  \end{align*}
  for every $A\in\cB_X$ and every $B\in\cB_Y$, which in particular implies that
  \begin{align*}
    \lim_{n\to\infty} \Delta(\rho_n,\rho)
    & =
    \lim_{n\to\infty} \sum_{m\in\NN}\frac{\lvert I_{A_m,B_m}(\rho_n) - I_{A_m,B_m}(\rho)\rvert}{2^m}
    \\
    & =
    \lim_{n\to\infty} \sum_{m\in\NN}
    \frac{\lvert \theta_n(A_m\times B_m) - \theta(A_m\times B_m)\rvert}{2^m}
    =
    0,
  \end{align*}
  where the last equality follows since the $m$th term in the sum is bounded by $2^{-m}$ and
  $\lim_{n\to\infty}\theta_n(A\times B) = \theta(A\times B)$ for every $A\in\cB_X$ and every
  $B\in\cB_Y$. Thus, $\Delta$ is complete as desired.
\end{proof}

We are now ready to describe the topology and $\sigma$-algebra of the spaces of dissociated and
overlapping types, which in turn allows us to make sense of the notion of ``random type''.

\begin{definition}
  Let $\Omega=(X,\mu)$ be an atomless standard probability space, let $d\in\NN$, let $\cL$ be a
  finite relational language, let $V$ be a countable set, let $A$ be a finite set disjoint from $V$.

  Consider the space $M(\cE_{A,V}^{(d),\dsct},\cK_{A\cup V})$ of Markov kernels from
  $\cE_{A,V}^{(d),\dsct}$ to $\cK_{A\cup V}$ equipped with the topology
  $\tau(\cE_{A,V}^{(d),\dsct},\cK_{A\cup V},\cB_{A\cup V})$ (see Definition~\ref{def:top}), where
  $\cB_{A\cup V}$ is the (countable) Boolean algebra generated by all cylinder sets of $\cK_{A\cup
    V}$ (see~\eqref{eq:cylinderset}). We equip the set of dissociated types
  $\cS_{A,V}^{(d),\dsct}\subseteq M(\cE_{A,V}^{(d),\dsct},\cK_{A\cup V})$ with the subspace topology
  and with the Borel $\sigma$-algebra.

  Given a random element $\rn{p}$ of $\cS_{A,V}^{(d),\dsct}$, a measurable set
  $E\subseteq\cE_{A,V}^{(d),\dsct}$, a finite set $U\subseteq A\cup V$ and $K\in\cK_U$, we define
  the shorthand notation
  \begin{align*}
    \EE[\rn{p}[E]\rest_U = K]
    & \df
    \EE\left[\int_E\rn{p}(x)(C(U,K))\ d\mu^{(d)}(x)\right],
  \end{align*}
  where $C(U,K)\df\{M\in\cK_{A\cup V}\mid M\rest_U=K\}$ is the cylinder set as
  in~\eqref{eq:cylinderset} (note that this is well-defined as the function
  $\cS_{A,V}^{(d),\dsct}\ni p\mapsto\int_E p(x)(C(U,K))\ d\mu^{(d)}(x)\in\RR$ is continuous, hence
  measurable).

  We define the analogous notions for overlapping types similarly.
\end{definition}

The next proposition collects all topological properties of dissociated and overlapping types that
we will need. A direct consequence of it is that all maps of Lemma~\ref{lem:equivpp} are measurable.

\begin{proposition}\label{prop:toptypes}
  Let $\Omega=(X,\mu)$ be an atomless standard probability space, let $d\in\NN$, let $\cL$ be a
  finite relational language, let $V$ be a countable set, let $A$ be a finite set disjoint from
  $V$. Then the following hold.
  \begin{enumerate}
  \item\label{prop:toptypes:closed} The sets $\cS_{A,V}^{(d),\dsct}$ and $\cS_{A,V}^{(d),\ovlp}$ are
    closed subsets of $M(\cE_{A,V}^{(d),\dsct},\cK_{A\cup V})$ and
    $M(\cE_{A,V}^{(d),\ovlp},\cK_{A\cup V})$ in the topologies
    $\tau(\cE_{A,V}^{(d),\dsct},\cK_{A\cup V},\cB_{A\cup V})$ and
    $\tau(\cE_{A,V}^{(d),\ovlp},\cK_{A\cup V},\cB_{A\cup V})$, respectively.
  \item\label{prop:toptypes:Pol} $\cS_{A,V}^{(d),\dsct}$ and $\cS_{A,V}^{(d),\ovlp}$ are Polish
    spaces, hence standard Borel spaces.
  \item\label{prop:toptypes:M} The maps $M^{\dsct}_{A,V}\colon\cS_{A,V}^{(d),\dsct}\to\cK_A$ and
    $M^{\ovlp}_{A,V}\colon\cS_{A,V}^{(d),\ovlp}\to\cK_A$ are continuous (when $\cK_A$ is equipped
    with discrete topology).
  \item\label{prop:toptypes:pi} The map
    $\pi^{\ovlp}_{A,V}\colon\cS_{A,V}^{(d),\ovlp}\to\cS_{A,V}^{(d),\dsct}$ is continuous.
  \item\label{prop:toptypes:alpha} If $B$ is a finite set disjoint from $V$ and $\alpha\colon A\to
    B$ is an injection, then the maps $\alpha^*\colon\cS_{B,V}^{(d),\dsct}\to\cS_{A,V}^{(d),\dsct}$
    and $\alpha^*\colon\cS_{B,V}^{(d),\ovlp}\to\cS_{A,V}^{(d),\ovlp}$ are continuous.
  \item\label{prop:toptypes:pp} If $\cN$ is a $d$-Euclidean structure in $\cL$ over $\Omega$, then
    the maps $\pp^{\dsct,\cN}_{A,V}$ and $\pp^{\ovlp,\cN}_{A,V}$ are measurable.
  \end{enumerate}
\end{proposition}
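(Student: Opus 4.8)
The plan is to exploit the fact that the topology $\tau(\Omega',Y,\cB_Y)$ on a Markov-kernel space $M(\Omega',Y)$ is, by construction, the initial topology with respect to the family of functionals $\rho\mapsto I_{A,B}(\rho)$ (one for each measurable $A$ in the domain and each $B\in\cB_Y$), since the generating sets $U(A,B,r)$ and $L(A,B,r)$ are exactly the preimages of the open rays $(-\infty,r)$ and $(r,\infty)$ under $I_{A,B}$. Consequently: a subset that is a preimage of a closed set under some $I_{A,B}$ is closed; a map into a (subspace of a) Markov-kernel space is continuous as soon as each composition with an $I_{A,B}$ is continuous; and, since by Proposition~\ref{prop:top}\ref{prop:top:gen} the Borel $\sigma$-algebra is already generated by the countably many functionals $I_{A,B}$ with $A\in\cB_X$ and $B\in\cB_Y$, such a map is Borel measurable as soon as each composition with one of those is measurable. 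I would deduce all six items by computing the compositions $I_{A,B}\comp(\place)$ explicitly.

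\textbf{Items~\ref{prop:toptypes:closed}--\ref{prop:toptypes:M}.} First, $\cK_{A\cup V}$, identified via its predicates with $\prod_{P\in\cL}\prod_{\alpha\in(A\cup V)_{k(P)}}\{0,1\}$, is a compact Polish space whose Borel $\sigma$-algebra is generated by the countably many cylinder sets~\eqref{eq:cylinderset}, and the Boolean algebra $\cB_{A\cup V}$ they generate is countable and consists of clopen sets; also $\cE_{A,V}^{(d),\dsct}$ and $\cE_{A,V}^{(d),\ovlp}$ are standard Borel spaces that are finite (when their index sets are empty) or uncountable (since $\mu$ is atomless). Hence Proposition~\ref{prop:top}\ref{prop:top:Pol} applies, so $M(\cE_{A,V}^{(d),\dsct},\cK_{A\cup V})$ and $M(\cE_{A,V}^{(d),\ovlp},\cK_{A\cup V})$ are Polish. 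Next, a Markov kernel $p$ lies in $\cS_{A,V}^{(d),\dsct}$ exactly when $I_{\cE_{A,V}^{(d),\dsct},\,C(A,M)}(p)=1$ for some $M\in\cK_A$; since $\cK_A$ is finite and each $I_{\cE_{A,V}^{(d),\dsct},\,C(A,M)}$ is one of the defining functionals (hence continuous), $\cS_{A,V}^{(d),\dsct}$ is a finite union of closed sets, hence closed, and therefore Polish and standard Borel, and similarly in the overlapping case; this gives items~\ref{prop:toptypes:closed} and~\ref{prop:toptypes:Pol}. For item~\ref{prop:toptypes:M}: inside $\cS_{A,V}^{(d),\dsct}$ the fibre $(M^{\dsct}_{A,V})^{-1}(M)$ equals $\{p : I_{\cE_{A,V}^{(d),\dsct},\,C(A,M)}(p)=1\}$, which is closed, and as $M$ ranges over the finite set $\cK_A$ these fibres partition $\cS_{A,V}^{(d),\dsct}$, so each is also open; thus $M^{\dsct}_{A,V}$ is continuous into discrete $\cK_A$, and likewise $M^{\ovlp}_{A,V}$.

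\textbf{Items~\ref{prop:toptypes:pi} and~\ref{prop:toptypes:alpha}.} For each measurable $E$ in the relevant $\cE$-space and each $B_0\in\cB_{A\cup V}$, I would compute $I_{E,B_0}\comp\pi^{\ovlp}_{A,V}$ and $I_{E,B_0}\comp\alpha^*$ and show each is again a defining functional of the larger type space. Unwinding Definition~\ref{def:typeproj}, applying $\pi^{\ovlp}_{A,V}$ averages $p$ over the coordinates indexed by $r^{\ovlp}(A,V)\setminus r^{\dsct}(A,V)$, and since the associated auxiliary measure is the appropriate product measure, Fubini gives $I_{E,B_0}(\pi^{\ovlp}_{A,V}(p))=I_{E\times F,\,B_0}(p)$, where $F$ is the full product over those extra coordinates and $\cE_{A,V}^{(d),\ovlp}\cong\cE_{A,V}^{(d),\dsct}\times F$; this is continuous in $p$. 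Likewise, unwinding Definition~\ref{def:contravartypes}, $\alpha^*(p)$ is obtained from $p$ by a reindexing $(\overline{\alpha}^{-1})^*$ of its argument, an averaging over auxiliary coordinates, and a pushforward of the output measure along $\alpha\up_V^*$; since $(\alpha\up_V^*)^{-1}(B_0)$ is again a finite Boolean combination of cylinder sets, hence lies in $\cB_{B\cup V}$, and since the reindexing and averaging are measure-preserving for the relevant product measures, one obtains $I_{E,B_0}(\alpha^*(p))=I_{E',\,(\alpha\up_V^*)^{-1}(B_0)}(p)$ for a suitable measurable $E'$, again a defining functional. The dissociated versions are the special case where the reindexing is the identity (as $r^{\dsct}(A,V)=r(V)$ does not depend on $A$) and there is no extra averaging.

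\textbf{Item~\ref{prop:toptypes:pp} and the main obstacle.} By the reduction above it suffices to check, for $E\in\cB_X$ and $B_0\in\cB_{A\cup V}$, that $x\mapsto I_{E,B_0}(\pp^{\dsct,\cN}_{A,V}(x))$ is measurable; choosing a representative this equals $\int_E\int_{\overline{\cE}_{A,V}^{(d),\dsct}}\One[M^\cN_{A\cup V}(x,y,z)\in B_0]\,d\mu^{(d)}(y)\,d\mu^{(d)}(z)$, and since $M^\cN_{A\cup V}$ is Borel measurable and $B_0$ is Borel, the integrand is jointly measurable, so Tonelli's theorem yields measurability in $x$; the overlapping case is identical. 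The step I expect to be the main obstacle is the bookkeeping behind items~\ref{prop:toptypes:pi}--\ref{prop:toptypes:alpha}: tracking precisely how the auxiliary averaging, the coordinate-reindexing maps $(\overline{\alpha}^{-1})^*$, and the pushforward $\alpha\up_V^*$ interact, verifying that they are measure-preserving for the relevant product measures so that each $I_{A,B}\comp(\place)$ is literally another defining functional, together with confirming the elementary but essential facts that $\cK_{A\cup V}$ is compact with $\cB_{A\cup V}$ clopen (needed to invoke Proposition~\ref{prop:top}\ref{prop:top:Pol}) and that preimages of cylinder sets under $\alpha\up_V^*$ remain in $\cB_{B\cup V}$.
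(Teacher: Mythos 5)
Your proposal is correct and follows essentially the same route as the paper: closedness and Polishness via Proposition~\ref{prop:top}\ref{prop:top:Pol} applied to the compact space $\cK_{A\cup V}$ with clopen cylinder sets, and continuity/measurability of $M^{\dsct}$, $M^{\ovlp}$, $\pi^{\ovlp}$, $\alpha^*$, $\pp^{\dsct,\cN}$, $\pp^{\ovlp,\cN}$ by checking that compositions with the defining functionals $I_{E,B}$ are again such functionals (equivalently, that preimages of the subbasic sets $U$, $L$ are again of that form), with Fubini/Tonelli handling item~\ref{prop:toptypes:pp}. Phrasing this as an initial-topology argument is only a cosmetic repackaging of the paper's preimage computations.
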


\begin{proof}
  For item~\ref{prop:toptypes:closed}, note that
  \begin{align*}
    \cS_{A,V}^{(d),\dsct}
    & =
    \bigl\{p\in M(\cE_{A,V}^{(d),\dsct},\cK_{A\cup V}) \mathrel{\big\vert}
    \exists K\in\cK_A, \forall x\in\cE_{A,V}^{(d),\dsct},
    p(x)(\{K'\in\cK_{A\cup V}\mid K'\rest_A=K\})=1
    \bigr\}
    \\
    & =
    \left.
    M(\cE_{A,V}^{(d),\dsct},\cK_{A\cup V})
    \middle\backslash
    \bigcap_{K\in\cK_A} U\bigl(\cE_{A,V}^{(d),\dsct}, C(A,K), 1\bigr)
    \right.
  \end{align*}
  and since the last intersection is finite (as $\cK_A$ is finite since $\cL$ is finite), the set
  above is closed.

  The proof that $\cS_{A,V}^{(d),\ovlp}$ is also closed is analogous.

  \medskip

  For item~\ref{prop:toptypes:Pol}, first we equip $\cK_{A\cup V}$ with the topology generated by
  cylinder sets so that its $\sigma$-algebra is simply the Borel $\sigma$-algebra. Note that since
  $\cL$ is finite, all cylinder sets are clopen and $\cK_{A\cup V}$ is compact. Also,
  $\cE_{A,V}^{(d),\dsct}$ is either finite (e.g., when $V = \varnothing$) or uncountable, so
  Proposition~\ref{prop:top}\ref{prop:top:Pol} gives that $(M(\cE_{A,V}^{(d),\dsct},\cK_{A\cup V}),
  \tau(\cE_{A,V}^{(d),\dsct},\cK_{A\cup V},\cB_{A\cup V}))$ is a Polish space. Since
  $\cS_{A,V}^{(d),\dsct}$ is a closed subspace (with subspace topology), it follows that it is also
  a Polish space.

  The proof that $\cS_{A,V}^{(d),\ovlp}$ is also a Polish space is analogous.

  \medskip

  For item~\ref{prop:toptypes:M}, continuity of the map
  $M^{\dsct}_{A,V}\colon\cS_{A,V}^{(d),\dsct}\to\cK_A$ follows since for every $K\in\cK_A$, the
  preimage
  \begin{align*}
    (M^{\dsct}_{A,V})^{-1}(\{K\})
    & =
    \bigl\{p\in M(\cE_{A,V}^{(d),\dsct},\cK_{A\cup V}) \mathrel{\big\vert}
    \forall x\in\cE_{A,V}^{(d)},
    p(x)(\{K'\in\cK_{A\cup V}\mid K'\rest_A=K\})=1\bigr\}
    \\
    & =
    \cS_{A,V}^{(d),\dsct}\setminus U(\cE_{A,V}^{(d),\dsct}, C(A,K), 1)
  \end{align*}
  is a closed set. The proof of continuity of $M^{\ovlp}_{A,V}$ is analogous.

  \medskip

  For item~\ref{prop:toptypes:pi}, continuity of the map
  $\pi^{\ovlp}_{A,V}\colon\cS_{A,V}^{(d),\ovlp}\to\cS_{A,V}^{(d),\dsct}$ follows since for every
  measurable $E\subseteq\cE_{A,V}^{(d),\dsct}$, every finite set $W\subseteq A\cup V$, every
  $K\in\cK_W$ and every $r\in\RR$, we have
  \begin{align*}
    (\pi^{\ovlp}_{A,V})^{-1}\Bigl(U\bigl(E, C(W,K), r\bigr)\cap\cS_{A,V}^{(d),\dsct}\Bigr)
    & =
    U\bigl(E\times\cR, C(W,K), r\bigr)\cap\cS_{A,V}^{(d),\ovlp},
    \\
    (\pi^{\ovlp}_{A,V})^{-1}\Bigl(L\bigl(E, C(W,K), r\bigr)\cap\cS_{A,V}^{(d),\dsct}\Bigr)
    & =
    L\bigl(E\times\cR, C(W,K), r\bigr)\cap\cS_{A,V}^{(d),\ovlp},
  \end{align*}
  where
  \begin{align*}
    \cR & \df \prod_{C\in r^{\ovlp}(A,V)\setminus r^{\dsct}(A,V)}(X\times\cO_C^d).
  \end{align*}
  Thus, the pre-images above are open (relative to $\cS_{A,V}^{(d),\ovlp}$).

  \medskip

  For item~\ref{prop:toptypes:alpha}, to show continuity of the map
  $\alpha^*\colon\cS_{B,V}^{(d),\dsct}\to\cS_{A,V}^{(d),\dsct}$, let
  \begin{align*}
    Y & \df \prod_{C\in r^{\dsct}(B,V)\setminus r^{\dsct}(\alpha(A),V)}(X\times\cO_C^d)
  \end{align*}
  and with a small abuse of notation, let us denote by $\mu^{(d)}$ the measure on $Y$ that is the
  product of several copies of $\mu$ with the appropriate copies of $\nu_C$.

  Note that for every measurable $E\subseteq\cE_{A,V}^{(d),\dsct}$, every finite set $W\subseteq
  A\cup V$, every $K\in\cK_W$ and every $r\in\RR$, the preimage
  \begin{align*}
    & \!\!\!\!\!\!
    (\alpha^*)^{-1}(U(E, C(W,K), r)\cap\cS_{A,V}^{(d),\dsct})
    \\
    & =
    \left\{p\in\cS_{B,V}^{(d),\dsct} \;\middle\vert\;
    \int_E \int_Y
    p((\overline{\alpha}^{-1})^*(x),y)(\{M\in\cK_{B\cup V} \mid \alpha\up_V^*(M)\rest_W = K\})
    \ d\mu^{(d)}(y)
    \ d\mu^{(d)}(x)
    < r
    \right\}
    \\
    & =
    U\bigl(
    (\overline{\alpha}^{-1})^*(E)\times Y, C(\alpha\up_V(W), (\alpha\up_V\down_W^{-1})^*(K)), r
    \bigr)\cap\cS_{A,V}^{(d),\dsct}
  \end{align*}
  is open (relative to $\cS_{A,V}^{(d),\dsct}$).

  A similar calculation shows that $(\alpha^*)^{-1}(L(E, C(W,K), r)\cap\cS_{A,V}^{(d),\dsct})$ is
  also open (relative to $\cS_{A,V}^{(d),\dsct}$).

  The proof of continuity of $\alpha^*\colon\cS_{B,V}^{(d),\ovlp}\to\cS_{A,V}^{(d),\ovlp}$ is analogous.

  \medskip

  For the final item~\ref{prop:toptypes:pp}, to show measurability of the map
  $\pp^{\dsct,\cN}_{A,V}$, we note that for every measurable $E\subseteq\cE_{A,V}^{(d),\dsct}$,
  every finite set $W\subseteq A\cup V$, every $K\in\cK_W$ and every $r\in\RR$, we have
  \begin{align*}
    (\pp^{\dsct,\cN}_{A,V})^{-1}(\cS_{A,V}^{(d),\dsct}\cap U(E,C(W,K),r))
    & =
    \left\{x\in\cE_A^{(d)} \;\middle\vert\;
    \int_E f^{\dsct,\cN}(x,y)(C(W,K))\ d\mu^{(d)}(y) < r
    \right\},
  \end{align*}
  which is measurable by Fubini's Theorem. A similar calculation shows that
  $(\pp^{\dsct,\cN}_{A,V})^{-1}(\cS_{A,V}^{(d),\dsct}\cap L(E,C(W,K),r))$ is measurable.

  The proof of measurability of $\pp^{\ovlp,\cN}_{A,V}$ is analogous.
\end{proof}

\section{Weak amalgamation of dissociated and overlapping types}
\label{sec:dsctovlp}

The objective of this section is to prove in Proposition~\ref{prop:weakamalgdsctovlp} a weak
amalgamation property of dissociated and overlapping types that (in particular) says that the
distribution of a dissociated $A$-type is completely determined by all overlapping $B$-types when
$B$ ranges in $r(A,\lvert A\rvert-1)$.

The next lemma provides a formula for the ``expected value'' of a random dissociated $A$-type in
terms of a Lebesgue Differentiation expression involving the overlapping $B$-type for a fixed
$B\subsetneq A$. The lemma below as stated only works when $\Omega=([0,1],\lambda)$ (but as
expected, we will be able to transfer its consequence to arbitrary $\Omega$ via a
measure-isomorphism modulo $0$).

\begin{lemma}\label{lem:randomtypeLDT}
  Let $\cN$ be a $d$-Euclidean structure in $\cL$ over $\Omega=([0,1],\lambda)$, let $A$ be a finite
  set, let $B\subsetneq A$, let $k\df\lvert B\rvert$, let $V$ be a countable set disjoint from $A$,
  let $U\subseteq A\cup V$ be a finite set, let $K\in\cK_U$ and let
  $E\subseteq\cE_{A,V}^{(d),\dsct}$ be measurable.

  Let also $\rn{y}$ be picked at random in $\prod_{C\in\binom{A}{>k}} ([0,1]\times\cO_C^d)$
  according to $\bigotimes_{C\in\binom{A}{>k}} (\lambda\otimes\bigotimes_{i=1}^d\nu_C)$.

  For every $x\in\cE_{A,k}^{(d)}$, decompose its coordinates as $x = (x^B,
  \widehat{x}, {\lhd})$, where
  \begin{align*}
    x^B & \in \cE_B^{(d)}, &
    \widehat{x} & \in [0,1]^{r(A,k)\setminus r(B)}, &
    {\lhd} & \in \prod_{C\in r(A,k)\setminus r(B)}\cO_C^d.
  \end{align*}
  Define also
  \begin{align*}
    \rn{p}_x & \df \pp^{\dsct,\cN}_{A,V}(x,\rn{y}).
  \end{align*}

  Then for $\lambda^{(d)}$-almost every $x\in\cE_{A,k}^{(d)}$, we have
  \begin{align*}
    & \!\!\!\!\!\!
    \EE[\rn{p}_x[E]\rest_U = K]
    \\
    & =
    \lim_{r\to 0}\frac{1}{\lambda(B_r(\widehat{x}))}\cdot
    \int_{B_r(\widehat{x})}\int_{E_B} 
    \pp^{\ovlp,\cN}_{B,V_B}(x^B)(z,t,{\lhd})(\{M\in\cK_{A\cup V} \mid M\rest_U = K\})
    \ d\lambda^{(d)}(z)
    \ d\lambda(t),
  \end{align*}
  where
  \begin{align*}
    V_B
    & \df
    V\cup (A\setminus B),
    \\
    E_B
    & \df
    E\times\prod_{C\in r^{\ovlp}(B,V_B)\setminus (r(A,k)\cup r^{\dsct}(A,V))} ([0,1]\times\cO_C^d)
  \end{align*}
  and $B_r(\widehat{x})$ is the $\ell^\infty$-ball of radius $r$ centered at $\widehat{x}$ (and by
  abuse of notation, the measure $\lambda^{(d)}$ over $E_B$ denotes the product of several copies of
  $\lambda$ with the appropriate copies of $\nu_C$).
\end{lemma}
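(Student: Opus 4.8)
The plan is to unfold both sides of the claimed identity into Lebesgue-measure computations on the Euclidean structure space and match them. First I would fix notation: write $\Omega = ([0,1],\lambda)$, set $k = \lvert B\rvert$, and recall that a dissociated $A$-type records the distribution of $M^\cN_{A\cup V}$ after averaging over the "overlapping" coordinates $\overline{r}^{\dsct}(A,V)$, while an overlapping $B$-type records the distribution after averaging only over the smaller set $\overline{r}^{\ovlp}(B,V_B)$. The key bookkeeping observation is the index-set identity $r(B)\cup r^{\ovlp}(B,V_B) = r(B)\cup r(A,k)\setminus r(B) \cup (\text{stuff overlapping } B \text{ and } V_B)$, and that running over all of $r(B\cup V_B) = r(A\cup V)$ we can split the coordinates into: (i) the $B$-coordinates $x^B$, (ii) the coordinates indexed by sets in $r(A,k)\setminus r(B)$ (these are the $\widehat{x}$ and ${\lhd}$ pieces, which are \emph{not} in $\overline{r}^{\ovlp}(B,V_B)$ precisely because they overlap $B$ but lie inside $A$, so $B\not\subseteq$ them and they meet $V_B = V\cup(A\setminus B)$), (iii) the remaining "genuinely overlapping with $V$" coordinates, and (iv) the coordinates indexed by sets in $\binom{A}{>k}$ (the $\rn{y}$ piece). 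I would verify carefully that (i)+(ii)+(iii) exhausts $r(B)\cup r^{\ovlp}(B,V_B)$ and that (ii)+(iii)+(iv) exhausts $\overline{r}^{\dsct}(A,V) \cup \{$coordinates being averaged in $\rn{p}_x\}$; this combinatorial matching is the technical heart of the lemma.

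Next I would expand the left-hand side. By definition, $\EE[\rn{p}_x[E]\rest_U = K]$ is the expectation over $\rn{y}$ of $\int_E \pp^{\dsct,\cN}_{A,V}(x,\rn{y})(z)(C(U,K))\,d\mu^{(d)}(z)$, and $\pp^{\dsct,\cN}_{A,V}(x,\rn{y})(z)(C(U,K))$ is itself the probability that $M^\cN_{A\cup V}(x,\rn{y},z,\rn{w})\rest_U = K$ where $\rn{w}$ ranges over $\overline{\cE}^{(d),\dsct}_{A,V}$. Fubini lets me collapse all these integrals/expectations into a single integral of $\One[M^\cN_{A\cup V}(\cdot)\rest_U = K]$ against $\lambda^{(d)}$ over the appropriate product of coordinate spaces, holding only $x = (x^B,\widehat{x},{\lhd})$ (minus the averaged part) fixed; equivalently it is $\lambda^{(d)}$-measure of a slice of $\Tind(K,\cN)\rest_U$-type set. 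For the right-hand side I would similarly expand: $\pp^{\ovlp,\cN}_{B,V_B}(x^B)(z,t,{\lhd})(C(U,K))$ is the probability over the $\overline{r}^{\ovlp}(B,V_B)$-coordinates that $M^\cN$ restricted to $U$ equals $K$, so $\int_{B_r(\widehat{x})}\int_{E_B}$ of this is again a $\lambda^{(d)}$-measure of a slightly different slice — one where the $\widehat{x}$-block is averaged over a small ball $B_r(\widehat{x})$ rather than fixed.

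Then the proof concludes via the Lebesgue Differentiation Theorem: the function $\widehat{x}\mapsto (\text{the inner double integral on the RHS})$ is an integrable function of $\widehat{x}\in[0,1]^{r(A,k)\setminus r(B)}$, so for $\lambda$-a.e.\ $\widehat{x}$ its average over $B_r(\widehat{x})$ converges as $r\to0$ to its value at $\widehat{x}$, which by the coordinate matching in the previous paragraph is exactly the LHS. The only subtlety here is that the LHS is defined only up to the a.e.\ equivalence in which dissociated/overlapping types are factored, so I must be careful that the LDT is applied to a genuine pointwise-defined integrable representative (possible since, after Fubini, the inner integrals are honest Lebesgue integrals of a bounded measurable function on a product of Lebesgue spaces), and that the exceptional null set is the same for all finitely-generated data — but since $U, K, E$ are fixed in the statement this is not an issue. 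The main obstacle I expect is not the LDT step but the bookkeeping in the first paragraph: precisely identifying which coordinates are fixed, which are averaged with the uniform measure, and which are being integrated over $E$ versus $E_B$, and checking that the measures on $E_B$ (product of $\lambda$'s and $\nu_C$'s over $r^{\ovlp}(B,V_B)\setminus(r(A,k)\cup r^{\dsct}(A,V))$) together with the ball average and the $\rn{y}$-average reassemble exactly the measure $\mu^{(d)}$ appearing implicitly on the LHS. I would organize this by drawing the Venn diagram of $r(A\cup V)$ relative to the four subsets $A$, $B$, $V$, $A\setminus B$ and tabulating each cell's fate, then invoking Fubini once on each side.
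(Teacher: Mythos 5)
Your proposal is correct and follows essentially the same route as the paper's proof: collapse the expectation over $\rn{y}$, the integral over $E$, and the averaging inside the dissociated type via Fubini into one integral of $\One[M^\cN_{A\cup V}(\cdot)\rest_U=K]$, observe that after fixing $(x^B,\widehat{x},{\lhd})$ and the $E_B$-coordinates the remaining averaged block is exactly $\overline{r}^{\ovlp}(B,V_B)$ (so the inner integral is the overlapping $(B,V_B)$-type evaluated at $(z,\widehat{x},{\lhd})$), and finish with the Lebesgue Differentiation Theorem in the $\widehat{x}$-variables. The only caveat is that your four-cell bookkeeping in the first paragraph is stated loosely (e.g.\ sets in $r(A,k)\setminus r(B)$ need not meet $B$, and "sets meeting $V$" must still be split according to whether they contain $B$), but your second paragraph handles the split correctly, matching the paper's identity $R\setminus R_B=\overline{r}^{\ovlp}(B,V_B)$.
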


\begin{proof}
  Let
  \begin{align*}
    R
    & \df
    r(A\cup V)\setminus (r(A,k)\cup r^{\dsct}(A,V))
    \\
    & =
    \{C\subseteq A\cup V \mid A\cap C\neq\varnothing\land (C\subseteq A\to\lvert C\rvert > k)\},
    \\
    R_B
    & \df
    r^{\ovlp}(B,V_B)\setminus (r(A,k)\cup r^{\dsct}(A,V))
    \\
    & =
    \{C\in A\cup V \mid
    (B = \varnothing\lor (C\cap V_B\neq\varnothing\land B\not\subseteq C))
    \land A\cap C\neq\varnothing
    \land (C\subseteq A\to\lvert C\rvert>k)\},
  \end{align*}
  so that $E_B = E\times\prod_{C\in R_B} ([0,1]\times\cO_C^d)$. Note also that $R_B\subseteq R$ and
  \begin{align*}
    R\setminus R_B
    & =
    \{C\subseteq A\cup V \mid
    A\cap C\neq\varnothing
    \land (C\subseteq A\to\lvert C\rvert > k)
    \land B\neq\varnothing
    \land (C\cap V_B=\varnothing\lor B\subseteq C)\}
    \\
    & =
    \{C\subseteq A\cup V \mid
    A\cap C\neq\varnothing
    \land (C\subseteq A\to\lvert C\rvert > k)
    \land \varnothing\neq B\subseteq C\}
    \\
    & =
    \{C\in r(A\cup V) \mid \varnothing\neq B\subsetneq C\}
    =
    \overline{r}^{\ovlp}(B,V_B),
  \end{align*}
  where the second equality follows since $\lvert B\rvert=k$ makes the condition $C\cap V_B=\varnothing$
  incompatible with $C\subseteq A\to\lvert C\rvert < k$, so it can be dropped.
  
  With a small abuse of notation, whenever we have a product of several copies of $[0,1]$ with
  several copies of $\cO_C^d$, let us also denote by $\lambda^{(d)}$ the measure that is the product
  of the appropriate number of copies of $\lambda$ with the appropriate copies of $\nu_C$.

  Then by Fubini's Theorem, we have
  \begin{align*}
    & \!\!\!\!\!\!
    \EE[\rn{p}_x[E]\rest_U = K]
    \\
    & =
    \int_{\prod_{C\in\binom{A}{>k}}([0,1]\times\cO_C^d)}
    \int_E
    \pp^{\dsct,\cN}_{A,V}(x,y)(u)(\{M\in\cK_{A\cup V} \mid M\rest_U=K\})
    \ d\lambda^{(d)}(u)
    \ d\lambda^{(d)}(y)
    \\
    & =
    \int_E \int_{\prod_{C\in R}([0,1]\times\cO_C^d)}
    \One[M^\cN_{A\cup V}(x,u,v)\rest_U = K]
    \ d\lambda^{(d)}(v)
    \ d\lambda^{(d)}(u)
    \\
    & =
    \int_{E_B} \int_{\prod_{C\in R\setminus R_B} ([0,1]\times\cO_C^d)}
    \One[M^\cN_{A\cup V}(x,z,w)\rest_U = K]
    \ d\lambda^{(d)}(w)
    \ d\lambda^{(d)}(z).
  \end{align*}

  Recalling now the decomposition $x = (x^B,\widehat{x},{\lhd})$, where $x^B$ contains the
  coordinates indexed by sets in $r(B)$ and $(\widehat{x},{\lhd})$ the coordinates indexed by sets
  in $r(A,k)\setminus r(B)$, since the latter set is a subset of $r^{\ovlp}(B,V_B)$ and $R\setminus
  R_B=\overline{r}^{\ovlp}(B,V_B)$, we get
  \begin{align*}
    \EE[\rn{p}_x[E]\rest_U = K]
    & =
    \int_{E_B}
    \pp^{\ovlp,\cN}_{B,V_B}(x^B)(z,\widehat{x},{\lhd})
    \ d\lambda^{(d)}(z).
  \end{align*}

  By the Lebesgue Differentiation Theorem (and Fubini's Theorem), we get
  \begin{align*}
    \EE[\rn{p}_x[E]\rest_U = K]
    & =
    \lim_{r\to 0}\frac{1}{\lambda(B_r(\widehat{x}))}\cdot
    \int_{B_r(\widehat{x})}\int_{E_B}
    \pp^{\ovlp,\cN}_{B,V_B}(x^B)(z,t,{\lhd})
    \ d\lambda^{(d)}(z)
    \ d\lambda(t),
  \end{align*}
  for $\lambda^{(d)}$-almost every $x\in\cE_{A,k}^{(d)}$ as desired.
\end{proof}

The next lemma is a stepping stone of the weak amalgamation property we aim to prove in
Proposition~\ref{prop:weakamalgdsctovlp}.

\begin{lemma}\label{lem:weakamalgstep}
  Let $\cN$ be a $d$-Euclidean structure in $\cL$ over $\Omega=(X,\mu)$, let $A$ be a finite set,
  let $B\subsetneq A$, let $k\df\lvert B\rvert$ and let $V$ be a countably infinite set disjoint
  from $A$.

  Let also $\rn{y}$ be picked at random in $\prod_{C\in\binom{A}{>k}} ([0,1]\times\cO_C^d)$
  according to $\bigotimes_{C\in\binom{A}{>k}} (\lambda\otimes\bigotimes_{i=1}^d\nu_C)$ and let
  \begin{align*}
    R_B & \df r(A,k)\setminus r(B).
  \end{align*}

  Then there exists a Markov kernel
  \begin{align*}
    \rho\colon\cS_{B,V}^{(d),\ovlp}\times\prod_{C\in R_B}(X\times\cO_C^d)\to\cP(\cS_{A,V}^{(d),\dsct})
  \end{align*}
  such that for $\mu^{(d)}$-almost every $x\in\cE_{A,k}^{(d)}$, we have
  \begin{align}\label{eq:weakamalgstep}
    \pp^{\dsct,\cN}_{A,V}(x,\rn{y})
    & \sim
    \rho(\pp^{\ovlp,\cN}_{B,V}(x^B), x^{R_B}),
  \end{align}
  where $x^B$ and $x^{R_B}$ are the projections of $x$ onto the coordinates indexed by $r(B)$ and
  $R_B$, respectively.

  In particular, if $\rn{x}$ is picked at random in $\cE_{A,k}^{(d)}$ according to
  $\mu^{(d)}$ independently from $\rn{y}$, then $\pp^{\dsct,\cN}_{A,V}(\rn{x},\rn{y})$ is
  conditionally independent from $\rn{x}^B$ given $\pp^{\ovlp,\cN}_{B,V}(\rn{x}^B)$ and $\rn{x}^{R_B}$.
\end{lemma}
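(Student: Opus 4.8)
The plan is to prove the statement by showing that, after averaging over the top variables $\rn{y}$, the law of the random dissociated type $\pp^{\dsct,\cN}_{A,V}(x,\rn{y})$ depends on $x$ only through the pair $\bigl(\pp^{\ovlp,\cN}_{B,V}(x^B),x^{R_B}\bigr)$, and in a Borel-measurable way; the Markov kernel $\rho$ is then produced by disintegration, and the ``in particular'' assertion is a formal consequence.

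First I would reduce to $\Omega=([0,1],\lambda)$ by a measure-isomorphism modulo $0$ (transporting $\cN$, the type maps and $\rho$ back and forth, as in the reduction that opens the proof of Lemma~\ref{lem:dreal}), so that Lemma~\ref{lem:randomtypeLDT} becomes applicable. Next, fix a countable Boolean algebra $\cB$ of measurable subsets of $\cE_{A,V}^{(d),\dsct}$ generating its $\sigma$-algebra, and for $E\in\cB$, a finite $U\subseteq A\cup V$ and $K\in\cK_U$ consider the continuous functional
\[
  I_{E,U,K}(p)\df\int_E p(z)\bigl(C(U,K)\bigr)\ d\mu^{(d)}(z)
  \qquad\bigl(p\in\cS_{A,V}^{(d),\dsct}\bigr).
\]
By the metrization of the type topology (Proposition~\ref{prop:top}, transferred through Proposition~\ref{prop:toptypes}) these countably many functionals separate points and generate the Borel $\sigma$-algebra of the Polish space $\cS_{A,V}^{(d),\dsct}$; hence the law of a random element of $\cS_{A,V}^{(d),\dsct}$ is determined, Borel-measurably, by the joint moments $\EE\bigl[\prod_{i=1}^r I_{E_i,U_i,K_i}(\rn{p})\bigr]$ (polynomials being dense in $C([0,1]^r)$).

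The heart of the argument is to reduce such a joint moment, for $\rn{p}_x\df\pp^{\dsct,\cN}_{A,V}(x,\rn{y})$, to a \emph{single} first moment of a dissociated type over an enlarged index set. Fix a countably infinite $V'\supseteq V$ disjoint from $A$ together with bijections $\phi_i\colon V\to V^{(i)}$ onto pairwise disjoint infinite subsets $V^{(i)}\subseteq V'$. Given data $(E_i,U_i,K_i)_{i=1}^r$, relabel $U_i$ to $U_i'\df(U_i\cap A)\cup\phi_i(U_i\cap V)\subseteq A\cup V^{(i)}$ and $E_i$ to $E_i'\subseteq\cE_{A,V^{(i)}}^{(d),\dsct}$, set $U'\df\bigcup_iU_i'$, and let $E'\subseteq\cE_{A,V'}^{(d),\dsct}$ be the product of the $E_i'$ with all the remaining (``straddling'') coordinate spaces. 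Using that $M^\cN$ is local and equivariant — so that in a point of $\cE_{A\cup V'}^{(d)}$ built from $x$, the \emph{shared} top variables $\rn{y}$, the auxiliary ``mixed'' variables (indexed by sets meeting both $A$ and $V'$) and a $z\in\cE_{A,V'}^{(d),\dsct}$, the restriction of the resulting structure to any $C\subseteq A\cup V^{(i)}$ only sees coordinates indexed by subsets of $A\cup V^{(i)}$, and the coordinates straddling distinct $V^{(i)}$'s can be integrated out freely — one checks the identity
\[
  \prod_{i=1}^r I_{E_i,U_i,K_i}\bigl(\pp^{\dsct,\cN}_{A,V}(x,\rn{y})\bigr)
  \;=\;
  \sum_{K'} I_{E',U',K'}\bigl(\pp^{\dsct,\cN}_{A,V'}(x,\rn{y})\bigr),
\]
where $K'$ ranges over the finitely many structures in $\cK_{U'}$ restricting to each $K_i$ under the relabelings (this uses equivariance of Lemma~\ref{lem:equivf} to identify $\pp^{\dsct,\cN}_{A,V^{(i)}}$ with $\pp^{\dsct,\cN}_{A,V}$). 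I expect this bookkeeping — pinning down exactly which coordinates each partial restriction depends on, and that such a restriction is unaffected by the straddling variables — to be the main obstacle.

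Taking expectations, each summand on the right equals $\EE[\rn{p}'_x[E']\rest_{U'}=K']$, to which Lemma~\ref{lem:randomtypeLDT} applies with $V'$ in place of $V$: for $\lambda^{(d)}$-almost every $x$ it is given by an explicit expression in $\bigl(\pp^{\ovlp,\cN}_{B,V'_B}(x^B),x^{R_B}\bigr)$, with $V'_B=V'\cup(A\setminus B)$ and with $R_B$ unchanged (it depends only on $A$ and $k$), and that expression is Borel in this pair, being a Lebesgue-differentiation limit over rational radii of continuous functionals of the overlapping type. Since $V'_B$ and $V$ are both countably infinite, equivariance of $\pp^{\ovlp,\cN}$ (Lemma~\ref{lem:equivf}) together with continuity of the maps $\alpha^*$ (Proposition~\ref{prop:toptypes}\ref{prop:toptypes:alpha}) rewrites it as a Borel function of $\bigl(\pp^{\ovlp,\cN}_{B,V}(x^B),x^{R_B}\bigr)$. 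Intersecting over the countably many choices of data yields a $\lambda^{(d)}$-conull $G\subseteq\cE_{A,k}^{(d)}$ on which every joint moment of the $I_{E,U,K}(\rn{p}_x)$ — hence the law of $\pp^{\dsct,\cN}_{A,V}(x,\rn{y})$ itself, viewed in $\cP(\cS_{A,V}^{(d),\dsct})$ — equals a fixed Borel map $\Psi\bigl(\pp^{\ovlp,\cN}_{B,V}(x^B),x^{R_B}\bigr)$. Finally I would let $\rho$ be a disintegration (Theorem~\ref{thm:DT}) over its first two coordinates of the joint law of $\bigl(\pp^{\ovlp,\cN}_{B,V}(\rn{x}^B),\rn{x}^{R_B},\pp^{\dsct,\cN}_{A,V}(\rn{x},\rn{y})\bigr)$, which is automatically a Markov kernel of the required type; since for every bounded Borel $h$ the conditional expectation $\EE\bigl[h\bigl(\pp^{\dsct,\cN}_{A,V}(\rn{x},\rn{y})\bigr)\mid\rn{x}\bigr]=\int h\,d\Psi\bigl(\pp^{\ovlp,\cN}_{B,V}(\rn{x}^B),\rn{x}^{R_B}\bigr)$ is already $\sigma\bigl(\pp^{\ovlp,\cN}_{B,V}(\rn{x}^B),\rn{x}^{R_B}\bigr)$-measurable, the random dissociated type is conditionally independent of $\rn{x}$, in particular of $\rn{x}^B$, given that pair (the ``in particular'' statement); and the same identity forces $\rho$ to agree with $\Psi$ almost everywhere, so that $\rho\bigl(\pp^{\ovlp,\cN}_{B,V}(x^B),x^{R_B}\bigr)$ is the law of $\pp^{\dsct,\cN}_{A,V}(x,\rn{y})$ for $\mu^{(d)}$-almost every $x$, which is \eqref{eq:weakamalgstep}.
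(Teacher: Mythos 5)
Your proposal follows essentially the same route as the paper's proof: reduce to $\Omega=([0,1],\lambda)$, observe that the law of the random dissociated type is determined by countably many joint moments of the functionals $I_{E,C(W,K)}$, convert joint moments into single first moments of a dissociated type over an enlarged vertex set built from disjoint copies of $V$ (your $V^{(i)}\subseteq V'$ play exactly the role of the paper's $V\times[n]$, with the same locality bookkeeping), apply Lemma~\ref{lem:randomtypeLDT}, and then pass to the $(B,V)$-overlapping type, your disintegration endgame being a repackaging of the paper's construction of the kernel $\rho'$ on the image of $\pp^{\ovlp,\cN}_{B,V}$. The only slip is the appeal to Proposition~\ref{prop:toptypes}\ref{prop:toptypes:alpha}, which concerns changing the finite set rather than the countable one: the factorization through $\bigl(\pp^{\ovlp,\cN}_{B,V}(x^B),x^{R_B}\bigr)$ given by Lemma~\ref{lem:equivf} is a priori defined only on the image of $\pp^{\ovlp,\cN}_{B,V}$, and its measurability there is supplied by the analytic-image/universal-measurability argument the paper uses (modifying on a null set of the pushforward measure), after which your conditional-expectation and disintegration argument goes through unchanged.
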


\begin{proof}
  First note that the conditional independence statement follows from~\eqref{eq:weakamalgstep} as
  its left-hand side is precisely the conditional distribution of
  $\pp^{\dsct,\cN}_{A,V}(\rn{x},\rn{y})$ given $\rn{x}$ and the right-hand side gives an expression
  for the same conditional distribution depending only on $\pp^{\ovlp,\cN}_{B,V}(\rn{x}^B)$ and
  $\rn{x}^{R_B}$.

  \medskip
  
  Now we claim that it suffices to show the result when $\Omega=([0,1],\lambda)$. To see this how
  the result for an arbitrary $\Omega=(X,\mu)$ follows from the result for
  $\Omega'=([0,1],\lambda)$, let $G\colon\Omega\to\Omega'$ be a measure-isomorphism modulo $0$ and
  for every countable set $C$, let
  $\widetilde{G}_C\colon\cE_C^{(d)}(\Omega)\to\cE_C^{(d)}(\Omega')$ be the function that acts
  as $G$ in all coordinates that have the space $X$ and acts identically in all other
  coordinates. We define
  \begin{align*}
    \widetilde{G}_{A,V}^{\dsct}
    \df
    \widetilde{G}_V\colon
    &
    \cE_{A,V}^{(d),\dsct}(\Omega)\to\cE_{A,V}^{(d),\dsct}(\Omega'),
    \\
    \widetilde{G}_{B,V}^{\ovlp}\colon
    &
    \cE_{B,V}^{(d),\ovlp}(\Omega)\to\cE_{B,V}^{(d),\ovlp}(\Omega'),
  \end{align*}
  similarly.
  
  Let $\cH$ be the $d$-Euclidean structure in $\cL$ over $\Omega'$ given by
  \begin{align*}
    \cH_P & \df \widetilde{G}_{k(P)}^{-1}(\cN_P)
  \end{align*}
  and note that
  \begin{align*}
    \pp^{\dsct,\cN}_{A,V}(u)(v) & = \pp^{\dsct,\cH}_{A,V}(\widetilde{G}_A(u))(\widetilde{G}_{A,V}^{\dsct}(v)),
    \\
    \pp^{\ovlp,\cN}_{B,V}(u)(v) & = \pp^{\ovlp,\cH}_{B,V}(\widetilde{G}_B(u))(\widetilde{G}_{B,V}^{\ovlp}(v)),
  \end{align*}
  so the result for $\cN$ follows from the result from $\cH$ and the fact that all $\widetilde{G}$
  are measure-preserving and bijective (except for a zero measure set).

  \medskip

  Let us now prove the case $\Omega=([0,1],\lambda)$. Fix a countable Boolean algebra $\cB$ that
  generates the $\sigma$-algebra of $\cE_{A,V}^{(d)}$ and let $\cC$ be the Boolean algebra of
  subsets of $\cS_{A,V}^{(d),\dsct}$ generated by all sets of the form $\cS_{A,V}^{(d),\dsct}\cap
  U(E,C(W,K),r)$ or $\cS_{A,V}^{(d),\dsct}\cap L(E,C(W,K),r)$ for some $E\in\cB$, some $W\subseteq
  A\cup V$ finite, some $K\in\cK_W$ and some $r\in\QQ$, where $C(W,K)\df\{M\in\cK_{A\cup V}\mid
  M\rest_W=K\}$ are the cylinder sets of~\eqref{eq:cylinderset} and $U$ and $L$ are as
  Definition~\ref{def:top} of the topology of $\cS_{A,V}^{(d),\dsct}$.

  Clearly $\cC$ is countable and by Proposition~\ref{prop:top}\ref{prop:top:gen}, we know that $\cC$
  generates the $\sigma$-algebra of $\cS_{A,V}^{(d),\dsct}$. Since $\cS_{A,V}^{(d)}$ is a Polish
  space (by Proposition~\ref{prop:toptypes}\ref{prop:toptypes:Pol}), it follows that distributions
  on $\cS_{A,V}^{(d)}$ are completely determined by their values on $\cC$.

  For every $x\in\cE_{A,k}^{(d)}$, let $\rn{p}_x\df\pp^{\dsct,\cN}_{A,V}(x,\rn{y})$. Given further
  $E\in\cB$, a finite set $W\subseteq A\cup V$ and $K\in\cK_W$, let
  \begin{align*}
    \rn{Z}_{x,E,W,K}
    & \df
    I_{E,C(W,K)}(\rn{p}_x)
    =
    \int_E \rn{p}_x(z)(C(W,K))\ d\lambda^{(d)}(z)
  \end{align*}
  and note that
  \begin{align*}
    \rn{p}_x\in U(E,C(W,K),r) & \iff \rn{Z}_{x,E,W,K} < r, &
    \rn{p}_x\in L(E,C(W,K),r) & \iff \rn{Z}_{x,E,W,K} > r.
  \end{align*}

  Thus, the distribution of $\rn{p}_x$ is completely determined by the joint distribution of the
  random variables $(\rn{Z}_{x,E,W,K})_{E,W,K}$. In turn, since the $\rn{Z}_{x,E,W,K}$ have ranges
  contained in $[0,1]$ (and there are countably many triples $(E,W,K)$), their joint distribution is
  completely determined by their joint moments.

  Let then $E_1,\ldots,E_n\in\cB$, $W_1,\ldots,W_n\subseteq A\cup V$ be finite and $K_i\in\cK_{W_i}$
  ($i\in[n]$) and let us compute the moment
  \begin{align*}
    \EE\left[\prod_{i=1}^n \rn{Z}_{x,E_i,W_i,K_i}\right].
  \end{align*}

  For each $i\in[n]$, let $\pi_i\colon A\cup V\to A\cup (V\times[n])$ be given by
  \begin{align*}
    \pi_i(a) & \df
    \begin{dcases*}
      a, & if $a\in A$,\\
      (a,i), & if $a\in V$,
    \end{dcases*}
  \end{align*}
  and let
  \begin{align*}
    E & \df \bigcap_{i=1}^n (\pi_i^*)^{-1}(E_i) \subseteq\cE_{A,V\times[n]}^{(d)}.
  \end{align*}

  For each $i\in[n]$, we also let $W'_i\df\pi_i(W_i)$ and let
  $K'_i\df(\pi_i\down_{W_i}^{-1})^*(K_i)$, that is, $K'_i$ is the unique $\cL$-structure on $W'_i$
  such that $\pi_i\down_{W_i}$ is an isomorphism from $K_i$ to $K'_i$ (recall that
  $\pi_i\down_{W_i}$ is the bijection obtained from $\pi_i$ by restricting its domain to $W_i$ and
  its codomain to $\pi_i(W_i)$). We also define
  \begin{align*}
    \rn{q}_x & \df \pp^{\dsct,\cN}_{A,V\times[n]}(x,\rn{y}).
  \end{align*}

  By~\eqref{eq:ppalpha*x} in Lemma~\ref{lem:equivf} applied to $\pi_i$, since $\pi_i\down_A =
  \id_A$, we have
  \begin{equation}\label{eq:px->qx}
    \begin{aligned}
      \int_{E_i} \rn{p}_x(z)(C(W_i,K_i))\ d\lambda^{(d)}(z)
      & =
      \int_{(\pi_i^*)^{-1}(E_i)}
      \rn{p}_x(\pi_i^*(w))(C(W_i,K_i))
      \ d\lambda^{(d)}(w)
      \\
      & =
      \int_{(\pi_i^*)^{-1}(E_i)}
      \pi_i^*(\rn{q}_x(w))(C(W_i,K_i))
      \ d\lambda^{(d)}(w)
      \\
      & =
      \int_{(\pi_i^*)^{-1}(E_i)}
      \rn{q}_x(w)(C(W'_i,K'_i))
      \ d\lambda^{(d)}(w).
    \end{aligned}
  \end{equation}
  
  We now let
  \begin{align*}
    \widehat{W} & \df \bigcup_{i=1}^n W'_i, &
    \widehat{\cK} & \df \{K\in\cK_{\widehat{W}} \mid \forall i\in[n], K\rest_{W'_i}=K'_i\},
  \end{align*}
  and note that
  \begin{align*}
    \EE\left[\prod_{i=1}^n \rn{Z}_{x,E_i,W_i,K_i}\right]
    & =
    \EE\left[
      \prod_{i=1}^n\left(
      \int_{E_i} \rn{p}_x(z)(C(W_i,K_i))\ d\lambda^{(d)}(z)
      \right)
      \right]
    \\
    & =
    \EE\left[
      \prod_{i=1}^n\left(
      \int_{(\pi_i^*)^{-1}(E_i)}
      \rn{q}_x(w)(C(W'_i,K'_i))
      \ d\lambda^{(d)}(w)
      \right)
      \right]
    \\
    & =
    \sum_{K\in\widehat{\cK}}
    \EE\left[
      \int_E\rn{q}_x(w)(C(\widehat{W},K))\ d\lambda^{(d)}(w)
      \right]
    \\
    & =
    \sum_{K\in\widehat{\cK}}
    \EE[\rn{q}_x[E]\rest_{\widehat{W}} = K]
  \end{align*}
  where the second equality follows from~\eqref{eq:px->qx} and the third equality follows from
  linearity of expectation.

  Let us decompose $x^{R_B}=(\widehat{x},{\lhd})$, where all order variables are collected in ${\lhd}$,
  let $V_B$ and $E_B$ be as in Lemma~\ref{lem:randomtypeLDT} using $V\times[n]$ in place of $V$, so
  that applying the lemma to the above yields
  \begin{align*}
    \EE\left[\prod_{i=1}^n \rn{Z}_{x,E_i,W_i,K_i}\right]
    & =
    \sum_{K\in\widehat{\cK}}
    \lim_{r\to 0}\frac{1}{\lambda(B_r(\widehat{x}))}\cdot
    \int_{B_r(\widehat{x})}\int_{E_B}
    \pp^{\ovlp,\cN}_{B,V_B}(x^B)(z,t,{\lhd})(C(\widehat{W},K))
    \ d\lambda^{(d)}(z)
    \ d\lambda(t)
  \end{align*}
  for almost every $x\in\cE_{A,k}^{(d)}$.

  Note now that the expression above only depends on $x$ via the overlapping $(B,V_B)$-type
  $\pp^{\ovlp,\cN}_{B,V_B}(x^B)$ and the projection $x^{R_B}=(\widehat{x},{\lhd})$. Since
  Lemma~\ref{lem:equivf} says that the overlapping $(B,V_B)$-type $\pp^{\ovlp,\cN}_{B,V_B}(x^B)$ is
  completely determined by the overlapping $(B,V)$-type $\pp^{\ovlp,\cN}_{B,V}(x^B)$ (and there are
  only countably many joint moments of $(\rn{Z}_{x,E,W,K})_{E,W,K}$), it follows that there exists a
  Markov kernel
  \begin{align*}
    \rho'\colon
    \im(\pp^{\ovlp,\cN}_{B,V})\times\prod_{C\in R_B} ([0,1]\times\cO_C^d)
    \to
    \cP(\cS_{A,V}^{(d),\dsct})
  \end{align*}
  such that~\eqref{eq:weakamalgstep} holds for $\lambda^{(d)}$-almost every $x\in\cE_{A,k}^{(d)}$
  with $\rho'$ in place of $\rho$.

  One final technicality is that $\rho'$ is not defined when its first argument is not in
  $\im(\pp^{\ovlp,\cN}_{B,V})$ and we need to define the Markov kernel $\rho$ measurably. This can
  be done as follows: since by Proposition~\ref{prop:toptypes}\ref{prop:toptypes:pp}
  $\pp^{\ovlp,\cN}_{B,V}$ is measurable, its image is universally measurable (as it is an analytic
  set); in particular, it is measurable with respect to the completion of the pushforward measure
  $\theta\df(\pp^{\ovlp,\cN}_{B,V})_*(\lambda^{(d)})$, so we can find a Borel set
  $D\subseteq\im(\pp^{\ovlp,\cN}_{B,V})$ that differs from $\im(\pp^{\ovlp,\cN}_{B,V})$ only by a
  zero $\theta$-measure set. We can then let $\rho$ act as $\rho'$ in $D\times\prod_{C\in
    R_B}([0,1]\times\cO_C^d)$ and to be constant equal to any fixed distribution in its complement
  and~\eqref{eq:weakamalgstep} follows.
\end{proof}

Before we proceed, let us recall the following standard lemma from probability theory:

\begin{lemma}\label{lem:mk}
  Let $\Omega_X$, $\Omega_Y$ and $\Omega_Z$ be Borel spaces, let $\rn{X}$, $\rn{Y}$ and $\rn{Z}$ be
  random variables with values in $\Omega_X$, $\Omega_Y$ and $\Omega_Z$, respectively. Then the
  following are equivalent:
  \begin{enumerate}
  \item\label{lem:mk:ci} $\rn{X}$ is conditionally independent from $\rn{Y}$ given $\rn{Z}$.
  \item\label{lem:mk:product} There exists a Markov kernel $\rho\colon\Omega_Z\to\cP(\Omega_X)$ such
    that if $\rn{\widetilde{X}}$ is picked at random according to $\rho(\rn{Z})$ conditionally on
    $\rn{Z}$ and conditionally independently from $\rn{Y}$, then
    $(\rn{\widetilde{X}},\rn{Y},\rn{Z})\sim(\rn{X},\rn{Y},\rn{Z})$.
  \end{enumerate}
\end{lemma}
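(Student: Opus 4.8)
The plan is to prove the two implications separately; \ref{lem:mk:product}$\implies$\ref{lem:mk:ci} is essentially immediate, and \ref{lem:mk:ci}$\implies$\ref{lem:mk:product} carries all the content. For the easy direction, I would take $\rho$ and a coupling $(\rn{\widetilde{X}},\rn{Y},\rn{Z})\sim(\rn{X},\rn{Y},\rn{Z})$ as in~\ref{lem:mk:product} and note that, by construction, $\rn{\widetilde{X}}$ is obtained from $(\rn{Y},\rn{Z})$ by sampling from $\rho(\rn{Z})$ conditionally independently of $\rn{Y}$; hence $\rn{\widetilde{X}}$ is conditionally independent from $\rn{Y}$ given $\rn{Z}$. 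Since conditional independence depends only on the joint law, the same holds for $(\rn{X},\rn{Y},\rn{Z})$, giving~\ref{lem:mk:ci}.

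For the main direction, assume $\rn{X}$ is conditionally independent from $\rn{Y}$ given $\rn{Z}$. Since $\Omega_X$ and $\Omega_Z$ are Borel spaces, I would apply the Disintegration Theorem (Theorem~\ref{thm:DTproj}) to the law of $(\rn{X},\rn{Z})$ on $\Omega_X\times\Omega_Z$ with the projection onto $\Omega_Z$ to obtain a Markov kernel $\rho\colon\Omega_Z\to\cP(\Omega_X)$ that is a regular version of the conditional distribution of $\rn{X}$ given $\rn{Z}$, so that
\begin{align*}
  \EE[f(\rn{X})\mid\rn{Z}] & = \int_{\Omega_X} f\ d\rho(\rn{Z})
\end{align*}
almost surely for every bounded measurable $f\colon\Omega_X\to\RR$. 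I would then let $\rn{\widetilde{X}}$ be sampled from $\rho(\rn{Z})$ conditionally on $\rn{Z}$ and conditionally independently of $\rn{Y}$; such a variable exists by a standard product construction (formally, one realizes the law of $(\rn{\widetilde{X}},\rn{Y},\rn{Z})$ via~\eqref{eq:rhomu} applied to the law of $(\rn{Y},\rn{Z})$ and the kernel $(y,z)\mapsto\rho(z)$).

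It then remains to verify $(\rn{\widetilde{X}},\rn{Y},\rn{Z})\sim(\rn{X},\rn{Y},\rn{Z})$. As the three spaces are Borel, two probability measures on their product agree once they agree on measurable rectangles, so it suffices to check $\EE[f(\rn{\widetilde{X}})g(\rn{Y})h(\rn{Z})]=\EE[f(\rn{X})g(\rn{Y})h(\rn{Z})]$ for all bounded measurable $f,g,h$. For the left-hand side, conditioning on $(\rn{Y},\rn{Z})$ and using that $\rn{\widetilde{X}}$ is drawn from $\rho(\rn{Z})$ conditionally independently of $\rn{Y}$ gives $\EE[f(\rn{\widetilde{X}})\mid\rn{Y},\rn{Z}]=\int f\ d\rho(\rn{Z})=\EE[f(\rn{X})\mid\rn{Z}]$, so the left-hand side equals $\EE[\EE[f(\rn{X})\mid\rn{Z}]\,g(\rn{Y})\,h(\rn{Z})]$. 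For the right-hand side, conditioning on $\rn{Z}$ and invoking the hypothesis gives $\EE[f(\rn{X})g(\rn{Y})\mid\rn{Z}]=\EE[f(\rn{X})\mid\rn{Z}]\cdot\EE[g(\rn{Y})\mid\rn{Z}]$, and multiplying by $h(\rn{Z})$ and using the tower property shows that the right-hand side is also $\EE[\EE[f(\rn{X})\mid\rn{Z}]\,g(\rn{Y})\,h(\rn{Z})]$. This lemma is standard, so no step is a genuine obstacle; the only places the Borel hypothesis is used are the existence of the regular conditional distribution $\rho$ (supplied by the Disintegration Theorem) and the reduction of equality of laws to agreement on rectangles.
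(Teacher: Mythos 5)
Your proposal is correct and follows essentially the same route as the paper: the easy direction is dispatched in one line, and for the main direction both you and the paper obtain $\rho$ as a regular conditional distribution of $\rn{X}$ given $\rn{Z}$ via the Disintegration Theorem (Theorem~\ref{thm:DTproj}) and then verify $(\rn{\widetilde{X}},\rn{Y},\rn{Z})\sim(\rn{X},\rn{Y},\rn{Z})$ using conditional independence on both sides (the paper phrases this as an almost-sure equality of conditional probabilities $\PP[\rn{X}\in A\given\rn{Y},\rn{Z}]=\PP[\rn{\widetilde{X}}\in A\given\rn{Y},\rn{Z}]$, while you check expectations of products of bounded measurable functions, which is the same computation).
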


\begin{proof}
  The implication~\ref{lem:mk:product}$\implies$\ref{lem:mk:ci} is obvious as $\rn{\widetilde{X}}$
  and $\rn{Y}$ are conditionally independent given $\rn{Z}$.

  \medskip

  For the implication~\ref{lem:mk:ci}$\implies$\ref{lem:mk:product}, by the Disintegration Theorem,
  Theorem~\ref{thm:DTproj}, there exists a Markov kernel $\rho\colon\Omega_Z\to\cP(\Omega_X)$ such
  that for all measurable $A\subseteq\Omega_X$ and measurable $C\subseteq\Omega_Z$, we have
  \begin{align*}
    \PP[\rn{X}\in A, \rn{Z}\in C]
    & =
    \int_C \rho(z)(A)\ d\mu_Z(z),
  \end{align*}
  where $\mu_Z\in\cP(\Omega_Z)$ is the distribution of $\rn{Z}$. Thus, with probability $1$ we have
  \begin{align*}
    \PP[\rn{X}\in A\given\rn{Y},\rn{Z}]
    & =
    \PP[\rn{X}\in A\given\rn{Z}]
    =
    \rho(\rn{Z})(A)
    =
    \PP[\rn{\widetilde{X}}\in A\given\rn{Z}]
    =
    \PP[\rn{\widetilde{X}}\in A\given\rn{Y},\rn{Z}],
  \end{align*}
  where the first equality follows since $\rn{X}$ is conditionally independent from $\rn{Y}$ given
  $\rn{Z}$ and the last equality follows since $\rn{\widetilde{X}}$ is conditionally independent
  from $\rn{Y}$ given $\rn{Z}$. Thus we get
  $(\rn{\widetilde{X}},\rn{Y},\rn{Z})\sim(\rn{X},\rn{Y},\rn{Z})$ as desired.
\end{proof}

We can now prove the weak amalgamation property of dissociated and overlapping types.

\begin{proposition}\label{prop:weakamalgdsctovlp}
  Let $\cN$ be a $d$-Euclidean structure in $\cL$ over $\Omega=(X,\mu)$, let $A$ be a finite set and
  let $V$ be a countably infinite set disjoint from $A$. Let also $k\in\{0,\ldots,\lvert
  A\rvert-1\}$ and let $\rn{y}$ be picked at random in $\prod_{B\in\binom{A}{>k}} (X\times\cO_A^d)$
  according to $\bigotimes_{B\in\binom{A}{>k}}(\mu\otimes\bigotimes_{i=1}^d\nu_B)$.

  Then there exists a Markov kernel
  \begin{align*}
    \rho\colon\prod_{B\in r(A,k)}\cS_{B,V}^{(d),\ovlp}\to\cP(\cS_{A,V}^{(d),\dsct})
  \end{align*}
  such that for $\mu^{(d)}$-almost every $x\in\cE_{A,k}$, we have
  \begin{align}\label{eq:weakamalgdsctovlp}
    \pp^{\dsct,\cN}_{A,V}(x,\rn{y})
    & \sim
    \rho\biggl(
    \Bigl(\pp^{\ovlp,\cN}_{B,V}\bigl(\iota_{B,A}^*(x)\bigr)
    \mathrel{\Big\vert}
    B\in r(A,k)\Bigr)
    \biggr),
  \end{align}
  (recall that $\iota_{B,A}\colon B\to A$ is the inclusion map).

  In particular, if $\rn{x}$ is picked at random in $\cE_{A,k}^{(d)}$ according to
  $\mu^{(d)}$ independently from $\rn{y}$, then $\pp^{\dsct,\cN}_{A,V}(\rn{x},\rn{y})$ is
  conditionally independent from $\rn{x}$ given
  $(\pp^{\ovlp,\cN}_{B,V}(\iota_{B,A}(\rn{x}))\mid B\in r(A,k))$.
\end{proposition}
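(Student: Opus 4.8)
The plan is to reduce Proposition~\ref{prop:weakamalgdsctovlp} to the single-set weak amalgamation of Lemma~\ref{lem:weakamalgstep}, combined with the equivariance of overlapping types. First I would observe that the ``in particular'' clause and the existence of $\rho$ are equivalent: by Proposition~\ref{prop:toptypes} the spaces $\cS_{A,V}^{(d),\dsct}$ and $\cS_{B,V}^{(d),\ovlp}$ are standard Borel and the maps $\pp^{\dsct,\cN}_{A,V}$, $\pp^{\ovlp,\cN}_{B,V}$ and $\iota_{B,A}^*$ are measurable, so ``random type'' statements make sense; and applying Lemma~\ref{lem:mk} with $\rn{X}=\pp^{\dsct,\cN}_{A,V}(\rn{x},\rn{y})$, $\rn{Y}=\rn{x}$ and $\rn{Z}=(\pp^{\ovlp,\cN}_{B,V}(\iota_{B,A}^*(\rn{x})))_{B\in r(A,k)}$ (a function of $\rn{Y}$), the asserted conditional independence is equivalent to the conditional law of $\rn{X}$ given $\rn{Y}$ being measurable with respect to $\sigma(\rn{Z})$; and since in that setting $\rn{y}$ is independent of $\rn{x}$, this is equivalent to the existence of $\rho$. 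So it suffices to show that the conditional distribution $G(\rn{x})\in\cP(\cS_{A,V}^{(d),\dsct})$ of $\pp^{\dsct,\cN}_{A,V}(\rn{x},\rn{y})$ given $\rn{x}$ --- regarded as a $\mu^{(d)}$-a.e.\ defined measurable function of $\rn{x}\in\cE_{A,k}^{(d)}$ --- is measurable with respect to $\sigma\bigl((\pp^{\ovlp,\cN}_{B,V}(\iota_{B,A}^*(\rn{x})))_{B\in r(A,k)}\bigr)$.

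Next I would cut the index set down to the $k$-subsets. For $B'\subseteq B$ with inclusion $\iota\colon B'\to B$, Lemma~\ref{lem:equivpp} (via the contravariant map $\iota^{*}$ on overlapping-type spaces of Definition~\ref{def:contravartypes}) gives $\pp^{\ovlp,\cN}_{B',V}(\iota_{B',A}^{*}x)=\iota^{*}(\pp^{\ovlp,\cN}_{B,V}(\iota_{B,A}^{*}x))$, so the overlapping $B'$-type is a measurable function of the overlapping $B$-type; since $|A|>k$, every $B'\in r(A,k)$ lies in some $B\in\binom{A}{k}$, whence $\sigma\bigl((\pp^{\ovlp,\cN}_{B,V}(\iota^{*}\rn{x}))_{B\in r(A,k)}\bigr)=\sigma\bigl((\pp^{\ovlp,\cN}_{B,V}(\iota^{*}\rn{x}))_{B\in\binom{A}{k}}\bigr)$. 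It therefore suffices to show that $G(\rn{x})$ is measurable with respect to the latter. (The case $k=0$ is trivial, $\cE_{A,0}^{(d)}$ being a one-point space.)

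Now I would argue by induction over an enumeration $\binom{A}{k}=\{B_1,\dots,B_m\}$. The base ingredient is Lemma~\ref{lem:weakamalgstep}: writing $\rn{x}^{S}$ for the block of (mutually independent) coordinates of $\rn{x}$ indexed by $S\subseteq r(A,k)$ and $R_{B_j}=r(A,k)\setminus r(B_j)$, that lemma gives that $G(\rn{x})$ is a measurable function of $(\pp^{\ovlp,\cN}_{B_j,V}(\rn{x}^{r(B_j)}),\rn{x}^{R_{B_j}})$; equivalently, $\pp^{\dsct,\cN}_{A,V}(\rn{x},\rn{y})$ is conditionally independent from $\rn{x}^{r(B_j)}$ given $(\pp^{\ovlp,\cN}_{B_j,V}(\rn{x}^{r(B_j)}),\rn{x}^{R_{B_j}})$. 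Incorporating one $B_j$ at a time, I would combine the running functional representation of $G(\rn{x})$ with the $B_j$-instance by taking the conditional expectation of $G(\rn{x})$ with respect to the $\sigma$-algebra accumulated at the previous step, substituting the $B_j$-representation, and then applying Fubini, the independence of the coordinate blocks, and the structural fact --- already exploited inside Lemmas~\ref{lem:randomtypeLDT} and~\ref{lem:weakamalgstep}, through the equivariance of $\pp^{\ovlp,\cN}$ in the enlarged ground set $V\cup(A\setminus B_j)$ (Lemma~\ref{lem:equivf}) --- that the block $\rn{x}^{R_{B_j}}$ affects $G(\rn{x})$ only by being substituted, together with fresh randomness, into the overlapping $B_j$-type; this lets me trade the coordinates of that block that are still ``plain'' for overlapping-type data. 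Since $\bigcap_{j}R_{B_j}=r(A,k)\setminus\bigcup_{j}r(B_j)=\varnothing$ (the intersection of all $k$-subsets of $A$ is empty when $|A|>k\ge1$), after processing all $B_j$ no plain coordinate survives and $G(\rn{x})$ is measurable in the tuple of overlapping $k$-types. Finally $\rho$ is obtained from $G$ on the universally measurable (analytic) image of $\rn{x}\mapsto(\pp^{\ovlp,\cN}_{B,V}(\iota^{*}\rn{x}))_{B}$ and extended arbitrarily but measurably off a null set, exactly as at the end of the proof of Lemma~\ref{lem:weakamalgstep}, yielding~\eqref{eq:weakamalgdsctovlp}.

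The main obstacle, as expected, is the combination step in the third paragraph. A soft ``intersection of $\sigma$-algebras'' argument is not sufficient: when two blocks $r(B_i)$ and $r(B_j)$ overlap they share the coordinate block indexed by $r(B_i\cap B_j)$, and for independent uniform $\rn{s},\rn{a},\rn{b}$ and $\Phi_i$ addition modulo $1$ the quantity $\rn{s}+\rn{a}+\rn{b}$ is measurable with respect to both $\sigma(\Phi_1(\rn{s},\rn{a}),\rn{b})$ and $\sigma(\Phi_2(\rn{s},\rn{b}),\rn{a})$ yet not with respect to $\sigma(\Phi_1,\Phi_2)$. Excluding this phenomenon for the dissociated type law requires the specific ``evaluation'' structure isolated in Lemma~\ref{lem:randomtypeLDT} --- that a block of mixed-index coordinates enters $G(\rn{x})$ only by being plugged into an overlapping type --- rather than any generic independence manipulation, and the bookkeeping of which coordinates have already been absorbed, and which overlapping type absorbs the shared block at a given step, is where the care is needed.
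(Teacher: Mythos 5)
Your setup matches the paper's: the equivalence between the kernel statement and the conditional independence statement via Lemma~\ref{lem:mk}, the reduction to the $k$-element subsets through equivariance of overlapping types, and the use of Lemma~\ref{lem:weakamalgstep} as the single-set input are exactly the ingredients the paper works with. The problem is that your third paragraph, which is where the entire content of the proposition lives, is never actually carried out: you describe an intention (``take the conditional expectation \ldots substitute the $B_j$-representation \ldots trade the plain coordinates of that block for overlapping-type data'') without stating a precise intermediate claim or proving one, and your closing paragraph concedes the point --- your mod-$1$ example shows that the abstract pattern (each block's representation from Lemma~\ref{lem:weakamalgstep}, plus independence of coordinates) is genuinely insufficient, and you then say only that the resolution ``requires the specific evaluation structure isolated in Lemma~\ref{lem:randomtypeLDT}'' and that ``the bookkeeping \ldots is where the care is needed''. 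That is an announcement of the missing idea, not a proof of it; in particular, Lemma~\ref{lem:randomtypeLDT} concerns a single $B\subsetneq A$ (and in the paper it is used only inside the proof of Lemma~\ref{lem:weakamalgstep}), so invoking it does not by itself say anything about how two overlapping $k$-sets, which share the coordinates indexed by $r(B_i\cap B_j)$, interact --- which is precisely where your own example locates the danger. As written, the proposal is a plan whose crucial step is absent.

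For comparison, the paper does not attempt to show that the conditional law of $\pp^{\dsct,\cN}_{A,V}(\rn{x},\rn{y})$ given $\rn{x}$ is measurable in the tuple of overlapping types by substituting one functional representation into another. Instead it proves the factorization of conditional probabilities directly on product events: enumerate $\binom{A}{k}=\{B_1,\ldots,B_m\}$, partition $r(A,k)$ into the disjoint blocks $W_n\df r(B_n)\setminus\bigcup_{i<n}W_i$, so that every coordinate is charged to exactly one $B_n$ and $W_i\subseteq R_n\df r(A,k)\setminus r(B_n)$ for $i>n$, and then peel off one block per step, using at step $n$ only Lemma~\ref{lem:weakamalgstep} for $B_n$ together with the observation that $(\rn{x}_{W_n},\rn{q}_{B_n})$ is a function of $\rn{x}^{B_n}$, which is independent of $\rn{x}_{R_n}$; iterating yields $\PP[\rn{p}\in P\land\rn{x}\in E_{r(A,k)}\given\rn{q}]=\PP[\rn{p}\in P\given\rn{q}]\cdot\prod_{n}\PP[\rn{x}_{W_n}\in E_{W_n}\given\rn{q}_{B_n}]$, and applying this with $P$ and with $P=\cS_{A,V}^{(d),\dsct}$ gives the conditional independence. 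This disjoint-partition bookkeeping --- each coordinate handled exactly once, and only ever against the single $k$-set to which it was assigned, so that no shared block ever has to be ``absorbed'' into two different types --- is the concrete device your sketch lacks, and without it (or a worked-out substitute) the proposal does not establish the proposition.
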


\begin{proof}
  First, let us show how~\eqref{eq:weakamalgdsctovlp} follows from the conditional independence
  statement and Lemma~\ref{lem:mk}.

  Let $\rn{p}\df\pp^{\dsct,\cN}_{A,V}(\rn{x},\rn{y})$ and let $\rn{q}$ be the random element of
  $\prod_{B\in\binom{A}{k}}\cS_{B,V}^{(d),\ovlp}$ given by
  \begin{align*}
    \rn{q}_B & \df \pp^{\ovlp,\cN}_{B,V}\bigl(\iota_{B,A}(\rn{x})\bigr)
    \qquad \left(B\in \binom{A}{k}\right).
  \end{align*}

  Since $\rn{p}$ is conditionally independent from $\rn{x}$ given $\rn{q}$, by
  Lemma~\ref{lem:mk:ci}, there exists a Markov kernel
  \begin{align*}
    \rho\colon\prod_{B\in\binom{A}{k}}\cS_{B,V}^{(d),\ovlp}\to\cP(\cS_{A,V}^{(d),\dsct})
  \end{align*}
  such that if $\rn{\widetilde{p}}$ is picked at random according to $\rho(\rn{q})$ conditionally on
  $\rn{q}$ and conditionally independently from $\rn{x}$, then
  $(\rn{\widetilde{p}},\rn{x},\rn{q})\sim(\rn{p},\rn{x},\rn{q})$. In particular this implies that
  with probability $1$, for every measurable $P\subseteq\cS_{A,V}^{(d),\dsct}$ we have
  \begin{align*}
    \PP[\pp^{\dsct,\cN}_{A,V}(\rn{x},\rn{y})\in A\given\rn{x}]
    & =
    \PP[\rn{p}\in P\given\rn{x}]
    =
    \PP[\rn{\widetilde{p}}\in P\given\rn{x}]
    \\
    & =
    \rho(\rn{q})(P)
    =
    \rho\left(
    \left(\pp^{\ovlp,\cN}_{B,V}\bigl(\iota_{B,A}(\rn{x})\bigr)\;\middle\vert\; B\in\binom{A}{k}\right)
    \right)(P),
  \end{align*}
  from which~\eqref{eq:weakamalgdsctovlp} follows.

  \medskip

  Let us now prove that $\rn{p}$ is conditionally independent from $\rn{x}$ given $\rn{q}$.

  For this, it suffices to show that for all measurable $P\subseteq\cS_{A,V}^{(d),\dsct}$ and all
  measurable $E_C\subseteq X\times\cO_C^d$ ($C\in r(A,k)$), we have
  \begin{align*}
    \PP[\rn{p}\in P\land\rn{x}\in E_{r(A,k)} \given \rn{q}]
    & =
    \PP[\rn{p}\in P\given\rn{q}]\cdot\PP[\rn{x}\in E_{r(A,k)}\given\rn{q}],
  \end{align*}
  where $E_{r(A,k)}\df\prod_{C\in r(A,k)} E_C$.

  For every $W\subseteq r(A,k)$, let
  \begin{align*}
    E_W & \df \prod_{C\in W} E_C
  \end{align*}
  (note that this is compatible with the notation $E_{r(A,k)}$) and let $\rn{x}_W$ be the projection
  of $\rn{x}$ onto the coordinates indexed by sets in $W$.

  Let us enumerate the elements of $\binom{A}{k}$ as $B_1,\ldots,B_m$ where $m\df\binom{\lvert
    A\rvert}{k}$ and for each $n\in[m]$, define inductively
  \begin{align*}
    W_n & \df \left.r(B_n)\middle\backslash\bigcup_{i=1}^{n-1} W_i\right..
  \end{align*}
  Note that the $W_n$ form a partition of $r(A,k)$. We also let $R_n\df r(A,k)\setminus r(B_n)$ for
  every $n\in[m]$ and note that for every $i\in\{n+1,\ldots,m\}$, we have $W_i\subseteq R_n$.

  Note also that for every $n\in[m]$, we have
  \begin{align*}
    & \!\!\!\!\!\!
    \PP[\rn{p}\in P\land\forall i\in\{n,\ldots,m\},\rn{x}_{W_i}\in E_{W_i}
      \given\rn{q}]
    \\
    & =
    \EE\bigl[
      \PP[
        \rn{p}\in P\land\rn{x}_{W_n}\in E_{W_n}
        \given
        \rn{q}_{B_n},\rn{x}_{R_n}
      ]
      \cdot
      \One[\forall i\in\{n+1,\ldots,m\},\rn{x}_{W_i}\in E_{W_i}]
      \given[\big]\rn{q}
      \bigr]
    \\
    & =
    \EE\bigl[
      \PP[
        \rn{p}\in P
        \given
        \rn{q}_{B_n},\rn{x}_{R_n}
      ]
      \cdot
      \PP[
        \rn{x}_{W_n}\in E_{W_n}
        \given
        \rn{q}_{B_n},\rn{x}_{R_n}
      ]
      \cdot
      \One[\forall i\in\{n+1,\ldots,m\},\rn{x}_{W_i}\in E_{W_i}]
      \given[\big]\rn{q}
      \bigr]
    \\
    & =
    \EE\bigl[
      \PP[
        \rn{p}\in P\land\forall i\in\{n+1,\ldots,m\},\rn{x}_{W_i}\in E_{W_i}
        \given
        \rn{q}_{B_n},\rn{x}_{R_n}
      ]
      \cdot
      \PP[
        \rn{x}_{W_n}\in E_{W_n}
        \given
        \rn{q}_{B_n}
      ]
      \given[\big]\rn{q}
      \bigr]
    \\
    & =
    \PP[\rn{p}\in P\land\forall i\in\{n+1,\ldots,m\},\rn{x}_{W_i}\in E_{W_i}
      \given\rn{q}]
    \cdot
    \PP[\rn{x}_{W_n}\given\rn{q}_{B_n}],
  \end{align*}
  where the first equality follows since for each $i\in\{n+1,\ldots,m\}$,
  $\rn{x}_{W_i}$ is $\rn{x}_{R_n}$-measurable (as $W_i\subseteq R_n$), the second equality follows
  since Lemma~\ref{lem:weakamalgstep} says that $\rn{p}$ is conditionally independent from
  $\rn{x}^{B_n}$ (hence also from $\rn{x}_{W_n}$) given $(\rn{q}_{B_n},\rn{x}_{R_n})$, the third
  equality follows since for each $i\in\{n+1,\ldots,m\}$, $\rn{x}_{W_i}$ is
  $\rn{x}_{R_n}$-measurable and since $\rn{x}_{W_n}$ is conditionally independent from
  $\rn{x}_{R_n}$ given $\rn{q}_{B_n}$ (this is because both $\rn{x}_{W_n}$ and $\rn{q}_{B_n}$ are
  $\rn{x}^{B_n}$-measurable and $\rn{x}^{B_n}$ is independent from $\rn{x}_{R_n}$).

  With a simple inductive argument, we conclude that
  \begin{align*}
    \PP[\rn{p}\in P\land\forall i\in[m], \rn{x}_{W_i}\in E_{W_i}\given\rn{q}]
    & =
    \PP[\rn{p}\in P\given\rn{q}]\cdot
    \prod_{n=1}^m \PP[\rn{x}_{W_n}\given\rn{q}_{B_n}]
  \end{align*}
  Applying the above with both $P=P$ and with $P=\cS_{A,V}^{(d),\dsct}$ then yields
  \begin{align*}
    \PP[\rn{p}\in P\land\forall i\in[m], \rn{x}_{W_i}\in E_{W_i}\given\rn{q}]
    & =
    \PP[\rn{p}\in P\given\rn{q}]\cdot
    \PP[\forall i\in[m], \rn{x}_{W_i}\in E_{W_i}\given\rn{q}]
    \\
    & =
    \PP[\rn{p}\in P\given\rn{q}]\cdot\PP[\rn{x}\in E_{r(A)}\given\rn{q}],
  \end{align*}
  concluding the proof.
\end{proof}

\section{Measure theory interlude}
\label{sec:meas}

Recall that our goal is to provide a way of sampling $M^\cN_V(\rn{x})\sim\mu^\cN_V$ by inductively
sampling types of larger and larger finite sets instead of sampling points in $\cE_V^{(d)}$. This
means that we would have liked to have a version of Proposition~\ref{prop:weakamalgdsctovlp} that
either allowed us to correctly sample the dissociated $A$-type given the dissociated $B$-types for
all $B\in\binom{A}{\lvert A\rvert-1}$, or that allowed us to sample the overlapping $A$-type given
the overlapping $B$-types for all $B\in\binom{A}{\lvert A\rvert-1}$. However,
Proposition~\ref{prop:weakamalgdsctovlp} is an awkward in-between: to correctly sample the
dissociated $A$-type, we need to know the overlapping $B$-types for all $B\in\binom{A}{\lvert
  A\rvert-1}$, which means that we cannot use Proposition~\ref{prop:weakamalgdsctovlp} inductively.

Instead, as mentioned in the introduction, we will define the notion of ``amalgamating type'' which
is something between the dissociated and overlapping type and that has the correct amalgamation
property: to correctly sample the amalgamating $A$-type, it suffices to know the amalgamating
$B$-types for all $B\in\binom{A}{\lvert A\rvert-1}$.

Intuitively, we want the amalgamating $A$-type $p$ to know the dissociated $A$-type and know a
recipe that is able to retrieve the overlapping $A$-type if it is given a point $x\in\cE_{A,\lvert
  A\rvert-1}^{(d)}$ whose amalgamating $B$-types are consistent with $p$. However, it is important
that the amalgamating $A$-type does not store too much extra information; namely, it is obvious that
the overlapping $A$-type has the property above (as $\pi^{\ovlp}_{A,V}$ recovers the dissociated
$A$-type), but it has too much information as it fails to amalgamate.

To make sense out of this, the definition of amalgamating $A$-types needs to be inductive and must
itself require a weak amalgamation property: to correctly sample the dissociated $A$-type, it
suffices to know the amalgamating $B$-types for all $B\in\binom{A}{\lvert A\rvert-1}$.

Let us assume for a second that we have an even stronger property: assume that if
$x,x'\in\cE_{A,\lvert A\rvert-1}^{(d)}$ have the same amalgamating $B$-types for all
$B\in\binom{A}{\lvert A\rvert-1}$, then there exists a measure-preserving function $f_{x,x'}\colon
X\times\cO_A^d\to X\times\cO_A^d$ such that
\begin{align*}
  \pp^{\dsct,\cN}_{A,V}(x,y)
  & =
  \pp^{\dsct,\cN}_{A,V}(x', f_{x,x'}(y))
\end{align*}
for almost every $y\in X\times\cO_A^d$. Intuitively, this means that we would like the amalgamating
$A$-type of some $(x,y)\in\cE_{A,\lvert A\rvert-1}^{(d)}\times (X\times\cO_A^d)$ to be
$\pp^{\dsct,\cN}_{A,V}(x,y)$ along with the function
\begin{align*}
  x'
  \mapsto
  \pp^{\ovlp,\cN}(x', f_{x,x'}(y))
\end{align*}
defined only when $x'\in\cE_{A,\lvert A\rvert-1}^{(d)}$ has the same amalgamating $B$-types for all
$B\in\binom{A}{\lvert A\rvert-1}$.

Here is where the measure-theoretic technicalities start creeping in:
\begin{enumerate}[label={\arabic*.}]
\item To ensure consistency of the definition, we would like the functions above to satisfy a
  cocycle condition of the form $f_{x,x'}\comp f_{x',x''} = f_{x,x''}$.
\item The weak amalgamation only ensures that the existence of measure-preserving functions
  $f_{x,x'}\colon X\times\cO_A^d\to (X\times\cO_A^d)\times(X\times\cO_A^d)$ such that
  \begin{align*}
    \pp^{\dsct,\cN}_{A,V}(x,y)
    & =
    \pp^{\dsct,\cN}_{A,V}(x', f_{x,x'}(y,y'))
  \end{align*}
  for almost every $(y,y')\in (X\times\cO_A^d)\times(X\times\cO_A^d)$, that is, we might actually need
  some dummy variable $y'$ to properly change one function into the other.

  In particular, this makes the cocycle condition of the previous item even more complicated.
\item When we eventually define the space $\cS_{A,V}^{(d),\amlg}$ of amalgamating $A$-types, we will
  also define a function $\pp^{\amlg,\cN}_{A,V}\colon\cE_A^{(d)}\to\cS_{A,V}^{(d),\amlg}$ that maps
  a point to its amalgamating $A$-type, a function
  $\pi^{\amlg}_{A,V}\colon\cS_{A,V}^{(d),\amlg}\to\cS_{A,V}^{(d),\dsct}$ that retrieves the dissociated
  $A$-type from the amalgamating $A$-type and a function $\qq^{\amlg,\cN}_{A,V}\colon\cE_{A,\lvert
    A\rvert-1}^{(d)}\times\cS_{A,V}^{(d),\amlg}\to\cS_{A,V}^{(d),\ovlp}$ that retrieves the
  overlapping $A$-type when given lower pointwise information in the sense
  \begin{align*}
    \pp^{\ovlp,\cN}_{A,V}(x,y) & = \qq^{\amlg,\cN}_{A,V}(x,\pp^{\amlg,\cN}_{A,V}(x,y)) \qquad
    (x\in\cE_{A,\lvert A\rvert-1}^{(d)}, y\in X\times\cO_A^d).
  \end{align*}
  Then we want to equip $\cS_{A,V}^{(d),\amlg}$ with a $\sigma$-algebra that turns it into a Borel
  space such that the functions $\pp^{\amlg,\cN}_{A,V}$, $\pi^{\amlg}_{A,V}$ and $\qq^{\amlg,\cN}_{A,V}$
  are measurable.

  In fact, we will also want to include the $\pp^{\amlg,\cN}_{A,V}$ and $\pi^{\amlg}_{A,V}$ in the
  commutative diagram of Lemma~\ref{lem:equivpp}. This means that will also need to define
  measurable contra-variant maps $\alpha^*\colon\cS^{(d),\amlg}_{B,V}\to\cS^{(d),\amlg}_{A,V}$ for
  all injections $\alpha\colon A\to B$, which in particular means that the amalgamating $A$-type
  must know all amalgamating $C$-types for $C\subseteq A$.
\end{enumerate}

To avoid all technicalities of having to make sense of a Polish topology over a space of functions
$f_{x,x'}$ satisfying a complicated cocycle condition, we will instead define the space of
amalgamating $A$-types $\cS_{A,V}^{(d),\amlg}$ in a more pragmatic way: intuitively, since
$\cS_{A,V}^{(d),\ovlp}$ is a Borel space, any sort of information that the amalgamating $A$-type
needs to know about the overlapping $A$-type that is not already present in the dissociated $A$-type
can be encoded by a point of $[0,1]$, so we can define inductively
\begin{align*}
  \cS_{A,V}^{(d),\amlg}
  & \df
  \left(\prod_{B\in r(A,\lvert A\rvert-1)}\cS_{B,V}^{(d),\amlg}\right)\times\cS_{A,V}^{(d),\dsct}\times[0,1].
\end{align*}

In this way, $\cS_{A,V}^{(d),\amlg}$ has the natural product topology (which is Polish), we can
retrieve all amalgamating $B$-types for $B\in r(A,\lvert A\rvert-1)$ measurably by simply projecting
to the first coordinate and we can retrieve the dissociated $A$-type by projecting to the second
coordinate (this gives us $\pi^{\amlg}_{A,V}$). We only need to be careful in defining the functions
$\pp^{\amlg,\cN}_{A,V}$ and $\qq^{\amlg,\cN}_{A,V}$ so that
\begin{align*}
  \pp^{\ovlp,\cN}_{A,V}(x,y) & = \qq^{\amlg,\cN}_{A,V}(x,\pp^{\amlg,\cN}_{A,V}(x,y)),
  \\
  \pp^{\dsct,\cN}_{A,V}(x,y) & = \pi^{\amlg}_{A,V}(\pp^{\amlg,\cN}_{A,V}(x,y))
\end{align*}
for almost every $(x,y)\in\cE_{A,\lvert A\rvert-1}^{(d)}\times(X\times\cO_A^d)$ and so that weak
amalgamation holds.

The product definition of $\cS_{A,V}^{(d),\amlg}$ is also very convenient to understand how
$\pp^{\amlg,\cN}_{A,V}$ has to be defined in order for the (strong) amalgamation property to
hold. To elaborate, fix $x\in\cE_{A,\lvert A\rvert-1}^{(d)}$, let $\rn{y}$ be picked at random in
$X\times\cO_A^d$ according to $\mu\otimes\bigotimes_{i=1}^d\nu_A$ and write
\begin{align*}
  \pp^{\amlg,\cN}_{A,V}(x,\rn{y}) & = (q_x, \rn{p}_x, \rn{t}_x),
\end{align*}
where $q_x$ contains the lower amalgamating types (which is not random as it is completely
determined by $x$), $\rn{p}_x$ is the dissociated $A$-type and $\rn{t}_x$ is the extra
information. Amalgamation then reduces to saying that if $x$ and $x'$ satisfy $q_x=q_{x'}$, then
$(\rn{p}_x,\rn{t}_x)\sim(\rn{p}_{x'},\rn{t}_{x'})$ (and weak amalgamation amounts only to the
marginal conclusion $\rn{p}_x\sim\rn{p}_{x'}$).

Since we have the freedom of choosing how the extra information is encoded by points of $[0,1]$, we
can be pragmatic again and insist that $\rn{p}_x$ is independent from $\rn{t}_x$ and $\rn{t}_x$ is
distributed uniformly in $[0,1]$. If we indeed can define $\pp^{\amlg,\cN}_{A,V}$ so as to ensure
that, then amalgamation will immediately follow from weak amalgamation, which in turn should follow
from the dissociated vs.\ overlapping amalgamation of Proposition~\ref{prop:weakamalgdsctovlp}.

Finally, we point out that to properly ensure weak amalgamation, we will still need to add ``dummy
variables''. This is because for amalgamation to hold, then it must be true that, except for a zero
measure change, if $x\in\cE_{A,\lvert A\rvert-1}^{(d)}$ and $p\in\cS_{A,V}^{(d),\dsct}$, then the
cardinality of the set
\begin{align*}
  \{\pp^{\ovlp,\cN}_{A,V}(x,y) \mid y\in X\times\cO_A^d\land\pp^{\dsct,\cN}_{A,V}(x,y)=p\}
\end{align*}
can only depend on $x$ via the amalgamating $B$-types for $B\in\binom{A}{\lvert A\rvert-1}$. If we
do not add dummy variables, this can fail terribly as Example~\ref{ex:semitwist} below shows.

\begin{example}\label{ex:semitwist}
  Inspired by $\cN^{\qr}$ and $\cN^{\twist}$ from Examples~\ref{ex:qr} and~\ref{ex:twist}, consider
  the $2$-hypergraphon $\cN^{\semitwist}$ given by
  \begin{align*}
    \cN^{\semitwist}_E
    \df
    \Biggl\{x\in\cE_2 \;\Bigg\vert\;
    &
    \left(\min\{x_{\{1\}},x_{\{2\}}\} < \frac{1}{2}\land x_{\{1,2\}} < \frac{1}{2}\right)
    \\
    & \lor
    \left(\min\{x_{\{1\}},x_{\{2\}}\}\geq \frac{1}{2}\land (x_{\{1\}} + x_{\{2\}} +
    x_{\{1,2\}})\bmod 1 < \frac{1}{2}\right)
    \Biggr\}.
  \end{align*}

  It is straightforward to check that $\cN^{\semitwist}$ has a unique overlapping/dissociated
  $1$-type. This can be done directly, or one can note that
  $\phi_{\cN^{\semitwist}}=\phi_{\cN^{\qr}}$, so $\cN^{\qr}$ satisfies $\UCouple[1]$ (and is
  obviously $0$-independent) hence uniqueness of overlapping/dissociated $1$-types follows from
  Proposition~\ref{prop:typeuniqueness}.

  We now consider $x,x'\in\cE_{2,1}$ given by
  \begin{align*}
    x_{\{1\}} & = x_{\{2\}} = \frac{1}{4}, &
    x'_{\{1\}} & = x'_{\{2\}} = \frac{3}{4}.
  \end{align*}
  
  Note that for every $y\in[0,1]$ and every $w\in\cE_{2,V}^{(d),\ovlp}$, we have that 
  $\pp^{\ovlp,\cN^{\semitwist}}_{2,V}(x,y)(w)$ is
  the distribution of the random graph $\rn{G}$ on $[2]\cup V$ such that:
  \begin{itemize}
  \item We have $\{1,2\}\in E(\rn{G})$ if and only if $y < 1/2$.
  \item If $\{u,v\}\in\binom{[2]\cup V}{2}$ is such that $\{u,v\}\cap V\neq\varnothing$ and
    $[2]\not\subseteq\{u,v\}$, then $\{u,v\}\in E(\rn{G})$ if and only if $w_{\{u,v\}}<1/2$.
  \item All other pairs are declared to be edges independently at random with
    probability $1/2$.
  \end{itemize}

  This in particular implies that the set
  \begin{align*}
    \{\pp^{\ovlp,\cN^{\semitwist}}_{2,V}(x,y) \mid y\in[0,1]\}
  \end{align*}
  has size $2$: one of the overlapping $2$-types happens when $y<1/2$ and the other when $y\geq 1/2$.

  On the other hand, note that if $y\in[0,1]$, $u\in V$ and $w\in\cE_{2,V}^{(d),\ovlp}$ is such that
  $w_{\{u\}}\geq 1/2$, then by sampling $\rn{G}$ according to
  $\pp^{\ovlp,\cN^{\semitwist}}_{2,V}(x',y)(w)$, we see that
  \begin{align*}
    \PP[\{1,u\}\in E(\rn{G})] & =
    \begin{dcases*}
      1, & if $(x'_{\{1\}} + w_{\{u\}} + w_{\{1,u\}})\bmod 1 < 1/2$,\\
      0, & otherwise,
    \end{dcases*}
    \\
    \PP[\{2,u\}\in E(\rn{G})] & =
    \begin{dcases*}
      1, & if $(x'_{\{2\}} + w_{\{u\}} + w_{\{2,u\}})\bmod 1 < 1/2$,\\
      0, & otherwise,
    \end{dcases*}
    \\
    \PP[\{1,2\}\in E(\rn{G})] & =
    \begin{dcases*}
      1, & if $(x'_{\{1\}} + x'_{\{2\}} + y)\bmod 1 < 1/2$,\\
      0, & otherwise,
    \end{dcases*}
  \end{align*}
  which means that whenever $y\neq y'$, we have
  $\pp^{\ovlp,\cN^{\semitwist}}_{2,V}(x',y)\neq\pp^{\ovlp,\cN^{\semitwist}}_{2,V}(x',y')$. In
  particular, the set
  \begin{align*}
    \{\pp^{\ovlp,\cN^{\semitwist}}_{2,V}(x',y) \mid y\in[0,1]\}
  \end{align*}
  has size continuum.
\end{example}

\medskip

The measure-theoretic formalization of the ideas necessary to make the construction of the
amalgamating types possible is done in Lemma~\ref{lem:keymeas} below. To ease notation, all lemmas
in this section are proved with single letters for the different objects, but for the reader's
convenience, we point out now how each of these will be used:
\begin{itemize}
\item $(X,\mu)$ will be the set of all lower pointwise data along with dummy variables:
  $\cE_{A,\lvert A\rvert-1}^{(d)}(\Omega^2)$ along with the measure $(\mu^2)^{(d)}$.
\item $Y$ will be the set of overlapping $A$-types: $\cS_{A,V}^{(d),\ovlp}$.
\item $(Z,\nu)$ will be an extra dummy variable (that is being added indexed by $A$): $(X,\mu)$.
\item $U$ will be the set of all lower amalgamating types: $\prod_{B\in r(A,\lvert
  A\rvert-1)}\cS_{B,V}^{(d),\amlg}$.
\item $V$ will be the set of dissociated $A$-types: $\cS_{A,V}^{(d),\dsct}$.
\item $W$ will be the set of extra information that the amalgamating $A$-type carries besides the
  dissociated $A$-type information: $[0,1]$.
\item $\gamma\colon\cE_{A,\lvert A\rvert-1}^{(d)}\to\cP(\cS_{A,V}^{(d),\ovlp})$ will be the Markov
  kernel that gives the conditional distribution of $\pp^{\ovlp,\cN}_{A,V}(\rn{x},\rn{y})$ given the
  lower pointwise data $\rn{x}$.
\item $\theta\colon\prod_{B\in r(A,\lvert
  A\rvert-1)}\cS_{B,V}^{(d),\amlg}\to\cP(\cS_{A,V}^{(d),\dsct})$ will be the Markov kernel that
  gives the conditional distribution of $\pp^{\dsct,\cN}_{A,V}(\rn{x},\rn{y})$ given the lower
  amalgamating types $(\pp^{\amlg,\cN}_{B,V}(\iota_{B,A}^*(\rn{x})))_{B\in\binom{A}{\lvert
      A\rvert-1}}$ (recall that $\iota_{B,A}\colon B\to A$ is the inclusion map); the existence of
  this is weak amalgamation and will be proven in Lemma~\ref{lem:amlg}\ref{lem:amlg:weak}.
\item $f$ will be the function that maps a point $x\in\cE_{A,\lvert A\rvert-1}^{(d)}$ to all the
  lower amalgamating types: $f(x)\df(\pp^{\amlg,\cN}_{B,V}(\iota_{B,A}^*(x)))_{B\in r(A,\lvert
    A\rvert-1)}$.
\item $g$ will be the function that takes the lower pointwise data along with the overlapping
  $A$-type $(x,p)\in\cE_{A,\lvert A\rvert-1}^{(d)}\times\cS_{A,V}^{(d),\dsct}$ and returns the
  dissociated $A$-type (ignoring the pointwise data): $g(x,p)\df\pi^{\ovlp}_{A,V}(p)$.
\end{itemize}

Lemma~\ref{lem:keymeas} will then provide us objects that will be used as follows:
\begin{itemize}
\item $G$ will be used to construct the function $\pp^{\amlg,\cN}_{A,V}$ that maps a point of
  $\cE_A^{(d)}$ to its amalgamating $A$-type.
\item $G'$ will be the function $\qq^{\amlg,\cN}_{A,V}$ that retrieves the overlapping $A$-type from
  the amalgamating $A$-type plus the lower pointwise data.
\item $\Theta$ will be the Markov kernel that gives the conditional distribution of the random
  amalgamating $A$-type $\pp^{\amlg,\cN}_{A,V}(\rn{x},\rn{y})$ given the lower amalgamating types
  $(\pp^{\amlg,\cN}_{B,V}(\iota_{B,A}^*(\rn{x})))_{B\in\binom{A}{\lvert A\rvert-1}}$ (the fact that this
  exists gives our desired (strong) amalgamation property).
\end{itemize}

Finally, we point out that for technical reasons, it will be convenient to work with partial
functions that are defined almost everywhere. We use the notation $f\colon A\pto B$ for such partial
functions and $\dom(f)\subseteq A$ for their domain. As usual, composition of partial functions
$f\colon A\pto B$ and $g\colon B\pto C$ is the partial function $g\comp f\colon A\pto C$ defined
only for $x\in f^{-1}(\dom(g))$ as $(g\comp f)(x)\df g(f(x))$.

By a measurable partial function, we mean a partial function $f$ such that $\dom(f)$ is measurable
and the restriction $f\rest_{\dom(f)}$ (which is a total function) is measurable. It is
straightforward to check that the Disintegration Theorem, Theorem~\ref{thm:DT} still holds if $\pi$
is a partial function as long as it is defined almost everywhere (i.e., $\nu(\dom(\pi))=1$): this is
done by extending $\pi$ arbitrarily (but measurably) to be defined everywhere.

By a partial Markov kernel from a Borel space $X$ to a Borel space $Y$, we mean a partial function
$\rho\colon X\pto\cP(Y)$ such that for every measurable $B\subseteq Y$, the partial function $X\ni
x\mapsto\rho(x)(B)\in[0,1]$ is measurable. If we are given further a probability measure $\mu$ on
$X$ such that $\mu(\dom(\rho))=1$, then it remains true that there is a unique probability measure
$\rho[\mu]$ such that
\begin{align*}
  \rho[\mu](A\times B) & = \int_A \rho(x)(B)\ d\mu(x)
\end{align*}
for all measurable sets $A\subseteq X$ and $B\subseteq Y$ (this is because the expression under the
integral is defined $\mu$-almost everywhere).

The next lemma will serve as an inductive step for the end goal of this section, Lemma~\ref{lem:keymeas}.

\begin{lemma}\label{lem:measstep}
  Let $(X,\mu)$, $(Z,\nu)$ and $(W,\eta)$ be atomless standard probability spaces, let $Y$ and $U$
  be Borel spaces with $Y$ uncountable, let $f\colon X\pto U$ and $g\colon X\times Y\pto V$ be
  measurable partial functions, let $\gamma\colon X\pto\cP(Y)$ and $\theta\colon U\pto\cP(V)$ be
  partial Markov kernels with $\mu(\dom(\gamma))=1$, let $A\subseteq\dom(f)\subseteq X$ and
  $B\subseteq\dom(g)\subseteq X\times Y$ be measurable sets with $\mu(A)=\gamma[\mu](B)=1$
  (see~\eqref{eq:rhomu}). Suppose further that $A\subseteq f^{-1}(\dom(\theta))\cap\dom(\gamma)$ and
  for every $x\in A$, we have
  \begin{align*}
    g(x,\place)_*(\gamma(x)) & = \theta(f(x)).
  \end{align*}
  Finally, let $\pi\colon V\times W\to V$ be the natural projection.

  Then there exist partial functions $G\colon X\times Y\times Z\pto V\times W$ and $G'\colon X\times
  V\times W\pto Y$ and measurable sets $A'\subseteq X$ and $B'\subseteq X\times Y\times Z$ such that
  letting $\Theta\colon U\pto\cP(V\times W)$ be the partial Markov kernel given by
  \begin{align*}
    \Theta(u) & \df \theta(u)\otimes\eta \qquad (u\in U),
  \end{align*}
  the following hold:
  \begin{enumerate}
  \item\label{lem:measstep:A'} $A'\subseteq A$.
  \item\label{lem:measstep:B'A'} $B'\subseteq ((A'\times Y)\cap B)\times Z$.
  \item\label{lem:measstep:B'G} $B'\subseteq\dom(G)$.
  \item\label{lem:measstep:G'G} For every $(x,y,z)\in B'$, we have $G'(x,G(x,y,z))=y$.
  \item\label{lem:measstep:piG} For every $(x,y,z)\in B'$, we have $\pi(G(x,y,z)) = g(x,y)$.
  \item\label{lem:measstep:gammamunuB'} $(\gamma[\mu]\otimes\nu)(B')=1$.
  \item\label{lem:measstep:muA'} $\mu(A')=1$.
  \item\label{lem:measstep:Ggammanu} For every $x\in A'$, we have
    $G(x,\place,\place)_*(\gamma(x)\otimes\nu)=\Theta(f(x))$.
  \end{enumerate}
\end{lemma}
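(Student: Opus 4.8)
The plan is to build $G$, $G'$ and the exceptional sets $A',B'$ by a measure-theoretic ``straightening'' argument, using the Disintegration Theorem to factor $\gamma(x)$ through $g(x,\place)$ and then a measurable family of measure isomorphisms to reconcile these disintegrations across different $x$ lying over the same $u\in U$. First I would disintegrate: for each $x\in A$, since $g(x,\place)_*(\gamma(x))=\theta(f(x))$, the Disintegration Theorem (Theorem~\ref{thm:DT}, in its partial-function form) applied to the push-forward $g(x,\place)\colon Y\to V$ with source measure $\gamma(x)$ gives a Markov kernel $\kappa_x\colon V\pto\cP(Y)$ with $\kappa_x(v)$ concentrated on $g(x,\place)^{-1}(v)$ and $\int_V\kappa_x(v)\,d\theta(f(x))(v)=\gamma(x)$. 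The key point is that this can be chosen to depend measurably on $x$; one realizes this by a single application of the disintegration theorem on the product space $\{(x,y)\in B\}$ with respect to $\gamma[\mu]$, projecting to $X\times V$ via $(x,y)\mapsto(x,g(x,y))$, which yields a jointly measurable kernel $(x,v)\mapsto\kappa_x(v)$ defined $\gamma[\mu]$-a.e.; shrinking $A$ to a full-measure measurable $A_0$ on which everything is honestly defined.

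Next, because $Y$ is an uncountable standard Borel space, for each $x\in A_0$ and each $v$ the probability space $(g(x,\place)^{-1}(v),\kappa_x(v))$ is a standard probability space, and using the extra atomless coordinate $(Z,\nu)$ together with $(W,\eta)$ I can produce a measurable-in-$(x,v)$ family of measure isomorphisms
\begin{align*}
  \Phi_{x}\colon g(x,\place)^{-1}(v)\times Z\;\longrightarrow\;\{v\}\times W,\qquad v\in V,
\end{align*}
carrying $\kappa_x(v)\otimes\nu$ to $\{\delta_v\}\otimes\eta$; here the role of $Z$ is exactly the ``dummy variable'' flagged in the discussion before Example~\ref{ex:semitwist}, needed because $g(x,\place)^{-1}(v)$ may be a single point (or otherwise atomic) for one $x$ and uncountable for another over the same $u$, and tensoring with the atomless $Z$ makes all of them isomorphic to an atomless space, hence to $(W,\eta)$. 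Concretely one invokes the standard fact that any two atomless standard probability spaces are isomorphic mod $0$, applied in a measurable family (this is a routine selection argument, e.g.\ via the isomorphism with $([0,1],\lambda)$ and a measurable inverse of the cumulative distribution). I then \emph{define}
\begin{align*}
  G(x,y,z) &\df \bigl(g(x,y),\,\mathrm{pr}_W\circ\Phi_x\bigl((y,z)\bigr)\bigr),
  \\
  G'(x,v,w) &\df \mathrm{pr}_Y\circ\Phi_x^{-1}(v,w),
\end{align*}
for $x\in A_0$ and $(y,z)$, $(v,w)$ in the appropriate full-measure sets, and $A'\df A_0$ (intersected with whatever further full-measure set is needed so that $\Phi_x$ is a genuine isomorphism and $f(x)\in\dom(\theta)$), and $B'\df\{(x,y,z)\mid x\in A',(x,y)\in B,\ (y,z)\in\dom(\Phi_x)\}$.

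With these definitions the verifications are bookkeeping. Items~\ref{lem:measstep:A'},~\ref{lem:measstep:B'A'},~\ref{lem:measstep:B'G} hold by construction; item~\ref{lem:measstep:G'G} is the fact that $\Phi_x$ is a bijection and $G'$ inverts the $Y$-coordinate; item~\ref{lem:measstep:piG} is immediate since the first coordinate of $G(x,y,z)$ is $g(x,y)$. Items~\ref{lem:measstep:muA'} and~\ref{lem:measstep:gammamunuB'} follow because each ingredient ($A$, $\dom(g)\cap B$ under $\gamma[\mu]$, the full-measure domain of the measurable family $\Phi$) has full measure, and $Z$ contributes a probability measure $\nu$; here one uses Fubini/the defining property~\eqref{eq:rhomu} of $\gamma[\mu]$ and the fact that $\Phi_x$ is defined $\kappa_x(v)\otimes\nu$-a.e.\ for $\theta(f(x))$-a.e.\ $v$, hence $\gamma(x)\otimes\nu$-a.e., hence $(\gamma[\mu]\otimes\nu)$-a.e.\ after integrating $x$ over $A'$. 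Item~\ref{lem:measstep:Ggammanu} is the heart: for $x\in A'$, push $\gamma(x)\otimes\nu$ forward by $G(x,\place,\place)$; disintegrating $\gamma(x)$ over $v$ via $\kappa_x$, the image is $\int_V \bigl(\delta_v\otimes (\mathrm{pr}_W\circ\Phi_x)_*(\kappa_x(v)\otimes\nu)\bigr)\,d\theta(f(x))(v)=\int_V \delta_v\otimes\eta\,d\theta(f(x))(v)=\theta(f(x))\otimes\eta=\Theta(f(x))$, using the isomorphism property of $\Phi_x$. The main obstacle is the measurable-family construction of the isomorphisms $\Phi_x$ (jointly in $x$ and, through the disintegration, in $v$): one must be careful that $\kappa_x(v)$ may be atomic, so the reduction is first to $g(x,\place)^{-1}(v)\times Z$ (now atomless) and then to $W$; making this selection Borel-measurable requires the standard measurable-isomorphism machinery for standard Borel spaces (e.g.\ \cite[Bog07]-style arguments, or the Borel isomorphism theorem together with a measurable choice of the CDF inverse), which is routine but where all the genuine care is needed.
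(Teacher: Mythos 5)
Your proposal is correct and follows essentially the same route as the paper: disintegrate $\gamma[\mu]$ over $(x,g(x,y))$, use the atomless dummy coordinate $Z$ to make the fiber measures atomless, let the $W$-coordinate of $G$ be a fiberwise measure isomorphism onto $(W,\eta)$ (a conditional CDF/quantile pair), take $G'$ to be its inverse recovering $y$, and let $A',B'$ be the full-measure sets where everything is honestly defined. The only difference is presentational: the ``measurable family of fiber isomorphisms'' you cite as routine is exactly what the paper constructs explicitly via the lexicographic CDF $\rho(x,v)(L(x,y,z))$ and then verifies at length (injectivity mod $0$ and uniformity of the pushforward, including the atom analysis that uses $Z$), so a self-contained write-up would still have to supply that verification.
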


\begin{proof}
  First we prove the case when $(X,\mu)=(Z,\nu)=(W,\eta)=([0,1],\lambda)$, where $\lambda$ is the Lebesgue
  measure and $Y = [0,1]$.

  Let $\phi\colon X\times Y\times Z\pto X\times V$ be given by
  \begin{align*}
    \phi(x,y,z) & \df (x,g(x,y)) \qquad (x\in X, y\in Y, z\in Z).
  \end{align*}
  (note that $\phi$ does not actually depend on the last coordinate $z\in Z$). Since $\dom(\phi) =
  \dom(g)\times Z\supseteq B$ and $\gamma[\mu](B)=1$, it follows that $\phi$ is defined
  $\gamma[\mu]\otimes\nu$-almost everywhere.

  We now apply Theorem~\ref{thm:DT} to $\phi$ to obtain a Markov kernel $\rho\colon X\times
  V\pto\cP(X\times Y\times Z)$ so that the set
  \begin{align*}
    R & \df \{x\in X\mid \rho(x)(\phi^{-1}(x)) = 1\},
  \end{align*}
  has $\sigma(R)=1$, where $\sigma=\phi_*(\gamma[\mu]\otimes\nu)$ and for every measurable $h\colon
  X\times Y\times Z\to[0,\infty]$, we have
  \begin{align}\label{eq:measstep:DT}
    \int_{X\times Y\times Z} h(s)\ d(\gamma[\mu]\otimes\nu)(s)
    & =
    \int_R \int_{\phi^{-1}(r)} h(s)\ d\rho(r)(s)\ d\sigma(r).
  \end{align}

  Define the partial function $G\colon X\times Y\times Z\pto V\times W$ by
  \begin{align}\label{eq:measstep:G}
    G(x,y,z)
    & \df
    \Bigl(g(x,y), \rho\bigl(\phi(x,y,z)\bigr)\bigl(L(x,y,z)\bigr)\Bigr)
    \qquad (x\in X, y\in Y, z\in Z),
  \end{align}
  where
  \begin{align*}
    L(x,y,z)
    & \df
    \{(x',y',z')\in X\times Y\times Z \mid
    \phi(x',y',z') = \phi(x,y,z)
    \land (y',z')\leq_L (y,z)\},
  \end{align*}
  and $\leq_L$ is the lexicographic order on $Y\times Z=[0,1]^2$ (note that the definition of $\phi$
  forces $x' = x$ in the above). It is obvious that $G$ is measurable and
  $\dom(G)=\dom(\phi)=\dom(g)\times Z\supseteq B\times Z$. Note also that for $(x,y,z)\in\dom(G)$,
  we have $\pi(G(x,y,z))=g(x,y)$ (recall $\pi\colon V\times W\to V$ is the natural projection).

  We now define a partial function $G'\colon X\times V\times W\pto Y$ by
  \begin{align}\label{eq:measstep:G'}
    G'(x,v,w)
    & \df
    \inf\{y\in Y \mid \phi(x,y,1) = (x,v)\land\rho(x,v)(L(x,y,1))\geq w\}
    \qquad (x\in X, v\in V, w\in W),
  \end{align}
  leaving $G'$ undefined if the set above is empty.

  Let us now prove some basic properties about these definitions.

  \begin{claim}\label{clm:measstep:domG'}
    We have $R\times [0,1)\subseteq\dom(G')$.
  \end{claim}

  \begin{proofof}{Claim~\ref{clm:measstep:domG'}}
    For $(x,v,w)\in R\times[0,1)$, the definition of $R$ ensures that
    $\rho(x,v)(\phi^{-1}(x,v))=1$. Let
    \begin{align*}
      \widetilde{y} & \df \sup\{y\in Y \mid \phi(x,y,1) = (x,v)\}.
    \end{align*}

    We claim that there exists a non-decreasing sequence $(y_n)_{n\in\NN}$ in the set above such
    that $\phi^{-1}(x,v) = \bigcup_{n\in\NN} L(x,y_n,1)$. Indeed, since $\phi$ does not actually
    depend on its last coordinate, if the supremum $\widetilde{y}$ is actually attained by some $y$,
    then we can simply take $y_n=y$ for every $n\in\NN$, otherwise, we can simply take a
    non-decreasing sequence $(y_n)_{n\in\NN}$ in the set converging to $\widetilde{y}$.

    Since $(y_n)_{n\in\NN}$ is non-decreasing, we have $L(x,y_n,1)\subseteq L(x,y_{n+1},1)$ and
    since $\rho(x,v)(\phi^{-1}(x,v)) = 1 > w$, there must exist $n\in\NN$ such that
    $\rho(x,y)(L(x,y_n,1))\geq w$, so the set in~\eqref{eq:measstep:G'} is non-empty, hence
    $(x,v,w)\in\dom(G')$.
  \end{proofof}

  \begin{claim}\label{clm:measstep:G'L}
    For every $(x,v)\in R$ and every $(x',y,z)\in\phi^{-1}(x,v)$, we have
    \begin{align*}
      G'\Bigl(x,v,\rho(x,v)\bigl(L(x',y,z)\bigr)\Bigr) & \leq y.
    \end{align*}
  \end{claim}

  \begin{proofof}{Claim~\ref{clm:measstep:G'L}}
    Let $w\df\rho(x,v)(L(x',y,z))$ so that the claim amounts to $G'(x,v,w)\leq y$.
    
    Since the definition of $\phi$ forces $x'=x$ and $\phi$ does not depend on its last coordinate,
    it follows that $L(x,y,1)\supseteq L(x',y,z)$, which in particular implies
    $\rho(x,v)(L(x,y,1))\geq\rho(x,v)(L(x',y,z))=w$.
    
    Since $\phi(x,y,1)=(x,v)=\phi(x,y,z)$, we get that $y$ is in the set on~\eqref{eq:measstep:G'}, hence
    \begin{align*}
      G'\Bigl(x,v,\rho(x,v)\bigl(L(x',y,z)\bigr)\Bigr)
      =
      G'(x,v,w)
      & \leq
      y
    \end{align*}
    as desired.
  \end{proofof}

  We now define the set
  \begin{align}\label{eq:measstep:B''}
    B''
    & \df
    \{(x,y,z)\in X\times Y\times Z \mid
    (x,y)\in B\land\phi(x,y,z)\in R\land G'(x,G(x,y,z))=y\}.
  \end{align}

  After we define the set $A'$, we will let $B'\df B''\cap(A'\times Y\times Z)$, which will ensure
  that $B'\subseteq((A'\times Y)\cap B)\times Z$ yielding item~\ref{lem:measstep:B'A'}. Note also
  that once we show item~\ref{lem:measstep:muA'} (i.e., $\mu(A')=1)$, then
  items~\ref{lem:measstep:B'G}, \ref{lem:measstep:G'G}, \ref{lem:measstep:piG}
  and~\ref{lem:measstep:gammamunuB'} will follow from their counterparts for $B''$, so let us prove that
  these items do hold for $B''$.

  We have already observed that $\dom(G)=\dom(\phi)=\dom(g)\times Z\supseteq B\times Z$ and since
  $B''\subseteq B\times Z$, item~\ref{lem:measstep:B'G} holds for $B''$ ($B''\subseteq\dom(G)$). It
  is also clear from the definition of $B''$ that it satisfies item~\ref{lem:measstep:G'G}
  ($G'(x,G(x,y,z))=y$ for every $(x,y,z)\in B''$). Item~\ref{lem:measstep:piG} for $B''$ ($\pi(G(x,y,z)) =
  g(x,y)$ for every $(x,y,z)\in B''$) follows from the definition of $G$ and the already proved
  $B''\subseteq\dom(G)$.

  \medskip

  Let us now prove item~\ref{lem:measstep:gammamunuB'} for $B''$
  ($(\gamma[\mu]\otimes\nu)(B'')=1$). By~\eqref{eq:measstep:DT} applied to $\One_{B''}$, we have
  \begin{align*}
    (\gamma[\mu]\otimes\nu)(B'')
    & =
    \int_R \int_{\phi^{-1}(r)} \One_{B''}(s) \ d\rho(r)(s)\ d\sigma(r).
  \end{align*}

  Since for every $r\in R$ we have $\rho(r)(\phi^{-1}(r))=1$ and using Claim~\ref{clm:measstep:G'L},
  we get that to show $(\gamma[\mu]\otimes\nu)(B'')=1$ it suffices to show that for every $(x,v)\in R$ and
  for the set
  \begin{align*}
    U(x,v)
    & \df
    \left\{(x',y,z)\in\phi^{-1}(x,v) \;\middle\vert\;
    G'\Bigl(x,v,\rho(x,v)\bigl(L(x,y,z)\bigr)\Bigr) < y\right\},
  \end{align*}
  we have $\rho(x,v)(U(x,v))=0$.

  Fix $(x,v)\in R$ and since we will be working with points of the form $(x',y,z)\in\phi^{-1}(x,v)$,
  which forces $x'=x$, we will simplify notation by writing $(x,y,z)\in\phi^{-1}(x,v)$. We will also
  use the shorthand $U\df U(x,v)$.

  Let us define by transfinite induction sequences $(y_\alpha)_{\alpha < \alpha_0}$,
  $(y'_\alpha)_{\alpha<\alpha_0}$, $(z_\alpha)_{\alpha<\alpha_0}$ and $(M_\alpha)_{\alpha<\alpha_0}$
  in $[0,1]$ with the following properties:
  \begin{enumerate}[label={\alph*.}, ref={(\alph*)}]
  \item\label{it:yy'} $y_\alpha < y'_\alpha$.
  \item\label{it:atomless} If $(x,y'_\alpha,z_\alpha)\in U$, then $(x,y'_\alpha,z_\alpha)$ is
    \emph{not} an atom of the measure $\rho(x,v)$, that is, we have
    $\rho(x,v)(\{(x,y'_\alpha,z_\alpha)\})=0$.
  \item\label{it:rhoLM} For the set
    \begin{align}\label{eq:measstep:Ualpha}
      U_\alpha & \df \{(x,y,z)\in U \mid (y_\alpha,1) <_L (y,z) \leq_L (y'_\alpha,z_\alpha)\},
    \end{align}
    we have $\rho(x,v)(L(x,y,z))=M_\alpha$ for every $(x,y,z)\in U_\alpha$.
  \item\label{it:rhoU} $\rho(x,v)(U_\alpha)=0$ for $U_\alpha$ given by~\eqref{eq:measstep:Ualpha}.
  \item\label{it:intervals} For every $\beta < \alpha$, we have $(y_\beta,y'_\beta)\not\supseteq
    (y_\alpha,y'_\alpha)$ (as open intervals in $[0,1]$).
  \end{enumerate}

  Given $(y_\beta)_{\beta<\alpha}$, $(y'_\beta)_{\beta<\alpha}$ and $(z_\beta)_{\beta<\alpha}$, if
  $U\subseteq\bigcup_{\beta<\alpha} U_\beta$, then stop the construction setting
  $\alpha_0\df\alpha$; otherwise, let $(x,y'_\alpha,\widetilde{z}_\alpha)\in
  U\setminus\bigcup_{\beta<\alpha} U_\beta$ and define
  \begin{align*}
    M_\alpha & \df \rho(x,v)\bigl(L(x,y'_\alpha,\widetilde{z}_\alpha)\bigr),
    \\
    y_\alpha & \df G'(x,v,M_\alpha),
    \\
    z_\alpha
    & \df
    \sup\left\{z\in Z\;\middle\vert\;
    \rho(x,v)\bigl(L(x,y'_\alpha,z)\bigr) = M_\alpha\right\}.
  \end{align*}
  (Note that in the definition of $z_\alpha$, all points $z$ in the set satisfy
  $\phi(x,y'_\alpha,z)=(x,v)$ since $(x,y'_\alpha,\widetilde{z}_\alpha)\in U$.)

  Let us check that all promised items are inductively satisfied.

  First note that since $(x,y'_\alpha,\widetilde{z}_\alpha)\in U$, we must have
  \begin{align*}
    y_\alpha & = G'\Bigl(x,v,\rho(x,v)\bigl(L(x,y,z)\bigr)\Bigr) < y'_\alpha,
  \end{align*}
  so item~\ref{it:yy'} holds.

  \medskip

  We prove item~\ref{it:atomless} by the contra-positive. Suppose that $(x,y'_\alpha,z_\alpha)$ is
  an atom of $\rho(x,v)$ and note that the definition of $G'$ in~\eqref{eq:measstep:G'} implies that
  \begin{align*}
    G'\Bigl(x,v,\rho(x,v)\bigl(L(x,y'_\alpha,z_\alpha)\bigr)\Bigr) = y'_\alpha
  \end{align*}
  as for any $y < y'_\alpha$ with $\phi(x,y,1)=(x,v)$, the set $L(x,y,1)$ has strictly less
  $\rho(x,v)$-measure than $L(x,y'_\alpha,1)$ as it excludes the atom
  $(x,y'_\alpha,z_\alpha$). Thus, we get $(x,y'_\alpha,z_\alpha)\notin U$.

  \smallskip

  To show item~\ref{it:rhoLM}, first note that if $(x,y,z)\in U_\alpha$, then we must have $y >
  y_\alpha$, so there exists $y'$ with $y_\alpha\leq y' < y$, $\phi(x,y',1)=(x,v)$ and
  $\rho(x,v)(L(x,y',1))\geq M_\alpha$ and since $L(x,y,z)\supseteq L(x,y',1)$, we conclude
  \begin{align*}
    \rho(x,v)\bigl(L(x,y,z)\bigr) & \geq \rho(x,v)\bigl(L(x,y',1)\bigr) \geq M_\alpha.
  \end{align*}

  To prove the other inequality, first suppose $(y,z) <_L (y'_\alpha,z_\alpha)$ and note that the
  definition of $z_\alpha$ implies that there exists $z'\in Z$ such that $(y,z) <_L (y'_\alpha,z')$
  and $\rho(x,v)(L(x,y'_\alpha,z')) = M_\alpha$. Since $L(x,y,z)\subseteq L(x,y'_\alpha,z')$, we get
  $\rho(x,v)(L(x,y,z))\leq M_\alpha$.

  Suppose now that $(y,z)=(y'_\alpha,z_\alpha)$ and note that the definition of $z_\alpha$ and the
  facts that $\phi$ does not depend on its last coordinate and that
  $(x,y'_\alpha,\widetilde{z}_\alpha)\in U$ implies $\phi(x,y'_\alpha,\widetilde{z}_\alpha)=(x,v)$
  give us that for every $z' < z_\alpha$, we have $\rho(x,v)(L(x,y'_\alpha,z'))\leq
  M_\alpha$. Finally, note that
  \begin{align*}
    \rho(x,v)\bigl(L(x,y,z)\bigr)
    & \leq
    \rho(x,v)(\{(x,y,z)\})
    +
    \sup_{z' < z_\alpha} \rho(x,v)\bigl(L(x,y'_\alpha,z')\bigr)
    \leq
    M_\alpha
  \end{align*}
  since $(x,y,z)=(x,y'_\alpha,z_\alpha)\in U$ and item~\ref{it:atomless} says that $(x,y,z)$ is not
  an atom of $\rho(x,v)$. This concludes the proof of item~\ref{it:rhoLM}.

  \smallskip

  Let us now show item~\ref{it:rhoU}. The definition of $y_\alpha$ and $z_\alpha$ as infimum and
  supremum, respectively, ensure the existence of a non-increasing sequence $(y_\alpha^n)_{n\in\NN}$
  and a non-decreasing sequence $(z_\alpha^n)_{n\in\NN}$ converging to $y_\alpha$ and $z_\alpha$,
  respectively, such that
  \begin{gather*}
    \phi(x,y_\alpha^n,1) = \phi(y'_\alpha,z_\alpha^n) = (x,v),\\
    \rho(x,v)\bigl(L(x,y_\alpha^n,1)\bigr) = \rho(x,v)\bigl(L(x,y'_\alpha,z_\alpha^n)\bigr) = M_\alpha,
  \end{gather*}
  from which it follows that for every $n\in\NN$, letting
  \begin{align*}
    U_\alpha^n
    & \df
    L(x,y'_\alpha,z_\alpha^n)\setminus L(x,y_\alpha^n,1)
    \\
    & =
    \{(x,y,z)\in X\times Y\times Z \mid
    \phi(x,y,z)=(x,v)\land
    (y_\alpha^n,1) <_L (y,z) \leq_L (y'_\alpha,z_\alpha^n)\},
  \end{align*}
  we get $\rho(x,v)(U_\alpha^n) = 0$.

  Note now that
  \begin{align*}
    U_\alpha
    & \subseteq
    \bigcup_{n\in\NN} U_\alpha^n\cup
    \{(x,y'_\alpha,z_\alpha) \mid\text{ if }(x,y'_\alpha,z_\alpha)\in U\}.
  \end{align*}
  Since each $U_\alpha^n$ has zero $\rho(x,v)$-measure and $(x,y'_\alpha,z_\alpha)\in U$ implies
  that it is not an atom of $\rho(x,v)$ by the already proved item~\ref{it:atomless}, it follows
  that $\rho(x,v)(U_\alpha)=0$, concluding the proof of item~\ref{it:rhoU}.

  \smallskip

  Finally, to show item~\ref{it:intervals}, it suffices to show that if the intervals
  $(y_\beta,y'_\beta)$ and $(y_\alpha,y'_\alpha)$ have non-empty intersection, then we have
  $y'_\alpha > y'_\beta$.

  Suppose for a contradiction that $y'_\alpha\leq y'_\beta$ and that $(y_\beta,y'_\beta)\cap
  (y_\alpha,y'_\alpha)\neq\varnothing$. Note that $y'_\alpha > y_\beta$ and since the construction
  of $y'_\alpha$ ensures $(x,y'_\alpha,\widetilde{z}_\alpha)\in U\setminus U_\beta$, we must have
  $(y'_\alpha,\widetilde{z}_\alpha) >_L (y'_\beta,z_\beta)$. Since $y'_\alpha\leq y'_\beta$, we get
  $y'_\alpha = y'_\beta$ and $\widetilde{z}_\alpha > z_\beta$.

  Since $\widetilde{z}_\beta\leq z_\beta < \widetilde{z}_\alpha\leq z_\alpha$ and $y_\alpha <
  y'_\alpha = y'_\beta$ from item~\ref{it:yy'}, we get $(y'_\beta,\widetilde{z}_\beta)\in U_\alpha$,
  so item~\ref{it:rhoLM} gives
  \begin{align*}
    M_\beta & = \rho(x,v)\bigl(L(x,y'_\beta,\widetilde{z}_\beta)\bigr) = M_\alpha,
  \end{align*}
  which in turn implies $y_\alpha = y_\beta$. Finally, since $y'_\alpha=y'_\beta$ and
  $M_\alpha=M_\beta$, we also get $z_\beta = z_\alpha\geq\widetilde{z}_\alpha$, contradicting
  $\widetilde{z}_\alpha > z_\beta$.

  This concludes the inductive construction.

  \smallskip

  Note now that item~\ref{it:intervals} implies that the construction must stop (as the union of the
  intervals $(y_\alpha,y'_\alpha)$ is getting strictly larger and is always a subset of $[0,1]$), so
  $\alpha_0$ gets defined. We claim that $\alpha_0$ is countable. Indeed, otherwise, for the first
  uncountable ordinal $\omega_1$, the union $\bigcup_{\alpha <
    \omega_1}(y_\alpha,y'_\alpha)\subseteq[0,1]$ has a cover by open sets that does not contain a
  countable subcover, contradicting the fact that the usual topology on $[0,1]$ is second-countable.

  By construction, we know that $U = \bigcup_{\alpha < \alpha_0} U_\alpha$, so item~\ref{it:rhoU}
  along with the fact that $\alpha_0$ is countable implies $\rho(x,v)(U) = 0$. Thus we have proven
  that $\rho(r)(U) = 0$ for every $r\in R$ so item~\ref{lem:measstep:gammamunuB'} for $B''$
  ($(\gamma[\mu]\otimes\nu)(B'')=0$) holds.

  \medskip

  Our next objective is to construct $A'$ so that the remaining items (items~\ref{lem:measstep:A'},
  \ref{lem:measstep:muA'} and~\ref{lem:measstep:Ggammanu}) hold.

  To do so, let us first define for every $(x,v)\in R$ and every $a\in(0,1)$ the set
  \begin{align*}
    C_a(x,v)
    & \df
    \left\{(x',y',z')\in\phi^{-1}(x,v) \;\middle\vert\;
    \rho(x,v)\bigl(L(x',y',z')\bigr) \leq a\right\}.
  \end{align*}
  We want to show that for $\sigma$-almost every $(x,v)\in R$ we have $\rho(x,v)(C_a(x,v))=a$. For
  this, we define the following points:
  \begin{align*}
    y_{x,v,a}
    & \df
    \sup\{y\in[0,1] \mid (x,y,0)\in C_a(x,v)\}\cup\{0\},
    \\
    z_{x,v,a}
    & \df
    \sup\{z\in[0,1] \mid (x,y_{x,v,a},z)\in C_a(x,v)\}\cup\{0\}
    \\
    y'_{x,v,a}
    \df
    G'(x,v,a)
    & =
    \inf\left\{y\in[0,1] \;\middle\vert\;
    \phi(x,v,1)=(x,v)\land \rho(x,v)\bigl(L(x,y,1)\bigr)\geq a
    \right\}
    \\
    z'_{x,v,a}
    & \df
    \inf\left\{z\in[0,1] \;\middle\vert\;
    \phi(x,y'_{x,v,a},z)=(x,v)\land\rho(x,v)\bigl(L(x,y,1)\bigr)\geq a\right\}
    \cup\{1\}
  \end{align*}
  (Note that Claim~\ref{clm:measstep:domG'} along with the fact that $a < 1$ ensure that
  $y'_{x,v,a}$ is actually defined, i.e., that $(x,v,a)\in\dom(G')$.)

  We first prove the following weaker claim about the $\rho(x,v)$-measure of $C_a(x,v)$.

  \begin{claim}\label{clm:measstep:rhoCa}
    Let $(x,v)\in R$. Then $\rho(x,v)(C_a(x,v))\leq a$. Furthermore, if $\rho(x,v)(C_a(x,v)) < a$,
    then $(x,y'_{x,v,a},z'_{x,v,a})$ is an atom of $\rho(x,v)$.
  \end{claim}

  \begin{proofof}{Claim~\ref{clm:measstep:rhoCa}}
    If $C_a(x,v)$ is empty, the we clearly have $\rho(x,v)(C_a(x,v))\leq a$. Suppose then that
    $C_a(x,v)$ is non-empty and note that the definitions of $y_{x,v,a}$ and $z_{x,v,a}$ as suprema
    ensure the existence of non-decreasing sequences $(y_n)_{n\in\NN}$ and $(z_n)_{n\in\NN}$
    such that $(x,y_n,z_n)\in C_a(x,v)$ and $C_a(x,v) = \bigcup_{n\in\NN} L(x,y_n,z_n)$ (when any of
    the suprema is actually a maximum, the corresponding sequence should be taken constant equal to
    the maximum; otherwise, the corresponding sequence should approximate the supremum).

    Since both sequences are non-decreasing, we get $L(x,y_n,z_n)\subseteq L(x,y_{n+1},z_{n+1})$ for
    every $n\in\NN$, so we conclude
    \begin{align*}
      \rho(x,v)\bigl(C_a(x,v)\bigr)
      & =
      \rho(x,v)\left(\bigcup_{n\in\NN} L(x,y_n,z_n)\right)
      =
      \lim_{n\to\infty} \rho(x,v)(L(x,y_n,z_n))
      \leq
      a,
    \end{align*}
    where the last inequality follows since $(x,y_n,z_n)\in C_a(x,v)$.

    \medskip

    Suppose now that $\rho(x,v)(C_a(x,v)) < a$ and note that the definitions of $y'_{x,v,a}$ and
    $z'_{x,v,a}$ as infima ensure the existence of non-increasing sequences $(y'_n)_{n\in\NN}$ and
    $(z'_n)_{n\in\NN}$ converging to $y'_{x,v,a}$ and $z'_{x,v,a}$, respectively and such that
    $\phi(x,y'_n,z'_n)=(x,v)$, $\rho(x,v)(L(x,y'_n,z'_n))\geq a$ and
    \begin{align*}
      \bigcap_{n\in\NN} L(x,y'_n,z'_n)
      & =
      \left\{(x,y,z) \;\middle\vert\;
      \phi(x,y,z)=(x,v)\land\rho(x,v)\bigl(L(x,y,z)\bigr)\geq a
      \right\}.
    \end{align*}
    (Again, whenever any of the infima is actually a minimum, the corresponding sequence should be
    taken constant equal to the minimum.)

    Since both sequences are non-increasing, we get $L(x,y'_n,z'_n)\supseteq
    L(x,y'_{n+1},z'_{n+1})$, from which we conclude
    \begin{align*}
      \rho(x,v)\left(\bigcap_{n\in\NN} L(x,y'_n,z'_n)\right)
      & =
      \lim_{n\to\infty} \rho(x,v)\bigl(L(x,y'_n,z'_n)\bigr)
      \geq
      a.
    \end{align*}

    On the other hand, since $y'_n$ and $z'_n$ converge to $y'_{x,v,a}$ and $z'_{x,v,a}$
    respectively, we have
    \begin{align*}
      \bigcap_{n\in\NN} L(x,y'_n,z'_n)
      & =
      \{(x',y',z')\in\phi^{-1}(x,v) \mid (y',z')\leq_L (y'_{x,v,a},z'_{x,v,a})\}.
    \end{align*}
    Let $C'_a$ be the set above and note that since $\rho(x,v)(C_a(x,v)) < a\leq \rho(x,v)(C'_a)$,
    we must have $C_a(x,v)\subsetneq C'_a$ (since both these sets are subsets of $\phi^{-1}(x,v)$
    that are closed downward with respect to $\leq_L$ within $\phi^{-1}(x,v)$).

    We claim that $C'_a\setminus C_a(x,v)=\{(x,y'_{x,v,a},z'_{x,v,a})\}$. To see this, let
    $(x',y',z')\in C'_a\setminus C_a(x,v)$ and note that the fact that $(x',y',z')\notin C_a(x,v)$
    gives $\rho(x,v)(L(x',y',z')) > a$. From $\phi(x',y',z')=(x,v)$ along with the definitions of
    $y'_{x,v,a}$ and $z'_{x,v,a}$ as infima, we get $x'=x$ and
    $(y',z')\geq_L(y'_{x,v,a},z'_{x,v,a})$. In turn, since $(x',y',z')\in C'_a$, we conclude
    $(y',z')\leq_L (y'_{x,v,a},z'_{x,v,a})$. Thus, $(x',y',z') = (x,y'_{x,v,a},z'_{x,v,a})$, hence
    $C'_a\setminus C_a(x,v)=\{(x,y'_{x,v,a},z'_{x,v,a})\}$.

    Finally, since
    \begin{align*}
      \rho(x,v)(\{x,y'_{x,v,a},z'_{x,v,a}\})
      & =
      \rho(x,v)(C'_a\setminus C_a(x,v))
      >
      0,
    \end{align*}
    we conclude that $(x,y'_{x,v,a},z'_{x,v,a})$ is an atom of $\rho(x,v)$.
  \end{proofof}

  For every $a\in(0,1)$, let
  \begin{align*}
    S_a & \df \{(x,y,z)\in B'' \mid y=y'_{\phi(x,y,z),a}\land z=z'_{\phi(x,y,z),a}\}.
  \end{align*}

  Since $\nu=\lambda$ is atomless and for each $(x,y)$ there exists at most one $z$ such that
  $(x,y,z)\in S_a$, by Fubini's Theorem, the set $S_a$ must have zero $\gamma[\mu]\otimes\nu$-measure. On
  the other hand, by~\eqref{eq:measstep:DT} with $h\df\One_{S_a}$, we have
  \begin{align*}
    0
    & =
    (\gamma[\mu]\otimes\nu)(S_a)
    =
    \int_R \int_{\phi^{-1}(x,v)} \One_{S_a}(x',y',z')
    \ d\rho(x,v)(x',y',z')\ d\sigma(x,v)
    \\
    & =
    \int_R \rho(x,v)(\{(x,y'_{x,v,a},z'_{x,v,a})\})\ d\sigma(x,v),
  \end{align*}
  from which we conclude that for $\sigma$-almost every $(x,v)\in R$, we have
  $\rho(x,v)(\{x,y'_{x,v,a},z'_{x,v,a}\}) = 0$, which along with Claim~\ref{clm:measstep:rhoCa}
  allows us to conclude that
  \begin{align*}
    \rho(x,v)(C_a(x,v)) = a
  \end{align*}
  for $\sigma$-almost every $(x,v)\in R$.

  We now define
  \begin{align*}
    R'_a
    & \df
    \left\{(x,v)\in R\;\middle\vert\;
    \rho(x,v)\bigl(C_a(x,v)\bigr) = a\right\}
    \qquad \bigl(a\in(0,1)\bigr)
    \\
    R'
    & \df
    \bigcap_{a\in(0,1)\cap\QQ} R'_a.
  \end{align*}
  We have just proved that $\sigma(R'_a)=1$ for all $a\in(0,1)$, which also implies $\sigma(R')=1$.

  In particular, we get
  \begin{align*}
    1
    =
    \sigma(R')
    & =
    \int_R \One_{R'}(r)\ d\sigma(r)
    \\
    & =
    \int_R \int_{\phi^{-1}(r)} \One_{\phi^{-1}(R')}(s)\ d\rho(r)(s)\ d\sigma(r)
    \\
    & =
    \int_{X\times Y\times Z} \One_{\phi^{-1}(R')}(s) \ d(\gamma[\mu]\otimes\nu)(s)
    \\
    & =
    \int_X (\gamma(x)\otimes\nu)(\{(y,z)\in Y\times Z \mid \phi(x,y,z)\in R'\})\ d\mu(x)
  \end{align*}
  where the second equality follows since $\sigma(R)=1$, the third equality follows since when $r\in
  R$, we have $\rho(r)(\phi^{-1}(r))=1$, the third equality follows from~\eqref{eq:measstep:DT} with
  $h\df\One_{\phi^{-1}(R')}$ and the last equality follows from Fubini's Theorem.

  Thus, we conclude that defining
  \begin{align*}
    A'
    & \df
    \left\{x\in A \;\middle\vert\;
    (\gamma(x)\otimes\nu)\bigl(\{(y,z)\in Y\times Z \mid \phi(x,y,z)\in R'\}\bigr) = 1
    \right\},
  \end{align*}
  we have $\mu(A') = 1$, that is, item~\ref{lem:measstep:muA'} is satisfied. It is also obvious from
  the definition of $A'$ that item~\ref{lem:measstep:A'} ($A'\subseteq A$) holds.

  \medskip

  It remains to show item~\ref{lem:measstep:Ggammanu}, that is, we want to show that for every $x\in
  A'$, we have $G(x,\place,\place)_*(\gamma(x)\otimes\nu)=\Theta(f(x))$. To do so, we pick
  $(\rn{y},\rn{z})$ according to $\gamma(x)\otimes\nu$, we let $(\rn{v},\rn{w})\df
  G(x,\rn{y},\rn{z})$ and we need to show that $(\rn{v},\rn{w})$ is distributed according to
  $\Theta(f(x)) = \theta(f(x))\otimes\eta$ (recall $\eta=\lambda$).

  The hypothesis $g(x,\place)_*(\gamma(x))=\theta(f(x))$ for every $x\in A$ along with the already
  proved items~\ref{lem:measstep:A'} and~\ref{lem:measstep:piG} ($A'\subseteq A$ and
  $\pi(G(x,y,z))=g(x,y)$) implies that $\rn{v}$ is distributed according to $\theta(f(x))$. On the
  other hand, the definition of $G$ in~\eqref{eq:measstep:G} implies that for every $a\in[0,1]$, we
  have the following conditional probability equality:
  \begin{align*}
    \PP[\rn{w}\leq a \given\rn{v}]
    & =
    \PP\bigl[\rho(x,\rn{v})(L(x,\rn{y},\rn{z}))\leq a\given[\big]\rn{v}\bigr]
    =
    \rho(x,\rn{v})\bigl(C_a(x,\rn{v})\bigr).
  \end{align*}

  On the other hand, by the definition of $A'$, we know that for every $a\in(0,1)\cap\QQ$, we have
  \begin{align*}
    \PP\Bigl[\rho(x,\rn{v})\bigl(C_a(x,\rn{v})\bigr) = a\Bigr],
  \end{align*}
  from which we get that the conditional distribution of $\rn{w}$ given $\rn{v}$ is $\lambda$ (with
  probability $1$ over $\rn{v}$), so $(\rn{w},\rn{v})$ has distribution $\theta(f(x))\otimes\lambda
  = \Theta(f(x))$, as desired.

  \medskip

  Finally, as mentioned after the definition of $B''$ in~\eqref{eq:measstep:B''}, letting $B'\df
  B''\cap(A'\times Y\times Z)$ concludes the proof in the case when
  $(X,\mu)=(Z,\nu)=([0,1],\lambda)$ and $W=Y=[0,1]$.

  \bigskip

  We now prove that the general case reduces to the previous case. Let $I_X\colon[0,1]\to X$,
  $I_Z\colon [0,1]\to Z$ and $I_W\colon [0,1]\to W$ be measure-isomorphisms modulo $0$ from
  $([0,1],\lambda)$ to $(X,\mu)$, $(Z,\nu)$ and $(W,\eta)$, respectively. Since $Y$ is uncountable,
  there exists a Borel-isomorphism $J_Y\colon[0,1]\to Y$.

  Let now $\widetilde{f}\colon[0,1]\pto U$, $\widetilde{g}\colon[0,1]\times[0,1]\pto V$,
  $\widetilde{\gamma}\colon[0,1]\pto\cP([0,1])$, $\widetilde{A}\subseteq[0,1]$ and
  $\widetilde{B}\subseteq[0,1]\times[0,1]$ be given by
  \begin{gather*}
    \begin{aligned}
      \widetilde{f} & \df f\comp I_X, &
      \widetilde{g} & \df g\comp(I_X\otimes J_Y),
      \\
      \widetilde{A} & \df I_X^{-1}(A), &
      \widetilde{B} & \df (I_X\otimes J_Y)^{-1}(B),
    \end{aligned}
    \\
    \widetilde{\gamma}(x)
    \df
    (J_Y^{-1})_*\Bigl(\gamma\bigl(I_X(x)\bigr)\Bigr).
  \end{gather*}

  Let then $\widetilde{G}\colon[0,1]\times[0,1]\times[0,1]\pto V\times[0,1]$, $\widetilde{G}'\colon
  [0,1]\times V\times[0,1]\pto [0,1]$, $\widetilde{A}'\subseteq\widetilde{A}$ and
  $\widetilde{B}'\subseteq\widetilde{B}\times[0,1]$ be the $(G,G',A',B')$ provided by the
  lemma for $(\widetilde{f},\widetilde{g},\widetilde{\gamma},\widetilde{A},\widetilde{B})$ (it is
  straightforward to check that the hypotheses are satisfied and this is the already proved case).

  Then define
  \begin{align*}
    G
    & \df
    (\id_V\otimes I_W)\comp\widetilde{G}\comp(I_X^{-1}\otimes J_Y^{-1}\otimes I_Z^{-1}),
    \\
    G' & \df J_Y\comp\widetilde{G}'\comp(I_X^{-1}\otimes\id_V\otimes I_W^{-1}),
    \\
    A' & \df I_X(\widetilde{A}')\cap\dom(I_X^{-1}),
    \\
    B'
    & \df
    (I_X\otimes J_Y\otimes I_Z)(\widetilde{B}')
    \cap(\dom(I_X^{-1})\times Y\times\dom(I_Z^{-1})),
  \end{align*}

  It is straightforward to check that the conclusions of the lemma for $(G,G',A',B')$ follow from
  the conclusions of the lemma for $(\widetilde{G},\widetilde{G}',\widetilde{A}',\widetilde{B}')$.
\end{proof}

The next lemma is the key lemma of this section that will be used to formalize the construction of
the amalgamating types. It upgrades Lemma~\ref{lem:measstep} in two ways: the first is so that the
resulting $G'(x,\place,\place)$ is an almost everywhere left-inverse of $G(x,\place,\place)$ (i.e.,
it is able to retrieve both the $Y$ and $Z$ coordinates instead of only the former); the second is
an added equivariant version of the construction.

\begin{lemma}\label{lem:keymeas}
  Let $(X,\mu)$, $(Z,\nu)$ and $(W,\eta)$ be atomless standard probability spaces, let $Y$ and $U$
  be Borel spaces with $Y$ uncountable, let $f\colon X\pto U$ and $g\colon X\times Y\pto V$ be
  measurable partial functions, let $\gamma\colon X\pto\cP(Y)$ and $\theta\colon U\pto\cP(V)$ be
  partial Markov kernels with $\mu(\dom(\gamma))=1$ and let $A\subseteq\dom(f)\subseteq X$ and
  $B\subseteq\dom(g)\subseteq X\times Y$ be measurable sets with $\mu(A)=\gamma[\mu](B)=1$
  (see~\eqref{eq:rhomu}). Suppose further that $A\subseteq f^{-1}(\dom(\theta))\cap\dom(\gamma)$ and
  for every $x\in A$, we have
  \begin{align*}
    g(x,\place)_*(\gamma(x)) & = \theta(f(x)).
  \end{align*}
  Finally, let $\pi\colon V\times W\to V$ be the natural projection.

  Then there exist partial functions $G\colon X\times Y\times Z\pto V\times W$ and $G'\colon X\times
  V\times W\pto Y\times Z$ and measurable sets $A'\subseteq X$ and $B'\subseteq X\times Y\times Z$
  such that letting $\Theta\colon U\pto\cP(V\times W)$ be the partial Markov kernel given by
  \begin{align*}
    \Theta(u) & \df \theta(u)\otimes\eta \qquad (u\in U),
  \end{align*}
  the following hold:
  \begin{enumerate}
  \item\label{lem:keymeas:A'} $A'\subseteq A$.
  \item\label{lem:keymeas:B'A'} $B'\subseteq ((A'\times Y)\cap B)\times Z$.
  \item\label{lem:keymeas:B'G} $B'\subseteq\dom(G)$.
  \item\label{lem:keymeas:G'G} For every $(x,y,z)\in B'$, we have $G'(x,G(x,y,z))=(y,z)$.
  \item\label{lem:keymeas:piG} For every $(x,y,z)\in B'$, we have $\pi(G(x,y,z)) = g(x,y)$.
  \item\label{lem:keymeas:gammamunuB'} $(\gamma[\mu]\otimes\nu)(B')=1$.
  \item\label{lem:keymeas:muA'} $\mu(A')=1$.
  \item\label{lem:keymeas:Ggammanu} For every $x\in A'$, we have
    $G(x,\place,\place)_*(\gamma(x)\otimes\nu)=\Theta(f(x))$.
  \end{enumerate}

  Furthermore, if $H$ is a finite group such that the following hold:
  \begin{itemize}
  \item $H$ acts measurably on $X$, $Y$, $Z$, $U$, $V$ and $W$.
  \item There exists a measurable set $X'\subseteq X$ such that $H$ the orbits $(\sigma\cdot
    X')_{\sigma\in H}$ of $X'$ are pairwise disjoint and $\mu(\bigcup_{\sigma\in H}\sigma\cdot
    X')=1$.
  \item The $H$-actions on $Z$ and $W$ are trivial.
  \item $\mu$ is $H$-invariant.
  \item $f$, $g$, $\gamma$ and $\theta$ are $H$-equivariant (when we equip product spaces with the
    diagonal action and $\cP(V)$ and $\cP(Y)$ with the induced actions).
  \end{itemize}
  Then $G$, $G'$, $\Theta$, $A'$ and $B'$ can be taken to further satisfy:
  \begin{enumerate}[resume*]
  \item\label{lem:keymeas:invariance} $A'$ and $B'$ are $H$-invariant (with respect to the diagonal
    actions).
  \item\label{lem:keymeas:equivariance} $G$, $G'$ and $\Theta$ are $H$-equivariant (with respect to
    the diagonal actions and the induced action on $\cP(V\times W)$).
  \end{enumerate}
\end{lemma}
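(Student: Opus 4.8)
The plan is to bootstrap Lemma~\ref{lem:keymeas} from Lemma~\ref{lem:measstep} in two stages. First I would obtain the ``left-inverse upgrade'': applying Lemma~\ref{lem:measstep} gives partial functions $G_0\colon X\times Y\times Z\pto V\times W$ and $G_0'\colon X\times V\times W\pto Y$ together with $A_0'\subseteq X$, $B_0'\subseteq X\times Y\times Z$ satisfying items~\ref{lem:measstep:A'}--\ref{lem:measstep:Ggammanu}. The issue is that $G_0'$ only recovers the $Y$-coordinate, not the $Z$-coordinate. To fix this, note that for $x\in A_0'$ the map $G_0(x,\place,\place)$ pushes $\gamma(x)\otimes\nu$ forward to $\theta(f(x))\otimes\eta$, and the $W$-coordinate of $G_0(x,y,z)$ is, by construction, $\rho(\phi(x,y,z))(L(x,y,z))$ which uses the full lexicographic order on $(y,z)$; so given $(x,v,w)$ with $v=g(x,y)$ one can recover $y$ via $G_0'$ and then recover $z$ as the essentially unique value with $\rho(x,v)(L(x,y,z))=w$. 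Concretely I would apply Lemma~\ref{lem:measstep} a second time with the roles arranged so that the ``new'' $Z$-space is trivial (a one-point space, or absorb it), using $G_0$ itself in place of $g$; alternatively, and more cleanly, I would redo the proof of Lemma~\ref{lem:measstep} observing that the set $S_a$ argument and Claim~\ref{clm:measstep:rhoCa} already show that for $\sigma$-a.e.\ $(x,v)$ the function $(y,z)\mapsto\rho(x,v)(L(x,y,z))$ is injective off a $\rho(x,v)$-null set, so the natural two-variable inverse $G'(x,v,w)\df$ (the $\leq_L$-least $(y,z)$ with $\phi(x,y,z)=(x,v)$ and $\rho(x,v)(L(x,y,z))\geq w$) satisfies $G'(x,G(x,y,z))=(y,z)$ on a set $B'$ of full $\gamma[\mu]\otimes\nu$-measure. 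This costs nothing extra since the lexicographic-order machinery is already set up for pairs.

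Second I would add the equivariance. The setup gives a measurable fundamental domain $X'$ whose $H$-translates tile $X$ up to null sets, and $H$ acts trivially on $Z$ and $W$. The strategy is to build $G$, $G'$, $A'$, $B'$ \emph{only over $X'$} (using the already-proved non-equivariant part applied to the restrictions of $f,g,\gamma$ to $X'$, after checking the hypotheses still hold on this sub-domain — here one needs $\gamma[\mu\rest_{X'}]$ normalized appropriately, or one just works with the genuinely measurable partial-function formalism already adopted in this section, which tolerates non-probability mass) and then \emph{transport by the group action}: for $x\in\sigma\cdot X'$ with $\sigma\in H$, define $G(x,y,z)\df\sigma\cdot G(\sigma^{-1}\cdot x,\sigma^{-1}\cdot y,z)$ (recall the action on $Z$ is trivial), and analogously for $G'$; set $A'\df\bigcup_{\sigma\in H}\sigma\cdot(A'\cap X')$ and likewise for $B'$. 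Since the $\sigma\cdot X'$ are pairwise disjoint up to null sets, this is well-defined almost everywhere; $H$-equivariance of $G$ and $G'$ is then immediate from the definition, and $H$-equivariance of $\Theta$ follows from that of $\theta$ (together with triviality of the $H$-action on $W=$ the $\eta$-factor, which is preserved under taking $\theta(u)\otimes\eta$ because $H$ fixes $\eta$). $H$-invariance of $A'$ and $B'$ is built into the union definition. Properties~\ref{lem:keymeas:A'}--\ref{lem:keymeas:Ggammanu} for the transported objects follow from the corresponding properties over $X'$ by applying $\sigma$ and using $H$-invariance of $\mu$ and $H$-equivariance of $f$, $g$, $\gamma$, $\theta$, $\pi$: e.g.\ for item~\ref{lem:keymeas:Ggammanu}, $G(x,\place,\place)_*(\gamma(x)\otimes\nu)=\sigma\cdot(G(\sigma^{-1}x,\place,\place)_*(\gamma(\sigma^{-1}x)\otimes\nu))=\sigma\cdot\Theta(f(\sigma^{-1}x))=\Theta(\sigma\cdot f(\sigma^{-1}x))=\Theta(f(x))$.

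The main obstacle I anticipate is the interaction between the fundamental-domain restriction and the measure-theoretic hypotheses of Lemma~\ref{lem:measstep}: restricting $\gamma$ to $X'$ produces sub-probability masses, and the disintegration/Lebesgue-differentiation arguments inside Lemma~\ref{lem:measstep} were phrased for probability spaces. The fix is to exploit the partial-function and partial-Markov-kernel formalism already introduced in this section — the statement explicitly allows $\dom$ to be a measurable set and the Disintegration Theorem to run on almost-everywhere-defined maps — so that one runs the construction on the (probability) space $(X,\mu)$ globally but \emph{chooses all the measurable selections $H$-equivariantly}: rather than literally restricting to $X'$, one observes that every step of the proof of Lemma~\ref{lem:measstep} is a measurable selection (the disintegration kernel $\rho$, the lexicographic infima defining $G,G'$, the sets $R,R',A',B'$), and measurable selections can be made $H$-equivariantly by making them on $X'$ and translating. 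So the cleanest writeup is: re-examine the proof of Lemma~\ref{lem:measstep}, note each choice-point, perform it over the fundamental domain $X'$ and extend by the action; the verification that this extension still has full measure and satisfies all eight numbered items is then a routine translation argument using $H$-invariance of $\mu$. A secondary, more bookkeeping-heavy obstacle is simply checking that the diagonal $H$-actions on the various product spaces $X\times Y\times Z$, $X\times V\times W$, $U$, $V\times W$ are the ones making all the maps in items~\ref{lem:keymeas:G'G}--\ref{lem:keymeas:Ggammanu} equivariant simultaneously; this is purely formal given the hypotheses that $f,g,\gamma,\theta$ are equivariant and the $Z,W$ actions are trivial.
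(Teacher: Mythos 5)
The equivariance half of your plan is essentially the paper's own argument: the paper restricts to the fundamental domain $X'$, rescales to the probability measure $\mu_H\df\lvert H\rvert\cdot\mu\rest_{X'}$ after first replacing $A,B$ by $\bigcap_{\sigma\in H}(\sigma\cdot A)\cap X'$ and $\bigcap_{\sigma\in H}(\sigma\cdot B)\cap(X'\times Y)$, applies the non-equivariant statement there, and transports by the unique $\tau_x\in H$ with $\tau_x\cdot x\in X'$, exactly as you describe; the normalization worry you raise is resolved by this rescaling, and your verification sketch for items~\ref{lem:keymeas:A'}--\ref{lem:keymeas:Ggammanu} is the right routine computation.

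The genuine gap is in your first stage. Your ``cleaner'' route rests on the claim that $(y,z)\mapsto\rho(x,v)\bigl(L(x,y,z)\bigr)$ is injective off a $\rho(x,v)$-null set of the fiber; this is false, and in the intended application it fails generically. Since $\phi$ ignores the $z$-coordinate, $\rho(x,v)$ is (conditional law of $y$)\,$\otimes\,\nu$ on the fiber, so the lexicographic CDF equals the CDF of the $y$-marginal plus an atom term; wherever the $y$-marginal is atomless the CDF does not depend on $z$ at all (for $\rho(x,v)=\lambda\otimes\lambda$ on $[0,1]^2$ it is just $(y,z)\mapsto y$). Claim~\ref{clm:measstep:rhoCa} and the $S_a$ argument only show that this CDF pushes $\rho(x,v)$ forward to the uniform law, which is perfectly compatible with it being very far from injective. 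The failure is moreover information-theoretic, not a defect of your formula for $G'$: the single-application $G_0$ has already discarded $z$ (its $W$-coordinate \emph{is} that CDF value), so no $G'$ whatsoever can satisfy item~\ref{lem:keymeas:G'G} for this $G_0$ --- one must change $G$ itself. Your fallback, a second application of Lemma~\ref{lem:measstep} with a trivial new $Z$, is also unavailable: the lemma requires $(Z,\nu)$ atomless, and the trivial-$Z$ statement is false whenever the conditionals of $\gamma(x)$ over $g(x,\place)$ have atoms (one cannot then manufacture the independent uniform $W$-coordinate from $y$ alone); with a nontrivial new $Z_1$ you are back where you started, with a fresh unrecovered dummy. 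What the paper actually does is a countable iteration: it reduces to $(Z,\nu)\cong(\widetilde{Z}^{\NN_+},\widetilde{\nu}^{\NN_+})$ and $(W,\eta)\cong(\widetilde{W}^{\NN_+},\widetilde{\eta}^{\NN_+})$, applies Lemma~\ref{lem:measstep} at stage $n$ with $Y\leftarrow Y\times\widetilde{Z}^n$, $g\leftarrow G_{n-1}$, $Z\leftarrow\widetilde{Z}$, $W\leftarrow\widetilde{W}$, so that each $G'_n$ recovers the previous stage's enlarged ``$Y$''; assembling the stage-$n$ $W$-coordinates into $w\in\widetilde{W}^{\NN_+}$ and telescoping the $G'_n$ yields a $G$ together with a genuine a.e.\ left inverse $G'$ recovering all of $(y,z)$, after which measure isomorphisms transfer the result to general $(Z,\nu)$ and $(W,\eta)$. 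Without this iteration (or some equivalent device that re-encodes $z$ into the output of $G$), your first stage does not go through.
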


\begin{proof}
  We will first show all the statements that do not involve the group $H$.
  
  We first prove the case when $(Z,\nu) = (\widetilde{Z}^{\NN_+},\widetilde{\nu}^{\NN_+})$ and
  $(W,\eta) = (\widetilde{W}^{\NN_+},\widetilde{\eta}^{\NN_+})$ for some atomless standard
  probability spaces $(\widetilde{Z},\widetilde{\nu})$ and $(\widetilde{W},\widetilde{\eta})$.

  For every $m,n\in\NN$ with $m\geq n$, let $\tau_{m,n}\colon\widetilde{Z}^m\to\widetilde{Z}^n$,
  $\tau_{\NN_+,n}\colon\widetilde{Z}^{\NN_+}\to\widetilde{Z}^n$,
  $\pi_{m,n}\colon\widetilde{W}^m\to\widetilde{W}^n$ and
  $\pi_{\NN_+,n}\colon\widetilde{W}^{\NN_+}\to\widetilde{W}^n$ be the projections onto the first $n$
  coordinates.

  Define
  \begin{align*}
    G_{-1} & \df g, &
    \Gamma_{-1} & \df \gamma, &
    \Theta_{-1} & \df \theta,
    \\
    A_{-1} & \df A, &
    B_{-1} & \df B
  \end{align*}
  and let us construct inductively in $n\in\NN$ sequences of measurable partial functions $G_n\colon
  X\times Y\times\widetilde{Z}^{n+1}\pto V\times\widetilde{W}^{n+1}$ and $G'_n\colon X\times
  V\times\widetilde{W}^{n+1}\pto X\times Y\times\widetilde{Z}^n$, sequences of partial Markov
  kernels $\Gamma_n\colon X\pto\cP(Y\times\widetilde{Z}^{n+1})$ and $\Theta_n\colon
  U\pto\cP(V\times\widetilde{W}^{n+1})$ and sequences of measurable sets $A_n\subseteq X$ and
  $B_n\subseteq X\times Y\times\widetilde{Z}^{n+1}$ such that the following hold:
  \begin{enumerate}[label={\Roman*.}, ref={(\Roman*)}]
  \item\label{lem:keymeas:An} $A_n\subseteq A_{n-1}$.
  \item\label{lem:keymeas:BnAn} $B_n\subseteq ((A_n\times Y\times\widetilde{Z}^n)\cap
    B_{n-1})\times\widetilde{Z}$.
  \item\label{lem:keymeas:BnGn} $B_n\subseteq\dom(G_n)$.
  \item\label{lem:keymeas:G'nGn} For every $(x,y,z)\in B_n$, we have $G'_n(x,G_n(x,y,z)) =
    (y,\tau_{n+1,n}(z))$.
  \item\label{lem:keymeas:piGn} For every $(x,y,z)\in B_n$, we have
    $(\id_V\otimes\pi_{n+1,n})(G_n(x,y,z)) = G_{n-1}(x,y,\tau_{n+1,n}(z))$.
  \item\label{lem:keymeas:GammanmuBn} $\Gamma_n[\mu](B_n)=1$.
  \item\label{lem:keymeas:muAn} $\mu(A_n)=1$.
  \item\label{lem:keymeas:GnGammanThetan} For every $x\in A_n$, we have
    $G_n(x,\place,\place)_*(\Gamma_n(x)) = \Theta_n(f(x))$.
  \item\label{lem:keymeas:Andomf} $A_n\subseteq\dom(f)$.
  \item\label{lem:keymeas:mudomGamman} $\mu(\dom(\Gamma_n))=1$.
  \end{enumerate}

  Assume that we are given $(G_{n-1},\Gamma_{n-1},\Theta_{n-1},A_{n-1},B_{n-1})$ satisfying
  items~\ref{lem:keymeas:BnGn}, \ref{lem:keymeas:GammanmuBn}, \ref{lem:keymeas:muAn},
  \ref{lem:keymeas:GnGammanThetan}, \ref{lem:keymeas:Andomf} and~\ref{lem:keymeas:mudomGamman} (note
  that these indeed hold for $(G_{-1},\Gamma_{-1}, \Theta_{-1}, A_{-1}, B_{-1})$) and note that the
  hypotheses of Lemma~\ref{lem:measstep} are satisfied with the following choice of parameters:
  \begin{align*}
    (X,\mu) & \leftarrow (X,\mu), &
    (Z,\nu) & \leftarrow (\widetilde{Z},\widetilde{\nu}), &
    Y & \leftarrow Y\times\widetilde{Z}^n,
    \\
    U & \leftarrow U, &
    V & \leftarrow V\times\widetilde{W}^n, &
    (W,\eta) & \leftarrow (\widetilde{W},\widetilde{\eta}),
    \\
    f & \leftarrow f, &
    g & \leftarrow G_{n-1}, &
    \gamma & \leftarrow \Gamma_{n-1},
    \\
    \theta & \leftarrow \Theta_{n-1}, &
    A & \leftarrow A_{n-1}, &
    B & \leftarrow B_{n-1}.
  \end{align*}

  Let then $(G_n,G'_n,\Theta_n,A_n,B_n)$ be the $(G,G',\Theta,A',B')$ provided by
  Lemma~\ref{lem:measstep} with the choice of parameters above. Define also the partial Markov
  kernel $\Gamma_n\colon X\pto\cP(Y\times\widetilde{Z}^{n+1})$ by
  \begin{align*}
    \Gamma_n(x) & \df \Gamma_{n-1}(x)\otimes\widetilde{\nu} \qquad (x\in X)
  \end{align*}
  (which by induction is equivalent to $\Gamma_n(x) = \gamma(x)\otimes\widetilde{\nu}^{n+1}$). Note
  that we have the following implications between the items of Lemma~\ref{lem:measstep} and the
  items required by our inductive construction:
  \begin{center}
    \begin{tabular}{*{4}{r@{$\implies$}l}}
    \ref{lem:measstep:A'} & \ref{lem:keymeas:An}, &
    \ref{lem:measstep:B'A'} & \ref{lem:keymeas:BnAn}, &
    \ref{lem:measstep:B'G} & \ref{lem:keymeas:BnGn}, &
    \ref{lem:measstep:G'G} & \ref{lem:keymeas:G'nGn},
    \\
    \ref{lem:measstep:piG} & \ref{lem:keymeas:piGn}, &
    \ref{lem:measstep:gammamunuB'} & \ref{lem:keymeas:GammanmuBn}, &
    \ref{lem:measstep:muA'} & \ref{lem:keymeas:muAn}, &
    \ref{lem:measstep:Ggammanu} & \ref{lem:keymeas:GnGammanThetan}.
    \end{tabular}
  \end{center}

  Note also that item~\ref{lem:keymeas:Andomf} follows from the same item for $A_{n-1}$ and the
  already proven item~\ref{lem:keymeas:An} and item~\ref{lem:keymeas:mudomGamman} follows from the
  same item for $\Gamma_{n-1}$.

  \medskip

  Define then the sets
  \begin{align*}
    A' & \df \bigcap_{n\in\NN} A_n,
    &
    B' & \df \bigcap_{n\in\NN} (\id_X\otimes\id_Y\otimes\tau_{\NN_+,n})^{-1}(B_{n-1}).
  \end{align*}
  Since item~\ref{lem:keymeas:An} implies $A_n\subseteq A_{-1}=A$ for every $n\in\NN$,
  item~\ref{lem:keymeas:A'} ($A'\subseteq A)$ holds.

  An inductive application of item~\ref{lem:keymeas:BnAn} shows that item~\ref{lem:keymeas:B'A'}
  ($B'\subseteq((A'\times Y)\cap B)\times Z$) holds.

  Define the measurable partial function $G\colon X\times Y\times Z\pto V\times W$ by letting
  \begin{align*}
    G(x,y,z) & \df \bigl(g(x,y), w\bigr) \qquad (x\in X, y\in Y, z\in Z),\\
    w_n & \df G_{n-1}(x,y,\tau_{\NN_+,n}(z))_n \qquad (n\in\NN_+).
  \end{align*}
  Note that item~\ref{lem:keymeas:BnGn} implies item~\ref{lem:keymeas:B'G} ($B'\subseteq\dom(G)$)
  holds.

  Define now the measurable partial function $G'\colon X\times V\times W\pto Y\times Z$ by letting
  \begin{align*}
    G'(x,v,w)
    & \df
    \Bigl(G'_0\bigl(x,v,\pi_{\NN_+,1}(w)\bigr), z) \qquad (x\in X, v\in V, w\in W),
    \\
    z_n
    & \df
    \widetilde{\tau}_n\Bigl(G'_n\bigl(x,v,\pi_{\NN_+,n+1}(w)\bigr)\Bigr)
    \qquad (n\in\NN),
  \end{align*}
  where $\widetilde{\tau}_n\colon Y\times\widetilde{Z}^n\to\widetilde{Z}$ is the projection onto the
  last coordinate of $\widetilde{Z}^n$. A simple induction shows that item~\ref{lem:keymeas:G'nGn}
  implies item~\ref{lem:keymeas:G'G} ($G'(x,(G(x,y,z)))=(y,z)$ whenever $(x,y,z)\in B'$) holds.

  It is obvious from the definition of $G$ that item~\ref{lem:keymeas:piG} ($\pi(G(x,y,z))=g(x,y)$
  whenever $(x,y,z)\in B'$) holds.

  Since $\Gamma_n(x)=\gamma(x)\otimes\widetilde{\nu}^{n+1}$, an inductive application of
  item~\ref{lem:keymeas:GammanmuBn} implies item~\ref{lem:keymeas:gammamunuB'}
  ($(\gamma[\mu]\otimes\nu)(B')=1$) holds.

  It is also clear that item~\ref{lem:keymeas:muAn} implies that item~\ref{lem:keymeas:muA'}
  ($\mu(A')=1$) holds.

  Recalling now that the partial Markov kernel $\Theta\colon U\pto\cP(V\times W)$ is given by
  \begin{align*}
    \Theta(u) & \df \theta(u)\otimes\eta = \theta(u)\otimes\widetilde{\eta}^{\NN_+} \qquad (u\in U),
  \end{align*}
  by item~\ref{lem:keymeas:GnGammanThetan}, it follows that item~\ref{lem:keymeas:Ggammanu}
  ($G(x,\place,\place)_*(\gamma(x)\otimes\nu)=\Theta(f(x))$) holds.

  \medskip

  Let us now prove that the general case reduces to the previous case. Let $I_Z\colon Z^{\NN_+}\to
  Z$ and $I_W\colon W^{\NN_+}\to W$ be measure-isomorphisms modulo $0$ from
  $(Z^{\NN_+},\nu^{\NN_+})$ to $(Z,\nu)$ and from $(W^{\NN_+},\eta^{\NN_+})$ to $(W,\eta)$.

  Let $\widetilde{G}\colon X\times Y\times Z^{\NN_+}\pto V\times W^{\NN_+}$, $\widetilde{G}'\colon
  X\times V\times W^{\NN+}\pto Y\times Z^{\NN_+}$, $\widetilde{A}'\subseteq A$,
  $\widetilde{B}'\subseteq B\times Z^{\NN_+}$ be the $(G,G',A',B')$ provided by the lemma in the
  previous case.

  It is then straightforward to check that setting
  \begin{gather*}
    \begin{aligned}
      G & \df (\id_V\otimes I_W)\comp\widetilde{G}\comp(\id_X\otimes\id_Y\otimes I_Z^{-1}), &
      G' & \df (\id_Y\otimes I_Z)\comp\widetilde{G}'\comp(\id_X\otimes\id_V\otimes I_W^{-1}),
    \end{aligned}
    \\
    \begin{aligned}
      A' & \df\widetilde{A}', &
      B' & \df (\id_X\otimes\id_Y\otimes I_Z)(\widetilde{B}')\cap(X\times Y\times\dom(I_Z^{-1}))
    \end{aligned}
  \end{gather*}
  gives the desired result.

  \medskip

  We now prove that the equivariant version of the lemma involving the group $H$ reduces to the
  non-equivariant version.

  First let
  \begin{align*}
    A_H & \df \bigcap_{\sigma\in H} (\sigma\cdot A)\cap X', &
    B_H & \df \bigcap_{\sigma\in H} (\sigma\cdot B)\cap (X'\times Y).
  \end{align*}
  It will also be convenient to set $X_H\df\bigcup_{\sigma\in H}\sigma\cdot X'$.

  Note that since the orbits $(\sigma\cdot X')_{\sigma\in H}$ of $X'$ are pairwise disjoint and
  $\mu(X_H)=1$, it follows that
  \begin{align*}
    \mu(A_H) = \gamma[\mu](B_H) = \mu(X') & = \frac{1}{\lvert H\rvert}.
  \end{align*}

  Note that for all measurable $C\subseteq X$ and all measurable $D\subseteq Y$, we have
  \begin{align*}
    \gamma[\mu](C\times D)
    & =
    \int_C \gamma(x)(D)\ d\mu(x)
    =
    \int_C \gamma(\sigma\cdot x)(\sigma\cdot D)\ d\mu(x)
    \\
    & =
    \int_{\sigma\cdot C} \gamma(z)(\sigma\cdot D)\ d\mu(z)
    =
    \gamma[\mu](\sigma\cdot(C\times D)),
  \end{align*}
  where the second equality follows since $\gamma$ is $H$-equivariant and the third equality follows
  since $\mu$ is $H$-invariant. Thus $\gamma[\mu]$ is $H$-invariant. In particular, since
  $\gamma[\mu](B)=1$, it follows that $\gamma[\mu](\bigcap_{\sigma\in H}\sigma\cdot B)=1$.

  We now let $\mu_H$ be the probability measure on $X'$ that is the rescaled restriction of $\mu$,
  that is, $\mu_H\df\lvert H\rvert\cdot\mu\rest_{X'}$. It is also clear from definitions that
  $\gamma\rest_{X'}[\mu_H]=\lvert H\rvert\cdot\gamma[\mu]\rest_{X'\times Y}$. In particular, we have
  \begin{align*}
    \gamma\rest_{X'}[\mu_H](B_H)
    & =
    \lvert H\rvert\cdot
    \gamma[\mu]\left(\bigcap_{\sigma\in H}(\sigma\cdot B)\cap (X'\times Y)\right)
    =
    \lvert H\rvert\cdot
    \gamma[\mu](X'\times Y)
    =
    1,
  \end{align*}
  where the second equality follows since $\gamma[\mu](\bigcap_{\sigma\in H}\sigma\cdot B)=1$ as
  $\gamma[\mu](B)=1$ and $\gamma[\mu]$ is $H$-invariant.

  This means that we can apply the current lemma without the equivariance/invariance conclusions
  with the following choice of parameters:
  \begin{align*}
    (X,\mu) & \leftarrow (X',\mu_H), &
    (Z,\nu) & \leftarrow (Z,\nu), &
    Y & \leftarrow Y,
    \\
    U & \leftarrow U, &
    V & \leftarrow V, &
    (W,\eta) & \leftarrow (W,\eta),
    \\
    f & \leftarrow f\rest_{X'}, &
    g & \leftarrow g\rest_{X'\times Y}, &
    \gamma & \leftarrow \gamma\rest_{X'}
    \\
    \theta & \leftarrow \theta, &
    A & \leftarrow A_H, &
    B & \leftarrow B_H.
  \end{align*}

  Let then $(G_H,G'_H,A'_H,B'_H)$ be the $(G,G',A',B')$ provided by the lemma with
  the choice of parameters above.

  We then define $(G,G',A',B')$ by
  \begin{align*}
    G(x,y,z)
    & \df
    \tau_x^{-1}\cdot G_H\bigl(\tau_x\cdot (x,y,z)\bigr)
    \qquad (x\in X_H, y\in Y, z\in Z)
    \\
    G'(x,v,w)
    & \df
    \tau_x^{-1}\cdot G'_H\bigl(\tau_x\cdot (x,v,w)\bigr),
    \qquad (x\in X_H, v\in V, w\in W),
    \\
    A' & \df \bigcup_{\sigma\in H} \sigma\cdot A'_H,
    \\
    B' & \df \bigcup_{\sigma\in H} \sigma\cdot B'_H,
  \end{align*}
  where $\tau_x\in H$ is the unique group element such that $\tau_x\cdot x\in X'$ (recall that the
  orbits $(\sigma\cdot X')_{\sigma\in H}$ of $X'$ are pairwise disjoint).

  It is obvious that $A'$ and $B'$ are $H$-invariant (i.e., item~\ref{lem:keymeas:invariance}
  holds). Note also that for $x\in X_H$ and $\sigma\in H$, we have $\tau_{\sigma\cdot x} =
  \tau_x\comp\sigma^{-1}$, from which it follows that
  \begin{align*}
    G\bigl(\sigma\cdot(x,y,z)\bigr)
    & =
    \tau_{\sigma\cdot x}^{-1}\cdot G_H\bigl((\tau_{\sigma\cdot x}\comp\sigma)\cdot(x,y,z)\bigr)
    =
    \sigma\cdot\Bigl(\tau_x^{-1}\cdot G_H\bigl(\tau_x\cdot(x,y,z)\bigr)\Bigr)
    =
    \sigma\cdot G(x,y,z),
    \\
    G'\bigl(\sigma\cdot(x,v,w)\bigr)
    & =
    \tau_{\sigma\cdot x}^{-1}\cdot G'_H\bigl((\tau_{\sigma\cdot x}\comp\sigma)\cdot(x,v,w)\bigr)
    =
    \sigma\cdot\Bigl(\tau_x^{-1}\cdot G'_H\bigl(\tau_x\cdot(x,v,w)\bigr)\Bigr)
    =
    \sigma\cdot G'(x,v,w),
  \end{align*}
  for every $y\in Y$, every $z\in Z$, every $v\in V$ and every $w\in W$. Thus, $G$ and $G'$ are
  $H$-equivariant. Finally, the definition of $\Theta$ along with the fact that $\theta$ is
  $H$-equivariant and $\eta$ is $H$-invariant (as the $H$-action on $W$ is trivial) implies that
  $\Theta$ is $H$-equivariant. Thus, item~\ref{lem:keymeas:equivariance} holds.

  Let us now check that all items not concerning invariance/equivariance follow from their
  counterparts for $(G_H,G'_H,A'_H,B'_H)$.

  Item~\ref{lem:keymeas:A'} is straightforward:
  \begin{align*}
    A'
    & =
    \bigcup_{\sigma\in H} \sigma\cdot A'_H
    \subseteq
    \bigcup_{\sigma\in H} \sigma\cdot A_H
    =
    \left(\bigcap_{\sigma\in H} \sigma\cdot A\right)\cap X_H
    \subseteq
    A.
  \end{align*}

  Item~\ref{lem:keymeas:B'A'} is also straightforward:
  \begin{align*}
    B'
    & =
    \bigcup_{\sigma\in H} \sigma\cdot B'_H
    \subseteq
    \bigcup_{\sigma\in H} \sigma\cdot \Bigl(\bigl((A'_H\times Y)\cap B_H\bigr)\times Z\Bigr)
    \\
    & =
    \left(\left(\left(\bigcup_{\sigma\in H} \sigma\cdot A'_H\right)\times Y\right)
    \cap B_H\right)\times Z
    \\
    & =
    \left((A'\times Y)\cap
    \left(\bigcap_{\sigma\in H}\sigma\cdot B\right)\cap (X_H\times Y)\right)
    \times Z
    \\
    & \subseteq
    \bigl((A'\times Y)\cap B\bigr)\times Z.
  \end{align*}

  Item~\ref{lem:keymeas:B'G} is straightforward too:
  \begin{align*}
    \dom(G)
    & =
    \bigcup_{\sigma\in H}\sigma\cdot\dom(G_H)
    \supseteq
    \bigcup_{\sigma\in H} \sigma\cdot B'_H
    =
    B'.
  \end{align*}

  For item~\ref{lem:keymeas:G'G}, if $(x,y,z)\in B'$, then we have $\tau_x\cdot(x,y,z)\in B'_H$,
  so we get
  \begin{align*}
    G'\bigl(x,G(x,y,z)\bigr)
    & =
    \tau_x^{-1}\cdot\Biggl(
    G'\biggl(\tau_x\cdot\Bigl(x,\tau_x^{-1}\cdot G\bigl(\tau_x\cdot(x,y,z)\bigr)\Bigr)\biggr)
    \Biggr)
    \\
    & =
    \tau_x^{-1}\cdot\biggl(
    G'\Bigl(\tau_x\cdot x, G\bigl(\tau_x\cdot x, \tau_x\cdot(y,z)\bigr)\Bigr)
    \biggr)
    \\
    & =
    \tau_x^{-1}\cdot\bigl(\tau_x\cdot(y,z)\bigr)
    =
    (y,z).
  \end{align*}

  For item~\ref{lem:keymeas:piG}, if $(x,y,z)\in B'$, then (since $\tau_x\cdot(x,y,z)\in B'_H$)
  we have
  \begin{align*}
    \pi\bigl(G(x,y,z)\bigr)
    & =
    \pi\biggl(\tau_x^{-1}\cdot\Bigl(G\bigl(\tau_x\cdot(x,y,z)\bigr)\Bigr)\biggr)
    =
    \tau_x^{-1}\cdot\pi\Bigl(G\bigl(\tau_x\cdot(x,y),\tau_x\cdot z\bigr)\Bigr)
    \\
    & =
    \tau_x^{-1}\cdot g\bigl(\tau_x\cdot(x,y)\bigr)
    =
    g(x,y),
  \end{align*}
  where the last equality follows since $g$ is $H$-equivariant.

  For item~\ref{lem:keymeas:gammamunuB'}, we have
  \begin{align*}
    (\gamma[\mu]\otimes\nu)(B')
    & =
    (\gamma[\mu]\otimes\nu)\left(\bigcup_{\sigma\in H}\sigma\cdot B'_H\right)
    =
    \lvert H\rvert\cdot(\gamma[\mu]\otimes\nu)(B'_H)
    =
    (\gamma\rest_{X'}[\mu_H]\otimes\nu)(B'_H)
    =
    1,
  \end{align*}
  where the second equality follows since $\gamma[\mu]$ is $H$-invariant, the $H$-action on $Z$ is
  trivial and the orbits $(\sigma\cdot X')_{\sigma\in H}$ of $X'$ are pairwise disjoint.

  For item~\ref{lem:keymeas:muA'}, we have
  \begin{align*}
    \mu(A')
    & =
    \mu\left(\bigcup_{\sigma\in H}\sigma\cdot A'_H\right)
    =
    \lvert H\rvert\cdot\mu(A'_H)
    =
    \mu_H(A'_H)
    =
    1,
  \end{align*}
  where the second equality follows since $\mu$ is $H$-invariant and $A'_H\subseteq A_H\subseteq X'$
  and the orbits $(\sigma\cdot X')_{\sigma\in H}$ of $X'$ are pairwise disjoint.

  Finally, for item~\ref{lem:keymeas:Ggammanu}
  ($G(x,\place,\place)_*(\gamma(x)\otimes\nu)=\Theta(f(x))$ whenever $x\in A'$), we need to show
  that if $x\in A'$ and $(\rn{y},\rn{z})\sim(\gamma(x)\otimes\nu)$ then
  $G(x,\rn{y},\rn{z})\sim\Theta(f(x))$.

  Indeed, first note that since $\gamma$ is $H$-equivariant and $\nu$ is $H$-invariant (as the
  $H$-action on $Z$ is trivial), it follows that
  \begin{align*}
    \tau_x\cdot(\rn{y},\rn{z})
    & \sim
    \tau_x\cdot(\gamma(x)\otimes\nu)
    =
    \gamma(\tau_x\cdot x)\otimes\nu
  \end{align*}
  so we get
  \begin{align*}
    G(x,\rn{y},\rn{z})
    & =
    \tau_x^{-1}\cdot G_H\bigl(\tau_x\cdot(x,\rn{y},\rn{z})\bigr)
    \sim
    \tau_x^{-1}\cdot\Theta\bigl(f\rest_{X'}(\tau_x\cdot x)\bigr)
    \\
    & =
    \tau_x^{-1}\cdot(\theta\bigl(f(\tau_x\cdot x)\bigr)\otimes\eta)
    =
    \theta(f(x))\otimes\eta
    =
    \Theta(f(x)),
  \end{align*}
  where the distributional equality follows from item~\ref{lem:keymeas:Ggammanu} applied to
  $(G_H,\gamma\rest_{X'})$ as $\tau_x\cdot x\in X'$, the second equality follows from the definition
  of $\Theta$, the third equality follows since $f$ and $\theta$ are $H$-equivariant and $\eta$ is
  $H$-invariant (as the $H$-action on $W$ is trivial).
\end{proof}

\section{Amalgamating types}
\label{sec:amlg}

In this section we finally define amalgamating types and prove its amalgamation property. Before we
give the definition, let us point out that one more technical detail was not covered in
Section~\ref{sec:meas}. Recall that the whole purpose of sampling via amalgamating types is that we
want $(\ell-1)$-independent $d$-Euclidean structures $\cN$ satisfying $\UCouple[\ell]$ to have a
unique amalgamating $\ell$-type. We have seen in Proposition~\ref{prop:typeuniqueness} that such
$\cN$ have a unique overlapping $\ell$-type, which in particular implies that they also have a
unique dissociated $\ell$-type; since our intuition is that amalgamating $\ell$-types carry an
intermediate amount of information between their dissociated and overlapping counterparts, we would
expect that they are also unique. However, since to make Lemma~\ref{lem:keymeas} go through we had
encoded (potential) extra information within the set $[0,1]$, it is impossible to get uniqueness
from the output of Lemma~\ref{lem:keymeas}.

We solve this issue by being pragmatic again: since Proposition~\ref{prop:typeuniqueness} says that
an $(\ell-1)$-independent $d$-Euclidean structure $\cN$ satisfying $\UCouple[\ell]$ has a unique
overlapping $\ell$-type, we can define amalgamating $A$-types in $\cN$ by splitting into cases: if
$\lvert A\rvert\leq\ell$, then we simply let the amalgamating $A$-type be the overlapping $A$-type
and if $\lvert A\rvert > \ell$, we actually apply Lemma~\ref{lem:keymeas} (in both cases, we also
add information about lower amalgamating types). Since we also have to keep track of the additional
countable set $V$ of the dissociated and overlapping types and the same $\cN$ can satisfy
$(\ell-1)$-independence and $\UCouple[\ell]$ for multiple different $\ell$, we call this
construction the amalgamating $(A,V,\ell)$-type.

Finally, we point out that since Lemma~\ref{lem:keymeas} requires the addition of dummy variables,
our definition of $(A,V,\ell)$-type requires us to go from $\cN$ to an auxiliary $\widehat{\cN}$
that has extra dummy variables.

Before we formalize this construction, let us introduce some terminology regarding partial
functions: we say that a diagram of partial functions is \emph{weakly commutative} if for any two
composition paths $(\alpha_1,\ldots,\alpha_n)$ and $(\beta_1,\ldots,\beta_m)$ between from the same
$A$ to the same $B$, we have
\begin{align*}
  (\alpha_n\comp\cdots\comp\alpha_1)\rest_D & = (\beta_m\comp\cdots\comp\beta_1)_D,
\end{align*}
where
\begin{align*}
  D & \df \dom(\alpha_n\comp\cdots\comp\alpha_1)\cap\dom(\beta_m\comp\cdots\comp\beta_1)_D,
\end{align*}
that is, the two compositions match in their common domain (but might not match as partial functions
because of points in which only one of them is defined); weak equivariance is defined
similarly. Since in all of our weakly commutative diagrams the partial functions will be defined
almost everywhere, these can be thought of ``almost everywhere commutative diagrams''.

We will also need the following invariant/equivariant version of Lemma~\ref{lem:mk}:

\begin{lemma}\label{lem:mkequiv}
  Let $\Omega_X$, $\Omega_Y$ and $\Omega_Z$ be Borel spaces, let $\rn{X}$, $\rn{Y}$ and $\rn{Z}$ be
  random variables with values in $\Omega_X$, $\Omega_Y$ and $\Omega_Z$, respectively. Suppose also
  that $H$ is a countable group acting measurably on $\Omega_X$, $\Omega_Y$ and $\Omega_Z$ and that
  the distribution $\mu_Z$ of $\rn{Z}$ is $H$-invariant. Then the following are equivalent:
  \begin{enumerate}
  \item\label{lem:mkequiv:ci} $\rn{X}$ is conditionally independent from $\rn{Y}$ given $\rn{Z}$ and the
    distribution of $(\rn{X},\rn{Z})$ is $H$-invariant.
  \item\label{lem:mkequiv:product} There exists an $H$-equivariant partial Markov kernel
    $\rho\colon\Omega_Z\pto\cP(\Omega_X)$ with $\mu_Z(\dom(\rho))=1$ such that if
    $\rn{\widetilde{X}}$ is picked at random according to $\rho(\rn{Z})$ conditionally on $\rn{Z}$
    and conditionally independently from $\rn{Y}$, then
    $(\rn{\widetilde{X}},\rn{Y},\rn{Z})\sim(\rn{X},\rn{Y},\rn{Z})$.
  \end{enumerate}
\end{lemma}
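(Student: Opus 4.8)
The plan is to deduce this equivariant version from the non-equivariant Lemma~\ref{lem:mk} by an averaging/symmetrization argument over the group $H$, using the fact that $H$ is countable and the distribution of $(\rn{X},\rn{Z})$ is already $H$-invariant. The implication \ref{lem:mkequiv:product}$\implies$\ref{lem:mkequiv:ci} is immediate as before: conditional independence of $\rn{\widetilde X}$ and $\rn{Y}$ given $\rn{Z}$ is built in, and $H$-invariance of the distribution of $(\rn{\widetilde X},\rn{Z})$ follows because $\rn{\widetilde X}\sim\rho(\rn{Z})$ with $\rho$ $H$-equivariant and $\mu_Z$ $H$-invariant; since $(\rn{\widetilde X},\rn{Y},\rn{Z})\sim(\rn{X},\rn{Y},\rn{Z})$, the distribution of $(\rn{X},\rn{Z})$ is $H$-invariant too.

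For the main direction \ref{lem:mkequiv:ci}$\implies$\ref{lem:mkequiv:product}, first I would apply Lemma~\ref{lem:mk} to obtain a (total, non-equivariant) Markov kernel $\rho_0\colon\Omega_Z\to\cP(\Omega_X)$ with the disintegration property $\PP[\rn{X}\in A,\rn{Z}\in C]=\int_C\rho_0(z)(A)\,d\mu_Z(z)$ for all measurable $A\subseteq\Omega_X$, $C\subseteq\Omega_Z$. Next I would symmetrize: define
\begin{align*}
  \rho(z) & \df \frac{1}{\lvert H\rvert}\sum_{\sigma\in H}\sigma\cdot\bigl(\rho_0(\sigma^{-1}\cdot z)\bigr)
\end{align*}
when $H$ is finite, and in the countable case replace the uniform average by a convergent weighted average $\sum_{\sigma\in H}w_\sigma\,\sigma\cdot(\rho_0(\sigma^{-1}\cdot z))$ with $w_\sigma>0$, $\sum_\sigma w_\sigma=1$ — except that to preserve the disintegration identity on the nose I instead argue as follows. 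Because the distribution $\mu_{XZ}$ of $(\rn{X},\rn{Z})$ is $H$-invariant and $\mu_Z=\pi_*(\mu_{XZ})$ is $H$-invariant, for each $\sigma\in H$ the pushed-forward kernel $z\mapsto\sigma\cdot(\rho_0(\sigma^{-1}\cdot z))$ is \emph{also} a disintegration of $\mu_{XZ}$ along $\pi$; by the essential uniqueness of disintegrations (Theorem~\ref{thm:DT}/Theorem~\ref{thm:DTproj}), it agrees with $\rho_0$ for $\mu_Z$-almost every $z$. Thus $\rho_0$ is $H$-equivariant on a $\mu_Z$-conull set $D_0$. Intersecting the countably many conull sets $\{z:\sigma\cdot(\rho_0(\sigma^{-1}\cdot z))=\rho_0(z)\}$ over $\sigma\in H$ gives a $\mu_Z$-conull set $D\subseteq\Omega_Z$; moreover I can shrink $D$ to an $H$-invariant conull set (replace $D$ by $\bigcap_{\sigma\in H}\sigma\cdot D$, still conull by $H$-invariance of $\mu_Z$). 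Setting $\rho\df\rho_0\rest_D$ yields an $H$-equivariant partial Markov kernel with $\mu_Z(\dom(\rho))=1$.

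Finally I would verify property \ref{lem:mkequiv:product} for this $\rho$: sampling $\rn{\widetilde X}\sim\rho(\rn{Z})$ conditionally on $\rn{Z}$ and conditionally independently of $\rn{Y}$, the same computation as in the proof of Lemma~\ref{lem:mk} gives, with probability $1$, $\PP[\rn{\widetilde X}\in A\mid\rn{Y},\rn{Z}]=\rho(\rn{Z})(A)=\rho_0(\rn{Z})(A)=\PP[\rn{X}\in A\mid\rn{Z}]=\PP[\rn{X}\in A\mid\rn{Y},\rn{Z}]$, using conditional independence of $\rn{X}$ and $\rn{Y}$ given $\rn{Z}$; hence $(\rn{\widetilde X},\rn{Y},\rn{Z})\sim(\rn{X},\rn{Y},\rn{Z})$. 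The main obstacle is the second step — making the equivariance \emph{exact} rather than merely almost-everywhere — which is resolved precisely by invoking uniqueness of disintegrations together with countability of $H$ to pass to a genuinely $H$-invariant conull domain; everything else is a direct transcription of the proof of Lemma~\ref{lem:mk}.
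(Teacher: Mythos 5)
Your proposal is correct and follows essentially the same route as the paper: apply Lemma~\ref{lem:mk} to get a (total, non-equivariant) kernel, show it is $\mu_Z$-almost everywhere $H$-equivariant, and use countability of $H$ to pass to a conull ($H$-invariant) set on which it is restricted to a partial kernel, the verification of \ref{lem:mkequiv:product} and the easy direction being identical. The only cosmetic difference is that you invoke essential uniqueness of disintegrations (a standard fact, though not literally stated in Theorem~\ref{thm:DT}), whereas the paper establishes the a.e.\ equivariance directly via the change-of-variables identity $\int_A\rho'(\tau\cdot z)(B)\,d\mu_Z(z)=\int_A\bigl(\tau\cdot\rho'(z)\bigr)(B)\,d\mu_Z(z)$, which amounts to the same computation.
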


\begin{proof}
  The implication~\ref{lem:mkequiv:product}$\implies$\ref{lem:mkequiv:ci} is obvious as
  $\rn{\widetilde{X}}$ and $\rn{Y}$ are conditionally independent given $\rn{Z}$ and the facts that
  $\rho$ is $H$-equivariant and that $\mu_Z$ is $H$-invariant imply that the distribution of
  $(\rn{\widetilde{X}},\rn{Z})$ is $H$-invariant.

  \medskip

  For the implication~\ref{lem:mkequiv:ci}$\implies$\ref{lem:mkequiv:product}, we apply
  Lemma~\ref{lem:mk} to obtain a (not necessarily $H$-equivariant) Markov kernel
  $\rho'\colon\Omega_Z\to\cP(\Omega_X)$ such that if $\rn{\widetilde{X}}$ is picked at random
  according to $\rho'(\rn{Z})$ conditionally on $\rn{Z}$ and conditionally independently from
  $\rn{Y}$, then $(\rn{\widetilde{X}},\rn{Y},\rn{Z})\sim(\rn{X},\rn{Y},\rn{Z})$.

  We now argue that $\rho'$ is $\mu_Z$-almost everywhere $H$-equivariant, that is, we argue that for
  $\mu_Z$-almost every $z\in\Omega_Z$ and every $\tau\in H$, we have $\rho'(\tau\cdot z) =
  \tau\cdot\rho'(z)$. For this, it suffices to show that for every measurable $A\subseteq\Omega_Z$
  and every measurable $B\subseteq\Omega_X$, we have
  \begin{align*}
    \int_A \rho'(\tau\cdot z)(B)\ d\mu_Z(z) & = \int_A (\tau\cdot\rho'(z))(B)\ d\mu_Z(z).
  \end{align*}

  But indeed, note that since $\mu_Z$ is $H$-invariant, we have
  \begin{align*}
    \int_A \rho'(\tau\cdot z)(B)\ d\mu_Z(z)
    & =
    \int_{\tau^{-1}(A)} \rho'(z)(B)\ d\mu_Z(z)
    =
    \PP[\rn{Z}\in\tau^{-1}(A), \rn{\widetilde{X}}\in B]
    \\
    & =
    \PP[\rn{Z}\in A, \rn{\widetilde{X}}\in \tau(B)]
    =
    \int_A \rho'(z)(\tau(B))\ d\mu_Z(z)
    =
    \int_A (\tau\cdot\rho'(z))(B)\ d\mu_Z(z),
  \end{align*}
  where the third equality follows since $(\rn{\widetilde{X}},\rn{Z})\sim(\rn{X},\rn{Z})$ and this
  latter distribution is $H$-invariant.

  Thus $\rho'$ is $\mu_Z$-almost everywhere $H$-equivariant. Since $H$ is countable, this means that
  the set
  \begin{align*}
    D & \df \{z\in\Omega_Z \mid \forall\tau\in H, \rho'(\tau\cdot z) = \tau\cdot\rho'(z)\}
  \end{align*}
  has $\mu_Z$-measure $1$, so letting $\rho\colon\Omega_Z\pto\cP(\Omega_X)$ be the partial Markov
  kernel defined to be equal to $\rho$ on $D$ and not defined anywhere else yields the result.
\end{proof}

We are now finally ready to define amalgamating types.

\begin{definition}\label{def:amlg}
  Let $A$ be a finite set, let $V$ be a countably infinite\footnote{Differently from dissociated or
    overlapping types, amalgamating types do not make sense when $V$ is finite. This is because for
    the definition to go through (see Lemma~\ref{lem:amlg} below), we make use of
    Proposition~\ref{prop:weakamalgdsctovlp}, which requires $V$ to be infinite.} set disjoint from
  $A$ and let $\ell\in\NN$. Let also $\Omega=(X,\mu)$ be an atomless standard probability space and
  $d\in\NN$. The space of \emph{amalgamating $(A,V,\ell)$-types} is defined inductively as
  \begin{align*}
    \cS_{A,V}^{(d),\amlg,\ell}(\Omega)
    & \df
    \begin{dcases*}
      \left(\prod_{B\in r(A,\lvert A\rvert-1)}\cS_{B,V}^{(d),\amlg}(\Omega)\right)
      \times\cS_{A,V}^{(d),\ovlp}(\Omega),
      & if $\lvert A\rvert\leq\ell$,
      \\
      \left(\prod_{B\in r(A,\lvert A\rvert-1)}\cS_{B,V}^{(d),\amlg}(\Omega)\right)
      \times\cS_{A,V}^{(d),\dsct}(\Omega)\times[0,1],
      & if $\lvert A\rvert>\ell$,
    \end{dcases*}
  \end{align*}
  equipped with the product topology (which makes it a Polish space, hence a Borel space, see
  Proposition~\ref{prop:toptypes}\ref{prop:toptypes:Pol}). We think of
  $\cS_{A,V}^{(d),\ovlp}(\Omega)$ and $\cS_{A,V}^{(d),\dsct}(\Omega)\times[0,1]$ in the above as the
  coordinate indexed by $A$.

  We also define $\pi^{\amlg,\ell}_{A,V}\colon\cS_{A,V}^{(d),\amlg,\ell}\to\cS_{A,V}^{(d),\dsct}$ by
  letting it be the natural projection when $\lvert A\rvert > \ell$ and letting it be the
  composition of the natural projection onto $\cS_{A,V}^{(d),\ovlp}$ with
  $\pi^{\ovlp}_{A,V}\colon\cS_{A,V}^{(d),\ovlp}\to\cS_{A,V}^{(d),\dsct}$ when $\lvert
  A\rvert\leq\ell$.

  We further define $M^{\amlg,\ell}_{A,V}\colon\cS_{A,V}^{(d),\amlg,\ell}\to\cK_A$ as the
  composition $M^{\amlg,\ell}_{A,V}\df M^{\dsct}_{A,V}\comp\pi^{\amlg,\ell}_{A,V}$; as before, for
  an amalgamating type $p\in\cS_{A,V}^{(d),\amlg,\ell}$, we call $M^{\amlg,\ell}_{A,V}(p)$ the
  \emph{underlying model} of $p$.

  For an injection $\alpha\colon A\to B$ with $B$ disjoint from $V$ we define a map
  $\alpha^*\colon\cS_{B,V}^{(d),\amlg,\ell}\to\cS_{A,V}^{(d),\amlg,\ell}$ inductively on the size of
  $A$ by
  \begin{align*}
    \alpha^*(p)_C
    & \df
    \begin{dcases*}
      \alpha\down_C^*(p_{\alpha(C)}), & if $\lvert C\rvert < \lvert A\rvert$,\\
      \alpha^*(p_B), & if $C = A$ and $\lvert A\rvert=\lvert B\rvert$,\\
      \alpha\down_A^*((p_{\alpha(A)})_{\alpha(A)}), & if $C = A$ and $\lvert A\rvert < \lvert B\rvert$,
    \end{dcases*}
  \end{align*}
  for every $p\in\cS_{B,V}^{(d),\amlg,\ell}$ and every $C\in r(A)$, where in the second case,
  $\alpha^*$ is either the map $\cS_{B,V}^{(d),\ovlp}\to\cS_{A,V}^{(d),\ovlp}$ defined
  contra-variantly in Definition~\ref{def:contravartypes} when $\lvert B\rvert\leq\ell$ or the
  extension of the map $\cS_{B,V}^{(d),\dsct}\to\cS_{A,V}^{(d),\dsct}$ to a map
  $\cS_{B,V}^{(d),\dsct}\times[0,1]\to\cS_{A,V}^{(d),\dsct}\times[0,1]$ by acting identically on the
  second coordinate when $\lvert B\rvert > \ell$; in the third case, $\alpha\down_A^*$ is defined
  similarly and we take the $\alpha(A)$th coordinate twice, the first time corresponds to the
  projection $\cS_{B,V}^{(d),\amlg,\ell}\to\cS_{\alpha(A),V}^{(d),\amlg,\ell}$ and the second time
  corresponds to the projection of $\cS_{\alpha(A),V}^{(d),\amlg,\ell}$ onto either
  $\cS_{\alpha(A),V}^{(d),\ovlp}$ when $\lvert A\rvert\leq\ell$ or
  $\cS_{\alpha(A),V}^{(d),\dsct}\times[0,1]$ when $\lvert A\rvert > \ell$.

  It is straightforward to check that the fact that the maps of Definition~\ref{def:contravartypes}
  are contra-variant (see Lemma~\ref{lem:contravartypes}) implies that the maps defined above are
  also contra-variant.

  In turn, recall that this also contra-variantly induces maps
  \begin{align*}
    \alpha^*\colon\prod_{C\in r(B,\lvert B\rvert-1)}\cS_{C,V}^{(d),\amlg,\ell}\to
    \prod_{C\in r(A,\lvert A\rvert-1)}\cS_{C,V}^{(d),\amlg,\ell}
  \end{align*}
  given by
  \begin{align*}
    \alpha^*(p)_C & \df (\alpha\down_C)^*(p_{\alpha(C)})
    \qquad (p\in\cS_{B,V}^{(d),\amlg,\ell}, C\in r(A,\lvert A\rvert-1)).
  \end{align*}

  Finally, it is obvious that the maps $\pi^{\amlg,\ell}$ and $M^{\amlg,\ell}$ are equivariant, that
  is, we have
  \begin{align}\label{eq:equivpiamlgMamlg}
    \pi^{\amlg,\ell}_{A,V}\comp\alpha^* & = \alpha^*\comp\pi^{\amlg,\ell}_{B,V}, &
    M^{\amlg,\ell}_{A,V}\comp\alpha^* & = \alpha^*\comp M^{\amlg,\ell}_{B,V}, &
  \end{align}

  \medskip

  Let now $\cN$ be a $d$-Euclidean structure in $\cL$ over $\Omega$ and let $\widehat{\cN}$ be the
  $d$-Euclidean structure in $\cL$ over $\Omega^2$ given by
  \begin{align}\label{eq:amlgwidehatcN}
    \widehat{\cN}_P & \df \{(x,x')\in\cE_{k(P)}^{(d)}(\Omega)\times\cE_{k(P)}(\Omega) \mid x\in\cN_P\}
    \qquad (P\in\cL).
  \end{align}
  (In the above, all order variables are contained within $x$.)

  Clearly $\phi_\cN=\phi_{\widehat{\cN}}$. In fact, note that for every
  $(x,x')\in\cE_A^{(d)}(\Omega)\times\cE_A(\Omega)$ we have
  \begin{equation}\label{eq:dummy}
    \begin{aligned}
      \pp^{\dsct,\widehat{\cN}}_{A,V}(x,x')(y,y')
      & =
      \pp^{\dsct,\cN}_{A,V}(x)(y)
      \qquad ((y,y')\in\cE_{A,V}^{(d),\dsct}(\Omega)\times\cE_{A,V}^{(0),\dsct}(\Omega)),
      \\
      \pp^{\ovlp,\widehat{\cN}}_{A,V}(x,x')(y,y')
      & =
      \pp^{\ovlp,\cN}_{A,V}(x)(y)
      \qquad ((y,y')\in\cE_{A,V}^{(d),\ovlp}(\Omega)\times\cE_{A,V}^{(0),\ovlp}(\Omega)).
    \end{aligned}
  \end{equation}

  Suppose now that $\cN$ is $(\ell-1)$-independent and satisfies $\UCouple[\ell]$ and note that
  $\widehat{\cN}$ inherits the same properties. We will define
  $\pp^{\amlg,\ell,\widehat{\cN}}_{A,V}$, $\qq^{\amlg,\ell,\widehat{\cN}}_{A,V}$ and
  $\Theta^{\amlg,\ell,\widehat{\cN}}_{A,V}$ by induction on the size of $A$ so that as to satisfy
  the following properties:
  \begin{enumerate}[label={\alph*.}, ref={(\alph*)}]
  \item\label{def:amlg:pp}
    $\pp^{\amlg,\ell,\widehat{\cN}}_{A,V}\colon\cE_A^{(d)}(\Omega^2)\pto\cS_{A,V}^{(d),\amlg,\ell}(\Omega^2)$
    is a measurable partial function that is defined $\mu^{(d)}$-almost everywhere. Furthermore, if
    $\lvert A\rvert\leq\ell$, then $\pp^{\amlg,\ell,\widehat{\cN}}$ is constant on its
    domain. ($\pp^{\amlg,\ell,\widehat{\cN}}_{A,V}$ gives the amalgamating $(A,V,\ell)$-type of
    $x\in\cE_A^{(d)}(\Omega^2)$.)
  \item\label{def:amlg:dompp} When the amalgamating $(A,V,\ell)$-type is defined, then all lower
    amalgamating types are defined: if $x\in\dom(\pp^{\amlg,\ell,\widehat{\cN}}_{A,V})$, then for
    every $B\in r(A,\lvert A\rvert-1)$, we have
    $\iota_{B,A}^*(x)\in\dom(\pp^{\amlg,\ell,\widehat{\cN}}_{B,V})$ (recall that $\iota_{B,A}\colon
    B\to A$ is the inclusion map).
  \item\label{def:amlg:qq} $\qq^{\amlg,\ell,\widehat{\cN}}_{A,V}\colon\cE_{A,\lvert
    A\rvert-1}^{(d)}(\Omega^2)\times\cS_{A,V}^{(d),\amlg,\ell}(\Omega^2)\pto\cS_{A,V}^{(d),\ovlp}(\Omega^2)$
    is a measurable partial function such that for every
    $x\in\dom(\pp^{\amlg,\ell,\widehat{\cN}}_{A,V})$, we have
    \begin{align*}
      \qq^{\amlg,\ell,\widehat{\cN}}_{A,V}
      \bigl(\tau_A(x), \pp^{\amlg,\ell,\widehat{\cN}}_{A,V}(x)\bigr)
      & =
      \pp^{\ovlp,\widehat{\cN}}_{A,V}(x),
    \end{align*}
    where $\tau_A\colon\cE_A^{(d)}(\Omega^2)\to\cE_{A,\lvert A\rvert-1}^{(d)}(\Omega^2)$ is the
    natural projection ($\qq^{\amlg,\ell,\widehat{\cN}}_{A,V}$ is the function that retrieves the
    overlapping type from the amalgamating type along with lower pointwise data).
  \item\label{def:amlg:Theta} $\Theta^{\amlg,\ell,\widehat{\cN}}_{A,V}\colon\prod_{B\in r(A,\lvert
    A\rvert-1)}\cS_{B,V}^{(d),\amlg,\ell}(\Omega^2)\pto\cP(\cS_{A,V}^{(d),\amlg,\ell}(\Omega^2))$ is
    a partial Markov kernel that is $S_A$-equivariant and is such that for the natural projection
    $\tau_A\colon\cE_A^{(d)}(\Omega^2)\to\cE_{A,\lvert A\rvert-1}^{(d)}(\Omega^2)$, if $\rn{x}$ is
    sampled in $\cE_A^{(d)}(\Omega^2)$ according to $\mu^{(d)}$, then the conditional distribution
    of $\pp^{\amlg,\ell,\widehat{\cN}}_{A,V}(\rn{x})$ given $\tau_A(\rn{x})$ is
    \begin{align*}
      \Theta^{\amlg,\ell,\widehat{\cN}}_{A,V}\Bigl(
      \pp^{\amlg,\ell,\widehat{\cN}}_{B,V}\bigl(\iota_{B,A}^*(\rn{x})\bigr)
      \mathrel{\Big\vert}
      B\in r(A,\lvert A\rvert-1)
      \Bigr).
    \end{align*}

    In particular, $\pp^{\amlg,\ell,\widehat{\cN}}_{A,V}(\rn{x})$ is conditionally independent from
    $\tau_A(\rn{x})$ given $(\pp^{\amlg,\ell,\widehat{\cN}}_{B,V}(\iota_{B,A}^*(\rn{x}))\mid B\in
    r(A,\lvert A\rvert-1))$ ($\Theta^{\amlg,\ell,\widehat{\cN}}_{A,V}$ then gives how to sample the
    amalgamating $(A,V,\ell)$-type from the lower amalgamating types).

    Furthermore, if $\lvert A\rvert\leq\ell$, then $\Theta^{\amlg,\ell,\widehat{\cN}}_{A,V}$ is the
    constant function that maps every point to the distribution concentrated on the unique point in
    the image of $\pp^{\amlg,\ell,\widehat{\cN}}_{A,V}$ (see item~\ref{def:amlg:pp} above).
  \item\label{def:amlg:commutative} For every injection $\beta\colon A'\to A$ with $A'$ also
    disjoint from $V$, the diagram
    \begin{equation*}
      \begin{tikzcd}[column sep={2.5cm}]
        \cE_A^{(d)}(\Omega^2)
        \arrow[r, harpoon, "\pp^{\amlg,\ell,\widehat{\cN}}_{A,V}"]
        \arrow[rr, bend left=20, "\pp^{\dsct,\widehat{\cN}}_{A,V}"]
        \arrow[rrr, bend left=30, "M^{\widehat{\cN}}_A"']
        \arrow[d, "\beta^*"]
        &
        \cS_{A,V}^{(d),\amlg,\ell}(\Omega^2)
        \arrow[r, "\pi^{\amlg,\ell}_{A,V}"]
        \arrow[rr, bend left=20, "M^{\amlg,\ell}_{A,V}"]
        \arrow[d, "\beta^*"]
        &
        \cS_{A,V}^{(d),\dsct,\ell}(\Omega^2)
        \arrow[r, "M^{\dsct}_{A,V}"]
        \arrow[d, "\beta^*"]
        &
        \cK_A
        \arrow[d, "\beta^*"]
        \\
        \cE_{A'}^{(d)}(\Omega^2)
        \arrow[r, harpoon, "\pp^{\amlg,\ell,\widehat{\cN}}_{A',V}"]
        \arrow[rr, bend right=20, "\pp^{\dsct,\widehat{\cN}}_{A',V}"']
        \arrow[rrr, bend right=30, "M^{\widehat{\cN}}_{A'}"]
        &
        \cS_{A',V}^{(d),\amlg,\ell}(\Omega^2)
        \arrow[r, "\pi^{\amlg,\ell}_{A',V}"]
        \arrow[rr, bend right=20, "M^{\amlg,\ell}_{A',V}"']
        &
        \cS_{A',V}^{(d),\dsct,\ell}(\Omega^2)
        \arrow[r, "M^{\dsct}_{A',V}"]
        &
        \cK_{A'}
      \end{tikzcd}
    \end{equation*}
    is weakly commutative.
  \end{enumerate}
  
  The final property is weak amalgamation, which is a joint property of all sets of size $\lvert
  A\rvert$:
  \begin{enumerate}[resume*]
  \item\label{def:amlg:weakamalg} Weak amalgamation: if $B$ is a set with $\lvert B\rvert=\lvert
    A\rvert+1$ and $B\cap V=\varnothing$ and $\rn{x}$ is sampled in $\cE_B^{(d)}(\Omega^2)$
    according to $\mu^{(d)}$, then $\pp^{\dsct,\widehat{\cN}}_{B,V}(\rn{x})$ is conditionally
    independent from $\tau_B(\rn{x})$ given
    \begin{align*}
      \bigl(\pp^{\amlg,\ell,\widehat{\cN}}_{A',V}(\iota_{A',B}(\rn{x}))
      \mathrel{\big\vert}
      A'\in r(B,\lvert B\rvert-1)\bigr),
    \end{align*}
    where $\tau_B\colon\cE_B^{(d)}(\Omega^2)\to\cE_{B,\lvert B\rvert-1}^{(d)}(\Omega^2)$ is the
    natural projection.
  \end{enumerate}

  In this definition, we focus on describing the construction, deferring the proof of some of the
  claimed properties to Lemmas~\ref{lem:amlgcomm} and~\ref{lem:amlg}.

  \medskip

  First we consider the case when $\lvert A\rvert\leq\ell$. Since $\widehat{\cN}$ is
  $(\ell-1)$-independent and satisfies $\UCouple[\ell]$, Proposition~\ref{prop:typeuniqueness} says
  that there exists an overlapping $(A,V)$-type $q_A\in\cS_{A,V}^{(d),\ovlp}(\Omega^2)$ such that
  for $\mu^{(d)}$-almost every $x\in\cE_A^{(d)}(\Omega^2)$, we have
  $\pp^{\ovlp,\widehat{\cN}}_{A,V}(x) = q_A$. Note also that by inductive hypothesis, we know that
  for every $B\in r(A,\lvert A\rvert-1)$, for almost every $x\in\cE_A^{(d)}(\Omega^2)$, we have
  $\iota_{B,A}^*(x)\in\dom(\pp^{\amlg,\ell,\widehat{\cN}}_{B,V})$. Thus, the set
  \begin{align*}
    D \df
    \{x\in\cE_A^{(d)}(\Omega^2)
    & \mid
    \forall\beta\in S_A, \pp^{\ovlp,\widehat{\cN}}_{A,V}(\beta^*(x)) = q_A
    \\
    & \land
    \forall B\in r(A,\lvert A\rvert-1), \iota_B^*(x)\in\dom(\pp^{\amlg,\ell,\widehat{\cN}}_{B,V})
    \}
  \end{align*}
  has $\mu^{(d)}$-measure $1$, so we define
  \begin{align*}
    \pp^{\amlg,\ell,\widehat{\cN}}_{A,V}(x)
    & \df
    \biggl(
    \Bigl(\pp^{\amlg,\ell,\widehat{\cN}}_{B,V}\bigl(\iota_{B,A}^*(x)\bigr)
    \mathrel{\Big\vert}
    B\in r(A,\lvert A\rvert-1)\Bigr), q_A
    \biggr)
  \end{align*}
  whenever $x\in D$ and leave $\pp^{\amlg,\ell,\widehat{\cN}}_{A,V}(x)$ undefined when $x\notin D$.

  Since inductively all $\pp^{\amlg,\ell,\widehat{\cN}}_{B,V}$ for $B\in r(A,\lvert A\rvert-1)$ are
  constant on their domains, it follows that $\pp^{\amlg,\ell,\widehat{\cN}}_{A,V}$ is also constant
  on its domain $D$ (i.e., item~\ref{def:amlg:pp} holds).

  By definition it is also clear that item~\ref{def:amlg:dompp} holds (i.e., if
  $\pp^{\amlg,\ell,\widehat{\cN}}_{A,V}$ is defined, then all lower amalgamating types are also
  defined).

  We now define $\qq^{\amlg,\ell,\widehat{\cN}}_{A,V}\colon\cE_{A,\lvert
    A\rvert-1}^{(d)}(\Omega^2)\times\cS_{A,V}^{(d),\amlg,\ell}(\Omega^2)\pto\cS_{A,V}^{(d),\ovlp}(\Omega^2)$
  by
  \begin{align*}
    \qq^{\amlg,\ell,\widehat{\cN}}_{A,V}(x,p)
    & \df
    \rho_A(p),
    \qquad (x\in\cE_{A,\lvert A\rvert-1}^{(d)}(\Omega^2), p\in\cS_{A,V}^{(d),\amlg,\ell}(\Omega^2)),
  \end{align*}
  where $\rho_A\colon\cS_{A,V}^{(d),\amlg,\ell}(\Omega^2)\to\cS_{A,V}^{(d),\ovlp}(\Omega^2)$ is the
  natural projection.

  Item~\ref{def:amlg:qq}
  ($\qq^{\amlg,\ell,\widehat{\cN}}_{A,V}(\tau_A(x),\pp^{\amlg,\ell,\widehat{\cN}}_{A,V}(x)) =
  \pp^{\ovlp,\widehat{\cN}}_{A,V}(x)$ whenever $x\in\dom(\pp^{\amlg,\ell,\widehat{\cN}}_{A,V})$)
  follows immediately from definitions.

  Finally, define the partial Markov kernel
  \begin{align*}
    \Theta^{\amlg,\ell,\widehat{\cN}}_{A,V}\colon
    \prod_{B\in r(A,\lvert A\rvert-1)}\cS_{B,V}^{(d),\amlg,\ell}(\Omega^2)
    & \pto
    \cP(\cS_{A,V}^{(d),\amlg,\ell}(\Omega^2)
  \end{align*}
   to be the constant function whose value is the distribution on
   $\cS_{A,V}^{(d),\amlg,\ell}(\Omega^2)$ that is concentrated on the unique point in the image of
   $\pp^{\amlg,\ell,\widehat{\cN}}_{A,V}$. Obviously, item~\ref{def:amlg:Theta} holds (i.e.,
   $\Theta^{\amlg,\ell,\widehat{\cN}}_{A,V}$ correctly computes the conditional distribution of the
   amalgamating $(A,V,\ell)$-type given the lower amalgamating types).

  We defer the proofs of items~\ref{def:amlg:commutative} and~\ref{def:amlg:weakamalg} to
  Lemmas~\ref{lem:amlgcomm} and~\ref{lem:amlg}, respectively.

  \medskip

  We now consider the case when $A = [a]$ for some $a\in\NN$ with $a > \ell$. Our objective is to
  setup for applying Lemma~\ref{lem:keymeas}. First, we take the following choice of parameters for
  the spaces:
  \begin{align*}
    (X,\mu) & \leftarrow (\cE_{a,a-1}^{(d)}(\Omega^2),\mu^{(d)}), &
    (Z,\nu) & \leftarrow (\Omega,\mu), &
    (W,\eta) & \leftarrow ([0,1],\lambda),
    \\
    Y & \leftarrow \cS_{a,V}^{(d),\ovlp}(\Omega^2), &
    U & \leftarrow \prod_{B\in r(a,a-1)}\cS_{B,V}^{(d),\amlg,\ell}(\Omega^2), &
    V & \leftarrow \cS_{a,V}^{(d),\dsct}(\Omega^2).
  \end{align*}

  Note that when we consider any of the contra-variant definitions only for members of $S_a$ (i.e.,
  injections $\beta\colon[a]\to[a]$, which are necessarily bijections), contra-variance translates
  into a right action of $S_a$ on the corresponding space.

  It is obvious that the action of $S_a$ on $\cE_{a,a-1}^{(d)}(\Omega^2)$ is measurable and
  $\mu^{(d)}$ is $S_a$-invariant. We claim that there exists a measurable set
  $X'\subseteq\cE_{a,a-1}^{(d)}(\Omega^2)$ such that the orbits $(\sigma\cdot X')_{\sigma\in S_a}$
  of $X'$ under the action of $S_a$ are pairwise disjoint and $\mu(\bigcup_{\sigma\in
    S_a}\sigma\cdot X')=1$. Indeed, if we fix a measurable order $\leq$ on $\Omega^2$ (this can be
  done by noting that $\Omega^2$ is Borel-isomorphic to $[0,1]$ and pulling back the usual order of
  $[0,1]$ through the Borel-isomorphism), then we can take
  \begin{align*}
    X' & \df \{x\in\cE_{a,a-1}^{(d)}(\Omega^2) \mid x_{\{1\}} < x_{\{2\}} < \cdots < x_{\{a\}}\}.
  \end{align*}
  The fact that $\mu^{(d)}(\bigcup_{\sigma\in S_a}\sigma\cdot X')=1$ follows since $\mu$ is
  atomless.

  By Proposition~\ref{prop:toptypes}\ref{prop:toptypes:alpha}, the actions of $S_a$ on
  $\cS_{a,V}^{(d),\ovlp}(\Omega^2)$ and $\cS_{a,V}^{(d),\dsct}(\Omega^2)$ are continuous, hence
  measurable. Similarly, the same proposition (along with the fact that
  $\cS_{a,V}^{(d),\amlg}(\Omega^2)$ is equipped with product topology) implies that the action of
  $S_a$ on $\prod_{B\in r(a,a-1)}\cS_{B,V}^{(d),\amlg,\ell}(\Omega^2)$ is continuous, hence
  measurable.

  Also, obviously all trivial actions are measurable. Thus, we have covered all hypotheses of
  Lemma~\ref{lem:keymeas} concerning action measurability and invariance (but not yet equivariance
  of $f$, $g$, $\gamma$ and $\theta$).

  Let us now setup the final objects $f$, $g$, $\gamma$ and $\theta$ of Lemma~\ref{lem:keymeas}.

  Note that if $\rn{x}$ is picked at random in $\cE_a^{(d)}(\Omega^2)$ according to $\mu^{(d)}$,
  then Lemma~\ref{lem:equivpp} implies that the distribution of
  $\pp^{\dsct,\widehat{\cN}}_{a,V}(\rn{x})$ is $S_a$-invariant (as $\mu^{(d)}$ is
  $S_a$-invariant). Since inductively all lower sized amalgamating types satisfy the weak
  amalgamation property of item~\ref{def:amlg:weakamalg}, Lemma~\ref{lem:mkequiv} gives us an
  $S_a$-equivariant partial Markov kernel
  \begin{align*}
    \theta_a\colon
    \prod_{B\in r(a,a-1)}\cS_{B,V}^{(d),\amlg,\ell}(\Omega^2)
    \pto
    \cP(\cS_{a,V}^{(d),\dsct}(\Omega^2))
  \end{align*}
  such that the conditional distribution of $\pp^{\dsct,\widehat{\cN}}_{a,V}(\rn{x})$ given
  $\tau_a(\rn{x})$ is
  \begin{align*}
    \theta_a\Bigl(\pp^{\amlg,\ell,\widehat{\cN}}_{B,V}\bigl(\iota_{B,[a]}^*(\rn{x})\bigr)
    \mathrel{\Big\vert}
    B\in r(a,a-1)\Bigr).
  \end{align*}

  On the other hand, Lemma~\ref{lem:equivpp} also implies that the distribution of
  $\pp^{\ovlp,\widehat{\cN}}_{A,V}(\rn{x})$ is $S_a$-invariant as well, so Lemma~\ref{lem:mkequiv}
  (with $\rn{Y}$ trivial) further gives us an $S_a$-equivariant partial Markov kernel
  \begin{align*}
    \gamma_a\colon\cE_{a,a-1}^{(d)}(\Omega^2)\pto\cP(\cS_{a,V}^{(d),\ovlp}(\Omega^2))
  \end{align*}
  such the conditional distribution of $\pp^{\ovlp,\widehat{\cN}}_{a,V}(\rn{x})$ given
  $\tau_a(\rn{x})$ is $\gamma_a(\tau_a(\rn{x}))$ (in particular, we have
  $\mu^{(d)}(\dom(\gamma_a))=1$).

  We then take the following parameters for Lemma~\ref{lem:keymeas}:
  \begin{align*}
    \theta & \leftarrow \theta_a, &
    \gamma & \leftarrow \gamma_a,
  \end{align*}
  we define the partial functions
  \begin{align*}
    f\colon\cE_{a,a-1}^{(d)}(\Omega^2)
    & \pto
    \prod_{B\in r(a,a-1)}\cS_{B,V}^{(d),\amlg,\ell}(\Omega^2),
    \\
    g\colon\cE_{a,a-1}^{(d)}(\Omega^2)\times\cS_{a,V}^{(d),\ovlp}(\Omega^2)
    & \pto
    \cS_{a,V}^{(d),\dsct}(\Omega^2)
  \end{align*}
  by
  \begin{align}
    f(x) & \df (\pp^{\amlg,\ell,\widehat{\cN}}_{B,V}(\iota_{B,[a]}^*(x)) \mid B\in r(a,a-1)),\notag\\
    g(x,p) & \df \pi^{\ovlp}_{a,V}(p),\label{eq:amlg:g}
  \end{align}
  for every $x\in\cE_{a,a-1}^{(d)}(\Omega^2)$ and every $p\in\cS_{a,V}^{(d),\ovlp}(\Omega^2)$ and we
  set
  \begin{align*}
    A & \df \dom(f)\cap f^{-1}(\dom(\theta_a))\cap\dom(\gamma_a), &
    B & \df \dom(g) = \cE_{a,a-1}^{(d)}(\Omega^2)\times\cS_{a,V}^{(d),\ovlp}(\Omega^2)
  \end{align*}
  ($g$ is actually a total function). Our inductive hypothesis of item~\ref{def:amlg:pp} implies
  that $\mu^{(d)}(A)=\gamma_a[\mu^{(d)}](B)=1$.

  Let us now check that for every $x\in A$, we have
  \begin{align*}
    g(x,\place)_*(\gamma_a(x)) & = \theta_a(f(x)).
  \end{align*}
  Since $\mu^{(d)}(A)=1$ and $A\subseteq f^{-1}(\dom(\theta_a))\cap\dom(\gamma_a)$, this is of
  course equivalent to showing that for $\rn{x}$ picked at random according to $\mu^{(d)}$, we have
  \begin{align*}
    g(\rn{x},\place)_*(\gamma_a(\rn{x})) & = \theta_a(f(\rn{x})).
  \end{align*}
  By construction, the right-hand side of the above is the conditional distribution of
  $\pp^{\dsct,\widehat{\cN}}_{a,V}(\rn{x})$ given $\tau_a(\rn{x})$ and the left-hand side is the
  conditional distribution of $\pi^{\ovlp}_{a,V}(\pp^{\ovlp,\widehat{\cN}}_{a,V}(\rn{x}))$ given
  $\tau_a(\rn{x})$, so their equality follows from
  $\pi^{\ovlp}_{a,V}\comp\pp^{\ovlp,\widehat{\cN}}_{a,V} = \pp^{\dsct,\widehat{\cN}}_{a,V}$ of
  Lemma~\ref{lem:equivpp}.

  Note also that Lemma~\ref{lem:equivpp} implies that $g$ is $S_a$-equivariant and our inductive
  hypothesis of item~\ref{def:amlg:commutative} implies that $f$ is $S_a$-equivariant.

  Thus, all hypotheses of Lemma~\ref{lem:keymeas} are satisfied. Let then $(G,G',A',B',\Theta)$ be
  given by the lemma and define
  \begin{align*}
    \pp^{\amlg,\ell,\widehat{\cN}}_{a,V}\colon
    \cE_a^{(d)}(\Omega^2)
    & \pto
    \cS_{a,V}^{(d),\amlg,\ell}(\Omega^2),
    \\
    \qq^{\amlg,\ell,\widehat{\cN}}_{a,V}\colon
    \cE_{a,a-1}^{(d)}(\Omega^2)\times\cS_{a,V}^{(d),\amlg,\ell}(\Omega^2)
    & \pto
    \cS_{a,V}^{(d),\ovlp}(\Omega^2),
    \\
    \Theta^{\amlg,\ell,\widehat{\cN}}_{a,V}\colon
    \prod_{B\in r(a,a-1)}\cS_{B,V}^{(d),\amlg,\ell}(\Omega^2)
    & \pto
    \cP(\cS_{A,V}^{(d),\amlg,\ell}(\Omega^2))
  \end{align*}
  by
  \begin{align*}
    \pp^{\amlg,\ell,\widehat{\cN}}_{a,V}(x,(y,z))
    & \df
    \biggl(
    \Bigl(\pp^{\amlg,\ell,\widehat{\cN}}_{B,V}\bigl(\iota_{B,[a]}^*(x)\bigr) \mid B\in r(a,a-1)\Bigr),
    G\Bigl(x, \pp^{\ovlp,\widehat{\cN}}_{a,V}\bigl(x,(y,z)\bigr), z\Bigr)
    \biggr)
  \end{align*}
  for every $x\in\cE_{a,a-1}^{(d)}(\Omega^2)$, every $y\in\Omega\times\cO_A^d$ and every
  $z\in\Omega$ (so that $(x,y,z)\in\cE_a^{(d)}(\Omega^2)$, with $(y,z)$ containing the coordinate
  indexed by $A$) such that
  \begin{align*}
    \Bigr(x, \pp^{\ovlp,\widehat{\cN}}_{a,V}\bigl(x,(y,z)\bigr), z\Bigr) & \in B',
  \end{align*}
  otherwise, we leave $\pp^{\amlg,\ell,\widehat{\cN}}_{a,V}(x,(y,z))$ undefined;
  \begin{align*}
    \qq^{\amlg,\ell,\widehat{\cN}}_{a,V}(x, p)
    & \df
    \rho'_a(G'(x, \rho_a(p))),
  \end{align*}
  for every $x\in\cE_{a,a-1}^{(d)}(\Omega^2)$ and every
  $p\in\cS_{a,a-1}^{(d),\amlg,\ell}(\Omega^2)$, where
  $\rho_a\colon\cS_{A,V}^{(d),\amlg,\ell}(\Omega^2)\to\cS_{A,V}^{(d),\dsct}(\Omega^2)\times[0,1]$
  and $\rho'_a\colon\cS_{A,V}^{(d),\ovlp}(\Omega^2)\times\Omega\to\cS_{A,V}^{(d),\ovlp}(\Omega^2)$
  are the natural projections; and
  \begin{align*}
    \Theta^{\amlg,\ell,\widehat{\cN}}_{a,V}(q)
    & \df
    \delta_q\otimes\Theta(q),
  \end{align*}
  for every $q\in\prod_{B\in r(a,a-1)}\cS_{B,V}^{(d),\amlg,\ell}(\Omega^2)$, where $\delta_q$ is the
  Dirac measure concentrated on $q$.

  We now argue why Lemma~\ref{lem:keymeas} gives all required properties to continue the inductive
  construction, with the exception of properties~\ref{def:amlg:commutative}
  and~\ref{def:amlg:weakamalg} whose proofs are deferred to Lemmas~\ref{lem:amlgcomm}
  and~\ref{lem:amlg}, respectively.

  Since $G$ is measurable and is defined $\gamma_a[\mu^{(d)}]$-almost everywhere (by
  items~\ref{lem:keymeas:B'G} and~\ref{lem:keymeas:gammamunuB'} of Lemma~\ref{lem:keymeas}), it
  follows that $\pp^{\amlg,\ell,\widehat{\cN}}$ is measurable and is defined almost everywhere, so
  item~\ref{def:amlg:pp} holds.
  
  Items~\ref{lem:keymeas:A'} and~\ref{lem:keymeas:B'A'} of Lemma~\ref{lem:keymeas} imply that when
  the amalgamating $(a,V,\ell)$-type is defined, then all lower amalgamating types are also defined,
  that is, item~\ref{def:amlg:dompp} holds.

  The fact that $\qq^{\amlg,\ell,\widehat{\cN}}_{a,V}$ correctly retrieves the overlapping type from
  the amalgamating type along with lower pointwise data (item~\ref{def:amlg:qq}) follows from
  item~\ref{lem:keymeas:G'G} in Lemma~\ref{lem:keymeas}: for every
  $(x,y,z)\in\dom(\pp^{\amlg,\ell,\widehat{\cN}}_{a,V})$ with $x\in\cE_{a,a-1}^{(d)}(\Omega)$,
  $y\in\Omega\times\cO_a^d$ and $z\in\Omega$, we have
  \begin{align*}
    \qq^{\amlg,\ell,\widehat{\cN}}_{a,V}\Bigl(
    x, \pp^{\amlg,\ell,\widehat{\cN}}_{a,V}\bigl(x,(y,z)\bigr)
    \Bigr)
    & =
    \rho'_a\Biggl(
    G'\biggl(
    x,G\Bigl(x,\pp^{\ovlp,\widehat{\cN}}_{a,V}\bigl(x,(y,z)\bigr),z\Bigr)
    \biggr)
    \Biggr)
    \\
    & =
    \rho'_a\Bigl(\pp^{\ovlp,\widehat{\cN}}_{a,V}\bigl(x,(y,z)\bigr),z\Bigr)
    =
    \pp^{\ovlp,\widehat{\cN}}_{a,V}\bigl(x,(y,z)\bigr).
  \end{align*}

  For item~\ref{def:amlg:Theta}, we have to show that if $\rn{x}$ is sampled in
  $\cE_a^{(d)}(\Omega^2)$ according to $\mu^{(d)}$, then the conditional distribution of
  $\pp^{\amlg,\ell,\widehat{\cN}}_{a,V}(\rn{x})$ given $\tau_a(\rn{x})$ is
  \begin{align*}
    \Theta^{\amlg,\ell,\widehat{\cN}}_{a,V}\Bigl(
    \pp^{\amlg,\ell,\widehat{\cN}}_{B,V}\bigl(\iota_{B,[a]}^*(\rn{x})\bigr)
    \mathrel{\Big\vert}
    B\in r(a,a-1)
    \Bigr).
  \end{align*}
  Note that the argument of $\Theta^{\amlg,\ell,\widehat{\cN}}_{a,V}$ is precisely $f(\rn{x})$.

  Since $\mu^{(d)}(A')=1$, it suffices to show that for every $x\in
  A'\subseteq\cE_{a,a-1}^{(d)}(\Omega^2)$, if $\rn{y}$ and $\rn{z}$ are picked independently at
  random in $\Omega\times\cO_a^d$ and $\Omega$ with respect to $\mu\otimes\bigotimes_{i=1}^d\nu_a$
  and $\mu$, respectively, then
  \begin{align*}
    \pp^{\amlg,\ell,\widehat{\cN}}_{a,V}(x, (\rn{y},\rn{z}))
    & \sim
    \Theta^{\amlg,\ell,\widehat{\cN}}_{A,V}\bigl(f(\rn{x})\bigr).
  \end{align*}

  By the definition of $\pp^{\amlg,\ell,\widehat{\cN}}_{a,V}$ and
  $\Theta^{\amlg,\ell,\widehat{\cN}}_{a,V}$, it is clear that all coordinates corresponding to lower
  amalgamating types correctly match, so the above is equivalent to
  \begin{align*}
    G\Bigl(x, \pp^{\ovlp,\widehat{\cN}}_{a,V}\bigl(x, (\rn{y},\rn{z})\bigr), \rn{z}\Bigr)
    & \sim
    \Theta\bigl(f(x)\bigr).
  \end{align*}

  Recalling that $\rn{z}$ is a dummy variable for $\widehat{\cN}$ (more specifically,
  using~\eqref{eq:dummy}), we note that $\pp^{\ovlp,\widehat{\cN}}_{a,V}\bigl(x,
  (\rn{y},\rn{z})\bigr)$ is independent from $\rn{z}$, so their joint distribution is precisely
  $\gamma_a(x)\otimes\mu$, so the above is equivalent to
  \begin{align*}
    G(x,\place,\place)_*(\gamma_a(x)\otimes\mu)
    & =
    \Theta\bigl(f(x)\bigr),
  \end{align*}
  which is precisely item~\ref{lem:keymeas:Ggammanu} of Lemma~\ref{lem:keymeas}.

  \medskip

  Finally, we consider the case when $\lvert A\rvert > \ell$ and $A$ is not $[a]$ for some
  $a\in\NN$. We fix a bijection $\alpha_A\colon [\lvert A\rvert]\to A$ and define
  \begin{equation}\label{eq:equivdef}
    \begin{aligned}
      \pp^{\amlg,\ell,\widehat{\cN}}_{A,V}
      & \df
      (\alpha_A^{-1})^*\comp\pp^{\amlg,\ell,\widehat{\cN}}_{\lvert A\rvert,V}\comp\alpha_A^*,
      \\
      \qq^{\amlg,\ell,\widehat{\cN}}_{A,V}
      & \df
      (\alpha_A^{-1})^*\comp\qq^{\amlg,\ell,\widehat{\cN}}_{\lvert A\rvert,V}\comp\alpha_A^*,
      \\
      \Theta^{\amlg,\ell,\widehat{\cN}}_{A,V}
      & \df
      (\alpha_A^{-1})^*\comp\Theta^{\amlg,\ell,\widehat{\cN}}_{\lvert A\rvert,V}\comp\alpha_A^*.
    \end{aligned}
  \end{equation}

  It will be convenient to let $\alpha_{[a]}\df\id_{[a]}$ for every $a\in\NN_+$ so that the
  equalities in the above remain true even in the case $A = [a]$, in which we define the partial
  functions above directly. It is also clear that the equalities above also hold when $\lvert
  A\rvert\leq\ell$.

  It is straightforward to check that the properties required by the construction for $A$ follow as
  a consequence of the properties for $[\lvert A\rvert]$, except for the two joint properties of
  items~\ref{def:amlg:commutative} and~\ref{def:amlg:weakamalg} that are being deferred.

  Note also that once we show that item~\ref{def:amlg:commutative} holds in
  Lemma~\ref{lem:amlgcomm}, a direct consequence is that the definition above does not
  depend on the choice of $\alpha_A$, except for a zero-measure set.
\end{definition}

\begin{remark}\label{rmk:amlguniqueness}
  A direct consequence of the item~\ref{def:amlg:pp} of Definition~\ref{def:amlg} is that if $\cN$
  is an $(\ell-1)$-independent $d$-Euclidean structure over $\Omega$ satisfying $\UCouple[\ell]$,
  then for the associated structure $\widehat{\cN}$ over $\Omega^2$ given
  by~\eqref{eq:amlgwidehatcN}, whenever $\lvert A\rvert\leq\ell$, there exists a unique amalgamating
  $(A,V,\ell)$-type. In turn, this uniqueness was obtained as a consequence of uniqueness of
  overlapping $(A,V)$-types given by Proposition~\ref{prop:typeuniqueness}.
\end{remark}

Let us now prove the deferred property~\ref{def:amlg:commutative}.

\begin{lemma}\label{lem:amlgcomm}
  Within Definition~\ref{def:amlg}, property~\ref{def:amlg:commutative} holds, that is, for every
  injection $\beta\colon A'\to A$ between finite sets $A'$ and $A$, both disjoint from a countably
  infinite set $V$, the diagram
  \begin{equation*}
    \begin{tikzcd}[column sep={2.5cm}]
      \cE_A^{(d)}(\Omega^2)
      \arrow[r, harpoon, "\pp^{\amlg,\ell,\widehat{\cN}}_{A,V}"]
      \arrow[rr, bend left=20, "\pp^{\dsct,\widehat{\cN}}_{A,V}"]
      \arrow[rrr, bend left=30, "M^{\widehat{\cN}}_A"']
      \arrow[d, "\beta^*"]
      &
      \cS_{A,V}^{(d),\amlg,\ell}(\Omega^2)
      \arrow[r, "\pi^{\amlg,\ell}_{A,V}"]
      \arrow[rr, bend left=20, "M^{\amlg,\ell}_{A,V}"]
      \arrow[d, "\beta^*"]
      &
      \cS_{A,V}^{(d),\dsct,\ell}(\Omega^2)
      \arrow[r, "M^{\dsct}_{A,V}"]
      \arrow[d, "\beta^*"]
      &
      \cK_A
      \arrow[d, "\beta^*"]
      \\
      \cE_{A'}^{(d)}(\Omega^2)
      \arrow[r, harpoon, "\pp^{\amlg,\ell,\widehat{\cN}}_{A',V}"]
      \arrow[rr, bend right=20, "\pp^{\dsct,\widehat{\cN}}_{A',V}"']
      \arrow[rrr, bend right=30, "M^{\widehat{\cN}}_{A'}"]
      &
      \cS_{A',V}^{(d),\amlg,\ell}(\Omega^2)
      \arrow[r, "\pi^{\amlg,\ell}_{A',V}"]
      \arrow[rr, bend right=20, "M^{\amlg,\ell}_{A',V}"']
      &
      \cS_{A',V}^{(d),\dsct,\ell}(\Omega^2)
      \arrow[r, "M^{\dsct}_{A',V}"]
      &
      \cK_{A'}
    \end{tikzcd}
  \end{equation*}
  is weakly commutative.
\end{lemma}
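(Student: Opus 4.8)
The plan is to prove weak commutativity of the diagram by first reducing to a canonical
situation, then handling the three ``layers'' (the $\pp^{\amlg}$/$\pp^{\dsct}$ square, the
$\pi^{\amlg}$ triangle, and the $M^{\dsct}$ triangle) separately. The rightmost two layers
are already essentially done: commutativity of the $M^{\dsct}$ triangle and the
$\pi^{\amlg,\ell}$ square follow from the already established equivariance
equations~\eqref{eq:equivpiamlgMamlg} together with the commutativity of the
$M^{\dsct}$-and-$\pi^{\ovlp}$ part of Lemma~\ref{lem:equivpp} (for the $\lvert A\rvert\leq\ell$
case, where $\pi^{\amlg,\ell}$ is built from $\pi^{\ovlp}$). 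So the real content is the leftmost
square, namely that $\beta^*\comp\pp^{\amlg,\ell,\widehat{\cN}}_{A,V}$ agrees almost everywhere
with $\pp^{\amlg,\ell,\widehat{\cN}}_{A',V}\comp\beta^*$, and then that projecting along
$\pi^{\amlg,\ell}$ recovers the $\pp^{\dsct,\widehat{\cN}}$ equivariance already known from
Lemma~\ref{lem:equivpp} (applied to $\widehat{\cN}$; note $\pi^{\amlg,\ell}\comp\pp^{\amlg}
= \pp^{\dsct}$ will be part of what we verify).

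First I would reduce to the case $\beta\in S_a$, i.e.\ $A = A' = [a]$ and $\beta$ a bijection.
This is legitimate because any injection $\beta\colon A'\to A$ factors as an inclusion followed
by a bijection: writing $\beta = \overline\beta$ post-composed with $\iota_{\im(\beta),A}$, and
using the definition~\eqref{eq:equivdef} that transports everything through the fixed bijections
$\alpha_A,\alpha_{A'}$, weak commutativity for general $\beta$ follows from the two sub-cases
``$\beta$ a bijection between the canonical sets $[a]$'' and ``$\beta = \iota_{[a'],[a]}$ the
standard inclusion''. For the bijection case, proceed by induction on $a$. When $a\le\ell$ the
amalgamating type is constant on its domain and built coordinatewise from lower amalgamating
types and the unique overlapping type $q_a$; equivariance of $\pp^{\ovlp,\widehat{\cN}}$ from
Lemma~\ref{lem:equivpp} gives $\beta^*(q_a) = q_{a}$ (the unique type is $S_a$-invariant), and
the lower coordinates match by the induction hypothesis, so the square commutes on a full-measure
set. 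When $a>\ell$, the amalgamating type is
$\bigl((\pp^{\amlg}_{B,V}(\iota_{B,[a]}^*x))_B, G(x,\pp^{\ovlp,\widehat{\cN}}_{a,V}(x,(y,z)),z)\bigr)$;
the lower coordinates again match by induction, and for the $A$-coordinate we invoke
item~\ref{lem:keymeas:equivariance} of Lemma~\ref{lem:keymeas}, which states precisely that $G$
(and $G'$, $\Theta$) are $S_a$-equivariant — this is where having chosen $H = S_a$ and verified
$S_a$-equivariance of $f,g,\gamma_a,\theta_a$ inside Definition~\ref{def:amlg} pays off. One
must be mildly careful that the $Z = \Omega$ and $W = [0,1]$ coordinates carry trivial $S_a$-action
(as required and arranged in the setup), so the equivariance of $G$ is with respect to the action
that is diagonal on $X\times Y$ and trivial on the dummy factors, matching how
$\pp^{\ovlp,\widehat{\cN}}$ and the dummy variable $z$ transform.

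The inclusion case $\beta = \iota_{[a'],[a]}$ with $a' < a$ is handled by observing that
$\iota_{[a'],[a]}^*$ acting on $\cS_{A,V}^{(d),\amlg,\ell}$ is, by the displayed definition of
$\alpha^*(p)_C$ in Definition~\ref{def:amlg}, exactly ``read off the coordinate indexed by
$\iota_{[a'],[a]}(C) = C$ for $C\subsetneq[a']$, and read off the $[a']$-th amalgamating
sub-coordinate when $C = [a']$'' — in other words $\iota^*$ of an amalgamating type is literally
one of the lower-amalgamating-type entries that was stored in it. Since
$\pp^{\amlg,\ell,\widehat{\cN}}_{[a],V}(x)$ has $(\pp^{\amlg,\ell,\widehat{\cN}}_{B,V}(\iota_{B,[a]}^*x))_{B\in r([a],a-1)}$
as its tuple of lower coordinates, applying $\iota_{[a'],[a]}^*$ and comparing coordinates reduces
to the statement $\iota_{B,[a']}^*\comp\iota_{[a'],[a]}^* = \iota_{B,[a]}^*$ (contra-variance,
Lemma~\ref{lem:contravartypes} and the analogous already-noted contra-variance of the amalgamating
maps) plus, for the top coordinate $C = [a']$, a direct unravelling; this holds on the set where
all the relevant types are defined, which has full measure by item~\ref{def:amlg:dompp}. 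Finally,
the $\pp^{\dsct,\widehat{\cN}}$ arc: once $\pi^{\amlg,\ell}\comp\pp^{\amlg,\ell,\widehat{\cN}}_{A,V}
= \pp^{\dsct,\widehat{\cN}}_{A,V}$ (which is part of the construction — it follows from
item~\ref{lem:keymeas:piG} composed with $\pi^{\ovlp}\comp\pp^{\ovlp}=\pp^{\dsct}$ when $a>\ell$,
and from the $\pi^{\ovlp}\comp\pp^{\ovlp}=\pp^{\dsct}$ definition when $a\le\ell$), the
$\pp^{\dsct}$ square commutes because it is the composite of the $\pp^{\amlg}$ square with the
already-commuting $\pi^{\amlg,\ell}$ square; the $M^{\widehat{\cN}}$ arc then commutes by Lemma~\ref{lem:equivpp}
applied to $\widehat{\cN}$ (or equivalently as a further composite).

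The main obstacle I anticipate is purely bookkeeping: keeping the three cases of the definition of
$\alpha^*$ on amalgamating types aligned with the three cases of the definition of
$\pp^{\amlg,\ell,\widehat{\cN}}$, and making sure that the almost-everywhere domains match up so
that ``weakly commutative'' (equality on the common domain) is genuinely what we get — in
particular one should quote item~\ref{def:amlg:dompp} to guarantee that wherever the longer
composition path is defined, all the amalgamating types occurring along the shorter path are
defined too, and vice versa, so that the common domain has full measure. There is no deep new
idea needed beyond correctly invoking $S_a$-equivariance of $G,G',\Theta$ from
Lemma~\ref{lem:keymeas}\ref{lem:keymeas:equivariance} and contra-variance of the ambient maps;
the delicate part is simply not losing track of indices.
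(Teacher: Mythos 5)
Your proposal is correct and follows essentially the same route as the paper's proof: establish the first-row identities ($\pi^{\amlg,\ell}\comp\pp^{\amlg,\ell,\widehat{\cN}}=\pp^{\dsct,\widehat{\cN}}$ via Lemma~\ref{lem:keymeas}\ref{lem:keymeas:piG} and Lemma~\ref{lem:equivpp}, plus the $M$-arcs), then reduce everything to weak equivariance of $\pp^{\amlg,\ell,\widehat{\cN}}$, proved by induction on $\lvert A\rvert$ using uniqueness of the overlapping type when $\lvert A\rvert\leq\ell$ and $S_a$-equivariance of $G$ from Lemma~\ref{lem:keymeas}\ref{lem:keymeas:equivariance} when $\lvert A\rvert>\ell$, with general sets transported through the fixed bijections $\alpha_A$ of~\eqref{eq:equivdef}. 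Your factorization of an injection $[a']\to[a]$ as a permutation composed with the standard inclusion is only a cosmetic repackaging of the paper's coordinatewise treatment of a general injection $[a']\to[a]$ (which likewise reduces to the bijection case via the restrictions $\beta\down_C$), so no new issues arise.
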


\begin{proof}
  To completely prove the lemma, it suffices to show all weakly commutative properties involving
  only the first row and to show weak equivariance of $\pp^{\amlg,\ell,\widehat{\cN}}$,
  $\pp^{\dsct,\widehat{\cN}}$, $M^{\widehat{\cN}}$, $\pi^{\amlg,\ell}$, $M^{\amlg,\ell}$ and
  $M^{\dsct}$.

  Equivariance of $\pp^{\dsct,\widehat{\cN}}$, $M^{\widehat{\cN}}$ and $M^{\dsct}$ was shown in
  Lemma~\ref{lem:equivpp} and equivariance of $\pi^{\amlg,\ell}$ and $M^{\amlg,\ell}$ was observed
  in~\eqref{eq:equivpiamlgMamlg}. The final weak equivariance of $\pp^{\amlg,\ell,\widehat{\cN}}$
  will be the last thing we prove.

  \medskip
  
  Let us prove the weakly commutative properties involving the first row. The equalities
  \begin{align*}
    M^{\dsct}_{A,V}\comp\pi^{\amlg,\ell}_{A,V} & = M^{\amlg,\ell}_{A,V}, &
    M^{\dsct}_{A,V}\comp\pp^{\dsct,\widehat{\cN}}_{A,V} & = M^{\widehat{\cN}}_A,
  \end{align*}
  follow straightforwardly from definitions (the latter already appeared in
  Lemma~\ref{lem:equivpp}).

  \medskip

  We now show
  \begin{align*}
    \pi^{\amlg,\ell}_{A,V}\comp\pp^{\amlg,\ell,\widehat{\cN}}_{A,V} & = \pp^{\dsct,\widehat{\cN}}_{A,V}
  \end{align*}
  within the domain of the left-hand side.

  When $\lvert A\rvert\leq\ell$, then by definition both sides evaluate to the unique dissociated
  $(A,V)$-type $q_A$ within the domain of the left-hand side.

  When $\lvert A\rvert > \ell$, for $(x,(y,z))\in\dom(\pp^{\amlg,\ell,\widehat{\cN}}_{A,V})$ where
  $x\in\cE_A^{(d)}(\Omega^2)$, $y\in\Omega\times\cO_A^d$ and $z\in\Omega$, we have
  \begin{align*}
    \pi^{\amlg,\ell}_{A,V}\bigl(\pp^{\amlg,\ell,\widehat{\cN}}_{A,V}(x,(y,z))\bigr)
    & =
    \pi^{\amlg,\ell}_{A,V}\biggl(
    (\alpha_A^{-1})^*\Bigl(
    \pp^{\amlg,\ell,\widehat{\cN}}_{\lvert A\rvert,V}\bigl(\alpha_A^*(x,(y,z))\bigr)
    \Bigr)
    \biggr)
    \\
    & =
    (\alpha_A^{-1})^*\Biggl(
    \pi^{\amlg,\ell}_{\lvert A\rvert,V}\biggl(
    \pp^{\amlg,\ell,\widehat{\cN}}_{\lvert A\rvert,V}\Bigl(\alpha_A^*(x),\bigl(\alpha_A^*(y),z\bigr)\Bigr)
    \biggr)
    \Biggr)
    \\
    & =
    (\alpha_A^{-1})^*\Biggl(
    \pi\Biggl(
    G\biggl(x,
    \pp^{\ovlp,\widehat{\cN}}_{\lvert A\rvert,V}\Bigl(\alpha_A^*(x),\bigl(\alpha_A^*(y),z\bigr)
    \Bigr),
    z
    \biggr)
    \Biggr)
    \Biggr)
    \\
    & =
    (\alpha_A^{-1})^*\Biggl(
    g\biggl(x,
    \pp^{\ovlp,\widehat{\cN}}_{\lvert A\rvert,V}\Bigl(\alpha_A^*(x),\bigl(\alpha_A^*(y),z\bigr)
    \Bigr)
    \biggr)
    \Biggr)
    \\
    & =
    (\alpha_A^{-1})^*\Biggl(
    \pi^{\ovlp}_{\lvert A\rvert,V}\biggl(
    \pp^{\ovlp,\widehat{\cN}}_{\lvert A\rvert,V}\Bigl(\alpha_A^*(x),\bigl(\alpha_A^*(y),z\bigr)
    \Bigr)
    \biggr)
    \Biggr)
    \\
    & =
    (\alpha_A^{-1})^*\biggl(
    \pp^{\dsct,\widehat{\cN}}_{\lvert A\rvert,V}\Bigl(\alpha_A^*(x),\bigl(\alpha_A^*(y),z\bigr)
    \Bigr)
    \biggr)
    \\
    & =
    \pp^{\dsct,\widehat{\cN}}_{A,V}\bigl(x,(y,z)\bigr),
  \end{align*}
  where the second equality follows from the already shown equivariance of $\pi^{\amlg,\ell}$, in
  the third equality, the undecorated $\pi$ is the projection $\pi\colon V\times W\to V$ of
  Lemma~\ref{lem:keymeas} (i.e., $\pi\colon\cS_{\lvert
    A\rvert,V}^{(d),\dsct}\times[0,1]\to\cS_{\lvert A\rvert,V}^{(d),\dsct}$), the fourth equality
  follows from Lemma~\ref{lem:keymeas}\ref{lem:keymeas:piG}, the fifth equality follows from the
  definition of $g$ in~\eqref{eq:amlg:g}, the sixth equality follows from Lemma~\ref{lem:equivpp}
  and the final equality follows from the already shown equivariance of $\pp^{\dsct,\widehat{\cN}}$
  (also shown in Lemma~\ref{lem:equivpp}).

  \medskip

  To conclude the weak commutativity of the first row, it remains to show
  \begin{align*}
    M^{\amlg,\ell}_{A,V}\comp\pp^{\amlg,\ell,\widehat{\cN}}_{A,V} & = M^{\widehat{\cN}}_{A,V}
  \end{align*}
  within the domain of the left-hand side. This follows from the other already shown weak
  commutativities of the first row:
  \begin{align*}
    M^{\amlg,\ell}_{A,V}\comp\pp^{\amlg,\ell,\widehat{\cN}}_{A,V}
    & =
    M^{\dsct,\ell}_{A,V}\comp\pi^{\amlg,\ell}_{A,V}\comp\pp^{\amlg,\ell,\widehat{\cN}}_{A,V}
    =
    M^{\dsct,\ell}_{A,V}\comp\pp^{\dsct,\widehat{\cN}}_{A,V}
    =
    M^{\widehat{\cN}}_{A,V}.
  \end{align*}

  \medskip

  It remains to show weak equivariance of $\pp^{\amlg,\ell,\widehat{\cN}}$, that is, we need to show that
  if $\beta\colon A'\to A$ is an injection, then
  \begin{align*}
    \pp^{\amlg,\ell,\widehat{\cN}}_{A',V}\comp\beta^* & = \beta^*\comp\pp^{\amlg,\ell,\widehat{\cN}}_{A,V}
  \end{align*}
  within the common domain of the two sides.

  Let $x\in\cE_{A'}^{(d)}(\Omega^2)$ be a point in the common domain of the two sides.

  We first consider the case when $A'$ and $A$ have the same cardinality, say $a$, and we prove the
  weak equivariance by induction in $a$.

  If $a\leq\ell$ (and assuming inductively that weak equivariance holds for smaller sizes), we note
  that
  \begin{align*}
    (\pp^{\amlg,\ell,\widehat{\cN}}_{A',V}\comp\beta^*)(x)
    & =
    \Biggl(
    \biggl(\pp^{\amlg,\ell,\widehat{\cN}}_{B,V}\Bigl(\iota_{B,A'}^*\bigl(\beta^*(x)\bigr)\Bigr)
    \mathrel{\bigg\vert}
    B\in r(A',a-1)\biggr), q_{A'}
    \Biggr)
    \\
    & =
    \Biggl(
    \biggl(\pp^{\amlg,\ell,\widehat{\cN}}_{B,V}\Bigl(\beta\down_B^*\bigl(\iota_{\beta(B),A}^*(x)\bigr)\Bigr)
    \mathrel{\bigg\vert}
    B\in r(A',a-1)\biggr), q_{A'}
    \Biggr)
    \\
    & =
    \Biggl(
    \biggl(\beta\down_B^*\Bigl(\pp^{\amlg,\ell,\widehat{\cN}}_{\beta(B),V}\bigl(\iota_{\beta(B),A}^*(x)\bigr)
    \Bigr)
    \mathrel{\bigg\vert}
    B\in r(A',a-1)\biggr), \beta^*(q_A)
    \Biggr)
    \\
    & =
    \beta^*\biggl(
    \Bigl(\pp^{\amlg,\ell,\widehat{\cN}}_{B,V}\bigl(\iota_{B,A}^*(x)\bigr)
    \mathrel{\Big\vert}
    B\in r(A,\lvert A\rvert-1)\Bigr), q_A
    \biggr)
    \\
    & =
    \beta^*\bigl(\pp^{\amlg,\ell,\widehat{\cN}}_{A,V}(x)\bigr),
  \end{align*}
  where the second equality follows from contra-variance and the fact that
  $\beta\comp\iota_{B,A'}=\iota_{\beta(B),A}\comp\beta\down_B$ and the third equality follows from
  inductive hypothesis along with $\beta^*(q_A)=q_{A'}$ (as $q_A$ and $q_{A'}$ are the unique up to zero measure
  overlapping types of $\widehat{\cN}$ over their respective sets).

  Suppose now that $a > \ell$ and consider first the case $A = A' = [a]$. Then a similar derivation
  holds using $S_a$-equivariance of $G$ instead: writing $x=(w,(y,z))$, where
  $w\in\cE_{a,a-1}^{(d)}(\Omega^2)$, $y\in\Omega\times\cO_a^d$ and $z\in\Omega$, we have
  \begin{align*}
    & \!\!\!\!\!\!
    (\pp^{\amlg,\ell,\widehat{\cN}}_{A',V}\comp\beta^*)(x)
    \\
    & =
    \Biggl(
    \biggl(\pp^{\amlg,\ell,\widehat{\cN}}_{B,V}\Bigl(\iota_{B,a}^*\bigl(\beta^*(x)\bigr)\Bigr)
    \mathrel{\bigg\vert}
    B\in r(a,a-1)\biggr),
    G\Bigl(\beta^*(x),\pp^{\ovlp,\widehat{\cN}}_{a,V}\bigl(\beta^*(w),(\beta^*(y),z)\bigr), z\Bigr)
    \Biggr)
    \\
    & =
    \Biggl(
    \biggl(\pp^{\amlg,\ell,\widehat{\cN}}_{B,V}\Bigl(\beta\down_B^*\bigl(\iota_{\beta(B),a}^*(x)\bigr)\Bigr)
    \mathrel{\bigg\vert}
    B\in r(a,a-1)\biggr),
    \beta^*\biggl(G\Bigl(x,\pp^{\ovlp,\widehat{\cN}}_{a,V}\bigl(w,(y,z)\bigr), z\Bigr)\biggr)
    \Biggr)
    \\
    & =
    \Biggl(
    \beta^*\biggl(
    \Bigl(\pp^{\amlg,\ell,\widehat{\cN}}_{B,V}\bigl(\iota_{\beta(B),a}^*(x)\bigr)\Bigr)
    \biggr)
    \mathrel{\bigg\vert}
    B\in r(a',a-1)\biggr),
    \beta^*\biggl(G\Bigl(x,\pp^{\ovlp,\widehat{\cN}}_{a,V}\bigl(w,(y,z)\bigr), z\Bigr)\biggr)
    \Biggr)
    \\
    & =
    (\beta^*\comp\pp^{\amlg,\ell,\widehat{\cN}}_{A,V})(x),
  \end{align*}
  where the second equality follows from $S_a$-equivariance of $G$ and $\pp^{\ovlp,\widehat{\cN}}$
  (from Lemma~\ref{lem:equivpp}) and the fact that
  $\beta\comp\iota_{B,a}=\iota_{\beta(B),a}\comp\beta\down_B$ and the third equality follows from
  inductive hypothesis.

  We now consider the case when $\lvert A\rvert = \lvert A'\rvert > \ell$ (but are not necessarily
  of the form $[a]$). Letting $a\df\lvert A\rvert$ and
  $\gamma\df\alpha_A^{-1}\comp\beta\comp\alpha_{A'}\in S_a$, we have
  \begin{align*}
    (\pp^{\amlg,\ell,\widehat{\cN}}_{A',V}\comp\beta^*)(x)
    & =
    \bigl((\alpha_{A'}^{-1})^*\comp\pp_{a,V}^{\amlg,\ell,\widehat{\cN}}\comp\alpha_{A'}^*
    \comp\beta^*\bigr)(x)
    =
    \bigl((\alpha_{A'}^{-1})^*\comp\pp_{a,V}^{\amlg,\ell,\widehat{\cN}}\comp\gamma^*
    \comp\alpha_A^*\bigr)(x)
    \\
    & =
    \bigl((\alpha_{A'}^{-1})^*\comp\gamma^*\comp\pp_{a,V}^{\amlg,\ell,\widehat{\cN}}
    \comp\alpha_A^*\bigr)(x)
    =
    \bigl(\beta^*\comp(\alpha_A^{-1})^*\comp\pp_{a,V}^{\amlg,\ell,\widehat{\cN}}
    \comp\alpha_A^*\bigr)(x)
    \\
    & =
    (\beta^*\comp\pp^{\amlg,\ell,\widehat{\cN}}_{A,V})(x),
  \end{align*}
  where the third equality follows from the previous case (for $[a]$).

  This concludes all cases in which $\lvert A'\rvert=\lvert A\rvert$ (i.e., cases in which $\beta$
  is a bijection).

  We now prove the case when $A'=[a']$ and $A=[a]$ for some $a',a\in\NN$ with $a' < a$. We need to
  show that for every $C\in r(a')$ we have
  \begin{align*}
    (\pp^{\amlg,\ell,\widehat{\cN}}_{A',V}\comp\beta^*)(x)_C
    & =
    (\beta^*\comp\pp^{\amlg,\ell,\widehat{\cN}}_{A,V})(x)_C.
  \end{align*}

  If $\lvert C\rvert < a'$, then we have
  \begin{align*}
    (\pp^{\amlg,\ell,\widehat{\cN}}_{A',V}\comp\beta^*)(x)_C
    & =
    \pp^{\amlg,\ell,\widehat{\cN}}_{C,V}\Bigl(\iota_{C,[a']}^*\bigl(\beta^*(x)\bigr)\Bigr)
    =
    \pp^{\amlg,\ell,\widehat{\cN}}_{C,V}\Bigl(\beta\down_C^*\bigl(\iota_{\beta(C),[a]}^*(x)\bigr)\Bigr)
    \\
    & =
    \beta\down_C^*\Bigl(\pp^{\amlg,\ell,\widehat{\cN}}_{\beta(C),V}\bigl(\iota_{\beta(C),[a]}^*(x)\bigr)\Bigr)
    =
    \beta\down_C^*\bigl(\pp^{\amlg,\ell,\widehat{\cN}}_{a,V}(x)_{\beta(C)}\bigr)
    =
    \beta^*\bigl(\pp^{\amlg,\ell,\widehat{\cN}}_{a,V}(x)\bigr)_C,
  \end{align*}
  where the third equality follows from the already proved case as $\beta\down_C$ is bijective.

  If $C = [a']$, then we have
  \begin{align*}
    (\pp^{\amlg,\ell,\widehat{\cN}}_{A',V}\comp\beta^*)(x)_C
    & =
    \pp^{\amlg,\ell,\widehat{\cN}}_{a',V}\bigl(\beta^*(x)\bigr)_{[a']}
    =
    \pp^{\amlg,\ell,\widehat{\cN}}_{a',V}\Bigl(\beta\down_{[a']}^*\bigl(\iota_{\beta([a']),[a]}^*(x)\bigr)\Bigr)_{[a']}
    \\
    & =
    \beta\down_{[a']}^*\Bigl(
    \pp^{\amlg,\ell,\widehat{\cN}}_{\beta([a']),V}\bigl(\iota_{\beta([a']),[a]}^*(x)\bigr)
    \Bigr)_{[a']}
    =
    \beta\down_{[a']}^*\bigl(
    \pp^{\amlg,\ell,\widehat{\cN}}_{a,V}(x)_{\beta([a'])}
    \bigr)_{[a']}
    \\
    & =
    \beta\down_{[a']}^*\Bigl(
    \bigl(\pp^{\amlg,\ell,\widehat{\cN}}_{a,V}(x)_{\beta([a'])}\bigr)_{\beta([a'])}
    \Bigr)
    =
    \beta^*\bigl(\pp^{\amlg,\ell,\widehat{\cN}}_{A,V}(x)\bigr)_C,
  \end{align*}
  where the third equality follows from the already proved case as $\beta\down_{[a']}$ is
  bijective, the fifth equality follows from the contra-variant definition of
  $\beta\down_{[a']}^*$ and the sixth equality follows from the contra-variant definition of
  $\beta^*$.

  It remains to show the case when $A'$ and $A$ are such that $\lvert A'\rvert<\lvert A\rvert$ and
  are not necessarily of the form $[a']$ and $[a]$, respectively. Let $a'\df\lvert A'\rvert$ and
  $a\df\lvert A\rvert$, consider the injection
  $\gamma\df\alpha_A^{-1}\comp\beta\comp\alpha_{A'}\colon [a']\to[a]$ and note that
  \begin{align*}
    (\pp^{\amlg,\ell,\widehat{\cN}}_{A',V}\comp\beta^*)(x)
    & =
    \bigl((\alpha_{A'}^{-1})^*\comp\pp^{\amlg,\ell,\widehat{\cN}}_{a',V}\comp\alpha_{A'}^*\comp\beta^*\bigr)(x)
    =
    \bigl((\alpha_{A'}^{-1})^*\comp\pp^{\amlg,\ell,\widehat{\cN}}_{a',V}\comp\gamma^*\comp\alpha_A^*\bigr)(x)
    \\
    & =
    \bigl((\alpha_{A'}^{-1})^*\comp\gamma^*\comp\pp^{\amlg,\ell,\widehat{\cN}}_{a',V}\comp\alpha_A^*\bigr)(x)
    =
    \bigl(\beta^*\comp(\alpha_A^{-1})^*\comp\pp^{\amlg,\ell,\widehat{\cN}}_{a',V}\comp\alpha_A^*\bigr)(x)
    \\
    & =
    \beta^*\bigl(\pp^{\amlg,\ell,\widehat{\cN}}_{A,V}(x)\bigr),
  \end{align*}
  where the third equality follows from the already proved case as $\gamma\colon[a']\to[a]$.

  This concludes weak equivariance of $\pp^{\amlg,\ell,\widehat{\cN}}$.
\end{proof}

The next lemma proves the weak amalgamation property (item~\ref{def:amlg:weakamalg} of
Definition~\ref{def:amlg}), completing Definition~\ref{def:amlg}, and proves the strong amalgamation
that we have been promising all along.

\begin{lemma}\label{lem:amlg}
  Let $\cN$ be an $(\ell-1)$-independent $d$-Euclidean structure in $\cL$ over $\Omega$ satisfying
  $\UCouple[\ell]$, let $V$ be a countably infinite set and let $A$ be a finite set disjoint from
  $V$. Let also $\widehat{\cN}$ be the $d$-Euclidean structure in $\cL$ over $\Omega^2$ as in
  Definition~\ref{def:amlg}, i.e., $\widehat{\cN}$ is given by
  \begin{align*}
    \widehat{\cN}_P & \df \{(x,x')\in\cE_{k(P)}^{(d)}(\Omega)\times\cE_{k(P)}(\Omega) \mid x\in\cN_P\}
    \qquad (P\in\cL).
  \end{align*}
  Finally, let $k\in\{0,\ldots,\lvert A\rvert-1\}$.

  Then the following hold.
  \begin{enumerate}
  \item\label{lem:amlg:weak} Weak amalgamation: if $\rn{x}$ is sampled in
    $\cE_{A,k}^{(d)}(\Omega^2)$ according to $\mu^{(d)}$, then
    $\pp^{\dsct,\widehat{\cN}}_{A,V}(\rn{x})$ is conditionally independent from $\tau_{A,k}(\rn{x})$
    given
    \begin{align*}
      \bigl(\pp^{\amlg,\ell,\widehat{\cN}}_{A',V}(\iota_{A',A}(\rn{x}))
      \mathrel{\big\vert} A'\in r(A,k)\bigr),
    \end{align*}
    where $\tau_{A,k}\colon\cE_A^{(d)}(\Omega^2)\to\cE_{A,k}^{(d)}(\Omega^2)$ is the natural
    projection.
  \item\label{lem:amlg:welldef} Amalgamating types are well-defined:
    $\pp^{\amlg,\ell,\widehat{\cN}}_{A,V}$, $\qq^{\amlg,\ell,\widehat{\cN}}_{A,V}$ and
    $\Theta^{\amlg,\ell,\widehat{\cN}}_{A,V}$ are well-defined.
  \item\label{lem:amlg:strong} Strong amalgamation: if $\rn{x}$ is sampled in
    $\cE_A^{(d)}(\Omega^2)$ according to $\mu^{(d)}$, then
    $\pp^{\amlg,\ell,\widehat{\cN}}_{A,V}(\rn{x})$ is conditionally independent from
    $\tau_{A,k}(\rn{x})$ given
    \begin{align*}
      \bigl(\pp^{\amlg,\ell,\widehat{\cN}}_{A',V}(\iota_{A',A}(\rn{x}))
      \mathrel{\big\vert} A'\in r(A,k)\bigr),
    \end{align*}
    where $\tau_{A,k}\colon\cE_A^{(d)}(\Omega^2)\to\cE_{A,k}^{(d)}(\Omega^2)$ is the
    natural projection.
  \end{enumerate}
\end{lemma}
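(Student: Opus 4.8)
The plan is to prove the three items of Lemma~\ref{lem:amlg} simultaneously by induction on $\lvert A\rvert$, since the definition of amalgamating types in Definition~\ref{def:amlg} itself proceeds by this induction and uses weak amalgamation of strictly smaller sets in order to invoke Lemma~\ref{lem:mkequiv} (to build $\theta_a$) and Lemma~\ref{lem:keymeas}. The structure will be: assume items~\ref{lem:amlg:weak}, \ref{lem:amlg:welldef}, \ref{lem:amlg:strong} hold for all sets of size $< \lvert A\rvert$ (equivalently, the full construction of Definition~\ref{def:amlg} has been validated up to that size); then first deduce well-definedness of $\pp^{\amlg,\ell,\widehat{\cN}}_{A,V}$, $\qq^{\amlg,\ell,\widehat{\cN}}_{A,V}$ and $\Theta^{\amlg,\ell,\widehat{\cN}}_{A,V}$ (item~\ref{lem:amlg:welldef}); then prove strong amalgamation (item~\ref{lem:amlg:strong}); and finally derive weak amalgamation for sets of size $\lvert A\rvert$ (item~\ref{lem:amlg:weak}), which is the input needed to push the induction to size $\lvert A\rvert+1$. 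Note that item~\ref{lem:amlg:weak} as stated for size-$\lvert A\rvert$ sets is a statement about the \emph{dissociated} $A$-type and the \emph{lower} amalgamating $A'$-types ($A'\in r(A,k)$, $k\le\lvert A\rvert-1$), so it only needs amalgamating types of size $\le\lvert A\rvert-1$, which keeps the induction honest.

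For item~\ref{lem:amlg:welldef}: when $\lvert A\rvert\le\ell$, well-definedness is immediate from Proposition~\ref{prop:typeuniqueness} (uniqueness of the overlapping $(A,V)$-type $q_A$) together with the inductive well-definedness of the lower amalgamating types; the only subtlety, that the construction does not depend on the choice of bijection $\alpha_A$, is exactly weak equivariance of $\pp^{\amlg,\ell,\widehat{\cN}}$, already handled in Lemma~\ref{lem:amlgcomm}. When $\lvert A\rvert=a>\ell$ and $A=[a]$, well-definedness is the assertion that all hypotheses of Lemma~\ref{lem:keymeas} are met with the parameters listed in Definition~\ref{def:amlg}; the nontrivial hypotheses are (i) $S_a$-equivariance and a.e.-definedness of the partial Markov kernels $\theta_a$ and $\gamma_a$, which come from Lemma~\ref{lem:mkequiv} once we know the inductive weak amalgamation (for $\theta_a$) and the trivial conditional-independence statement $\rn Y$-trivial (for $\gamma_a$), plus $S_a$-invariance of $\mu^{(d)}$; (ii) the existence of the fundamental domain $X'=\{x : x_{\{1\}}<\cdots<x_{\{a\}}\}$ with pairwise disjoint translates covering a full-measure set, which holds because $\mu$ is atomless; (iii) $S_a$-equivariance of $f$ (from the inductive item~\ref{def:amlg:commutative}, proven in Lemma~\ref{lem:amlgcomm}) and of $g=\pi^{\ovlp}_{a,V}\circ(\text{proj})$ (from Lemma~\ref{lem:equivpp}); and (iv) the compatibility $g(x,\place)_*(\gamma_a(x))=\theta_a(f(x))$, which follows from $\pi^{\ovlp}_{a,V}\circ\pp^{\ovlp,\widehat\cN}_{a,V}=\pp^{\dsct,\widehat\cN}_{a,V}$ in Lemma~\ref{lem:equivpp} after disintegrating over $\tau_a(\rn x)$. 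For general $A$ of size $>\ell$, well-definedness is transported from $[\lvert A\rvert]$ via \eqref{eq:equivdef} and again the $\alpha_A$-independence is Lemma~\ref{lem:amlgcomm}.

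For item~\ref{lem:amlg:strong}: it suffices (by equivariance, Lemma~\ref{lem:amlgcomm}) to treat $A=[a]$. For $a\le\ell$ the amalgamating type has a unique value on its domain, so strong amalgamation is vacuous. For $a>\ell$, first reduce the claim to the case $k=a-1$: if $\rn x\sim\mu^{(d)}$ on $\cE_a^{(d)}(\Omega^2)$, then conditioning on $\tau_{a,k}(\rn x)$ for $k<a-1$ factors through conditioning on $\tau_{a,a-1}(\rn x)$ together with the extra lower-indexed coordinates, and the tower property plus the $k=a-1$ case (combined with the inductive item~\ref{lem:amlg:strong} applied to proper subsets, which controls how $\pp^{\amlg}_{B,V}(\iota_{B,[a]}^*(\rn x))$ depends on those extra coordinates through $\pp^{\amlg}_{B',V}$ for $B'\in r(B,k)$) collapses it to the stated form; this is essentially the same bookkeeping argument already used in the proof of Proposition~\ref{prop:weakamalgdsctovlp}. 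For $k=a-1$: write $\pp^{\amlg,\ell,\widehat\cN}_{a,V}(\rn x)=\bigl(f(\tau_a\rn x),\,G(\tau_a\rn x,\pp^{\ovlp,\widehat\cN}_{a,V}(\rn x),\rn z)\bigr)$ where $\rn z$ is the dummy coordinate; the lower amalgamating types $f(\tau_a\rn x)$ are $\tau_a(\rn x)$-measurable, so they cause no trouble, and conditionally on $f(\tau_a\rn x)$ the vector $\bigl(\pp^{\ovlp,\widehat\cN}_{a,V}(\rn x),\rn z\bigr)$ has conditional distribution $\gamma_a(\tau_a\rn x)\otimes\mu$ by \eqref{eq:dummy} (the dummy $\rn z$ is independent of everything in $\widehat\cN$), and by Lemma~\ref{lem:keymeas}\ref{lem:keymeas:Ggammanu} its image under $G(\tau_a\rn x,\place,\place)$ is $\Theta(f(\tau_a\rn x))$, which depends on $\tau_a(\rn x)$ only through $f(\tau_a\rn x)$. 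Assembling with the (vacuous) component-wise match of the lower coordinates yields exactly $\Theta^{\amlg,\ell,\widehat\cN}_{a,V}(\,f(\tau_a\rn x)\,)=\delta_{f(\tau_a\rn x)}\otimes\Theta(f(\tau_a\rn x))$ as the conditional law, which is the content of strong amalgamation.

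For item~\ref{lem:amlg:weak} at size $\lvert A\rvert$: this is the one place where we invoke Proposition~\ref{prop:weakamalgdsctovlp} directly. By equivariance reduce to $A=[a]$ and $k=a-1$; Proposition~\ref{prop:weakamalgdsctovlp} (applied to $\widehat\cN$, using that $\widehat\cN$ inherits $(\ell-1)$-independence and $\UCouple[\ell]$) says $\pp^{\dsct,\widehat\cN}_{a,V}(\rn x)$ is conditionally independent of $\tau_{a,a-1}(\rn x)$ given $\bigl(\pp^{\ovlp,\widehat\cN}_{B,V}(\iota_{B,[a]}^*(\rn x))\mid B\in r(a,a-1)\bigr)$; to upgrade the conditioning from overlapping $B$-types to amalgamating $B$-types, observe that by item~\ref{def:amlg:qq} each $\pp^{\ovlp,\widehat\cN}_{B,V}(\iota_{B,[a]}^*(\rn x))=\qq^{\amlg,\ell,\widehat\cN}_{B,V}\bigl(\tau_B(\iota_{B,[a]}^*\rn x),\pp^{\amlg,\ell,\widehat\cN}_{B,V}(\iota_{B,[a]}^*\rn x)\bigr)$ is a measurable function of the amalgamating $B$-type and the lower pointwise data $\tau_B(\iota_{B,[a]}^*\rn x)$, and the latter is $\tau_{a,a-1}(\rn x)$-measurable; so the $\sigma$-algebra generated by the amalgamating $B$-types together with $\tau_{a,a-1}(\rn x)$ equals that generated by the overlapping $B$-types together with $\tau_{a,a-1}(\rn x)$, and conditional independence of $\pp^{\dsct,\widehat\cN}_{a,V}(\rn x)$ from $\tau_{a,a-1}(\rn x)$ is preserved when the conditioning $\sigma$-algebra is enlarged by something $\tau_{a,a-1}(\rn x)$-measurable and then coarsened back to the amalgamating types. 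I expect the main obstacle to be this last upgrade and the reduction-in-$k$ argument in item~\ref{lem:amlg:strong}: one must be careful that ``conditioning on more'' is legitimate (it is, by the standard fact that if $\rn U\perp\rn W\mid\rn Z$ and $\rn W'$ is $\rn W$-measurable then $\rn U\perp\rn W\mid(\rn Z,\rn W')$) and that coarsening the conditioning back to $\sigma(\text{amalgamating }B\text{-types})$ is valid because the discarded part ($\tau_{a,a-1}(\rn x)$ beyond what the $B$-types see) is exactly the thing we are proving independence from — so the bookkeeping of which $\sigma$-algebras contain which must be laid out explicitly, mirroring the inductive conditional-independence computation at the end of the proof of Proposition~\ref{prop:weakamalgdsctovlp}.
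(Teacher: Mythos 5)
Your three ingredients are individually close to the paper's, but the way you schedule them makes the induction circular at its key joint. To get the kernel $\theta_a$ from Lemma~\ref{lem:mkequiv} --- which is the whole content of well-definedness at level $a=\lvert A\rvert>\ell$ --- you need exactly the statement that $\pp^{\dsct,\widehat{\cN}}_{a,V}(\rn{x})$ is conditionally independent from $\tau_a(\rn{x})$ given the amalgamating $B$-types for $B\in r(a,a-1)$; in the lemma's numbering this is item~\ref{lem:amlg:weak} \emph{for the size-$a$ set itself} (with $k=a-1$), equivalently item~\ref{def:amlg:weakamalg} of Definition~\ref{def:amlg} at level $a-1$. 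It is not contained in your inductive hypothesis ``items \ref{lem:amlg:weak}--\ref{lem:amlg:strong} for all sets of size $<a$'': weak amalgamation for a set of size $a-1$ concerns dissociated types of size $a-1$, not $a$. Yet you defer proving item~\ref{lem:amlg:weak} at level $a$ to the end of the step while invoking it (under the name ``the inductive weak amalgamation'') at the start to justify $\theta_a$. The paper's proof orders the step the other way: first strong amalgamation for sizes $\le a-1$ implies weak amalgamation at level $a$ (all $k$), then well-definedness (which is then immediate, since the Definition already performed the verification), then strong amalgamation at level $a$; your ordering has to be rearranged to match, otherwise the construction of $\theta_a$ rests on an unproven statement.

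The second genuine gap is your $\sigma$-algebra argument for weak amalgamation. Proposition~\ref{prop:weakamalgdsctovlp} gives $\pp^{\dsct,\widehat{\cN}}_{a,V}(\rn{x})$ conditionally independent from $\tau_{a,k}(\rn{x})$ given the \emph{overlapping} types; enlarging the conditioning by the $\tau_{a,k}(\rn{x})$-measurable amalgamating types is legitimate, but your final ``coarsen back'' from conditioning on (overlapping types, amalgamating types) down to the amalgamating types alone is the inference $\rn{X}\perp\rn{Y}\given(\rn{Z},h(\rn{Y}))\implies\rn{X}\perp\rn{Y}\given\rn{Z}$, which is false in general (take $\rn{Z}$ trivial and $h(\rn{Y})=\rn{Y}$), and the justification you offer --- that the discarded information is what we are proving independence from --- is circular. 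The paper bridges exactly this point by an additional induction on $k$ inside item~\ref{lem:amlg:weak}: given $y,y'$ with equal amalgamating $A'$-types for $A'\in r(A,k)$, the $(A,k-1)$ case lets one assume in addition that $y$ and $y'$ agree on all coordinates of rank $\le k-1$; then $\tau_{A'}(y)=\tau_{A'}(y')$, so the overlapping types recovered through $\qq^{\amlg,\ell,\widehat{\cN}}_{A',V}$ coincide, and the kernel $\rho$ of Proposition~\ref{prop:weakamalgdsctovlp} forces the two conditional laws of the dissociated type to agree. Some device of this kind, pinning down the lower pointwise data before discarding it, is indispensable. A lesser issue: your reduction of strong amalgamation from general $k$ to $k=\lvert A\rvert-1$ is only asserted; the paper instead proves it for all $k$ at once by coupling three random tuples ($\rn{p}$, $\rn{q}$, $\rn{r}$), resampling the types of size $>k$ through the kernels $\Theta^{\amlg,\ell,\widehat{\cN}}$, and you would still need to supply an argument of that precision.
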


\begin{proof}
  We will prove all assertions by induction on $\lvert A\rvert$ and the logic of the inductive step
  is:
  \begin{enumerate}[label={\Alph*.}, ref={(\Alph*)}]
  \item\label{lem:amlg:weak->welldef} Item~\ref{lem:amlg:weak} for a set $A$ and $k=\lvert
    A\rvert-1$ implies item~\ref{lem:amlg:welldef} for $A$.
  \item\label{lem:amlg:welldef->strong} Item~\ref{lem:amlg:welldef} for all sets of size at most
    $\lvert A\rvert$ implies item~\ref{lem:amlg:strong} for $A$.
  \item\label{lem:amlg:strong->weak} Item~\ref{lem:amlg:strong} for all sets of size at most $\lvert
    A\rvert-1$ implies item~\ref{lem:amlg:weak} for $A$ (and every $k < \lvert A\rvert$).
  \end{enumerate}

  Item~\ref{lem:amlg:weak->welldef} is trivial as item~\ref{lem:amlg:weak} when $k=\lvert A\rvert-1$
  is exactly item~\ref{def:amlg:weakamalg} of Definition~\ref{def:amlg}.

  \medskip

  For item~\ref{lem:amlg:welldef->strong}, we consider three random variables $\rn{p}$, $\rn{q}$ and
  $\rn{r}$ in $\prod_{A'\in r(A)} \cS_{A',V}^{(d),\amlg,\ell}(\Omega^2)$.

  For $\rn{p}$, we simply let
  \begin{align*}
    \rn{p}_{A'} & \df \pp^{\amlg,\ell,\widehat{\cN}}_{A',V}\bigl(\iota_{A',A}^*(\rn{x})\bigr)
    \qquad (A'\in r(A)).
  \end{align*}

  For $\rn{q}$, we inductively define
  \begin{align*}
    \rn{q}_{A'}
    & \df
    \Theta^{\amlg,\ell,\widehat{\cN}}_{A',V}\bigl(\rn{q}_C \mid C\in r(A',\lvert A'\rvert-1)\bigr)
    \qquad (A'\in r(A)).
  \end{align*}
  An inductive application of items~\ref{def:amlg:Theta} and~\ref{def:amlg:commutative} of
  Definition~\ref{def:amlg} shows that $\rn{p}$ and $\rn{q}$ are equidistributed.

  Finally, for $\rn{r}$, we inductively define
  \begin{align*}
    \rn{r}_{A'}
    & \df
    \begin{dcases*}
      \rn{p}_{A'}, & if $\lvert A'\rvert\leq k$,\\
      \Theta^{\amlg,\ell,\widehat{\cN}}_{A',V}\bigl(
      (\rn{r}_C \mid C\in r(A',\lvert A'\rvert-1)
      \bigr),
      & if $\lvert A'\rvert > k$,
    \end{dcases*}
    \qquad (A'\in r(A)).
  \end{align*}
  Since $\rn{p}$ and $\rn{q}$ are equidistributed, it follows that $\rn{r}$ also has the same
  distribution as $\rn{p}$ and $\rn{q}$.

  Since by construction we have $(\rn{r}_{A'}\mid A'\in r(A,k))=(\rn{p}_{A'}\mid A'\in r(A,k))$, it
  follows that the $\rn{p}$ and $\rn{r}$ have the same conditional distribution given
  $(\rn{p}_{A'}\mid A'\in r(A,k))$. On the other hand, by construction of $\rn{r}$, it is obvious
  that the conditional distribution of $\rn{r}$ given $(\rn{p}_{A'}\mid A'\in r(A,k))$ is
  (conditionally) independent from $(\rn{x}_{A'}\mid A'\in r(A,k))$, concluding the proof of
  item~\ref{lem:amlg:welldef->strong}.

  \medskip

  It remains to prove item~\ref{lem:amlg:strong->weak}. We prove item~\ref{lem:amlg:weak} by
  induction in $k < \lvert A\rvert$.

  Let $\rn{z}$ be picked at random in $\prod_{C\in\binom{A}{>k}} (X\times X\times\cO_C^d)$ according
  to $\bigotimes_{C\in\binom{A}{>k}} (\mu\otimes\mu\otimes\bigotimes_{i=1}^d \nu_C)$ and for each
  $y\in\cE_{A,k}^{(d)}(\Omega^2)$, let
  \begin{align*}
    \rn{p}^y & \df \pp^{\dsct,\widehat{\cN}}_{A,V}(y,\rn{z})
  \end{align*}
  and note that it suffices to show that there exists a $\mu^{(d)}$-measure $1$ subset $G$ of
  $\cE_{A,k}^{(d)}(\Omega^2)$ such that if $y,y'\in G$ are such that
  $\pp^{\amlg,\ell,\widehat{\cN}}_{A',V}(\iota_{A',A}(y)) =
  \pp^{\amlg,\ell,\widehat{\cN}}_{A',V}(\iota_{A',A}(y'))$ for every $A'\in r(A,k)$, then $\rn{p}^y$
  and $\rn{p}^{y'}$ are equidistributed. By our inductive hypothesis in $k$, we may further suppose
  that $y_{A'} = y'_{A'}$ for every $A'\in r(A,k-1)$.

  We now introduce new random variables $\rn{r}^y$ in $\cS_{A,V}^{(d),\dsct}(\Omega^2)$ for each
  $y\in\cE_{A,k}^{(d)}(\Omega^2)$: by Proposition~\ref{prop:weakamalgdsctovlp}, there exists a
  Markov kernel
  \begin{align*}
    \rho\colon
    \prod_{A'\in \binom{A}{k}}\cS_{A',V}^{(d),\ovlp}(\Omega^2)
    \to
    \cP\bigl(\cS_{A,V}^{(d),\dsct}(\Omega^2)\bigr)
  \end{align*}
  that allows us to correctly sample the conditional distribution of the dissociated $(A,V)$-type in
  $\widehat{\cN}$ given the overlapping $(A',V)$-types for every $A'\in\binom{A}{k}$. For each
  $A'\in\binom{A}{k}$, we let
  $\tau_{A'}\colon\cE_{A'}^{(d)}(\Omega^2)\to\cE_{A',k-1}^{(d)}(\Omega^2)$ be the natural projection
  and
  \begin{align*}
    q^y_{A'}
    & \df
    \qq^{\amlg,\ell,\widehat{\cN}}_{A',V}\Bigl(
    \tau_{A'}(y),
    \pp^{\amlg,\ell,\widehat{\cN}}_{A',V}\bigr(\iota_{A',A}^*(y)\bigr)
    \Bigr)
  \end{align*}
  be the overlapping $(A',V)$-type recovered via $\qq^{\amlg,\ell,\widehat{\cN}}_{A',V}$ from the
  amalgamating $(A',V,\ell)$-type of $\iota_{A',A}^*(y)$ along with the pointwise information
  $\tau_{A'}(y)$ of coordinates indexed by sets in $r(A',k-1)$ and finally let $\rn{r}^y$ be sampled
  according to $\rho((q^y_{A'} \mid A'\in\binom{A}{k}))$.

  By item~\ref{def:amlg:qq} in Definition~\ref{def:amlg}, we know that
  \begin{align*}
    q^y_{A'} & = \pp^{\ovlp,\widehat{\cN}}_{A',V}(\iota_{A',A}^*(y))
  \end{align*}
  whenever $\iota_{A',A}^*(y)\in\dom(\pp^{\amlg,\ell,\widehat{\cN}}_{A',V})$. In turn, since our
  $\rho$ was provided by Proposition~\ref{prop:weakamalgdsctovlp}, we conclude that
  \begin{align}\label{eq:ry}
    \rn{r}^y & \sim \pp^{\dsct,\widehat{\cN}}_{A,V}(y,\rn{z}) = \rn{p}^y
  \end{align}
  for $\mu^{(d)}$-almost every $y$ in the set
  \begin{align*}
    G & \df
    \left\{y\in\cE_{A,k}^{(d)}(\Omega^2) \;\middle\vert\;
    \forall A'\in\binom{A}{k}, \iota_{A',A}^*(y)\in\dom(\pp^{\amlg,\ell,\widehat{\cN}}_{A',V})
    \right\}.
  \end{align*}
  Clearly $\mu^{(d)}(G)=1$, so if $\widetilde{G}$ is the set of all $y\in G$ such that~\eqref{eq:ry}
  holds, then $\mu^{(d)}(\widetilde{G})=1$.

  Now let $y,y'\in\widetilde{G}$ be such that
  $\pp^{\amlg,\ell,\widehat{\cN}}_{A',V}(\iota_{A',A}(y)) =
  \pp^{\amlg,\ell,\widehat{\cN}}_{A',V}(\iota_{A',A}(y'))$ for every $A'\in r(A,k)$ and $y_C=y'_C$
  for every $C\in r(A,k-1)$. The latter condition implies that for every $A'\in\binom{A}{k}$, we get
  $\tau_{A'}(y) = \tau_{A'}(y')$, so we get $q^y_{A'} = q^{y'}_{A'}$, which in turn implies
  \begin{align*}
    \rn{p}^y & \sim \rn{r}^y = \rn{r}^{y'} \sim \rn{p}^{y'},
  \end{align*}
  as desired.
\end{proof}

Before we proceed, let us show equivariance of the definition of
$\Theta^{\amlg,\ell,\widehat{\cN}}$ with respect to bijections.

\begin{lemma}\label{lem:Thetaequiv}
  Within Definition~\ref{def:amlg}, if $\beta\colon A'\to A$ is a bijection between finite sets $A'$
  and $A$, both disjoint from a countably infinite set $V$, then the diagram
  \begin{equation*}
    \begin{tikzcd}
      \prod_{B\in r(A,\lvert A\rvert-1)}\cS_{B,V}^{(d),\amlg,\ell}(\Omega^2)
      \arrow[r, harpoon, "\Theta^{\amlg,\ell,\widehat{\cN}}_{A,V}"]
      \arrow[d, "\beta^*"']
      &
      \cP(\cS_{A,V}^{(d),\amlg,\ell}(\Omega^2))
      \arrow[d, "\beta^*"]
      \\
      \prod_{B\in r(A',\lvert A'\rvert-1)}\cS_{B,V}^{(d),\amlg,\ell}(\Omega^2)
      \arrow[r, harpoon, "\Theta^{\amlg,\ell,\widehat{\cN}}_{A',V}"]
      &
      \cP(\cS_{A',V}^{(d),\amlg,\ell}(\Omega^2))
    \end{tikzcd}
  \end{equation*}
  is weakly commutative.
\end{lemma}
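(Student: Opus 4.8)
The plan is to reduce to the case of bijections $[a']\to[a]$ (necessarily elements of $S_a$ with $a=a'$) and then do a case split on whether $a\leq\ell$ or $a>\ell$, mirroring the structure of Definition~\ref{def:amlg}. First I would observe that by the definition~\eqref{eq:equivdef} of $\Theta^{\amlg,\ell,\widehat{\cN}}_{A,V}$ via conjugation by the fixed bijections $\alpha_A$ and $\alpha_{A'}$, for a general bijection $\beta\colon A'\to A$ we have $\alpha_A^{-1}\comp\beta\comp\alpha_{A'}=\gamma\in S_a$, and the claim $\Theta^{\amlg,\ell,\widehat{\cN}}_{A',V}\comp\beta^*=\beta^*\comp\Theta^{\amlg,\ell,\widehat{\cN}}_{A,V}$ follows from contra-variance of the $\alpha^*$ maps on amalgamating types (established in Definition~\ref{def:amlg} using Lemma~\ref{lem:contravartypes}) together with the special case of the statement for $\gamma\colon[a]\to[a]$. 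This is exactly the same conjugation-chasing already carried out for $\pp^{\amlg,\ell,\widehat{\cN}}$ at the end of the proof of Lemma~\ref{lem:amlgcomm}, so I would just cite that computation pattern. Hence it suffices to prove $S_a$-equivariance of $\Theta^{\amlg,\ell,\widehat{\cN}}_{[a],V}$.

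For the case $a=\lvert A\rvert\leq\ell$: here $\Theta^{\amlg,\ell,\widehat{\cN}}_{[a],V}$ is, by construction, the constant partial Markov kernel whose value is the Dirac measure $\delta_{p_a}$ concentrated on the unique amalgamating $([a],V,\ell)$-type $p_a$ in the image of $\pp^{\amlg,\ell,\widehat{\cN}}_{[a],V}$ (uniqueness coming from Proposition~\ref{prop:typeuniqueness} via Remark~\ref{rmk:amlguniqueness}). Since $\pp^{\amlg,\ell,\widehat{\cN}}$ is weakly $S_a$-equivariant (Lemma~\ref{lem:amlgcomm}) and its domain has full measure, its image is an $S_a$-invariant singleton $\{p_a\}$, so $\gamma^*(p_a)=p_a$ for every $\gamma\in S_a$; therefore $\gamma^*(\delta_{p_a})=\delta_{\gamma^*(p_a)}=\delta_{p_a}$, which gives the desired commutativity. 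For the case $a>\ell$: by construction $\Theta^{\amlg,\ell,\widehat{\cN}}_{[a],V}(q)=\delta_q\otimes\Theta(q)$, where $\Theta$ is the partial Markov kernel produced by Lemma~\ref{lem:keymeas} applied with group $H=S_a$. The point is that the hypotheses of the equivariant part of Lemma~\ref{lem:keymeas} were verified when the construction was set up --- the $S_a$-actions on all the relevant type spaces are measurable (Proposition~\ref{prop:toptypes}\ref{prop:toptypes:alpha}), $\mu^{(d)}$ is $S_a$-invariant, the separating set $X'$ exists, and $f$, $g$, $\gamma_a$, $\theta_a$ are $S_a$-equivariant (the latter two by Lemma~\ref{lem:mkequiv}, and $f$, $g$ by the inductive hypothesis of item~\ref{def:amlg:commutative} and by Lemma~\ref{lem:equivpp}). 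Thus Lemma~\ref{lem:keymeas}\ref{lem:keymeas:equivariance} directly yields that $\Theta$ is $S_a$-equivariant. It remains only to check that $q\mapsto\delta_q$ is $S_a$-equivariant (immediate, since $\gamma^*(\delta_q)=\delta_{\gamma^*(q)}$) and that the induced action on a product measure is the product of the induced actions (also immediate from the definition of the pushforward of the diagonal action), so $\gamma^*(\delta_q\otimes\Theta(q))=\delta_{\gamma^*(q)}\otimes\Theta(\gamma^*(q))=\Theta^{\amlg,\ell,\widehat{\cN}}_{[a],V}(\gamma^*(q))$, as needed.

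I do not expect a serious obstacle here: the lemma is essentially bookkeeping, assembling equivariances that were either built into the construction of $\Theta^{\amlg,\ell,\widehat{\cN}}$ (via the $H$-equivariant clause of Lemma~\ref{lem:keymeas}) or proved in Lemma~\ref{lem:amlgcomm} and Lemma~\ref{lem:equivpp}. The one point requiring a little care --- and the closest thing to a ``hard part'' --- is making sure the reduction from an arbitrary bijection $\beta$ to an element of $S_a$ is stated cleanly given that the definition of $\Theta^{\amlg,\ell,\widehat{\cN}}_{A,V}$ for non-$[a]$ sets is only equivariant up to the already-established contra-variance and up to zero-measure sets (so the diagram is only \emph{weakly} commutative, consistent with the statement). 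I would also note in passing that, since this lemma and Lemma~\ref{lem:amlgcomm} complete all the deferred properties, the construction in Definition~\ref{def:amlg} is now fully justified; but that remark belongs after the proof, not inside it.
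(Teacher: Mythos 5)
Your proposal is correct and follows essentially the same route as the paper: reduce an arbitrary bijection $\beta$ to $\gamma=\alpha_A^{-1}\comp\beta\comp\alpha_{A'}\in S_a$ via the conjugation in~\eqref{eq:equivdef} and then invoke $S_a$-equivariance of $\Theta^{\amlg,\ell,\widehat{\cN}}_{a,V}$. The paper's proof simply cites that equivariance (it is part of item~\ref{def:amlg:Theta} of Definition~\ref{def:amlg}, supplied by Lemma~\ref{lem:keymeas}\ref{lem:keymeas:equivariance} when $a>\ell$ and trivially for the constant kernel when $a\leq\ell$), whereas you re-verify it from the construction; that extra check is sound but not needed.
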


\begin{proof}
  Let $a\df\lvert A\rvert=\lvert A'\rvert$ and $\gamma\df\alpha_A^{-1}\comp\beta\comp\alpha_{A'}\in
  S_a$ and note that
  \begin{align*}
    \beta^*\comp\Theta^{\amlg,\ell,\widehat{\cN}}_{A,V}
    & =
    \beta^*\comp
    (\alpha_A^{-1})^*\comp\Theta^{\amlg,\ell,\widehat{\cN}}_{a,V}\comp\alpha_A^*
    =
    (\alpha_{A'}^{-1})^*\comp\gamma^*\comp\Theta^{\amlg,\ell,\widehat{\cN}}_{a,V}\comp\alpha_A^*
    \\
    & =
    (\alpha_{A'}^{-1})^*\comp\Theta^{\amlg,\ell,\widehat{\cN}}_{a,V}\comp\gamma^*\comp\alpha_A^*
    =
    (\alpha_{A'}^{-1})^*\comp\Theta^{\amlg,\ell,\widehat{\cN}}_{a,V}\comp\alpha_{A'}^*\comp\beta^*
    \\
    & =
    \Theta^{\amlg,\ell,\widehat{\cN}}_{A',V}\comp\beta^*,
  \end{align*}
  where the third equality follows since $\Theta^{\amlg,\ell,\widehat{\cN}}_{a,V}$ is $S_a$-equivariant.
\end{proof}

We can finally prove the implication~\ref{thm:UCouple:UCouple}$\implies$\ref{thm:UCouple:1ellindep}
of Theorem~\ref{thm:UCouple}.

\begin{proposition}[Theorem~\ref{thm:UCouple}\ref{thm:UCouple:UCouple}$\implies$\ref{thm:UCouple:1ellindep}]\label{prop:UCouple->1ellindep}
  If $\phi\in\HomT$ satisfies $\UCouple[\ell]$, then there exists an $\ell$-independent
  $1$-Euclidean structure $\cN$ such that $\phi=\phi_\cN$.
\end{proposition}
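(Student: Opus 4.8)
The plan is to use the machinery of amalgamating types developed in Section~\ref{sec:amlg} to build an $\ell$-independent representation by ``sampling types instead of points.'' Starting from $\phi\in\UCouple[\ell]$, first we may fix any $0$-Euclidean structure $\cN$ over some atomless $\Omega=(X,\mu)$ with $\phi_\cN=\phi$; by the characterization of $\UCouple[\ell]$ from~\cite[Theorem~3.10]{CR23} we know $\cN$ satisfies $\UCouple[\ell]$. We first reduce to the case where $\cN$ is $(\ell-1)$-independent: by Theorem~\ref{thm:Indep} (which follows from Corollary~\ref{cor:astindep->indep} and the already-established implication \ref{thm:UCouple:UCouple}$\implies$\ref{thm:UCouple:astellindep}, or more directly because $\UCouple[\ell]$ implies $\UCouple[\ell-1]$ implies $\Independence[\ell-1]$), $\phi$ has an $(\ell-1)$-independent $0$-Euclidean representation, so we may assume $\cN$ is $(\ell-1)$-independent from the start. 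Then we pass to $\widehat{\cN}$ over $\Omega^2$ as in~\eqref{eq:amlgwidehatcN}, which is still $(\ell-1)$-independent and satisfies $\UCouple[\ell]$, and we fix a countably infinite set $V$; Definition~\ref{def:amlg} and Lemma~\ref{lem:amlg} give us the amalgamating type machinery for $\widehat{\cN}$, in particular the strong amalgamation property (Lemma~\ref{lem:amlg}\ref{lem:amlg:strong}) and uniqueness of the amalgamating $\ell$-type (Remark~\ref{rmk:amlguniqueness}).

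Next I would define the new representation. The idea is: for each $k$, identify the space of ``random amalgamating $k$-types'' with a copy of $[0,1]^{r(k)}$ (or $[0,1]^{r(k)}\times\cO_k$ if we need a single order variable—this is where the ``$1$-Euclidean'' in the statement comes from), using the disintegration/Borel-space structure of $\cS_{A,V}^{(d),\amlg,\ell}$ established in Section~\ref{sec:top} (Proposition~\ref{prop:toptypes}). Concretely, for a predicate $P$ with $k(P)=k$, one runs the inductive sampling procedure: sample amalgamating $1$-types, then conditionally amalgamating $2$-types via $\Theta^{\amlg,\ell,\widehat{\cN}}$, and so on up to $k$; by strong amalgamation this is consistent and equivariant (using Lemma~\ref{lem:Thetaequiv} and Lemma~\ref{lem:amlgcomm} for the $S_k$-equivariance), and by item~\ref{def:amlg:commutative} the underlying model $M^{\amlg,\ell}$ of the resulting amalgamating $k$-type recovers a structure with the same distribution as $\mu^{\widehat{\cN}}_V=\mu^\cN_V$. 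One then packages this sampling procedure into a family of symmetric measure-preserving-on-h.o.a.\ functions $f=(f_i)_i$ from the standard Euclidean space (with one order variable) into $\cE^{(0)}(\Omega^2)$ or directly into the type spaces, and sets the new $\cN'$ so that $\cN'_P$ is the preimage under the type-sampling map of $(M^{\amlg,\ell}_{k(P),V})^{-1}(\cdot)$. The key point is that the function producing the amalgamating $\ell$-type is \emph{constant} (uniqueness, Remark~\ref{rmk:amlguniqueness}), so the amalgamating $k$-type for $k\le\ell$ carries no information, and hence $\cN'_P$ depends only on the coordinates indexed by sets of size $>\ell$, i.e., $\cN'$ is $\ell$-independent. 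That $\phi_{\cN'}=\phi$ follows from item~\ref{def:amlg:commutative} (the underlying model matches) together with Lemma~\ref{lem:widehat}\ref{lem:widehat:measpres} to check the pushforward measure is correct, exactly as in Lemma~\ref{lem:widehatEuclidean}\ref{lem:widehatEuclidean:phi}.

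The main obstacle I expect is the bookkeeping needed to realize the inductive ``sample the amalgamating $k$-type from the lower amalgamating types'' procedure as a bona fide $1$-Euclidean structure over a \emph{standard} probability space with the right independence/equivariance, and to verify that the resulting $M^{\amlg,\ell}$-pushforward actually reproduces $\mu^\cN_V$ on all finite sets simultaneously (not just marginally). Concretely: one must choose, for each $k$, a measure-isomorphism mod $0$ between the conditional-type space and a product of Lebesgue intervals that depends measurably on the lower types and is $S_k$-equivariant; this requires a measurable-selection/parametrization argument of the sort used in Lemma~\ref{lem:dreal} and Lemma~\ref{lem:keymeas}. The order variable is needed precisely because when the amalgamating $(k-1)$-types do not break the symmetry of a $k$-set, $\Theta^{\amlg,\ell,\widehat{\cN}}$ may need to distribute an $S_k$-orbit of amalgamating $k$-types over the orbit, and (as with the quasirandom tournament in Example~\ref{ex:qrT}) only an antisymmetric datum can do this measurably and equivariantly. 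Once the parametrization is set up, the verification that $\cN'$ is $\ell$-independent is immediate from the constancy of the $\le\ell$ amalgamating types, and equivalence $\phi_{\cN'}=\phi$ reduces to the equivariance and measure-preservation already recorded in Lemmas~\ref{lem:widehat}, \ref{lem:amlg} and~\ref{lem:amlgcomm}.
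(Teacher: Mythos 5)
Your main construction is essentially the paper's: realize the kernels $\Theta^{\amlg,\ell,\widehat{\cN}}_{A,V}$ by measurable functions of a uniform $[0,1]$ variable, use a single order variable to repair the failure of $S_A$-equivariance of that realization (this is exactly the paper's $G_{A,V}$, and your diagnosis of \emph{why} the order variable is unavoidable is the right one), sample amalgamating types inductively, read off the structure via $M^{\amlg,\ell}$, and get $\ell$-independence from the constancy of the type maps in arities $\leq\ell$ (Remark~\ref{rmk:amlguniqueness}). The verification that the joint (not just marginal) distributions are reproduced, which you flag as the remaining bookkeeping, is what the paper does via its sampling claims using strong amalgamation.

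However, your opening reduction to an $(\ell-1)$-independent representation is circular as written. You invoke Theorem~\ref{thm:Indep}, justified either by ``the already-established implication \ref{thm:UCouple:UCouple}$\implies$\ref{thm:UCouple:astellindep}'' or by the chain ``$\UCouple[\ell]\implies\UCouple[\ell-1]\implies\Independence[\ell-1]$''. Neither works: in the paper the only route from $\UCouple[\ell]$ to $\ast$-$\ell$-independence is through Proposition~\ref{prop:UCouple->1ellindep} itself (see Table~\ref{tab:prooflocations} and Corollary~\ref{cor:Indep}), so Theorem~\ref{thm:Indep} cannot be used in its proof; and the step $\UCouple[\ell-1]\implies\Independence[\ell-1]$ is simply false --- $\UCouple$ is strictly weaker than $\Independence$ at the same level, the quasirandom tournament of Example~\ref{ex:qrT} being the counterexample. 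The non-circular fix is the one the paper uses (and the introduction hints at with ``we can iterate the construction''): prove the statement by induction on $\ell$. The base case is trivial since every Euclidean structure is $0$-independent; for the inductive step, $\UCouple[\ell]\implies\UCouple[\ell-1]$, so the induction hypothesis supplies an $(\ell-1)$-independent representation of $\phi$, and your (and the paper's) main construction then upgrades it to an $\ell$-independent $1$-Euclidean structure, using that $(\ell-1)$-independence plus $\UCouple[\ell]$ gives uniqueness of the amalgamating $\ell$-type. With the reduction repaired in this way, your outline matches the paper's argument.
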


\begin{proof}
  Let $\widetilde{\Omega}=([0,1],\lambda)$, where $\lambda$ is the Lebesgue measure. Since
  $\UCouple[\ell]$ implies $\UCouple[\ell-1]$ and since every $d$-Euclidean structure is
  $0$-independent, it suffices to show that if $\cN$ is an $(\ell-1)$-independent $d$-Euclidean
  structure over some $\Omega=(X,\lambda)$, then there exists an $\ell$-independent $1$-Euclidean
  structure $\cH$ over $\widetilde{\Omega}$ such that $\phi_\cN = \phi_\cH$ (in fact, it would even
  suffice to show only the case $d=1$ and $\Omega=\widetilde{\Omega}$, but the general $d\in\NN$ and
  arbitrary $\Omega$ case is not any harder and the different notation will help keep track of which
  points are in which spaces).

  Let $\widehat{\cN}$ be the $d$-Euclidean structure in $\cL$ over $\Omega^2$ as in
  Definition~\ref{def:amlg}:
  \begin{align*}
    \widehat{\cN}_P & \df \{(x,x')\in\cE_{k(P)}^{(d)}(\Omega)\times\cE_{k(P)}(\Omega) \mid x\in\cN_P\}
    \qquad (P\in\cL).
  \end{align*}

  We will first provide an alternative way of sampling amalgamating $(A,V,\ell)$-types of
  $\widehat{\cN}$ for every finite $A\subseteq\NN_+$ using i.i.d.\ variables distributed uniformly
  in $[0,1]$ (note that by equivariance of Lemma~\ref{lem:amlgcomm}, this also provides a way of
  sampling the amalgamating $(A,V,\ell)$-types even when $A$ is not a subset of $\NN_+$). Throughout
  this proof, we always assume $V$ is a countably infinite set disjoint from $\NN_+$ (which ensures
  it is disjoint from any $A$ we consider) and we also follow the notation of
  Definition~\ref{def:amlg}.

  First, since all spaces of amalgamating types are Borel spaces (see
  Proposition~\ref{prop:toptypes}\ref{prop:toptypes:Pol}), for each $a\in\NN_+$, we can associate to
  each Markov kernel
  \begin{align*}
    \Theta^{\amlg,\ell,\widehat{\cN}}_{a,V}\colon
    \prod_{B\in r(a,a-1)}\cS_{B,V}^{(d),\amlg,\ell}(\Omega^2)
    & \pto
    \cP(\cS_{a,V}^{(d),\amlg,\ell}(\Omega^2))
    \intertext{of Definition~\ref{def:amlg} a measurable partial function}
    F_{a,V}\colon
    \prod_{B\in r(a,a-1)}\cS_{B,V}^{(d),\amlg,\ell}(\Omega^2)\times[0,1]
    & \pto
    \cS_{a,V}^{(d),\amlg,\ell}(\Omega^2)
  \end{align*}
  such that
  \begin{align}\label{eq:FaVdef}
    F_{a,V}(p,\place)_*(\lambda) & = \Theta^{\amlg,\ell,\widehat{\cN}}_{a,V}(p)
  \end{align}
  for every $p\in\prod_{B\in r(a,a-1)}\cS_{B,V}^{(d),\amlg,\ell}(\Omega^2)$, that is,
  $\Theta^{\amlg,\ell,\widehat{\cN}}_{a,V}(p)$ is the distribution of $F_{a,V}(p,\rn{t})$ where
  $\rn{t}$ is sampled uniformly at random in $[0,1]$.

  Item~\ref{def:amlg:Theta} of Definition~\ref{def:amlg} says that when $a\leq\ell$, then
  $\Theta^{\amlg,\ell,\widehat{\cN}}_{a,V}$ is the constant function that maps all points to the
  distribution concentrated on the unique point $q_a$ in the image of $\pp^{(d),\amlg,\ell}_{a,V}$,
  so when $a\leq\ell$, we can further require that $F_{a,V}$ is the constant function that maps
  every point to $q_a$.

  It is worth noting that even though $\Theta^{\amlg,\ell,\widehat{\cN}}_{a,V}$ is
  $S_a$-equivariant, we cannot require $F_{a,V}$ to be $S_a$-equivariant (as there are
  $S_a$-invariant distributions that cannot be generated by $S_a$-equivariant functions; e.g., the
  quasirandom tournament, see Example~\ref{ex:qrT}).

  For $A\subseteq\NN_+$ finite non-empty that is not of the form $[a]$ for some $a\in\NN_+$, we also
  define
  \begin{align*}
    F_{A,V}\colon
    \prod_{B\in r(A,\lvert A\rvert-1)}\cS_{B,V}^{(d),\amlg,\ell}(\Omega^2)\times[0,1]
    & \pto
    \cS_{A,V}^{(d),\amlg,\ell}(\Omega^2)
  \end{align*}
  in a similar fashion to~\eqref{eq:equivdef}:
  \begin{align}\label{eq:FAV}
    F_{A,V} & \df (\alpha_A^{-1})^*\comp F_{\lvert A\rvert,V}\comp\alpha_A^*,
  \end{align}
  where $\alpha_A\colon [\lvert A\rvert]\to A$ is the same bijection used in~\eqref{eq:equivdef} in
  Definition~\ref{def:amlg} and we declare the rightmost $\alpha_A^*$ in the above to act trivially
  on the coordinate in $[0,1]$. Note also that since we set $\alpha_{[a]}=\id_a$, it follows
  that~\eqref{eq:FAV} for every $A\subseteq\NN_+$ regardless of whether it is of the form $[a]$ or
  not.

  Note that Lemma~\ref{lem:Thetaequiv} along with~\eqref{eq:FaVdef} implies that
  \begin{align}\label{eq:FAVTheta}
    F_{A,V}(p,\place)_*(\lambda) & = \Theta^{\amlg,\ell,\widehat{\cN}}_{A,V}(p)
  \end{align}
  for every $p\in\prod_{B\in r(A,\lvert A\rvert-1)}\cS_{B,V}^{(d),\amlg,\ell}(\Omega^2)$.
  
  Note also that equivariance of $\pp^{\amlg,\ell,\widehat{\cN}}$ (Lemma~\ref{lem:amlgcomm}) implies that
  $F_{A,V}$ also has the property that if $\lvert A\rvert\leq\ell$, then $F_{A,V}$ is the constant
  function that maps every point to the unique point in the image of
  $\pp^{\amlg,\ell,\widehat{\cN}}_{A,V}$.

  Again, $F_{A,V}$ is not necessarily $S_A$-equivariant. To address this, we will use order
  variables to define a partial function
  \begin{align*}
    G_{A,V}\colon
    \prod_{B\in r(A,\lvert A\rvert-1)}\cS_{B,V}^{(d),\amlg,\ell}(\Omega^2)\times[0,1]\times\cO_A
    & \pto
    \cS_{A,V}^{(d),\amlg,\ell}(\Omega^2)
  \end{align*}
  by  
  \begin{align*}
    G_{A,V}(p,t,{\lhd})
    & \df
    (\sigma_\lhd^{-1})^*\Bigl(F_{A,V}\bigl(\sigma_\lhd^*(p,t)\bigr)\Bigr)
    \qquad
    \left(
    p\in\prod_{B\in r(A,\lvert A\rvert-1)}\cS_{B,V}^{(d),\amlg,\ell}(\Omega^2),
    t\in[0,1], {\lhd}\in\cO_A
    \right),
  \end{align*}
  where $\sigma_\lhd\in S_A$ is the unique permutation such that
  \begin{align*}
    (\sigma_\lhd\comp\alpha_A)^*({\lhd}) & = {\leq_{\lvert A\rvert}},
  \end{align*}
  where $\leq_{\lvert A\rvert}$ is the usual order on $[\lvert A\rvert]$ (recalling that
  $\alpha_A\colon[\lvert A\rvert]\to A$ is the bijection used in~\eqref{eq:equivdef} in
  Definition~\ref{def:amlg}). Again, Lemma~\ref{lem:amlgcomm} implies that $G_{A,V}$ also has the
  property that if $\lvert A\rvert\leq\ell$, then $G_{A,V}$ is the constant function that maps every
  point to the unique point in the image of $\pp^{\amlg,\ell,\widehat{\cN}}_{A,V}$.

  The next claim gives a form of equivariance of $\sigma_\lhd$.
  \begin{claim}\label{clm:sigmalhd}
    If $\beta\colon A\to B$ is a bijection and ${\lhd}\in\cO_B$, then
    \begin{align*}
      \sigma_{\beta^*({\lhd})} & = \beta^{-1}\comp\sigma_\lhd\comp\alpha_B\comp\alpha_A^{-1}.
    \end{align*}
  \end{claim}

  \begin{proofof}{Claim~\ref{clm:sigmalhd}}
    Letting $a\df\lvert A\rvert=\lvert B\rvert$, the claim follows from:
    \begin{align*}
      (\beta^{-1}\comp\sigma_\lhd\comp\alpha_B\comp\alpha_A^{-1}\comp\alpha_A)^*\bigl(\beta^*({\lhd})\bigr)
      & =
      (\sigma_\lhd\comp\alpha_B)^*({\lhd})
      =
      {\leq_a}.
      \qedhere
    \end{align*}
  \end{proofof}

  The next claim says that the partial functions $G_{A,V}$ are equivariant under bijections.
  \begin{claim}\label{clm:GAV}
    If $\beta\colon A\to B$ is a bijection, then the diagram
    \begin{equation*}
      \begin{tikzcd}
        \prod_{C\in r(B,\lvert B\rvert-1)}\cS_{C,V}^{(d),\amlg,\ell}(\Omega^2)\times\cO_B
        \arrow[r, harpoon, "G_{B,V}"]
        \arrow[d, "\beta^*"']
        &
        \cS_{B,V}^{(d),\amlg,\ell}(\Omega^2)
        \arrow[d, "\beta^*"]
        \\
        \prod_{C\in r(B,\lvert B\rvert-1)}\cS_{C,V}^{(d),\amlg,\ell}(\Omega^2)\times\cO_B
        \arrow[r, harpoon, "G_{B,V}"]
        &
        \cS_{B,V}^{(d),\amlg,\ell}(\Omega^2)
      \end{tikzcd}
    \end{equation*}
    is weakly commutative.
  \end{claim}

  \begin{proofof}{Claim~\ref{clm:GAV}}
    Let $a\df\lvert A\rvert=\lvert B\rvert$ and let
    $(p,t,{\lhd})\in\dom(G_{A,V}\comp\beta^*)\cap\dom(G_{B,V})$ and note that
    \begin{align*}
      (G_{A,V}\comp\beta^*)(p,t,{\lhd})
      & =
      (\sigma^{-1}_{\beta^*({\lhd})})^*\biggl(
      F_{A,V}\Bigl(\sigma_{\beta^*({\lhd})}\bigl(\beta^*(p,t)\bigr)\Bigr)
      \biggr)
      \\
      & =
      (\alpha_A\comp\alpha_B^{-1}\comp\sigma_\lhd^{-1}\comp\beta)^*\biggl(
      F_{A,V}\Bigl(
      (\beta^{-1}\comp\sigma_\lhd\comp\alpha_B\comp\alpha_A^{-1})^*\bigl(\beta^*(p,t)\bigr)
      \Bigr)\biggr)
      \\
      & =
      \beta^*\biggl(
      (\alpha_A\comp\alpha_B^{-1}\comp\sigma_\lhd^{-1})^*\Bigl(
      F_{A,V}\bigl(
      (\sigma_\lhd\comp\alpha_B\comp\alpha_A^{-1})^*(p,t)
      \bigr)\Bigr)\biggr)
      \\
      & =
      \beta^*\biggl(
      (\alpha_B^{-1}\comp\sigma_\lhd^{-1})^*\Bigl(
      F_{a,V}\bigl(
      (\sigma_\lhd\comp\alpha_B)^*(p,t)
      \bigr)\Bigr)\biggr)
      \\
      & =
      \beta^*\biggl(
      (\sigma_\lhd^{-1})^*\Bigl(
      F_{B,V}\bigl(
      \sigma_\lhd^*(p,t)
      \bigr)\Bigr)\biggr)
      \\
      & =
      (\beta^*\comp G_{B,V})(p,t,{\lhd}),
    \end{align*}
    where the second equality follows from Claim~\ref{clm:sigmalhd} and the fourth and fifth
    equalities follow from~\eqref{eq:FAV}.
  \end{proofof}

  We proceed to show that the partial function $G_{A,V}$ can be used to correctly sample the
  conditional distribution of the amalgamating $(A,V,\ell)$-types given the lower amalgamating
  types.
  \begin{claim}\label{clm:GAVsample}
    For $\mu^{(d)}$-almost every $x\in\cE_{A,\lvert A\rvert-1}^{(d)}(\Omega^2)$ and every
    ${\lhd}\in\cO_A$, we have
    \begin{align*}
      G_{A,V}(p_x, \place, {\lhd})_*(\lambda) & = \Theta^{\amlg,\ell,\widehat{\cN}}_{A,V}(p_x),
    \end{align*}
    where $p_x\in\prod_{B\in r(A,\lvert A\rvert-1)}\cS_{B,V}^{(d),\amlg,\ell}(\Omega^2)$ is given by
    \begin{align*}
      (p_x)_B & \df \pp^{\amlg,\ell,\widehat{\cN}}\bigl(\iota_{B,A}^*(x)\bigr)
      \qquad (B\in r(A,\lvert A\rvert-1)).
    \end{align*}

    In particular, we have
    \begin{align*}
      G_{A,V}(p_x,\place)_*(\lambda\otimes\nu_A) & = \Theta^{\amlg,\ell,\widehat{\cN}}_{A,V}(p_x)
    \end{align*}
    for $\mu^{(d)}$-almost every $x\in\cE_{A,\lvert A\rvert-1}^{(d)}(\Omega^2)$.
  \end{claim}

  \begin{proofof}{Claim~\ref{clm:GAVsample}}
    It is clear that the second assertion follows from the first as $\nu_A$ is the uniform measure
    on $\cO_A$.

    In turn, to prove the first assertion, it suffices to show that if $\rn{x}$ is sampled in
    $\cE_{A,\lvert A\rvert-1}^{(d)}(\Omega^2)$ according to $\mu^{(d)}$, then
    \begin{align*}
      G_{A,V}(p_{\rn{x}},\place,{\lhd})_*(\lambda) & = \Theta^{\amlg,\ell,\widehat{\cN}}_{A,V}(p_{\rn{x}})
    \end{align*}
    with probability $1$.

    By item~\ref{def:amlg:Theta} in Definition~\ref{def:amlg}, we know that if we sample $\rn{y}$ in
    $X^2\times\cO_A^d$ according to $\mu^2\otimes\bigotimes_{i=1}^d\nu_A$ independently from
    $\rn{x}$, then $\Theta^{\amlg,\ell,\widehat{\cN}}_{A,V}(p_{\rn{x}})$ is the conditional
    distribution of $\pp^{\amlg,\ell,\widehat{\cN}}_{A,V}(\rn{x},\rn{y})$ given $\rn{x}$.

    By Lemma~\ref{lem:amlgcomm}, we also have
    \begin{align*}
      \pp^{\amlg,\ell,\widehat{\cN}}_{A,V}(\rn{x},\rn{y})
      & =
      (\sigma_\lhd^{-1})^*\Bigl(
      \pp^{\amlg,\ell,\widehat{\cN}}_{A,V}\bigl(\sigma_\lhd^*(\rn{x},\rn{y})\bigr)
      \Bigr).
    \end{align*}
    By considering conditional distributions given $\rn{x}$ of the above and applying
    item~\ref{def:amlg:Theta} in Definition~\ref{def:amlg} again, we get
    \begin{align*}
      \Theta^{\amlg,\ell,\widehat{\cN}}_{A,V}(p_{\rn{x}})
      & =
      (\sigma_\lhd^{-1})^*\bigl(
      \Theta^{\amlg,\ell,\widehat{\cN}}_{A,V}(p_{\sigma_\lhd^*(\rn{x})})
      \bigr).
    \end{align*}
    In turn, applying~\eqref{eq:FAVTheta} to the right-hand side of the above, we get
    \begin{align*}
      \Theta^{\amlg,\ell,\widehat{\cN}}_{A,V}(p_{\rn{x}})
      & =
      (\sigma_\lhd^{-1})^*\Bigl(F_{A,V}(p_{\sigma_\lhd^*(\rn{x})},\place)_*(\lambda)\Bigr)
      \\
      & =
      ((\sigma_\lhd^{-1})^*\comp F_{A,V}\comp\sigma_\lhd^*)(p_{\rn{x}},\place)_*(\lambda)
      \\
      & =
      G_{A,V}(p_{\rn{x}},\place,{\lhd})_*(\lambda),
    \end{align*}
    where the second equality follows from equivariance of $\pp^{\amlg,\ell,\widehat{\cN}}$ (see
    Lemma~\ref{lem:amlgcomm}) and since the action on $[0,1]$ is trivial.
  \end{proofof}

  Our goal is now to upgrade the correctness of the sampling via $G_{A,V}$ to joint
  distributions. To do so, for $A\in r(\NN_+)$ and $c\leq\lvert A\rvert$,
  let us define the natural ``product'' partial function
  \begin{align*}
    \widehat{G}_{A,c,V}\colon
    \prod_{B\in r(A,c-1)}\cS_{B,V}^{(d),\amlg,\ell}(\Omega^2)
    \times\prod_{C\in\binom{A}{c}} ([0,1]\times\cO_C)
    & \pto
    \prod_{C\in\binom{A}{c}}\cS_{C,V}^{(d),\amlg,\ell}(\Omega^2)
  \end{align*}
  by
  \begin{align*}
    \widehat{G}_{A,c,V}(p,u)_C
    & \df
    G_{C,V}\bigl(\iota_{C,A}^*(p), u_C\bigr)
  \end{align*}
  for every $C\in\binom{A}{c}$, every $p\in\prod_{B\in
    r(A,c-1)}\cS_{B,V}^{(d),\amlg,\ell}(\Omega^2)$ and every $u\in\prod_{C'\in\binom{A}{c}}
  ([0,1]\times\cO_{C'})$.

  The next claim is the desired upgrade of Claim~\ref{clm:GAVsample} for $\widehat{G}_{A,c,V}$.

  \begin{claim}\label{clm:GAVjointsample}
    Let $A\in r(\NN_+)$ and let $c\leq\lvert A\rvert$. For every $x\in\cE_{A,c-1}^{(d)}(\Omega^2)$
    and every $C\in\binom{A}{c}$, let $p_x\in\prod_{B\in
      r(A,c-1)}\cS_{B,V}^{(d),\amlg,\ell}(\Omega^2)$ and $p_{x,C}\in\prod_{B\in
      r(C,c-1)}\cS_{B,V}^{(d),\amlg,\ell}(\Omega^2)$ be given by
    \begin{align*}
      (p_x)_B & \df \pp^{\amlg,\ell,\widehat{\cN}}_{B,V}\bigl(\iota_{B,A}^*(x)\bigr)
      \qquad (B\in r(A,c-1)),
      \\
      (p_{x,C})_B & \df \pp^{\amlg,\ell,\widehat{\cN}}_{B,V}\bigl(\iota_{B,C}^*(x)\bigr)
      \qquad (B\in r(C,c-1)).
    \end{align*}

    Let further $\rn{x}$ be picked in $\cE_{A,c-1}^{(d)}(\Omega^2)$ according to $\mu^{(d)}$, $\rn{y}$
    be picked in $\prod_{C\in\binom{A}{c}} (X^2\times\cO_C^d)$ according to
    $\bigotimes_{C\in\binom{A}{c}}(\mu^2\otimes\bigotimes_{i=1}^d\nu_C)$ independently from $\rn{x}$
    and $\rn{u}$ be picked in $\prod_{C\in\binom{A}{c}}([0,1]\times\cO_C)$ according to
    $\bigotimes_{C\in\binom{A}{c}}(\lambda\otimes\nu_C)$ independently from $\rn{x}$.

    Finally let
    \begin{align*}
      \rn{\widehat{G}} & \df \widehat{G}_{A,c,V}(p_{\rn{x}},\rn{u}),
      &
      \rn{q}
      & \df
      \left(\pp^{\amlg,\ell,\widehat{\cN}}_{C,V}(\iota_{C,A}^*(\rn{x}),\rn{y}_C)
      \;\middle\vert\; C\in\binom{A}{c}\right).
    \end{align*}

    Then the following conditional distributions are the same:
    \begin{enumerate*}[label={(\roman*)}, ref={(\roman*)}]
    \item $\rn{\widehat{G}}$ given $p_{\rn{x}}$,
    \item $\rn{\widehat{G}}$ given $\rn{x}$,
    \item $\rn{q}$ given $p_{\rn{x}}$,
    \item $\rn{q}$ given $\rn{x}$.
    \end{enumerate*}
  \end{claim}

  \begin{proofof}{Claim~\ref{clm:GAVjointsample}}
    The fact that the conditional distributions of $\rn{\widehat{G}}$ given $p_{\rn{x}}$ or given
    $\rn{x}$ are the same follows since the definition of $\rn{\widehat{G}}$ implies that it is
    conditionally independent from $\rn{x}$ given $p_{\rn{x}}$. In fact, the definition of
    $\rn{\widehat{G}}$ along with Claim~\ref{clm:GAVsample} implies that this conditional
    distribution is exactly
    \begin{align*}
      \bigotimes_{C\in\binom{A}{c}}\Theta^{\amlg,\ell,\widehat{\cN}}_{C,V}(p_{\rn{x},C}).
    \end{align*}
    
    The fact that the conditional distributions of $\rn{q}$ given $p_{\rn{x}}$ or given $\rn{x}$ are
    the same is exactly strong amalgamation, Lemma~\ref{lem:amlg}\ref{lem:amlg:strong}: $\rn{q}$ is
    conditionally independent from $\rn{x}$ given $p_{\rn{x}}$. Since each coordinate $\rn{q}_C$ is
    clearly $(\iota_{C,A}^*(\rn{x}),\rn{y}_C)$-measurable, it follows that the coordinates of
    $\rn{q}$ are conditionally independent given $\rn{x}$.

    In turn, this conditional independence along with item~\ref{def:amlg:Theta} in
    Definition~\ref{def:amlg} implies that the conditional distribution of $\rn{q}$ given $\rn{x}$
    is exactly
    \begin{align*}
      \bigotimes_{C\in\binom{A}{c}}\Theta^{\amlg,\ell,\widehat{\cN}}_{C,V}(p_{\rn{x},C}),
    \end{align*}
    concluding the proof of the claim.
  \end{proofof}

  We next sample all amalgamating types inductively and use Claim~\ref{clm:GAVjointsample} to show
  that this gives the correct distribution. First, we let $\rn{z}$ be picked at random in
  $\cE_{\NN_+}^{(1)}(\widetilde{\Omega})$ (recall that $\widetilde{\Omega}\df([0,1],\lambda)$)
  according to $\lambda^{(1)}$ and for each $A\in r(\NN_+)$, we define inductively the random
  $(A,V,\ell)$-amalgamating type
  \begin{align}\label{eq:pA}
    \rn{p}_A & \df G_{A,V}\bigl((\rn{p}_B \mid B\in r(A,\lvert A\rvert-1)), \rn{z}_A\bigr).
  \end{align}

  \begin{claim}\label{clm:sampleamlg}
    If $A\in r(\NN_+)$ and $\rn{x}$ is sampled in $\cE_A^{(d)}(\Omega^2)$ according to $\mu^{(d)}$,
    then $\rn{p}_A$ and $\pp^{\amlg,\ell,\widehat{\cN}}_{A,V}(\rn{x})$ have the same distribution.
  \end{claim}

  \begin{proofof}{Claim~\ref{clm:sampleamlg}}
    For each $B\in r(A)$, let
    $\rn{q}_B\df\pp^{\amlg,\ell,\widehat{\cN}}_{B,V}(\iota_{B,A}^*(\rn{x}))$. Then it suffices to
    show that for every $c\in[\lvert A\rvert]$, we have
    \begin{align*}
      \bigl(\rn{p}_B \mid B\in r(A,c)\bigr)
      & \sim
      \bigl(\rn{q}_B \mid B\in r(A,c)\bigr).
    \end{align*}
    We prove this by induction in $c\in\{0,\ldots,\lvert A\rvert\}$.

    The result is trivial when $c=0$. When $c>0$, by definition, we have
    \begin{align*}
      \left(\rn{p}_C \;\middle\vert\; C\in\binom{A}{c}\right)
      & =
      \widehat{G}_{A,c,V}\left(
      \bigl(\rn{p}_B \;\big\vert\; B\in r(A,c-1)\bigr),
      \left(\rn{z}_C\;\middle\vert\; C\in\binom{A}{c}\right)
      \right).
    \end{align*}

    By inductive hypothesis, we know that the first argument of $\widehat{G}_{A,c,V}$ above has the same
    distribution as $(\rn{q}_B \mid B\in r(A,c-1))$, so we have
    \begin{multline*}
      \bigl(\rn{p}_B \;\big\vert\; B\in r(A,c)\bigr)
      \\
      \sim
      \left(
      \bigl(\rn{q}_B \;\big\vert\; B\in r(A,c-1)\bigr),
      \widehat{G}_{A,c,V}\left(
      \bigl(\rn{q}_B \;\big\vert\; B\in r(A,c-1)\bigr),
      \left(\rn{u}_C\;\middle\vert\; C\in\binom{A}{c}\right)
      \right)
      \right),
    \end{multline*}
    where $\rn{u}$ is picked in $\prod_{C\in\binom{A}{c}}([0,1]\times\cO_C)$ according to
    $\bigotimes_{C\in\binom{A}{c}}(\lambda\otimes\nu_C)$, independently from $(\rn{q}_B \mid B\in
    r(A,c-1))$.

    Conditioning on $(\rn{q}_B \mid B\in r(A,c-1))$ and using Claim~\ref{clm:GAVjointsample}, we
    conclude that the distribution of the right-hand side is the same as the distribution of
    $(\rn{q}_B \mid B\in r(A,c))$, as desired.
  \end{proofof}

  The inductive definition of $\rn{p}_A$ in~\eqref{eq:pA} implies the existence of measurable
  partial functions
  $f_A\colon\cE_A^{(1)}(\widetilde{\Omega})\pto\cS_{A,V}^{(d),\amlg,\ell}(\Omega^2)$ such that
  \begin{align}\label{eq:pAfA}
    \rn{p}_A
    & =
    f_A\bigl((\rn{z}_B \mid B\in r(A))\bigr)
    =
    f_A\bigl(\iota_{A,\NN_+}(\rn{z})\bigr).
  \end{align}
  More precisely, the partial functions $f_A$ are defined inductively as
  \begin{align}\label{eq:fA}
    f_A(z)
    & \df
    G_{A,V}\biggl(
    \Bigl(
    f_B\bigl(\iota_{B,A}^*(z)\bigr)
    \mathrel{\Big\vert}
    B\in r(A,\lvert A\rvert-1)
    \Bigr), z_A
    \biggr).
  \end{align}

  Note that a direct consequence of Claim~\ref{clm:GAVsample} is that
  $\lambda^{(1)}(\dom(f_A))=1$. Also, yet again $f_A$ inherits the property that if $\lvert
  A\rvert\leq\ell$, then $f_A$ is the constant function that maps every point to the unique point in
  the image of $\pp^{\amlg,\ell,\widehat{\cN}}_{A,V}$. The next claim says that $f_A$ also inherits
  equivariance under bijections.

  \begin{claim}\label{clm:fAequiv}
    If $\beta\colon A\to B$ is a bijection, then the diagram
    \begin{equation*}
      \begin{tikzcd}
        \cE_B^{(1)}(\widetilde{\Omega})
        \arrow[r, "f_B"]
        \arrow[d, "\beta^*"']
        &
        \cS_{B,V}^{(d),\amlg,\ell}(\Omega^2)
        \arrow[d, "\beta^*"]
        \\
        \cE_A^{(1)}(\widetilde{\Omega})
        \arrow[r, "f_A"]
        &
        \cS_{A,V}^{(d),\amlg,\ell}(\Omega^2)
      \end{tikzcd}
    \end{equation*}
    is weakly commutative.
  \end{claim}

  \begin{proofof}{Claim~\ref{clm:fAequiv}}
    The proof is by induction in $\lvert A\rvert$. Note that if
    $z\in\dom(f_B)\cap\dom(f_A\comp\beta^*)$, then
    \begin{align*}
      f_A\bigl(\beta^*(z)\bigr)
      & =
      G_{A,V}\Biggl(
      \biggl(
      f_C\Bigl(\iota_{C,A}^*\bigl(\beta^*(z)\bigr)\Bigr)
      \mathrel{\bigg\vert}
      C\in r(A,\lvert A\rvert-1)
      \biggr), z_{\beta(A)}
      \Biggr)
      \\
      & =
      G_{A,V}\Biggl(
      \biggl(
      f_C\Bigl(\beta\down_C^*\bigl(\iota_{\beta(C),B}^*(z)\bigr)\Bigr)
      \mathrel{\bigg\vert}
      C\in r(A,\lvert A\rvert-1)
      \biggr), z_B
      \Biggr)
      \\
      & =
      G_{A,V}\Biggl(
      \biggl(
      \beta\down_C^*\Bigl(
      f_{\beta(C)}\bigl(\iota_{\beta(C),B}^*(z)\bigr)
      \Bigr)
      \mathrel{\bigg\vert}
      C\in r(A,\lvert A\rvert-1)
      \biggr), z_B
      \Biggr)
      \\
      & =
      G_{A,V}\Biggl(
      \beta^*\biggl(
      \Bigl(
      f_D\bigl(\iota_{D,B}^*(z)\bigr)
      \mathrel{\Big\vert}
      D\in r(B,\lvert B\rvert-1)
      \Bigr),
      z_B
      \biggr)
      \Biggr)
      \\
      & =
      \beta^*\Biggl(
      G_{B,V}\biggl(
      \Bigl(
      f_D\bigl(\iota_{D,B}^*(z)\bigr)
      \mathrel{\Big\vert}
      D\in r(B,\lvert B\rvert-1)
      \Bigr),
      z_B
      \biggr)
      \Biggr)
      \\
      & =
      \beta^*\bigl(f_B(z)\bigr),
    \end{align*}
    where the second equality follows from $\beta\comp\iota_{C,A} =
    \iota_{\beta(C),B}\comp\beta\down_C$, the third equality follows from inductive hypothesis and
    the fifth equality follows from equivariance of $G_{A,V}$ under bijections (see
    Claim~\ref{clm:GAV}).
  \end{proofof}

  We can finally define the $1$-Euclidean structure $\cH$ over $\widetilde{\Omega}=([0,1],\lambda)$
  that we will show to be $\ell$-independent and satisfy $\phi_\cH=\phi_{\widehat{\cN}}=\phi_\cN$:
  for each $P\in\cL$, we let
  \begin{align*}
    \cH_P
    & \df
    \Bigl\{z\in\cE_{k(P)}^{(1)}(\widetilde{\Omega}) \;\Big\vert\;
    M^{\amlg,\ell}_{k(P),V}\bigl(f_{k(P)}(z)\bigr)\vDash P(1,\ldots,k(P))
    \Bigr\}.
  \end{align*}
  (Recall that $M^{\amlg,\ell}_{k(P),V}\colon\cS_{k(P),V}^{(d),\amlg,\ell}(\Omega^2)\to\cK_{k(P)}$
  is the function that maps amalgamating types to their underlying model.)

  Note that since $f_A$ is constant when $\lvert A\rvert\leq\ell$, from the inductive definition of
  $f_A$ in~\eqref{eq:fA}, we get that $\cH$ is $\ell$-independent. It remains to show that
  $\phi_\cH=\phi_{\widehat{\cN}}$.

  Recalling that $\rn{z}$ is sampled in $\cE_{\NN_+}^{(1)}(\widetilde{\Omega})$, to show that
  $\phi_\cH=\phi_{\widehat{\cN}}$, it suffices to show that for every $A\in r(\NN_+)$, we have
  \begin{align*}
    M^\cH_A(\iota_{A,\NN_+}^*(\rn{z})) & \sim M^{\widehat{\cN}}_A(\rn{x}),
  \end{align*}
  where $\rn{x}$ is sampled in $\cE_A^{(d)}(\Omega^2)$ according to $\mu^{(d)}$.

  In turn, by Lemma~\ref{lem:amlgcomm} and Claim~\ref{clm:sampleamlg}, we have
  \begin{align*}
    M^{\widehat{\cN}}_A(\rn{x})
    & =
    M^{\amlg,\ell}_{A,V}\bigl(\pp^{\amlg,\ell,\widehat{\cN}}_{A,V}(\rn{x})\bigr)
    \sim
    M^{\amlg,\ell}_{A,V}(\rn{p}_A),
  \end{align*}
  so it suffices to show $M^\cH_A(\iota_{A,\NN_+}^*(\rn{z})) = M^{\amlg,\ell}_{A,V}(\rn{p}_A)$ with
  probability $1$. Going one step further, it suffices to show that for every $P\in\cL$ and every
  $\beta\colon[k(P)]\to A$ injective, we have
  \begin{align}\label{eq:disteq}
    \Bigl(
    M^\cH_A\bigl(\iota_{A,\NN_+}^*(\rn{z})\bigr)
    \vDash P(\beta)
    \Bigr)
    & \iff
    \bigl(
    M^{\amlg,\ell}_{A,V}(\rn{p}_A)
    \vDash P(\beta)
    \bigr)
  \end{align}
  with probability $1$.

  Recalling that $\overline{\beta}$ denotes the function obtained from $\beta$ by restricting its
  co-domain to $\im(\beta)$ (so $\beta = \iota_{\im(\beta),A}\comp\overline{\beta}$, hence
  $\iota_{A,\NN_+}\comp\beta=\iota_{\im(\beta),\NN_+}\comp\overline{\beta}$), note that
  \begin{align*}
    \Bigl(
    M^\cH_A\bigl(\iota_{A,\NN_+}^*(\rn{z})\bigr)
    \vDash P(\beta)
    \Bigr)
    & \iff
    \beta^*\bigl(\iota_{A,\NN_+}^*(\rn{z})\bigr)
    \in\cH_P
    \\
    & \iff
    \Biggl(
    M^{\amlg,\ell}_{k(P),V}\biggl(f_{k(P)}\Bigl(\beta^*\bigl(\iota_{A,\NN_+}^*(\rn{z})
    \bigr)\Bigr)\biggr)
    \vDash P(1,\ldots,k(P))
    \Biggr)
    \\
    & \iff
    \Biggl(
    M^{\amlg,\ell}_{k(P),V}\biggl(f_{k(P)}\Bigl(\overline{\beta}^*\bigl(
    \iota_{\im(\beta),\NN_+}^*(\rn{z})
    \bigr)\Bigr)\biggr)
    \vDash P(1,\ldots,k(P))
    \Biggr)
    \\
    & \iff
    \Biggl(
    M^{\amlg,\ell}_{k(P),V}\biggl(\overline{\beta}^*\Bigl(f_{\im(\beta)}\bigl(
    \iota_{\im(\beta),\NN_+}^*(\rn{z})
    \bigr)\Bigr)\biggr)
    \vDash P(1,\ldots,k(P))
    \Biggr)
    \\
    & \iff
    \Biggl(
    \overline{\beta}^*\biggl(M^{\amlg,\ell}_{\im(\beta),V}\Bigl(f_{\im(\beta)}\bigl(
    \iota_{\im(\beta),\NN_+}^*(\rn{z})
    \bigr)\Bigr)\biggr)
    \vDash P(1,\ldots,k(P))
    \Biggr)
    \\
    & \iff
    \Bigl(
    \overline{\beta}^*\bigl(M^{\amlg,\ell}_{\im(\beta),V}(\rn{p}_{\im(\beta)})\bigr)
    \vDash P(1,\ldots,k(P))
    \Bigr),
  \end{align*}
  where the fourth equivalence follows from Claim~\ref{clm:fAequiv}, the fifth equivalence follows
  from equivariance of $M^{\amlg,\ell}$ (Lemma~\ref{lem:amlgcomm}) and the sixth equivalence follows
  from~\eqref{eq:pAfA}.

  By Claim~\ref{clm:sampleamlg} and equivariance of $\pp^{\amlg,\ell,\widehat{\cN}}$
  (Lemma~\ref{lem:amlgcomm}), we know that with probability $1$ we have $\rn{p}_{\im(\beta)} =
  \iota_{\im(\beta),A}^*(\rn{p}_A)$, so with probability $1$ we have
  \begin{align*}
    \Bigl(
    \overline{\beta}^*\bigl(M^{\amlg,\ell}_{\im(\beta),V}(\rn{p}_{\im(\beta)})\bigr)
    \vDash P(1,\ldots,k(P))
    \Bigr)
    & \iff
    \biggl(
    \overline{\beta}^*\Bigl(M^{\amlg,\ell}_{\im(\beta),V}\bigl(
    \iota_{\im(\beta),A}^*(\rn{p}_A)
    \bigr)\Bigr)
    \vDash P(1,\ldots,k(P))
    \biggr)
    \\
    & \iff
    \biggl(
    \overline{\beta}^*\Bigl(\iota_{\im(\beta),A}^*\bigl(
    M^{\amlg,\ell}_{A,V}(\rn{p}_A)
    \bigr)\Bigr)
    \vDash P(1,\ldots,k(P))
    \biggr)
    \\
    & \iff
    \Bigl(
    \beta^*\bigl(M^{\amlg,\ell}_{A,V}(\rn{p}_A))\bigr)
    \vDash P(1,\ldots,k(P))
    \Bigr)
    \\
    & \iff
    \bigl(
    M^{\amlg,\ell}_{A,V}(\rn{p}_A)
    \vDash P(\beta)
    \bigr),
  \end{align*}
  where the second equivalence follows from equivariance of $M^{\amlg,\ell}$
  (Lemma~\ref{lem:amlgcomm}).

  Thus~\eqref{eq:disteq} holds, so $\phi_\cH = \phi_{\widehat{\cN}}=\phi_\cN$, concluding the proof
  of the proposition.
\end{proof}

As previously mentioned in Section~\ref{subsec:mainthms}, we can now derive Theorem~\ref{thm:Indep}
from Proposition~\ref{prop:UCouple->1ellindep} and Corollary~\ref{cor:astindep->indep}:
\begin{corollary}[Theorem~\ref{thm:Indep}]\label{cor:Indep}
  $\UCouple[\ell]\implies\Independence[\ell']$ whenever $\ell,\ell'\in\NN$ satisfy $\ell' < \ell$.
\end{corollary}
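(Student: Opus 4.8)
\textbf{Proof proposal for Corollary~\ref{cor:Indep}.}
The plan is to chain together the two main ingredients that have already been assembled in the excerpt, namely the ``hard direction'' Theorem~\ref{thm:UCouple}\ref{thm:UCouple:UCouple}$\implies$\ref{thm:UCouple:1ellindep} (proved as Proposition~\ref{prop:UCouple->1ellindep}) and the $d$-realization machinery of Section~\ref{sec:dreal} (packaged as Corollary~\ref{cor:astindep->indep}). Suppose $\phi\in\HomT$ satisfies $\UCouple[\ell]$ for some $\ell\in\NN$, and let $\ell'<\ell$ be given; note $\ell\geq 1$ since $\ell'\geq 0$. First I would invoke Proposition~\ref{prop:UCouple->1ellindep} to obtain an $\ell$-independent $1$-Euclidean structure $\cN$ with $\phi=\phi_\cN$. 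By the definition of $\ast$-$\ell$-independence in Section~\ref{subsec:mainthms} (existence of a $d$-Euclidean structure, here $d=1$, that is $\ell$-independent and represents $\phi$), this already shows $\phi$ is $\ast$-$\ell$-independent.

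The second and final step is to apply Corollary~\ref{cor:astindep->indep} with the pair $(\ell',\ell)$: since $\ell'<\ell$ and $\phi$ is $\ast$-$\ell$-independent, that corollary yields that $\phi$ satisfies $\Independence[\ell']$. Unwinding Corollary~\ref{cor:astindep->indep}'s proof (which we may cite as a black box), this amounts to constructing an $\ell$-realization $f$ from the ground space of $\cN$ to itself via Lemma~\ref{lem:dreal}, and then using Lemma~\ref{lem:widehatEuclidean}\ref{lem:widehatEuclidean:phi} and~\ref{lem:widehatEuclidean:dreal} to see that $f^*(\cN)$ is a $0$-Euclidean structure a.e.\ equal to an $(\ell-1)$-independent $0$-Euclidean structure $\cH$ with $\phi=\phi_\cH$; since $\ell'\leq\ell-1$, an $(\ell-1)$-independent structure is in particular $\ell'$-independent. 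This completes the argument.

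There is really no substantive obstacle here: the corollary is by design a two-line consequence of results already in hand, and the only thing to be careful about is the bookkeeping that $\ast$-$\ell$-independence with $d=1$ is exactly what Proposition~\ref{prop:UCouple->1ellindep} provides, and that ``$(\ell-1)$-independent'' implies ``$\ell'$-independent'' for all $\ell'<\ell$ (which is immediate, since $\cN=\cE_{k(P),\ell-1}^{(d)}\times\cH$ forces $\cN=\cE_{k(P),\ell'}^{(d)}\times\cH'$ for a suitable $\cH'$ whenever $\ell'\leq\ell-1$). If one prefers, one could instead route the proof through item~\ref{thm:UCouple:kqrOcoupling} of Theorem~\ref{thm:UCouple}, using that $\phi^{\kqrO[(\ell+1)]}$ lies in $\Independence[\ell-1]$ and that $\Independence$ is preserved under independent couplings and quantifier-free definitions by~\cite[Theorems~3.3 and~3.4]{CR23}; but the route via Corollary~\ref{cor:astindep->indep} is the shorter one, as the authors note, so I would present that one.
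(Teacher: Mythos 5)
Your proposal is correct and matches the paper's own proof essentially verbatim: Proposition~\ref{prop:UCouple->1ellindep} gives an $\ell$-independent $1$-Euclidean representation, hence $\ast$-$\ell$-independence, and Corollary~\ref{cor:astindep->indep} then yields $\Independence[\ell']$ for $\ell'<\ell$. The extra unwinding of Corollary~\ref{cor:astindep->indep} and the alternative route via item~\ref{thm:UCouple:kqrOcoupling} are fine but not needed.
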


\begin{proof}
  By Proposition~\ref{prop:UCouple->1ellindep}, we know that every $\UCouple[\ell]$ limit is
  $1$-$\ell$-independent, in particular, $\ast$-$\ell$-independent, so
  Corollary~\ref{cor:astindep->indep} implies that it satisfies $\Independence[\ell']$.
\end{proof}

\section{Simulating order variables with the quasirandom orientation}
\label{sec:sim}

In this section, we prove the
implication~\ref{thm:UCouple:1ellindep}$\implies$\ref{thm:UCouple:kqrOcoupling} of
Theorem~\ref{thm:UCouple}, which essentially amounts to the almost trivial fact that we can use a
quasirandom orientation to simulate order variables. The proof of this implication is mostly about
bookkeeping.

\begin{proposition}[Theorem~\ref{thm:UCouple}\ref{thm:UCouple:1ellindep}$\implies$\ref{thm:UCouple:kqrOcoupling}]\label{prop:1ellindep->kqrOcoupling}
  If $\cN$ is an $\ell$-independent $1$-$T$-on, then there exist a canonical theory
  $T'$, an $\ell$-independent limit $\psi\in\HomT[T']$ and a quantifier-free definition $I\colon
  T\leadsto T'\cup\TkO[(\ell+1)]$ such that $\phi_\cN = I^*(\psi\otimes\phi^{\kqrO[(\ell+1)]})$.
\end{proposition}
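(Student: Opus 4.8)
The plan is to produce the required data by an explicit construction. I would take $\cN^{\kqrO[(\ell+1)]}$ to be the standard ($\ell$-independent) $1$-$\TkO[(\ell+1)]$-on of Definition~\ref{def:kqrO}, build an $\ell$-independent $0$-$T'$-on $\cG$ over a suitable space (so that $\psi\df\phi_\cG$ satisfies $\Independence[\ell]$), and write down a quantifier-free definition $I\colon T\leadsto T'\cup\TkO[(\ell+1)]$ such that $I^*(\cG\otimes\cN^{\kqrO[(\ell+1)]})$ is equivalent to $\cN$; then
\[
  \phi_\cN=\phi_{I^*(\cG\otimes\cN^{\kqrO[(\ell+1)]})}=I^*\bigl(\phi_{\cG\otimes\cN^{\kqrO[(\ell+1)]}}\bigr)=I^*\bigl(\psi\otimes\phi^{\kqrO[(\ell+1)]}\bigr),
\]
using the identities $\phi_{I^*(\cH)}=I^*(\phi_\cH)$ and $\phi_{\cN^1\otimes\cN^2}=\phi_{\cN^1}\otimes\phi_{\cN^2}$ recorded earlier. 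Since $\cN$ is $\ell$-independent, writing $\cN_P=\cE_{k(P),\ell}^{(1)}\times\cH_P$, the only relevant coordinates are the values $\omega_A$ and the single order $\lhd_A$ for $\ell<\lvert A\rvert\le K$, where $K\df\max_{P\in\cL}k(P)$; the orders at level $\ell+1$ will be supplied by the quasirandom $(\ell+1)$-orientation, while all the values and the orders at levels $\ell+2\le\lvert A\rvert\le K$ will be absorbed into $\psi$.

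\textbf{The construction.} I would let $\cL'$ have a predicate $\mathrm{Ord}_m$ of arity $m$ for each $\ell+2\le m\le K$ and, for each $P\in\cL$ and each family $\vec\sigma=(\sigma_S)_{S\in\binom{[k(P)]}{\ell+1}}$ assigning a linear order to every $(\ell+1)$-subset of $[k(P)]$, a predicate $P'_{\vec\sigma}$ of arity $k(P)$ (finitely many of each). Take the space of $\cG$ to be $\prod_{j\in J}[0,1]$ with $J=\{\mathrm{val}\}\cup\{\mathrm{o}_m:\ell+2\le m\le K\}$, so each coordinate $\omega_A$ of $\cG$ splits as $(\omega_A^{\mathrm{val}},(\omega_A^{\mathrm{o}_m})_m)$. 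Put $\cG_{\mathrm{Ord}_m}$ equal to the representation $\cH^{\kqrO[m]}$ of~\eqref{eq:altkqrO} run on the $\mathrm{o}_m$-copy of the coordinates; this is $(m-2)$-independent, hence $\ell$-independent as $m-2\ge\ell$, and by Definition~\ref{def:kqrO} it samples an independent uniform orientation for each $m$-set, mutually independent across different $m$ since distinct levels use distinct copies. Put $\cG_{P'_{\vec\sigma}}$ equal to the set of $x$ such that the datum made of the values $(\omega_A^{\mathrm{val}})_{\ell<\lvert A\rvert\le k(P)}$, the hard-wired orders $(\sigma_A)_{\lvert A\rvert=\ell+1}$, and the orders decoded from $\mathrm{Ord}_{\lvert A\rvert}$ at levels $\ell+2\le\lvert A\rvert\le k(P)$ lies in $\cN_P$; this is again $\ell$-independent. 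Finally let
\[
  I(P)(x_1,\dots,x_{k(P)})\df\bigvee_{\vec\sigma}\Bigl(P'_{\vec\sigma}(x_1,\dots,x_{k(P)})\wedge\bigwedge_{S\in\binom{[k(P)]}{\ell+1}}\bigl[\text{the }(\ell+1)\text{-orientation on }\{x_s:s\in S\}\text{ equals }\sigma_S\bigr]\Bigr),
\]
the bracketed conditions being quantifier-free in the $(\ell+1)$-orientation predicate.

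\textbf{Verification and the obstacle.} Sampling a model of $\psi\otimes\phi^{\kqrO[(\ell+1)]}$ on a countable set $V$ and applying $I^*$: the $(\ell+1)$-orientation part yields independent uniform orders $(\rn{\lhd}^{\mathrm{or}}_A)_{\lvert A\rvert=\ell+1}$, and for every injective $k(P)$-tuple $\alpha$ exactly one $\vec\sigma$ agrees with these on the $(\ell+1)$-subsets of $\im(\alpha)$, so $I(P)$ holds at $\alpha$ iff $\alpha^*$ of the datum $\bigl((\rn{\omega}^{\mathrm{val}}_A)_A,(\rn{\lhd}^{\mathrm{or}}_A)_{\lvert A\rvert=\ell+1},(\rn{\tilde\lhd}_A)_{\lvert A\rvert\ge\ell+2}\bigr)$ lies in $\cN_P$, with $\rn{\tilde\lhd}_A$ the order decoded from $\mathrm{Ord}_{\lvert A\rvert}$. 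Since the $\rn{\omega}^{\mathrm{val}}$ are i.i.d.\ uniform on $[0,1]$, the $\rn{\lhd}^{\mathrm{or}}$ are i.i.d.\ uniform orders (a uniform $(\ell+1)$-orientation is a uniform linear order), the $\rn{\tilde\lhd}$ are i.i.d.\ uniform orders by the defining property of $\cH^{\kqrO[m]}$, and all three families are mutually independent, the reconstructed structure is distributed exactly as $\mu^\cN_V$, so $\phi_{I^*(\cG\otimes\cN^{\kqrO[(\ell+1)]})}=\phi_\cN$. It remains to fix $T'$: I would take it to be the canonical theory axiomatized so that, together with $\TkO[(\ell+1)]$, the reconstruction $I^*$ always yields a model of $T$ — splitting each axiom of $T$ over the finitely many orientation patterns on its free variables turns it into purely $\cL'$-axioms, and $\cG$ satisfies these almost everywhere because $M^\cN_V$ is a model of $T$ for $\mu^{(1)}$-almost every point; this makes $\cG$ a $0$-$T'$-on and $I$ a bona fide quantifier-free definition. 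The only real difficulty is organizational: keeping the level-$(\ell+1)$ order patterns matched between the orientation and the $P'_{\vec\sigma}$'s, checking that the absorbed higher-level orders are genuinely independent of one another and of everything else, and choosing $T'$ so that $I$ meets the axioms-to-theorems requirement.
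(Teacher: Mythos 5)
Your construction is correct, and its heart coincides with the paper's proof: you introduce one predicate of arity $k(P)$ for each assignment of linear orders to the $(\ell+1)$-subsets of $[k(P)]$, hard-wire that assignment into the corresponding peon, and let the quantifier-free definition $I$ be a disjunction over assignments, each conjoined with the orientation-predicate atoms that select it — this is exactly the paper's family $Q_\lhd$ and its formula for $I(Q)$. Where you diverge is in two pieces of packaging. First, to certify that $\psi$ is $\ell$-independent, the paper simply keeps $\psi$ represented by the $\ell$-independent $1$-Euclidean structure $\cH$ (which still carries all higher-level order variables but no longer depends on those at level $\ell+1$) and invokes Lemma~\ref{lem:widehatEuclidean}\ref{lem:widehatEuclidean:dreal2}, i.e.\ the $d$-realization machinery of Section~\ref{sec:dreal}; you instead build an explicit $0$-Euclidean structure by adding auxiliary predicates $\mathrm{Ord}_m$ for $\ell+2\le m\le K$ and decoding the higher-level orders from the representation $\cH^{\kqrO[m]}$ of~\eqref{eq:altkqrO}. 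That works, but the independence and uniformity of the decoded orders (jointly across sets and levels, conditionally on lower data) is precisely the measure-preserving-on-h.o.a.\ content of Lemma~\ref{lem:dreal}/Remark~\ref{rmk:altkqrO}, so you are re-deriving by hand, level by level, what the single application of the realization lemma gives the paper for free. Second, the paper chooses $T'$ semantically — its finite models are exactly the $\cL'$-structures with $\tind(\place,\cH)>0$ — and checks the axioms-to-theorems requirement for $I$ by a positivity-of-density argument; you choose $T'$ syntactically by case-splitting each axiom of $T$ over the finitely many level-$(\ell+1)$ orientation patterns. Your route is viable (your positive-probability-of-each-pattern argument does show $\cG$ satisfies the split axioms almost everywhere), but it needs a little extra bookkeeping that the semantic choice avoids: in verifying that $T'\cup\TkO[(\ell+1)]$ proves $\forall\bar x\, I(\theta)$ you must also handle tuples with repeated entries (equivalently, include the diagonal/identified-variable instances of each split axiom in $T'$), since the pattern-based reduction of $I(\theta)$ to an $\cL'$-formula only applies to injective tuples. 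Neither difference is a gap; the paper's version is just leaner because both of these steps are absorbed into lemmas it has already proved.
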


\begin{proof}
  Let $\Omega=(X,\mu)$ be the underlying space of $\cN$, let $\cL$ be the language of $T$ and let
  $k(\cL)\df\max\{k(Q)\mid Q\in\cL\}\cup\{1\}$.

  For each predicate symbol $Q\in\cL$ and each ${\lhd}\in\prod_{A\in\binom{[k(Q)]}{\ell+1}}\cO_A$,
  we introduce a predicate symbol $Q_\lhd$ of arity $k(Q_\lhd)\df k(Q)$ and we let $\cL'$ be the
  language consisting of all such predicate symbols.

  For each $n\in\NN_+$, define the function
  \begin{align*}
    \pi_n\colon\cE_n^{(1)}\times\prod_{A\in\binom{[n]}{\ell+1}}\cO_A\to\cE_n^{(1)}
  \end{align*}
  by letting
  \begin{align*}
    \pi_n(x,{\ll},{\lhd}) & \df (x,{\ll^\lhd})
    \qquad
    \left(
    x\in\cE_n, {\ll}\in\prod_{A\in r(n)}\cO_A, {\lhd}\in\prod_{A\in\binom{[n]}{\ell+1}}\cO_A
    \right),
  \end{align*}
  where
  \begin{align*}
    {\ll^\lhd_A}
    & \df
    \begin{dcases*}
      \ll_A, & if $\lvert A\rvert\neq\ell+1$,\\
      \lhd_A, & if $\lvert A\rvert=\ell+1$,
    \end{dcases*}
  \end{align*}
  that is, $\pi_n$ simply replaces the order variables of $(x,\ll)$ corresponding to sets of size
  $\ell+1$ by those in $\lhd$.

  We now define a $1$-Euclidean structure $\cH$ in $\cL'$ over $\Omega$ by
  \begin{align*}
    \cH_{Q_\lhd}
    & \df
    \{(x,{\ll})\in\cE_{k(Q)}^{(1)} \mid \pi_{k(Q)}(x,{\ll},{\lhd})\in\cN_Q\}
    \qquad \left(
    Q\in\cL, {\lhd}\in\prod_{A\in\binom{[k(Q)]}{\ell+1}}\cO_A
    \right).
  \end{align*}

  We also let $T'$ be the (canonical) universal theory whose finite models are precisely those
  (canonical) $\cL'$-structures $M$ such that $\tind(M,\cH) > 0$ (such a universal theory exists as
  this set of structures is closed under substructures). In particular, $\cH$ is a $1$-$T'$-on.

  Note that since $\cN$ is $\ell$-independent and the functions $\pi_n$ replace the order
  coordinates of order $\ell+1$, it follows that $\cH$ is $\ell$-independent and does not depend on
  coordinates in $\cO_A$ with $\lvert A\rvert=\ell+1$, so by
  Lemma~\ref{lem:widehatEuclidean}\ref{lem:widehatEuclidean:dreal2}, we know that $\psi\df\phi_\cH$
  satisfies $\Independence[\ell]$.

  Let now $P$ be the unique predicate symbol in the language of $\TkO[(\ell+1)]$ and recall that the
  standard representation of the quasirandom $(\ell+1)$-orientation $\cN^{\kqrO[(\ell+1)]}$ over
  $\Omega$ is given by
  \begin{align*}
    \cN^{\kqrO[(\ell+1)]}_P
    & \df
    \{(x,{\lhd})\in\cE_{\ell+1}^{(1)} \mid {\lhd}_{[\ell+1]} = {\leq}\},
  \end{align*}
  where ${\leq}\in\cO_{\ell+1}$ is the usual order on $[\ell+1]$.

  Define the $1$-Euclidean structure $\cG$ in $\cL'\cup\{P\}$ over $\Omega$ by
  \begin{align*}
    \cG_P & \df \cN^{\kqrO[(\ell+1)]}_P, &
    \cG_{Q_\lhd} & \df \cH_{Q_\lhd}
    \qquad\left(
    Q\in\cL, {\lhd}\in\prod_{A\in\binom{[k(Q)]}{\ell+1}}\cO_A
    \right).
  \end{align*}

  Note that since $\cH$ does not depend on coordinates in $\cO_A$ with $\lvert A\rvert=\ell+1$ and
  $\cN^{\kqrO[(\ell+1)]}$ is completely determined by such coordinates, it follows that $\phi_\cG =
  \phi_{\cN^{\kqrO[(\ell+1)]}}\otimes\phi_\cH = \phi^{\kqrO[(\ell+1)]}\otimes\psi$.

  We now define the quantifier-free definition $I\colon T\leadsto T'\cup\TkO[(\ell+1)]$ by
  \begin{multline*}
    I(Q)(x_1,\ldots,x_{k(Q)})
    \\
    \df
    \bigvee_{{\lhd}\in\prod_{A\in\binom{[k(Q)]}{\ell+1}}\cO_A}
    \left(
    Q_\lhd(x_1,\ldots,x_{k(Q)})
    \land\bigwedge_{A\in\binom{[k(Q)]}{\ell+1}}
    P(x_{\alpha_{\lhd_A}(1)},\ldots,x_{\alpha_{\lhd_A}(k(Q))})
    \right),
  \end{multline*}
  where $\alpha_{\lhd_A}\colon[\ell+1]\to A$ is the unique bijection such that
  \begin{align*}
    \alpha_{\lhd_A}^*({\lhd_A}) & = {\leq}.
  \end{align*}
  Note that when $k(Q) < \ell+1$, then there is a unique element in the empty product
  $\prod_{A\in\binom{[k(Q)]}{\ell+1}}\cO_A$ and the conjunction
  $\bigwedge_{A\in\binom{[k(Q)]}{\ell+1}}$ is empty (which is to be interpreted as always true).

  We have to check that $I^*(\phi^{\kqrO[(\ell+1)]}\otimes\psi) = \phi_\cN$ and that $I$ is indeed a
  quantifier-free definition (i.e., it maps axioms of $T$ to theorems of $T'\cup\TkO[(\ell+1)]$).

  To check the former, it suffices to show that $I^*(\cG)=\cN$. In turn, it suffices to check that
  for every $Q\in\cL$, we have $T(I(Q),\cG) = \cN_Q$.

  When $k(Q) < \ell+1$, this is obvious as there is a unique element $\lhd$ in the empty product
  $\prod_{A\in\binom{[k(Q)]}{\ell+1}}\cO_A$ and we have $T(I(Q),\cG) = \cG_{Q_\lhd} = \cH_{Q_\lhd} =
  \cN_Q$.

  Suppose then that $k(Q)\geq\ell+1$ and note that
  \begin{align*}
    T\left(
    \bigwedge_{A\in\binom{[k(Q)]}{\ell+1}}
    P(x_{\alpha_{\lhd_A}(1)},\ldots,x_{\alpha_{\lhd_A}(k(Q))}),
    \cG
    \right)
    & =
    \bigcap_{A\in\binom{[k(Q)]}{\ell+1}}
    \bigl((\iota_{A,[k(Q)]}\comp\alpha_{\lhd_A})^*\bigr)^{-1}(\cN^{\kqrO[(\ell+1)]}_P)
    \\
    & =
    \left\{(x,{\ll})\in\cE_{k(Q)}^{(1)} \;\middle\vert\;
    \forall A\in\binom{k(Q)}{\ell+1},
    \alpha_{\lhd_A}^*({\ll}_A) = {\leq}
    \right\}
    \\
    & =
    \left\{(x,{\ll})\in\cE_{k(Q)}^{(1)} \;\middle\vert\;
    \forall A\in\binom{[k(Q)]}{\ell+1},
    {\ll}_A = {\lhd_A}
    \right\},
  \end{align*}
  from which we conclude that
  \begin{align*}
    & \!\!\!\!\!\!
    T(I(Q),\cG)
    \\
    & =
    \left\{(x,{\ll})\in\cE_{\ell+1}^{(1)} \;\middle\vert\;
    \exists{\lhd}\in\prod_{A\in\binom{[k(Q)]}{\ell+1}}\cO_A,
    \left(
    \pi_{k(Q)}(x,{\ll},{\lhd})\in\cN_Q
    \land\forall A\in\binom{[k(Q)]}{\ell+1},
    {\ll}_A = {\lhd_A}
    \right)
    \right\}
    \\
    & =
    \cN_Q,
  \end{align*}
  as desired.

  We now check that $I\colon T\leadsto T'\cup\TkO[(\ell+1)]$ is indeed a quantifier-free
  definition. We certainly know that $I$ is a quantifier-free definition of the form $T_\cL\leadsto
  T'\cup\TkO[(\ell+1)]$ as $T_\cL$ is the pure canonical theory. To show that it is also a
  quantifier-free definition of the form $T\leadsto T'\cup\TkO[(\ell+1)]$, it suffices to show that
  for every $n\in\NN$ and every $M\in\cK_n[T'\cup\TkO[(\ell+1)]]$, we have $I^*_n(M)\in\cK_n[T]$.

  By definition of $T'$, we know that $\tind(M\rest_{\cL'},\cH) > 0$. Since
  $\tind(M\rest_{\{P\}},\cN^{\kqrO[(\ell+1)]}) > 0$ (as every $(\ell+1)$-orientation has positive
  density in $\phi^{\kqrO[(\ell+1)]}$) and $\phi_\cG = \phi_{\cN^{\kqrO[(\ell+1)]}}\otimes\phi_\cH$,
  we get
  \begin{align*}
    \tind(M,\cG) & = \tind(M\rest_{\{P\}},\cN^{\kqrO[(\ell+1)]})\cdot\tind(M\rest_{\cL'},\cH) > 0,
  \end{align*}
  so $\phi_\cG(M) > 0$.

  On the other hand, since $I^*(\cG)=\cN$, we get
  \begin{align*}
    \phi_\cN\bigl(I^*_n(M)\bigr)
    & =
    \phi_{I^*(\cG)}\bigl(I^*_n(M)\bigr)
    =
    I^*(\phi_\cG)\bigl(I^*_n(M)\bigr)
    \geq
    \phi_\cG(M)
    >
    0,
  \end{align*}
  that is, $I^*_n(M)$ has positive density in $\cN$, so it must be a model of $T$ as $\cN$ is a $1$-$T$-on.
\end{proof}

\section{Concluding remarks and open problems}
\label{sec:conc}

In this work, we have closed the final implications of the poset of natural quasirandomness
introduced in~\cite{CR23}, namely, we proved in Theorem~\ref{thm:UCouple} that
$\UCouple[\ell]\implies\Independence[\ell']$ whenever $\ell > \ell'$. In fact, we proved a stronger
result in Theorem~\ref{thm:UCouple} that gives further alternative characterizations of
$\UCouple[\ell]$ in terms of $\ast$-$\ell$-independence and shows that the ``gap'' between
$\Independence[\ell]$ and $\UCouple[\ell]$ is precisely a quasirandom $(\ell+1)$-orientation
(item~\ref{thm:UCouple:kqrOcoupling}).

\medskip

Our new characterizations of $\UCouple[\ell]$ also make progress in some of the other questions
asked in~\cite{CR23}, on which we elaborate now.

In~\cite[Theorem~3.10]{CR23}, it was shown that $\phi\in\UCouple[\ell]$ is also equivalent to
$\phi\otimes\psilin\in\UInduce[\ell]$ where $\psilin\in\HomT[\TLinOrder]$ is the unique limit of
linear orders and it was asked whether a different kind of converse of this result holds. Namely, it
was asked whether every $\phi\in\UInduce[\ell]$ is of the form $I^*(\psi\otimes\psilin)$ for some
$\psi\in\UCouple[\ell]$ and some quantifier-free definition $I\colon T\leadsto T'\cup\TLinOrder$, in
other words, similarly to Theorem~\ref{thm:UCouple}\ref{thm:UCouple:kqrOcoupling}, the ``gap''
between $\UCouple[\ell]$ and $\UInduce[\ell]$ is precisely a linear order. Since the quasirandom
$(\ell+1)$-orientation $\phi^{\kqrO[(\ell+1)]}$ can itself be written as $J^*(\psi\otimes\psilin)$ for
some $\psi\in\Independence[\ell]$ and a quantifier-free interpretation
$J\colon\TkO[(\ell+1)]\leadsto T\cup\TLinOrder$, by our
Theorem~\ref{thm:UCouple}\ref{thm:UCouple:kqrOcoupling}, the aforementioned question of~\cite{CR23}
is equivalent to asking whether every $\phi\in\UInduce[\ell]$ is in fact of the form
$I^*(\psi\otimes\psilin)$ for some $\psi\in\Independence[\ell]$ and some quantifier-free definition
$I\colon T\leadsto T'\cup\TLinOrder$ (i.e., the ``larger gap'' between $\UInduce[\ell]$ and
$\Independence[\ell]$ is precisely a linear order).

In~\cite[Section~10]{CR23}, the notion of \emph{$B$-compatibility} was introduced as a common
generalization of rank and $\Independence[\ell]$: namely, for $B\subseteq\NN_+$, we say that $\phi$
is \emph{$B$-compatible} if it has a representation $\phi=\phi_\cN$ where $\cN$ is a theon that
depends only on coordinates indexed by sets $A$ with $\lvert A\rvert\in B$. Thus, rank at most
$\ell$ amounts to $[\ell]$-compatibility and $\Independence[\ell]$ amounts to
$(\NN_+\setminus[\ell])$-compatibility. It was already observed in~\cite{CR23} that if
$B\subseteq\NN_+$, then every $B$-compatible limit is uniquely coupleable with every
$(\NN_+\setminus B)$-compatible limit (see~\cite[Theorem~4.8.2]{Cor21} for a proof). In turn, since
$\Independence[\ell]\neq\UCouple[\ell]$, we also know that the set of limits that are uniquely
coupleable with all $B$-compatible limits can be strictly larger than the set of all
$(\NN_+\setminus B)$-compatible limits.

On the other hand, since Theorem~\ref{thm:UCouple} says that $\UCouple[\ell]$ is equivalent to
$\ast$-$\ell$-independence, it is natural to consider the version of compatibility that allows for
order variables. Namely, let us say that $\phi$ is \emph{$\ast$-$B$-compatible} ($B\subseteq\NN_+$)
if it has a representation $\phi=\phi_\cN$ where $\cN$ is a $d$-theon (for some $d\in\NN$) that
depends only on coordinates indexed by sets $A$ with $\lvert A\rvert\in B$. Thus, rank at most
$\ell$ amounts to $\ast$-$[\ell]$-compatibility and $\ast$-$\ell$-independence (i.e.,
$\UCouple[\ell]$) amounts to $\ast$-$(\NN_+\setminus[\ell])$-compatibility. Is it then true that
$\phi$ is uniquely coupleable with all $\ast$-$B$-compatible limits if and only if $\phi$ is
$\ast$-$(\NN_+\setminus B)$-compatible? The backward direction is a straightforward generalization
of~\cite[Theorem~4.8.2]{Cor21} and Theorem~\ref{thm:UCouple} says that the forward direction holds
when $B=[\ell]$ for some $\ell\in\NN$.

\bibliographystyle{alpha}
\bibliography{refs}

\newcommand{\etalchar}[1]{$^{#1}$}
\begin{thebibliography}{AHCH{\etalchar{+}}18}

\bibitem[AC14]{AC14}
Ashwini Aroskar and James Cummings.
\newblock Limits, regularity and removal for finite structures.
\newblock Technical Report arXiv:1412.8084 [math.LO], arXiv e-print, 2014.

\bibitem[AHCH{\etalchar{+}}18]{ACHPS18}
Elad Aigner-Horev, David Conlon, Hi\d{\^{e}}p H\`an, Yury Person, and Mathias
  Schacht.
\newblock Quasirandomness in hypergraphs.
\newblock {\em Electron. J. Combin.}, 25(3):Paper No. 3.34, 22, 2018.

\bibitem[Ald81]{Ald81}
David~J. Aldous.
\newblock Representations for partially exchangeable arrays of random
  variables.
\newblock {\em J. Multivariate Anal.}, 11(4):581--598, 1981.

\bibitem[Ald85]{Ald85}
David~J. Aldous.
\newblock Exchangeability and related topics.
\newblock In {\em \'{E}cole d'\'{e}t\'{e} de probabilit\'{e}s de
  {S}aint-{F}lour, {XIII}---1983}, volume 1117 of {\em Lecture Notes in Math.},
  pages 1--198. Springer, Berlin, 1985.

\bibitem[Aus08]{Aus08}
Tim Austin.
\newblock On exchangeable random variables and the statistics of large graphs
  and hypergraphs.
\newblock {\em Probab. Surv.}, 5:80--145, 2008.

\bibitem[Bog07]{Bog07}
V.~I. Bogachev.
\newblock {\em Measure theory. {V}ol. {I}, {II}}.
\newblock Springer-Verlag, Berlin, 2007.

\bibitem[CGW89]{CGW89}
F.~R.~K. Chung, R.~L. Graham, and R.~M. Wilson.
\newblock Quasi-random graphs.
\newblock {\em Combinatorica}, 9(4):345--362, 1989.

\bibitem[Cor21]{Cor21}
Leonardo~N. Coregliano.
\newblock {\em Applications of {C}ontinuous {C}ombinatorics to
  {Q}uasirandomness and {E}xtremal {C}ombinatorics}.
\newblock ProQuest LLC, Ann Arbor, MI, 2021.
\newblock Thesis (Ph.D.)--The University of Chicago.

\bibitem[CR20]{CR20}
L.~N. Coregliano and A.~A. Razborov.
\newblock Semantic limits of dense combinatorial objects.
\newblock {\em Uspekhi Mat. Nauk}, 75(4(454)):45--152, 2020.

\bibitem[CR23]{CR23}
Leonardo~N. Coregliano and Alexander~A. Razborov.
\newblock Natural quasirandomness properties.
\newblock {\em Random Structures \& Algorithms}, 63(3):624--688, 2023.

\bibitem[CT18]{CT18}
Harry Crane and Henry Towsner.
\newblock Relatively exchangeable structures.
\newblock {\em J. Symb. Log.}, 83(2):416--442, 2018.

\bibitem[DJ08]{DJ08}
Persi Diaconis and Svante Janson.
\newblock Graph limits and exchangeable random graphs.
\newblock {\em Rend. Mat. Appl. (7)}, 28(1):33--61, 2008.

\bibitem[ES12]{ES12}
G\'{a}bor Elek and Bal\'{a}zs Szegedy.
\newblock A measure-theoretic approach to the theory of dense hypergraphs.
\newblock {\em Adv. Math.}, 231(3-4):1731--1772, 2012.

\bibitem[Hoo79]{Hoo79}
D.~N. Hoover.
\newblock Relations on probability spaces and arrays of random variables.
\newblock Preprint, Institute of Advanced Study, Princeton, NJ, 1979.

\bibitem[Hru22]{hrushovskitalk}
Ehud Hrushovski.
\newblock A question on amalgamation and probability.
\newblock Notes from a talk at Combinatorics Meets Model Theory, June 2022.

\bibitem[Kal05]{Kal05}
Olav Kallenberg.
\newblock {\em Probabilistic symmetries and invariance principles}.
\newblock Probability and its Applications (New York). Springer, New York,
  2005.

\bibitem[KNRS10]{KNRS10}
Yoshiharu Kohayakawa, Brendan Nagle, Vojt\v{e}ch R\"{o}dl, and Mathias Schacht.
\newblock Weak hypergraph regularity and linear hypergraphs.
\newblock {\em J. Combin. Theory Ser. B}, 100(2):151--160, 2010.

\bibitem[KRS02]{KRS02}
Yoshiharu Kohayakawa, Vojt\v{e}ch R\"{o}dl, and Jozef Skokan.
\newblock Hypergraphs, quasi-randomness, and conditions for regularity.
\newblock {\em J. Combin. Theory Ser. A}, 97(2):307--352, 2002.

\bibitem[LM15a]{LM15a}
John Lenz and Dhruv Mubayi.
\newblock Eigenvalues and linear quasirandom hypergraphs.
\newblock {\em Forum Math. Sigma}, 3:Paper No. e2, 26, 2015.

\bibitem[LM15b]{LM15b}
John Lenz and Dhruv Mubayi.
\newblock The poset of hypergraph quasirandomness.
\newblock {\em Random Structures Algorithms}, 46(4):762--800, 2015.

\bibitem[LM17]{LM17}
John Lenz and Dhruv Mubayi.
\newblock Eigenvalues of non-regular linear quasirandom hypergraphs.
\newblock {\em Discrete Math.}, 340(2):145--153, 2017.

\bibitem[Lov12]{Lov12}
L\'{a}szl\'{o} Lov\'{a}sz.
\newblock {\em Large networks and graph limits}, volume~60 of {\em American
  Mathematical Society Colloquium Publications}.
\newblock American Mathematical Society, Providence, RI, 2012.

\bibitem[LS06]{LS06}
L\'{a}szl\'{o} Lov\'{a}sz and Bal\'{a}zs Szegedy.
\newblock Limits of dense graph sequences.
\newblock {\em J. Combin. Theory Ser. B}, 96(6):933--957, 2006.

\bibitem[NOdM12]{NO12}
Jaroslav Ne\u{s}et\u{r}il and Patrice Ossona~de Mendez.
\newblock A model theory approach to structural limits.
\newblock {\em Comment. Math. Univ. Carolin.}, 53(4):581--603, 2012.

\bibitem[NOdM20]{NO20}
Jaroslav Ne\u{s}et\u{r}il and Patrice Ossona~de Mendez.
\newblock A unified approach to structural limits and limits of graphs with
  bounded tree-depth.
\newblock {\em Mem. Amer. Math. Soc.}, 263(1272):v + 108, 2020.

\bibitem[Raz07]{Raz07}
Alexander~A. Razborov.
\newblock Flag algebras.
\newblock {\em J. Symbolic Logic}, 72(4):1239--1282, 2007.

\bibitem[Tow17]{Tow17}
Henry Towsner.
\newblock {$\sigma$}-algebras for quasirandom hypergraphs.
\newblock {\em Random Structures Algorithms}, 50(1):114--139, 2017.

\end{thebibliography}

\end{document}